\documentclass[letter,12pt]{article}

\usepackage{amssymb}
\usepackage{amsthm}
\usepackage{amsfonts}
\usepackage[latin2]{inputenc}
\usepackage{amsmath}
\usepackage{anysize}
\usepackage{hyperref}
\usepackage{bbm}
\marginsize{2.6cm}{2.6cm}{1cm}{2cm}

\newtheorem*{theorem*}{Theorem}
\newtheorem{main_theorem}{Theorem}
\newtheorem{theorem}{Theorem}[section]

\newtheorem{main_corollary}[main_theorem]{Corollary}

\newtheorem{lemma}[theorem]{Lemma}
\newtheorem{corollary}[theorem]{Corollary}
\newtheorem{remark}[theorem]{Remark}

\newtheorem{proposition}[theorem]{Proposition}

\def\del{\partial}
\def\dbar{\bar\partial}
\def\ddbar{\del\dbar}

\def\del{\partial}

\def\o{\omega}

\begin{document}
\title{The Mabuchi Completion of the Space of K\"ahler Potentials}
\author{Tam\'as Darvas\thanks{Research supported by NSF grant DMS1162070. \newline  2010 Mathematics subject classification 53C55, 32W20, 32U05. }}
\date{\vspace{0.0in} \emph{\small{To Andrea.}}\vspace{-0.3in}}
\maketitle
\begin{abstract}
Suppose $(X,\o)$ is a compact K\"ahler manifold. Following Mabuchi, the space of smooth K\"ahler potentials $\mathcal H$ can be endowed with a Riemannian structure, which induces an infinite dimensional path length metric space $(\mathcal H,d)$. We prove that the metric completion of $(\mathcal H,d)$ can be identified with $(\mathcal E^2(X,\o),\tilde d)$, and this latter space is a complete non-positively curved geodesic metric space. In obtaining this result, we will rely on envelope techniques which allow for a treatment in a very general context. Profiting from this, we will characterize the pairs of potentials in $\text{PSH}(X,\o)$ that can be connected by weak geodesics and we will also give a characterization of $\mathcal E(X,\o)$ in this context.
\end{abstract}

\section{Introduction}

Given $(X^n,\o)$, a connected compact K\"ahler manifold, the space of smooth K\"ahler potentials is the set
$$\mathcal H = \{ u \in C^\infty(X) | \ \o + i\partial \bar \partial u > 0\}.$$
Using the $\ddbar$--lemma of Hodge theory, this set can be identified (up to a constant) with the space of K\"ahler metrics that are cohomologous to $\o$. $\mathcal H$ is a Fr\'echet manifold, and as such one can endow it with different Riemannian structures that induce path length metric spaces on $\mathcal H$. One such Riemannian structure was introduced by E. Calabi \cite{ca}  in 1954 (see also \cite{cl}), who proposed to find the metric completion of $\mathcal H$ using his metric. This was eventually carried out
by B. Clarke and Y. Rubinstein in \cite{cr}. As pointed out in \cite{cr}, it would be desirable, although much harder, to find the metric completion of $\mathcal H$ with respect to the much more studied Mabuchi metric. The main theorem of this paper answers this question completely, confirming a conjecture of V. Guedj \cite{g} in the process.

\subsection{Background}

To state our main result it is necessary to recall some facts about the Mabuchi geometry of $\mathcal H$. For a more exhaustive survey  we refer to \cite{bl}. For $v \in \mathcal{H}$ one can identify $T_v \mathcal{H}$ with $C^{\infty}(X)$. After Mabuchi \cite{m}, we introduce the following Riemannian metric on $\mathcal{H}$:
\[\langle \xi,  \eta \rangle_v := \int_X \xi \eta (\o + i\partial \overline{\partial}v)^n, \ \ \ \ \xi,\eta \in T_v \mathcal{H}. \]

As discovered independently by Semmes \cite{s} and Donaldson \cite{do}, the geodesic equation of this metric can be written as a complex Monge-Amp\`ere equation. Let $(0,1)\ni t \to u_t \in \mathcal{H}$ be a smooth curve, $S = \left\{s \in \Bbb C: 0 <\textup{Re }s<1 \right\}$ and let $\tilde \o$ be the pullback of the K\"{a}hler form $\o$ to the product $S \times X$. Let $u \in C^{\infty}({S} \times X)$ be the complexification of $t \to u_t$, defined by $u(s,x) := u_{\textup{Re}s}(x)$. Then $t \to u_t$ is a geodesic if and only if the following equation is satisfied for $u$:
\begin{equation}\label{geodesic_eq}
(\tilde \o + i \partial_{S \times X} \overline{\partial}_{S \times X}u)^{n+1}=0.
\end{equation}
In the future we will not make a difference between a curve and its complexification.

Unfortunately, given $u_0,u_1 \in \mathcal H$ there may be no smooth geodesic connecting them (\cite{lv,dl,d2}). As shown by  X. X. Chen \cite{c} (with complements by Z. Blocki \cite{bli}), there exists certain weak geodesics connecting points of $\mathcal H$. To define these curves, one is driven into the world of pluripotential theory. The set of $\o$-plurisubharmonic functions is a natural generalization of $\mathcal H$:
$$\textup{PSH}(X,\o)=\{ u \equiv -\infty \hbox{ or }u \in C^{\uparrow}(X) \hbox{ with } \o + i\partial\bar\partial u \geq 0 \hbox{ in the sense of currents}\},$$
where $C^\uparrow(X)$ is the set of upper semi-continuous integrable functions on $X$. Chen proved that there exists a curve
$$[0,1]\ni t \to u_t \in \mathcal H_{\Delta}=\{ u \in \text{PSH}(X,\o)| \ \Delta u \in L^\infty(X)\}$$
connecting $u_0,u_1$ such that the map $(t,x) \to u_t(x)$ has bounded Laplacian, and the complexification of $u$ (again denoted by $u$) satisfies (\ref{geodesic_eq}) in the weak sense of Bedford-Taylor \cite{c}. For this curve we do not have ${u}_t \in \mathcal H$, $t \in (0,1)$, however many of its properties mimic those of actual geodesics (Section 2.1, see also \cite{cc,ct}).

As usual, the length  of a smooth curve $[0,1]\ni t \to \alpha_t \in \mathcal{H}$ is computed by the formula:
\begin{equation}\label{curve_length_def}
l(\alpha)=\int_0^1\sqrt{\int_X\dot \alpha_t^2(\o + i\partial \overline{\partial}\alpha_t)^n}dt.
\end{equation}
The distance $d({u_0},{u_1})$ between ${u_0},{u_1} \in \mathcal H$ is the infimum of the length of smooth curves joining ${u_0}$ and ${u_1}$. Chen proved that $d(u_0,u_1)=0$ if and only if $u_0 = u_1$, thus $(\mathcal H, d)$ is a metric space \cite{c}.

A curve $ (0,1) \ni t \to v_t \in \text{PSH}(X,\o)$ is called a subgeodesic segment if its complexification $v: S \times X \to \Bbb R$  is an $\o$-plurisubharmonic function. If additionally $v$ is locally bounded and satisfies (\ref{geodesic_eq}) in the sense of Bedford-Taylor, then $t \to v_t$ is called a weak geodesic segment.

\subsection{The Main Result}

One of the goals of this work is to generalize the notion of weak geodesic, so that given $u_0,u_1 \in \text{PSH}(X,\o)$ there is a weak geodesic $(0,1) \ni t \to u_t \in \text{PSH}(X,\o)$ connecting $u_0$ with $u_1$, i.e. $\lim_{t \to 0,1}=u_{0,1}$ in some suitable sense explained below. As we will see, this is not always possible, but by the end of this paper we partially attain this goal (Theorem \ref{geod_constr}).

When $u_0,u_1$ are bounded, Berndtsson constructed such weak geodesic $t \to u_t$ (Section 2.2 and \cite{br}). In the unbounded case we define $t\to u_t$ as a decreasing limit of bounded weak geodesics. Let $\{u^k_0\}_{k \in \Bbb N},\{u^k_1 \}_{k \in \Bbb N}\subset \mathcal H$ be sequences decreasing to $u_0$ and $u_1$ respectively. By \cite{bk} it is always possible to find such an approximating sequence.  Let $u^k_t : [0,1] \to \mathcal H_{\Delta}$ be the weak geodesic joining $u^k_0,u^k_1$, whose existence was proved by Chen. We define $t\to u_t$ as the decreasing limit:
\begin{equation}\label{udef}
u_t = \lim_{k \to + \infty}u^k_t, \ t \in (0,1).
\end{equation}
The curve $(0,1) \ni t \to u_t \in \text{PSH}(X,\o)$ we just constructed may be identically equal to $-\infty$, however one can see  that it is independent of the choice of approximating sequences, hence well defined. Indeed, $u$ is the upper envelope of a family $\mathcal S$ of subgeodesics:
$$u= \sup_{v \in \mathcal S}v,$$
where $ \mathcal S = \{ (0,1) \ni t \to v_t \in \text{PSH}(X,\o) \textup{ is a subgeodesic with }\lim_{t \to 0,1}v_t \leq u_{0,1} \}.$

One would also like to extend the path length metric $d$ on $\mathcal H$ to $\text{PSH}(X,\o)$ the obvious way, i.e. $\tilde d (u_0,u_1)=\lim_{k\to\infty}{d(u^k_0,u^k_1)}$, however it is not clear if this limit even exists or if it is independent of the approximating sequences. As we shall see shortly, to make this precise one has to restrict the definition of $\tilde d$ to a  subset of $\text{PSH}(X,\o)$. Before we can identify this domain, we need to recall some facts about finite energy classes on compact K\"ahler manifolds introduced by Guedj-Zeriahi \cite{gz}. For $v \in \text{PSH}(X,\o)$ let $v_h = \max \{ v,-h\}$, $h \in \Bbb R$. By an application of the comparison principle, it follows that the Borel measures $\mathbbm{1}_{\{v >-h\}}(\o + i\partial \bar\partial v_h)^n$
are increasing for $h >0$. By definition, $v \in \mathcal E(X,\o)$ if
\begin{equation*}
\lim_{h \to \infty} \int_X \mathbbm{1}_{\{v >-h\}}(\o + i\partial \bar\partial v_h)^n =\int_X \o^n=:\text{Vol}(X).
\end{equation*}
Suppose $\chi : \Bbb R \cup \{ -\infty\} \to \Bbb R$ is a continuous increasing function, with $\chi(0)= 0$ and $\chi(-\infty)=-\infty$. Such $\chi$ is referred to as a weight. The set of all weights is denoted by $\mathcal W$. Given $v \in \mathcal E(X,\o)$ we have $v \in \mathcal E_\chi(X,\o)$ if
$$E_\chi(v)=\lim_{h \to \infty}\int_{\{ v > -h\}} \chi(v_h) (\o + i\partial\bar\partial v_h)^n > -\infty.$$

We will be most interested in $\mathcal E^2(X,\o)$, this being the finite energy class given by the weight $\chi(t)= - (-t)^2$ (see Section 2.3). Given $u_0,u_1 \in \mathcal E^2(X,\o)$ and decreasing approximating sequences $u^k_0, u^k_1 \in \mathcal H$, we define $\tilde d(u_0,u_1)$, as promised, by the formula:
\begin{equation}\label{dist_geod_new}
\tilde d(u_0,u_1)=\lim_{k\to \infty}d(u^k_0,u^k_1).
\end{equation}
We will prove that this definition is independent of the choice of decreasing approximating sequences and $\tilde d$ is a metric on $\mathcal E^2(X,\o)$. Moreover, for $t \to u_t$ as defined in \eqref{udef}, we have $u_t \in \mathcal E^2(X,\omega), t \in (0,1)$ and the following theorem holds, which is our main result:
\begin{main_theorem}[Theorem \ref{e2space}, Theorem \ref{E2complete}]$(\mathcal E^2(X,\o), \tilde d)$ is a complete non-positively curved geodesic metric space, with geodesic segments joining $u_0,u_1 \in \mathcal E^2(X,\o)$ given by \eqref{udef}. Furthermore, $(\mathcal E^2(X,\o), \tilde d)$ can be identified with the metric completion of $(\mathcal H,d)$. \label{e2theorem}
\end{main_theorem}

This result was proved for toric K\"ahler manifolds and conjectured to hold in general by V. Guedj in a preliminary version of \cite{g}.

Let us recall that a geodesic metric space $(M,\rho)$ is a metric space for which any two points can be connected with a geodesic. By a geodesic connecting two points $a,b \in M$ we understand a curve $\alpha: [0,1]\to M$ such that $\alpha(0)=a$, $\alpha(1)=b$ and
\begin{equation}\label{geod_def}
\rho(\alpha(t_1),\alpha(t_2))=|t_1 -t_2|\rho(a,b),
\end{equation}
for any $t_1,t_2 \in [0,1]$.
Furthermore, a geodesic metric space $(M,\rho)$ is non-positively curved (in the sense of Alexandrov) if for any distinct points $q,r \in M$ there exists a geodesic $\gamma:[0,1] \to M$ joining $q,r$ such that for any $s \in \{ \gamma \}$  and $p \in M$ the following inequality is satisfied:
\begin{equation}\label{nonpositive}
\rho(p,s)^2 \leq \lambda \rho(p,r)^2 +(1-\lambda) \rho(p,q)^2 - \lambda(1-\lambda)\rho(q,r)^2,
\end{equation}
where $\lambda = \rho(q,s)/ \rho(q,r)$. A basic property of non-positively curved metric spaces is that geodesic segments joining different points are unique. For more about these spaces we refer to \cite{bh}.

As a consequence of the last theorem, using the estimates of Berndtsson \cite {br}, B\l ocki \cite{bl2}, \cite[Corollary 4.7]{bd} (see also \cite{h}) and Theorem \ref{geod_constr}, we obtain that $(\mathcal E^2(X,\o), \tilde d)$ has many special totally geodesic dense subspaces, with this answering positively questions raised in \cite{h}:

\begin{main_corollary}[Corollary \ref{supspaces}]The following sets are totally geodesic dense subspaces of $(\mathcal E^2(X,\o), \tilde d)$:
\begin{itemize}
\item[(i)] $\mathcal H_{\Delta}=\{ u \in \textup{PSH}(X,\o)| \ \Delta u \in L^\infty(X)\}$;
\item[(ii)] $\mathcal H_{0,1}= \textup{PSH}(X,\o) \cap \textup{Lip}(X)$;
\item[(iii)] $\mathcal H_0= \textup{PSH}(X,\o) \cap L^\infty(X)$;
\end{itemize}
\end{main_corollary}

\subsection{Further Results}

The proof of Theorem \ref{e2theorem} involves the study of certain upper envelopes. The tools we develop will allow us to treat weak geodesics in a very general context, even beyond the class $\mathcal E^2(X,\o)$, which will be further explored in the paper \cite{d3}. Given $b_0,b_1 \in C^\uparrow(X)$, one can define the envelopes
$$P(b_0) = \sup \{ \psi \leq b_0: \psi \in \text{PSH}(X,\o)\},$$
$$P(b_0,b_1)= P(\min ( b_0,b_1)).$$
As the upper semi-continuous regularization $\textup{usc}(P(b_0))$ is an element of  $\{ \psi \leq b_0: \psi \in \text{PSH}(X,\o)\}$  and $P(b_0) \leq \textup{usc}(P(b_0))$, it follows that $P(b_0) \in \text{PSH}(X,\o)$ and similarly $P(b_0,b_1)\in \text{PSH}(X,\o)$. The motivation for the study of envelopes of the type $P(b_0,b_1)$ came from the following identity found in \cite{dr} (see also Section 2.2):
\begin{equation}
\inf_{t\in (0,1)} u_t = P(u_0,u_1),\label{intrlegenv}
\end{equation}
where $u_0,u_1 \in \mathcal H_0 = \text{PSH}(X,\o)\cap L^\infty$ and $(0,1) \ni t \to u_t \in \mathcal H_0$ is the weak geodesic connecting them. Further evidence that properties of the envelope $P(u_0,u_1)$ are tied together with the metric structure of the space $\mathcal H$ is given by the 'Pythagorean' identity of Theorem \ref{pythagorean}:
$$\tilde d(u_0,u_1)^2 = \tilde d(u_0,P(u_0,u_1))^2 + \tilde d(P(u_0,u_1),u_1)^2.$$
Our very first result about these envelopes says that the classes $\mathcal E_\chi(X,\o)$ are closed under the operation $ (u_0,u_1) \to P(u_0,u_1)$.

\begin{main_theorem}[Theorem \ref{envexist}]\label{E_min} If $\chi \in \mathcal W$ and ${u_0},{u_1} \in \mathcal E_\chi(X,\o)$, then $P({u_0},{u_1}) \in \mathcal E_\chi(X,\o)$. More precisely, if $N \in \Bbb R$ is such that $\chi \circ {u_0}, \chi \circ {u_1} \leq N$, then we have:
\begin{equation}\label{en_est}
E_\chi(P({u_0},{u_1})) \geq E_\chi({u_0}) + E_\chi({u_1})-N\textup{Vol}(X).
\end{equation}
Furthermore, if $\chi \in \mathcal W^- \cup \mathcal W^+_M, M \geq 1$, then $\mathcal E_\chi(X,\o)$ is convex.
\end{main_theorem}

For the definition of the weights $\mathcal W^-, \mathcal W^+_M$ and further properties of finite energy classes we refer to Section 2.3.
Convexity of ${\mathcal E}_\chi (X,\o)$  was already established
\cite{gz} in case $\chi \in \mathcal W^+_M, \ M \geq 1$. The case $\chi \in \mathcal W^-$ was conjectured in \cite[Remark 2.16]{begz}.

Next we turn to a characterization of $\mathcal E(X,\o)$ in terms of singularity types. For $u,v \in \text{PSH}(X,\o)$ we say that $u$ and $v$ have the same singularity type if there exists $c \in (0,\infty)$ such that
$$u - c \leq v \leq u + c.$$
This induces an equivalence relation on $\text{PSH}(X,\o)$ and we denote by $[u]$ the class of a representative $u\in \text{PSH}(X,\o)$.

Suppose now that ${u_0},{u_1} \in \text{PSH}(X,\o)$. In \cite{rwn1} the envelope of ${u_0}$ with respect to the singularity type of ${u_1}$ was introduced in the following manner:
\begin{equation}\label{envdef}
P_{[{u_1}]}({u_0})= \textup{usc}\Big(\lim _{c \to \infty}P({u_1}+c,{u_0})\Big).
\end{equation}
If ${u_0}$ is bounded and $u_1 \not\equiv -\infty$ then $P_{[{u_1}]}({u_0}) \not\equiv-\infty$. As a consequence of our first result, we see that this is also true when ${u_0},{u_1} \in\mathcal E(X,\o)$.

The functions of $\mathcal E(X,\o)$ can be unbounded, but their singularities are mild. In particular, by Corollary 1.8 \cite{gz}, at any $x \in X$ the Lelong number of $v$ is zero. However, as noted in \cite{gz}, the converse is false.  Membership in $\mathcal E(X,\o)$  can nevertheless be characterized in terms of envelopes:

\begin{theorem*}[\cite{d}] Suppose ${u_0} \in \textup{PSH}(X,\o) \cap C(X)$ and ${u_1} \in \textup{PSH}(X,\o)$. Then ${u_1} \in \mathcal E(X, \o)$ if and only if
$$P_{[{u_1}]}({u_0})={u_0}.$$
\end{theorem*}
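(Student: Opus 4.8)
The plan is to prove both implications by exploiting the defining property of $\mathcal E(X,\o)$—that the non-pluripolar Monge--Amp\`ere mass equals $\text{Vol}(X)$—together with the monotone convergence properties of the envelopes $P(u_1+c,u_0)$ as $c\to\infty$. Write $P_c := P(u_1+c,u_0) = P(\min(u_1+c,u_0))$; these functions increase with $c$ to $P_{[u_1]}(u_0)$ after taking the usc regularization, and they are all $\leq u_0$.

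For the forward direction, suppose $u_1 \in \mathcal E(X,\o)$. Since $u_0$ is continuous and hence bounded, and $u_1 \in \mathcal E(X,\o) \subset \text{PSH}(X,\o)$ with $u_1\not\equiv-\infty$, the first step is to note $P_{[u_1]}(u_0)\not\equiv-\infty$—this follows from Theorem \ref{E_min} (our first main result) applied with a bounded weight, or directly: $P(u_1+c,u_0)$ lies above a suitable subgeodesic-type competitor once $c$ is large. Clearly $P_{[u_1]}(u_0) \leq u_0$. For the reverse inequality $P_{[u_1]}(u_0) \geq u_0$, the idea is to compare Monge--Amp\`ere masses. On the contact set $\{P_c = u_0\}$ one has $(\o + i\ddbar P_c)^n = \mathbbm 1_{\{P_c = u_0\}}(\o+i\ddbar u_0)^n$ by the standard orthogonality relation for envelopes (this is where continuity of $u_0$ is used, via Bedford--Taylor theory / the balayage characterization). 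Meanwhile on $\{P_c = u_1 + c\}$ the function $P_c$ inherits the singularities of $u_1$. As $c \to \infty$, the region $\{P_c = u_1+c\}$ must shrink in Monge--Amp\`ere mass because $u_1\in\mathcal E(X,\o)$ forces its mass to concentrate where $u_1$ is finite; a comparison-principle argument then shows $\int_{\{P_{[u_1]}(u_0) < u_0\}}(\o+i\ddbar u_0)^n = 0$, and since $u_0$ is continuous (so $(\o+i\ddbar u_0)^n$ dominates Lebesgue-type mass on open sets) this forces $\{P_{[u_1]}(u_0)<u_0\}$ to have empty interior; upper semicontinuity of both sides then upgrades this to the pointwise equality $P_{[u_1]}(u_0) = u_0$.

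For the converse, suppose $u_1 \notin \mathcal E(X,\o)$; I want to show $P_{[u_1]}(u_0) \neq u_0$, i.e. the envelope strictly drops somewhere. The strategy is contrapositive via mass accounting: if $P_{[u_1]}(u_0) = u_0$, then testing against the total mass, one would get $\text{Vol}(X) = \int_X (\o+i\ddbar u_0)^n = \lim_{c\to\infty}\int_X (\o+i\ddbar P_c)^n$, and the portion of this mass coming from the set $\{P_c = u_1+c\}$ is controlled by the truncated masses $\int_{\{u_1 > -h\}}\mathbbm 1(\o+i\ddbar (u_1)_h)^n$ appearing in the definition of $\mathcal E(X,\o)$; one shows this portion stays bounded away from the full mass precisely when $u_1 \notin \mathcal E$, yielding a contradiction. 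Concretely, a comparison-principle estimate on $\{P_c < u_0\} \subset \{P_c \le u_1 + c\}$ relates $\int_{\{P_c < u_0\}}(\o+i\ddbar P_c)^n$ to a Monge--Amp\`ere mass of (a truncation of) $u_1$, and letting $c\to\infty$ recovers exactly the Guedj--Zeriahi defect $\text{Vol}(X) - \lim_h\int_{\{u_1>-h\}}\mathbbm 1(\o+i\ddbar(u_1)_h)^n > 0$.

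The main obstacle I expect is the rigorous handling of the Monge--Amp\`ere measures of the envelopes $P_c$ near their "free boundary" $\partial\{P_c = u_0\}$, and the passage to the limit $c\to\infty$ when $u_1$ is unbounded and the $P_c$ are merely bounded (so Bedford--Taylor theory applies to each $P_c$, but uniform estimates are delicate). The key technical input is the orthogonality/contact-set formula $(\o+i\ddbar P(\min(u_1+c,u_0)))^n$ is supported on $\{P_c = u_1+c\}\cup\{P_c = u_0\}$ with the expected form on each piece, combined with monotone convergence of non-pluripolar masses along the increasing family $\{P_c\}$. Controlling the error terms in the comparison principle uniformly in $c$—so that the limit genuinely isolates the $\mathcal E(X,\o)$-defect of $u_1$—is the crux; everything else is bookkeeping with the comparison principle and upper semicontinuous regularization.
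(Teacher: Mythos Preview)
This theorem is \emph{cited} from \cite{d} and is not given a proof in the present paper; what the paper actually proves is the stronger Theorem~\ref{Echar}, where the hypothesis $u_0\in\textup{PSH}(X,\o)\cap C(X)$ is relaxed to $u_0\in\mathcal E(X,\o)$. So the relevant comparison is with the proof of Theorem~\ref{Echar}.

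Your forward direction has a genuine gap. The paper's argument (for the generalization) proceeds by showing, via Lemma~\ref{weak_est} and a limit $c\to\infty$, that $(\o+i\ddbar P_{[u_1]}(u_0))^n\leq(\o+i\ddbar u_0)^n$; since both sides are non-pluripolar measures of elements of $\mathcal E(X,\o)$, they have the same total mass, hence are equal; then Dinew's uniqueness theorem (Theorem~\ref{dinew}) gives $P_{[u_1]}(u_0)=u_0+\textup{const}$, and Lemma~\ref{dif_lemma} forces the constant to be zero. You instead try to bypass Dinew's theorem by arguing that $\int_{\{P_{[u_1]}(u_0)<u_0\}}(\o+i\ddbar u_0)^n=0$ and then claiming this forces the set to be empty because ``$u_0$ is continuous (so $(\o+i\ddbar u_0)^n$ dominates Lebesgue-type mass on open sets).'' That last assertion is false: continuity of $u_0$ does not prevent $(\o+i\ddbar u_0)^n$ from vanishing on nonempty open sets (any continuous $\o$-psh function that is maximal on an open set furnishes a counterexample). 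Without Dinew's uniqueness, or some genuinely different input, the passage from a measure-zero statement to a pointwise equality does not go through.

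For the converse direction, your mass-accounting sketch is at best heuristic: the paper (again in the proof of Theorem~\ref{Echar}) does not argue this way at all, but instead invokes the geodesic-ray machinery of \cite{d}, specifically \cite[Theorem~2(iii)]{d} and \cite[Proposition~5.1]{d}, to show that the associated weak geodesic ray $t\to v(\tilde u_0,u_1)_t$ is constant, which characterizes $u_1\in\mathcal E(X,\o)$. Your proposed comparison-principle estimate linking $\int_{\{P_c<u_0\}}(\o+i\ddbar P_c)^n$ to the Guedj--Zeriahi defect of $u_1$ is plausible in spirit but is not carried out, and the delicate point you yourself flag---controlling error terms uniformly in $c$---is exactly where the argument would need real work.
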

The condition $P_{[{u_1}]}({u_0})={u_0}$ basically says that the singularity of ${u_1}$ is so mild that it is undetectable under envelope construction (\ref{envdef}). It is also easily seen to imply that all Lelong numbers of ${u_1}$ are zero \cite{d}. A question that immediately arises is weather the technical condition ${u_0} \in C(X)$ can be removed in the above result. Our next theorem, for which we will find other uses as well, says that this is indeed the case, what is more, ${u_0}$ need not even be bounded.
\begin{main_theorem}[Theorem \ref{Echar}] \label{E_char1}Suppose ${u_0} \in \mathcal E(X,\o)$ and ${u_1} \in \textup{PSH}(X,\o)$. Then ${u_1} \in \mathcal E(X, \o)$ if and only if
$$P_{[{u_1}]}({u_0})={u_0}.$$
\end{main_theorem}

We return now to weak geodesics. Given $u_0,u_1 \in \text{PSH}(X,\o)$, for the weak geodesic $t \to u_t$ defined in \eqref{udef} it may easily happen that $u \equiv -\infty$. If this is not the case, one wonders in what sense $u_t$ approaches $u_0,u_1$ as $t \to 0,1$. To investigate these questions we revisit the notion of capacity. As introduced by Ko\l odziej \cite{k}, the Monge-Amp\`ere capacity of a Borel set $B \subset X$ is defined by the formula
$$\text{Cap}(B)=\sup\Big\{\int_B (\o + i\partial\bar\partial u)^n \Big| u \in \text{PSH}(X,\o), 0 < u < 1   \Big\}.$$

For a sequence $\{v_k\}_{k \in \Bbb N} \subset \text{PSH}(X,\o)$ we say that $v_k \to v \in \text{PSH}(X,\o)$ in capacity if for any $\varepsilon > 0$ we have
$$\lim_{k \to \infty}\text{Cap}\big(\big\{x \in X \big| |v_k(x) -v(x)| > \varepsilon\big\}\big) = 0.$$

We note that convergence in capacity is stronger then convergence in $L^1(X)$ and is perhaps the strongest notion of convergence for unbounded plurisubharmonic functions. For an extensive study of capacities on compact K\"ahler manifolds we refer to \cite{gz2}.

We say that $u_0,u_1 \in \text{PSH}(X,\o)$ can be connected with a weak geodesic if the curve $t \to u_t$ defined in (\ref{udef}) satisfies $u \not\equiv \infty$ and $\lim_{t \to 0,1} u_t = u_{0,1}$ in capacity. It is not readily clear what pairs of potentials $(u_0,u_1)$ can be joined by a weak geodesic. Our next result connects this issue with properties of the different types of envelopes we introduced.

\begin{main_theorem}[Theorem \ref{boundary_limit}] \label{geod_constr}Suppose $u_0,u_1 \in \textup{PSH}(X,\o)$. Then for the curve $t \to u_t$ defined in \eqref{udef} we have:
\begin{enumerate}
\item[(i)] $u \not\equiv -\infty$ if and only if $P(u_0,u_1) \not\equiv -\infty.$
\item[(ii)] $\lim_{t \to 0} u_t = u_0$ in capacity if and only if $P_{[u_1]}(u_0)= u_0$.
\item[(iii)] $\lim_{t \to 1} u_t = u_1$ in capacity if and only if $P_{[u_0]}(u_1) = u_1$.
\end{enumerate}
\end{main_theorem}

As a trivial consequence of the last theorem we observe that if $u_0$ and $u_1$ have the same singularity type, then $u_0$ and $u_1$ can be connected with a weak geodesic. However, using \eqref{intrlegenv}, a more precise result can be obtained when $u_0,u_1$ are elements of finite energy classes:

\begin{main_theorem}[Corollary \ref{energy_geod1}] \label{energy_geod}Suppose $\chi \in \mathcal W^- \cup\mathcal W^+_M, M\geq1$ and $u_0,u_1 \in \mathcal E_\chi(X,\o)$. Then for the curve $t \to u_t$ defined in \eqref{udef} we have:
\begin{enumerate}
\item[(i)] $u_t \in \mathcal E_\chi(X,\o)$ for all $t \in (0,1).$ More precisely, if $N \in \Bbb R$ is such that $\chi \circ u_0,\chi \circ u_1 \leq N$ then we have:
\begin{equation}\label{geod_energy}
    E_\chi(u_t) \geq C(E_\chi(u_0) + E_\chi(u_1) - N \textup{Vol}(X)),
\end{equation}
where $C$ only depends on $M$ and $\dim X$.
\item[(ii)] $\lim_{t \to t_0} u_t = u_{t_0}$ in capacity for any $t_0 \in [0,1]$.
\end{enumerate}
\end{main_theorem}

Given $u_0 \in \text{PSH}(X,\o)$, one might want to find all $u_1 \in \text{PSH}(X,\o)$ such that $u_0$ and $u_1$ can be connected with a weak geodesic. As a consequence of  Theorem \ref{E_char1} and Theorem \ref{geod_constr}, for $u_0 \in \mathcal E(X,\o)$ we can provide an answer:
\begin{main_corollary}[Corollary \ref{geod_join}] Suppose $u_0 \in \mathcal E(X,\o)$ and $u_1 \in \textup{PSH}(X,\o)$. Then $u_0$ can be connected to $u_1$ with a weak geodesic if and only if $u_1 \in \mathcal E(X,\o)$.
\end{main_corollary}

\paragraph{Further applications and possible future directions.} Building on the techniques of this paper, in \cite{d3} we explore the Orlicz--Finsler geometry of $\mathcal H$, that is intimately tied together with the finite energy classes $\mathcal E_{\chi}(X,\o)$ for  $ \chi \in \mathcal W^+_M$. In addition to this, we also prove that convergence with respect to the path length metric $\tilde d$ can be characterized using very concrete terms. Indeed, we show that there exists $C>1$ such that for any $u_0,u_1 \in \mathcal E^2(X,\o)$ we have:
$$\frac{1}{C}\tilde d(u_0,u_1)^2 \leq \int_X (u_0 - u_1)^2 \o_{u_0}^n + \int_X (u_0 - u_1)^2 \o_{u_1}^n \leq C\tilde d(u_0,u_1)^2.$$
Further applications of the techniques developed here are explored in \cite{dh}, where we carry out a divergence analysis of the K\"ahler--Ricci flow in terms of the metric $\tilde d$. We also  construct destabilizing geodesic rays weakly asymptotic to diverging K\"ahler--Ricci flow trajectories, with this partially verifying a folklore conjecture.

Convergence of metrics with respect to the Calabi metric is equivalent to the $L^1$ convergence of their volume densities \cite{cr}. Though seemingly unrelated, in a future publication we hope to compare the geometry/topology of the Calabi metric with that of the the Mabuchi metric, as proposed in \cite{cr}.

In this paper we study extensively the end point problem for geodesics segments inside the metric completion of $\mathcal H$. Following the sequence of works initiated in \cite{rz}, it would be also natural to investigate the analogous initial value problem in the general context of the metric completion.

\paragraph{Acknowledgements.}  I would like to thank L. Lempert and Y. Rubinstein for their suggestions on how to improve the presentation of the paper and for their support during the years. I would also like to thank M. Jonsson for useful discussions, in particular for noticing that Theorem \ref{E_min} answers a question raised in \cite{begz}. I also profited from discussions with V. Guedj, G.K. Misiolek, and G. Sz\'ekelyhidi. The fact that the metric completion of $\mathcal H$ is non-positively curved has been explored recently by J. Streets in connection with the Calabi flow (\cite{st1}, \cite{st2}). We thank him for bringing to our attention his interesting papers.

\paragraph{Organization.} In Section 2 we recall preliminary material about finite energy classes and geodesics that we will need the most. We prove Theorem \ref{E_min} in Section 3, Theorem \ref{E_char1} in Section 4 and Theorems \ref{geod_constr} and \ref{energy_geod} in Section 5. The proof of Theorem \ref{e2theorem}  is given in  Sections 6--9. Readers interested only in the metric completion problem, should read Section 3, then skip ahead to the proof of Theorem \ref{energy_geod}(i) and from here proceed to Sections 6--9.

\section{Preliminaries}

\subsection{Distances in the Metric Space $(\mathcal H,d)$}

We summarize some of the properties of the metric space $(\mathcal H,d)$ that we will need later. This short section is based  entirely on the findings of \cite{c}. As in the introduction, given a smooth curve $[0,1]\ni t \to \alpha_t \in \mathcal{H}$, we define its length by the formula:
\begin{equation}\label{curve_length_def1}
l(\alpha)=\int_0^1\sqrt{\int_X\dot \alpha_t^2(\o + i\partial \overline{\partial}\alpha_t)^n}dt.
\end{equation}
The distance $d(u_0,u_1)$ between two points $u_0,u_1 \in \mathcal H$ is defined as the infimum of the length of smooth curves joining $u_0,u_1$. According to \cite{c}, $d$ is a metric on $\mathcal H$.

As noted earlier, in general there is no geodesic joining $u_0,u_1$ in $\mathcal H$, we only have a weak geodesic segment $[0,1] \ni t \to u_t \in \mathcal H_\Delta = \{ u \in \text{PSH}(X,\o)| \ \Delta u \in L^\infty(X)\}$ solving (\ref{geodesic_eq}) in the Bedford-Taylor sense, having bounded Laplacian. To address this technical inconvenience, X. X. Chen introduced the notion of $\varepsilon-$geodesics. An $\varepsilon-$geodesic joining $u_0,u_1$ is a smooth curve $[0,1] \ni t \to u^\varepsilon_t \in \mathcal H$ connecting $u_0,u_1$, that satisfies the equation:
\begin{equation}\label{approx_geod}
(\ddot{u^\varepsilon_t} - \frac{1}{2} \langle \nabla\dot u^\varepsilon_t, \nabla\dot u^\varepsilon_t\rangle)(\o + i\partial\bar\partial u^\varepsilon_t)^n=\varepsilon \o^n, \ t \in (0,1).
\end{equation}
Chen proved the existence of such curve for any $\varepsilon >0$, along with the formula:
\begin{equation}\label{dist_apr}
d(u_0,u_1)=\lim_{\varepsilon \to 0} l(u^\varepsilon).
\end{equation}
It is also proved that there exists $C>0$ independent of $\varepsilon$ such that
\begin{equation}\label{epsest}
\| \Delta u^\varepsilon\|_{L^\infty([0,1]\times X)} \leq C.
\end{equation}
Still, one would like to relate $d(u_0,u_1)$ directly to the weak geodesic $[0,1] \ni t \to u_t \in \mathcal H_{\Delta}$ joining $u_0,u_1$. For this we analyze (\ref{dist_apr}) more closely. We have that
$$l(u^\varepsilon)=\int_0^1\sqrt{E^\varepsilon(t)}dt,$$
where $E^\varepsilon(t)=\int_X\dot {u_t^\varepsilon}^2(\o + i\partial \overline{\partial}u_t^\varepsilon)^n$. Using (\ref{approx_geod}), one can easily compute that
$$\frac{dE^\varepsilon(t)}{dt}=2\varepsilon \int  {\dot u^\varepsilon_t} \o^n,$$
hence $|\dot E^\varepsilon(t)| \leq 2 \varepsilon  \| {\dot u^\varepsilon_t}\|_{L^\infty(X)}, \ t \in [0,1]$. Estimate (\ref{epsest}) further implies that
$$|\dot E^\varepsilon(t)| \leq 2\varepsilon C, \ t \in [0,1].$$
It follows from this that for any $t_0 \in [0,1]$ we have $\big|l(u^\varepsilon)-\sqrt{E^\varepsilon(t_0)}\big|=\big|\int_0^1 (\sqrt{E^\varepsilon(t)}-\sqrt{E^\varepsilon(t_0)})dt\big|\leq \sqrt{2 \varepsilon C}.$ This coupled with (\ref{dist_apr}) implies that
$$d(u_0,u_1)=\sqrt{\lim_{\varepsilon \to 0}E^\varepsilon(t_0)}.$$
It follows from the comparison principle (\cite[Theorem 6.4]{bl}) that our $\varepsilon-$geodesics $u^\varepsilon$ increase to the weak geodesic $u$ joining $u_0,u_1$, in particular $u^\varepsilon_{t_0}$ increases to $u_{t_0}$. Bedford-Taylor theory implies now that $(\o +i\partial\bar\partial u^\varepsilon_{t_0})^n \rightarrow (\o +i\partial\bar\partial u_{t_0})^n$ in the weak sense of measures. By (\ref{epsest}), using the Arzel\`a-Ascoli theorem it follows that we can find a subsequence of $u^\varepsilon_{t_0}$ (again denoted $u^\varepsilon_{t_0}$) such that $C(X) \ni \dot u^\varepsilon_{t_0} \to \dot u_{t_0} \in C(X)$ uniformly. The last two statements imply the following formula, again from \cite{c}:
\begin{equation}\label{distgeod}
d(u_0,u_1)=\sqrt{\lim_{\varepsilon \to 0}E^\varepsilon(t_0)} = \sqrt{\int_X\dot {u}_{t_0}^2(\o + i\partial \overline{\partial}u_{t_0})^n}.
\end{equation}
In Section 7 we will revisit (\ref{distgeod}) in a more general setting.

\subsection{Bounded Weak Geodesic Segments and Envelopes}

X. X. Chen's notion of weak geodesic can be generalized to construct weak geodesic segments connecting points of $\mathcal H_0 = \text{PSH}(X,\o)\cap L^\infty(X)$. Following Berndtssson, we recall how this argument works.

As before, let $S \subset \Bbb C$ be the strip $\{ 0 < \textup{Re }s < 1\}$ and $\tilde \o$ be the pullback of $\o$ to the product $S \times X$. As argued in \cite[Section 2.1]{br}, for $u_0,u_1 \in \mathcal H_0$ the following Dirichlet problem has a unique solution:
\begin{alignat}{2}\label{bvp_Bern}
&u \in \text{PSH}(S\times X, \tilde\o)\cap L^\infty(S\times X)\nonumber\\
&(\tilde \o + i \partial \overline{\partial}u)^{n+1}=0 \nonumber\\
&u(t+ir,x) =u(t,x) \ \forall x \in X, t \in (0,1), r \in \Bbb R \\
&\lim_{t \to 0,1}u(t,x)=u_{0,1}(x),  \forall x \in X\nonumber.
\end{alignat}
Since the solution to this equation is invariant in the imaginary direction, we denote it by $(0,1) \ni t\to u_t \in \mathcal H_0$ and call it the weak geodesic segment joining $u_0$ and $u_1$. Unsurprisingly, when $u_0,u_1 \in \mathcal H$ this is the same curve as the weak geodesic $t \to u_t$ mentioned in the previous section. Fittingly, a curve $(0,1)\ni t \to v_t \in \text{PSH}(X,\o)$ is called a subgeodesic if $v(s,x):=v_{\textup{Re} s}(x) \in \text{PSH}(S\times X,\tilde \o)$.

As a reminder, let us mention that the solution $u$ is constructed as the upper envelope
\begin{equation}\label{udef1}
u = \sup_{v \in \mathcal S}v,
\end{equation}
where $\mathcal S$ is the following set of weak subgeodesics:
$$ \mathcal S = \{ (0,1) \ni t \to v_t \in \text{PSH}(X,\o) \textup{ is a subgeodesic with }\lim_{t \to 0,1}v_t \leq u_{0,1} \}.$$
Berndtsson also proved that  $u$ is Lipschitz in the $t-$variable:
\begin{equation}\label{dot_u_est}
\bigg\| \frac{\partial u}{\partial t} \bigg\|_{L^\infty(X)} \leq \| u_0 - u_1\|_{L^\infty(X)}.
\end{equation}

Now we introduce the Legendre transform $\Bbb R \ni \tau \to u^*_\tau \in \text{PSH}(X,\o)$ of the weak geodesic segment $t \to u_t$ defined in (\ref{udef1}):
$$u^*_\tau = \inf_{t \in (0,1)}(u_t - \tau t).$$
The fact that $u^*_\tau \in \text{PSH}(X,\o)$ is guaranteed by Kiselman's minimum principle. The following identity is a particular case of a formula discovered  in \cite{dr}, where some of its applications are studied:
\begin{equation}\label{legenv} u^*_\tau = P(u_0,u_1 - \tau)
\end{equation}
The proof of this identity is quite elementary. On the one hand $P(u_0,u_1 - \tau) \leq u^*_\tau$ follows from (\ref{udef1}), as the subgeodesic $ t \to P(u_0,u_1 - \tau)+t \tau$ is included in $\mathcal S$. On the other hand, we clearly have $u^*_\tau \leq \min (u_0,u_1 -\tau )$, hence the definition of $P(u_0,u_1-\tau)$ implies the direction $ u^*_\tau \leq P(u_0,u_1 - \tau)$.

The regularity of envelopes of the type $P(u_0,u_1)$ will play an important role in this work. This is studied in \cite{dr} and we mention here the following particular case of interest:

\begin{theorem}[\cite{dr}] \label{DR_reg} If $u_0,u_1 \in \mathcal H_\Delta$ then $P(u_0,u_1) \in \mathcal H_\Delta$.
\end{theorem}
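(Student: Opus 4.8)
The plan is to reduce the statement to a one‑sided Laplacian estimate for a weak geodesic and then read off the conclusion from the Legendre transform identity \eqref{legenv}. Since $u_0,u_1\in\calH_\Delta\subset\calH_0$, the weak geodesic segment $(0,1)\ni t\to u_t\in\calH_0$ joining them is defined by the envelope \eqref{udef1}, and \eqref{legenv} with $\tau=0$ (equivalently \eqref{intrlegenv}) gives $P(u_0,u_1)=\inf_{t\in(0,1)}u_t$. Because $P(u_0,u_1)\in\text{PSH}(X,\o)$, the bound $\o+\i\ddbar P(u_0,u_1)\ge 0$ is automatic, so it is enough to find $C>0$ with $\i\ddbar_X u_t\le C\o$ for every $t$: then $\i\ddbar_X P(u_0,u_1)=\i\ddbar_X\inf_t u_t\le C\o$, being an infimum of functions with a uniform upper Hessian bound, and hence $P(u_0,u_1)\in\calH_\Delta$.

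The uniform bound on $u_t$ I would obtain by approximation from Chen's a priori estimate. Choose $\calH\ni u_0^j\downarrow u_0$ and $\calH\ni u_1^j\downarrow u_1$, arranged so that $\sup_X|\Delta u_0^j|$ and $\sup_X|\Delta u_1^j|$ stay uniformly bounded. Let $u^j$ be the weak geodesic joining $u_0^j,u_1^j$; by Chen's theorem, complemented by B\l ocki, $u^j\in\calH_\Delta$ with $\|\Delta u^j\|_{L^\infty([0,1]\times X)}\le C$, where $C$ depends only on $(X,\o)$ and on uniform $C^0$ and Laplacian bounds of the endpoints, hence is independent of $j$. Since the boundary data decrease, the envelope characterization \eqref{udef1} forces $u^j_t\downarrow u_t$ for each $t$ (the limit is a subgeodesic with boundary values $\le u_{0,1}$, so it is $\le u_t$, and it dominates $u_t$ because $u_i^j\ge u_i$). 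Letting $j\to\infty$ in $\i\ddbar_X u^j_t\le C\o$ then yields $\i\ddbar_X u_t\le C\o$ uniformly in $t$, as needed.

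The step I expect to be the main obstacle is the approximation of $u_0,u_1$ with uniform Laplacian control: a function $v\in\calH_\Delta$ satisfies $v\in\text{PSH}(X,\o)$ and $-v\in\text{PSH}(X,C_0\o)$ for some $C_0$, and one needs a Demailly/B\l ocki--Ko\l odziej‑type regularization producing smooth decreasing approximants $v^j\downarrow v$ that retain $-v^j\in\text{PSH}(X,(C_0+o(1))\o)$, i.e. a uniform upper Hessian bound. An alternative route, avoiding geodesics, is to work with $\phi=\min(u_0,u_1)$ directly: the envelope $\psi=P(\phi)$ satisfies $(\o+\i\ddbar\psi)^n=\mathbbm{1}_{\{\psi=\phi\}}(\o+\i\ddbar\phi)^n$, a measure of bounded density, and $\psi$ is the increasing limit of the solutions $\psi_s$ of $(\o+\i\ddbar\psi_s)^n=e^{s(\psi_s-\phi)}\o^n$; the uniform $C^{1,1}$ a priori estimates of Chen and B\l ocki for these auxiliary \MA equations (after a preliminary smoothing of $\phi$) then give $\psi\in\calH_\Delta$. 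On either route the crux is a single uniform second order estimate, with the rest of the argument being soft.
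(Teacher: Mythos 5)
Your overall skeleton is the same as the ``quick proof'' this paper has in mind when it merely cites \cite[Remark 4.3]{dr}: by \eqref{legenv} with $\tau=0$ one has $P(u_0,u_1)=\inf_{t\in(0,1)}u_t$, and once one knows $i\partial\bar\partial u_t\le C\o$ uniformly in $t$, the infimum satisfies $-\o\le i\partial\bar\partial P(u_0,u_1)\le C\o$ (locally $Cg-u_t$ is plurisubharmonic for a local potential $g$ of $C\o$, and the usc regularization of $\sup_t(Cg-u_t)$ is plurisubharmonic and agrees a.e.\ with $Cg-P(u_0,u_1)$), hence $P(u_0,u_1)\in\mathcal H_\Delta$. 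That part of your argument is sound.

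The gap is in how you produce the uniform Laplacian bound for the geodesic when the endpoints are only in $\mathcal H_\Delta$. You propose to approximate $u_0,u_1$ by smooth K\"ahler potentials decreasing to them \emph{with uniformly bounded Laplacians} and then invoke Chen's estimate; but \cite{bk} gives no control on the Laplacians of the approximants, and a decreasing smooth regularization of an element of $\mathcal H_\Delta$ that keeps a uniform upper Hessian bound is not a standard fact --- it is a $C^{1,1}$-type regularization statement of essentially the same order of difficulty as the theorem itself (regularizing $-u$ as a $C\o$-plurisubharmonic function produces an increasing family and loses $\o$-plurisubharmonicity; sup-convolution tricks destroy monotonicity). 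You flag this step yourself but do not close it, so the proof as written is incomplete. The standard way to close it --- and what this paper does in Section 7 for exactly this purpose --- is to quote the regularity theory for weak geodesics with $\mathcal H_\Delta$ endpoints directly: \cite[Corollary 4.7]{bd} (see also \cite[Theorem 1.1]{h}) gives $\|\Delta u\|_{L^\infty([0,1]\times X)}\le C$ with $C$ depending only on the Laplacian and $C^0$ bounds of $u_0,u_1$, with no approximation needed; with that input your argument goes through verbatim. Your alternative route via the penalized equations $(\o+i\partial\bar\partial\psi_s)^n=e^{s(\psi_s-\phi)}\o^n$ is in spirit the actual argument behind \cite{bd} and \cite{dr}, but as stated it suffers from the same issue (the ``preliminary smoothing of $\phi$'' again needs uniform second-order control), and note that the partition formula you quote for the Monge--Amp\`ere measure of the envelope is obtained in this paper as a consequence of Theorem \ref{DR_reg} itself (Proposition \ref{MA_form}), so it cannot be used here without circularity.
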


A quick proof for this theorem can be given using the already existent regularity theory of weak geodesic segments  \cite[Remark 4.3]{dr}.

\begin{proposition} \label{MA_form} Given $u_0,u_1 \in \mathcal H_\Delta$ we introduce $\Lambda_{u_0} = \{ P(u_0,u_1)=u_0\}$ and  $\Lambda_{u_1} = \{ P(u_0,u_1)=u_1\}$. Then the following partition formula holds for the Monge-Amp\`ere measure of $P(u_0,u_1)$:
\begin{equation}\label{MA_partition}
(\o + i\partial\bar\partial P(u_0,u_1))^n= \mathbbm{1}_{\Lambda_{u_0}}(\o + i\partial\bar\partial u_0)^n + \mathbbm{1}_{\Lambda_{u_1} \setminus \Lambda_{u_0}}(\o + i\partial\bar\partial u_1)^n.
\end{equation}
\end{proposition}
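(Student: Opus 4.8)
The plan is to prove the partition formula \eqref{MA_partition} by exploiting two facts: that $P(u_0,u_1)$ has bounded Laplacian (Theorem \ref{DR_reg}), so its Monge--Amp\`ere measure is absolutely continuous and can be computed pointwise a.e.\ from second derivatives, and that an envelope ``touches'' its obstacle on the contact set in a way that forces the Hessians to agree there. Write $\phi = P(u_0,u_1)$ and $\omega_v = \omega + i\partial\bar\partial v$. Since $\phi \in \mathcal H_\Delta$, the form $\omega_\phi$ has $L^\infty$ coefficients and $\omega_\phi^n = \det$-type density times Lebesgue measure; in particular $\omega_\phi^n$ charges no pluripolar or measure-zero set, and on any Borel set it equals $\mathbbm 1\cdot \omega_\phi^n$ computed from the a.e.-defined second-order data of $\phi$.

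The key step is the local contact-set analysis. On $\Lambda_{u_0} = \{\phi = u_0\}$, since $\phi \le u_0$ everywhere with equality on $\Lambda_{u_0}$, at every point of $\Lambda_{u_0}$ that is simultaneously a Lebesgue density point of $\Lambda_{u_0}$ and a twice-differentiability point of both $\phi$ and $u_0$ (Alexandrov: a.e.\ point, using $\Delta\phi,\Delta u_0 \in L^\infty$), the function $u_0 - \phi \ge 0$ attains an interior minimum in a density sense, which forces $i\partial\bar\partial\phi = i\partial\bar\partial u_0$ at that point, hence $\omega_\phi^n = \omega_{u_0}^n$ there. The standard way to make ``interior minimum in a density sense'' rigorous for merely twice-differentiable functions is: at a Lebesgue density point $x_0$ of $\Lambda_{u_0}$, pick a sequence $x_k \to x_0$ in $\Lambda_{u_0}$ along which the second-order Taylor expansions of $u_0$ and $\phi$ are valid; comparing $\phi(x_k) - \phi(x_0) \le u_0(x_k) - u_0(x_0)$ in every real direction and its negative, and using density to get directions spanning all of $T_{x_0}X$, yields $D^2\phi(x_0) \le D^2 u_0(x_0)$, while $\phi \le u_0$ with equality at $x_0$ as a bona fide twice-differentiable touching gives the reverse, so $D^2\phi(x_0) = D^2u_0(x_0)$ and in particular the complex Hessians agree. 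Thus $\mathbbm 1_{\Lambda_{u_0}}\omega_\phi^n = \mathbbm 1_{\Lambda_{u_0}}\omega_{u_0}^n$. The identical argument on $\Lambda_{u_1}$ gives $\mathbbm 1_{\Lambda_{u_1}}\omega_\phi^n = \mathbbm 1_{\Lambda_{u_1}}\omega_{u_1}^n$, and intersecting/subtracting handles the overlap $\Lambda_{u_1}\setminus\Lambda_{u_0}$.

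Next I would show that $\omega_\phi^n$ is supported on $\Lambda_{u_0}\cup\Lambda_{u_1}$, i.e.\ $\mathbbm 1_{X\setminus(\Lambda_{u_0}\cup\Lambda_{u_1})}\omega_\phi^n = 0$. On the open set $U = X\setminus(\Lambda_{u_0}\cup\Lambda_{u_1})$ we have $\phi < \min(u_0,u_1)$, so $\phi$ is a maximal $\omega$-plurisubharmonic function there: any small upward perturbation of $\phi$ supported in a small ball stays below $\min(u_0,u_1)$ and hence is a competitor in the definition of $P(u_0,u_1)$, which would contradict $\phi$ being the supremum unless $\phi$ already solves $\omega_\phi^n = 0$ on $U$. (This is the usual envelope-maximality argument; with $\phi\in\mathcal H_\Delta$ the Bedford--Taylor Monge--Amp\`ere operator is well-behaved and the comparison principle applies cleanly.) Combining the three contributions gives exactly \eqref{MA_partition}.

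I expect the main obstacle to be the rigorous justification of the Hessian-matching on the contact sets at the level of merely twice-differentiable (Alexandrov) functions, since $\phi$ and $u_0$ need not be $C^2$ — one must carefully select full-measure sets of ``good'' points (Lebesgue density points of the contact set which are also Alexandrov points of both functions) and argue that $\omega_\phi^n$, being absolutely continuous, lives on such points; the density-point argument replacing the naive ``the difference has a local min, so gradients/Hessians match'' is the delicate part. A cleaner alternative, if one prefers to avoid Alexandrov technicalities, is to bypass this by using instead \eqref{legenv}: realize $P(u_0,u_1)$ as $u^*_0$, the Legendre transform at $\tau=0$ of the bounded weak geodesic, and quote the known differentiability/Monge--Amp\`ere properties of weak geodesics from \cite{dr,br} to read off the partition; but the direct envelope argument above is more self-contained and is the route I would take.
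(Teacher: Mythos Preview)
Your proposal is correct and follows essentially the same two-step strategy as the paper: first show that $(\omega+i\partial\bar\partial P(u_0,u_1))^n$ is concentrated on the contact set $\Lambda_{u_0}\cup\Lambda_{u_1}$, then show that on each piece the second-order data of $P(u_0,u_1)$ agrees a.e.\ with that of the corresponding obstacle. The paper's proof is shorter only because it invokes two references in place of your hands-on arguments: \cite[Corollary 9.2]{bt} gives the concentration on the coincidence set directly (your envelope-maximality perturbation argument is the standard proof of that result), and \cite[Lemma 7.7]{gt} --- applied twice to the nonnegative difference $u_0-\phi$, whose second partials lie in every $L^p$ --- yields $D^2\phi=D^2u_0$ a.e.\ on $\Lambda_{u_0}$ without any Alexandrov/density-point gymnastics, after which \cite[Proposition 2.1.6]{bl3} identifies the pointwise Monge--Amp\`ere densities.
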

\begin{proof}
From \cite[Corollary 9.2]{bt} it follows that $(\o + i\partial\bar\partial P(u_0,u_1))^n$ is concentrated on the coincidence set $\Lambda_{u_0} \cup \Lambda_{u_1}.$ Having bounded Laplacian implies that all second order partials of $P(u_0,u_1)$ are in any $L^p(X), \ p <\infty$. It follows from \cite[Chapter 7, Lemma 7.7]{gt} that on $\Lambda_{u_0}$ all the second order partials of $P(u_0,u_1)$ and $u_0$ agree a.e. and an analogous statement holds on $\Lambda_{u_1}$. Hence, using \cite[Proposition 2.1.6]{bl3}  one can write:
\begin{flalign*}
(\o + i\partial\bar\partial P(u_0,u_1))^n&=\mathbbm{1}_{\Lambda_{u_0} \cup \Lambda_{u_1}}(\o + i\partial\bar\partial P(u_0,u_1))^n \nonumber \\
&=\mathbbm{1}_{\Lambda_{u_0}}(\o + i\partial\bar\partial u_0)^n + \mathbbm{1}_{\Lambda_{u_1} \setminus \Lambda_{u_0}}(\o + i\partial\bar\partial u_1)^n,
\end{flalign*}
finishing the proof.
\end{proof}
The partition formula (\ref{MA_partition}) is at the core of most theorems in this work. Interestingly, it does not hold even in the slightly more general case $u_0,u_1 \in \mathcal H_{0,1}$. For a counterexample suppose $\dim X = 1$ and $g_x$ is the $\o-$Green function with pole at $x \in X$. Such function is characterized by the property $\int_X g_x \o =0$ and $\o + i\partial \bar \partial g_x = \delta_x$. We choose $u_0 = \max\{g_x,0\}$ and $u_1 =0$. In this case $P(u_0,u_1)=0, \ \Lambda_{u_0} = \{ g_x \leq 0\}$ and $\Lambda_{u_1} = X \setminus \Lambda_{u_0} \neq \emptyset$. As
$ \textup{Vol}(X) = \int_{\Lambda_{u_0}} (\o + i\partial \bar \partial u_0)^n = \int_X (\o + i\partial \bar \partial P(u_0,u_1))^n,$ it is seen that the right hand side of (\ref{MA_partition}) has total integral greater the the left hand side, hence they can not equal.

Despite these difficulties, a one-sided generalization of this formula is still possible (see Proposition \ref{MA_form_prop}).

As the function $t \to \int_{\{ u_0  \leq u_1+t\}} \o^n$ is increasing, by adding constants one can always arrange that $\Lambda_{u_0} \cap \Lambda_{u_1} \subset \{ u_0=u_1\}$ has zero Lebesgue measure. Using this and the previous proposition, we can write down the following observation:
\begin{remark} \label{MA_form_remark} Given $u_0,u_1 \in \mathcal H_\Delta$ for any $\tau \in \Bbb R$ outside a countable set we have that $\Lambda_{u_0} \cap \Lambda_{u_1+\tau}$ has Lebesgue measure zero, implying:
$$(\o + i\partial\bar\partial P(u_0,u_1+\tau))^n= \mathbbm{1}_{\Lambda_{u_0}}(\o + i\partial\bar\partial u_0)^n + \mathbbm{1}_{\Lambda_{u_1+\tau}}(\o + i\partial\bar\partial u_1)^n.$$
\end{remark}
For more details on the regularity theory and more geometric properties of weak geodesic segments we refer to \cite{cc},\cite{bl},\cite{ct},\cite{do},\cite{h},\cite{ps},\cite{sz}, to name only a few articles in a very fast expanding literature.

\subsection{$\mathcal{E}(X,\o)$ and Finite Energy Classes}

We recall here the most basic facts about the class $\mathcal E(X,\o) \subset \text{PSH}(X,\o)$.  Our treatment is very brief and we refer to \cite{gz} and \cite{begz} for a more complete picture. For $v \in \text{PSH}(X,\o)$, one can define the canonical cutoffs $v_h \in \mathcal H_0, \ h \in \Bbb R$ by the formula $v_h = \max (-h, v ).$ By an application of the comparison principle, it follows that the Borel measures
$$\mathbbm{1}_{\{v >-h\}}(\o + i\partial \bar\partial v_h)^n$$
are increasing in $h$.
Even in $v$ is unbounded, one can still make sense of $(\o + i\partial\bar\partial v)^n$ as the limit of these increasing measures:
\begin{equation}\label{non-plurip}(\o + i\partial\bar\partial v)^n= \lim_{h \to \infty} \mathbbm{1}_{\{v >-h\}}(\o + i\partial \bar\partial v_h)^n.
\end{equation}
With this definition, $(\o + i\partial\bar\partial v)^n$ is called the non-pluripolar Monge-Amp\`ere measure of $v$. It follows from (\ref{non-plurip}) that
$$\int_X (\o + i\partial\bar\partial v)^n \leq \int_{X}\o^n =\text{Vol}(X).$$
This brings us to the class $\mathcal E(X,\o)$. By definition, $v \in \mathcal E(X,\o)$ if
\begin{equation}\label{Eps_def}
\int_X (\o + i\partial\bar\partial v)^n=\lim_{h \to \infty} \int_X \mathbbm{1}_{\{v >-h\}}(\o + i\partial \bar\partial v_h)^n =\text{Vol}(X).
\end{equation}

As shown in \cite{gz}, one can think of $\mathcal E(X,\o)$ as $\o$-plurisubharmonic functions having finite weighted energy. Suppose $\chi : \Bbb R \cup \{ -\infty \}\to \Bbb R$ is a continuous increasing function, with $\chi(0)= 0$ and $\chi(-\infty)=-\infty$. Such $\chi$ is referred to as a weight. The set of all weights is denoted by $\mathcal W$. By definition, for $v \in \mathcal E(X,\o)$ we have $v \in \mathcal E_\chi(X,\o)$ if
$$E_\chi(v)=\int_X \chi(v) (\o + i\partial\bar\partial v)^n > -\infty.$$
The following result says that the $\chi-$energy $E_\chi$ can be computed using approximation by the canonical cutoffs:
\begin{proposition}\textup{\cite[Proposition 1.4]{gz}}\label{cutoff_aprox} Suppose $u \in \mathcal E(X,\o), \chi \in \mathcal W$  and $u_k = \max ( u,-k), k \in \Bbb N$. Then:
$$E_\chi(u)> -\infty \textup{ if and only if } \inf_{k}E_\chi(u_k)=\inf_{k}\int_X \chi(u_k)\mathcal (\o + i\partial \bar \partial u_k)^{n}>-\infty.$$
If the above condition holds then we additionally have $E_\chi(u)= \lim_{k \to \infty}E_\chi(u_k).$
\end{proposition}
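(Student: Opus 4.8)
The plan is to exploit monotonicity of the truncated non-pluripolar measures together with the fact that $u_k = \max(u,-k)$ is bounded, so that $(\o + i\partial\bar\partial u_k)^n$ is a genuine Bedford--Taylor Monge--Amp\`ere measure. First I would record the basic inclusion-flavored identity: on the open set $\{u > -k\}$ we have $u_k = u$, and since $u \in \mathcal E(X,\o)$ the measures $\mu_k := \mathbbm 1_{\{u > -k\}}(\o + i\partial\bar\partial u_k)^n$ increase to $(\o + i\partial\bar\partial u)^n$, which has total mass $\mathrm{Vol}(X)$; thus the complementary pieces $\nu_k := \mathbbm 1_{\{u \le -k\}}(\o + i\partial\bar\partial u_k)^n = (\o + i\partial\bar\partial u_k)^n - \mu_k$ have total mass tending to $0$. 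This is the quantitative handle on how much of the mass of $(\o + i\partial\bar\partial u_k)^n$ sits in the ``bad'' region where $u_k$ is pinned at $-k$.

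Next I would split $E_\chi(u_k) = \int_X \chi(u_k)(\o + i\partial\bar\partial u_k)^n$ into the integral over $\{u > -k\}$ plus the integral over $\{u \le -k\}$. On $\{u > -k\}$ the integrand is $\chi(u)\,d\mu_k$; on $\{u \le -k\}$ it is $\chi(-k)\,d\nu_k = \chi(-k)\,\nu_k(X)$. For the first term I would argue that $\int_{\{u>-k\}}\chi(u)\,d\mu_k$ converges to $\int_X \chi(u)(\o + i\partial\bar\partial u)^n = E_\chi(u)$: the measures $\mathbbm 1_{\{u>-k\}}\chi(u)(\o+i\partial\bar\partial u_k)^n$ are monotone in the appropriate sense once one notes $\chi(u) \le 0$ is bounded above, so monotone convergence applies to $-\chi(u)$ against the increasing family $\mu_k$. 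For the second term, the point is that $\chi(-k)\,\nu_k(X) \le 0$ and one must show it does not run off to $-\infty$ precisely when $\inf_k E_\chi(u_k) > -\infty$; conversely, the inequality $E_\chi(u_k) = \int_{\{u>-k\}}\chi(u)\,d\mu_k + \chi(-k)\nu_k(X) \le \int_{\{u>-k\}}\chi(u)\,d\mu_k$ (both correction terms being nonpositive) gives $\inf_k E_\chi(u_k) \ge$ something bounded below as soon as $E_\chi(u)$ is finite, yielding one implication essentially for free, while the reverse implication and the final limit formula $E_\chi(u) = \lim_k E_\chi(u_k)$ require showing $\chi(-k)\nu_k(X) \to 0$.

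The main obstacle is exactly this last point: controlling $\chi(-k)\,\nu_k(X)$, where $\chi(-k) \to -\infty$ while $\nu_k(X) \to 0$, so the product is an indeterminate form. I expect to handle it by a comparison-principle argument bounding $\nu_k(X) = \int_{\{u \le -k\}}(\o+i\partial\bar\partial u_k)^n$ against a tail of the energy of $u$ itself: namely one relates $\int_{\{u \le -k\}}(\o+i\partial\bar\partial u_k)^n$ to $\int_{\{u \le -k/2\}}|\chi(u)|(\o+i\partial\bar\partial u)^n$ (or a similar expression with a shifted weight), and since $u \in \mathcal E_\chi$ forces this tail integral to vanish as $k \to \infty$, one deduces $|\chi(-k)|\,\nu_k(X) \to 0$. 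The key inequality here is a localized version of the comparison principle together with the crude bound $|\chi(-k)| \le 2|\chi(-k/2)| \le \cdots$ — actually one should instead compare on the level sets $\{-2j \le u \le -j\}$ and sum, using $\chi(-k)\mathbbm 1_{\{u\le -k\}}(\o+i\partial\bar\partial u_k)^n \ge (\text{something involving }\chi(u))$ valid by monotonicity of $\chi$. Once that tail estimate is in place, letting $k\to\infty$ in the decomposition of $E_\chi(u_k)$ gives both the equivalence of finiteness and the limit identity simultaneously.
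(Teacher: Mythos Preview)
The paper does not prove this proposition; it is quoted from \cite{gz} without argument, so there is nothing in the paper to compare your approach against. Your decomposition $E_\chi(u_k)=A_k+B_k$ with $A_k=\int_{\{u>-k\}}\chi(u)\,d\mu_k$ and $B_k=\chi(-k)\,\nu_k(X)$ is the standard one and is correct in outline.

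However, you have the two implications reversed. From $E_\chi(u_k)=A_k+B_k\le A_k$ (both pieces nonpositive once $u\le 0$) you cannot extract a \emph{lower} bound on $E_\chi(u_k)$; that inequality is an upper bound. The implication that is actually free is the opposite one: if $\inf_k E_\chi(u_k)\ge C$ then $A_k\ge E_\chi(u_k)\ge C$, and since $A_k\searrow E_\chi(u)$ by monotone convergence you get $E_\chi(u)\ge C>-\infty$. The direction requiring work is $E_\chi(u)>-\infty\Rightarrow B_k$ bounded (in fact $B_k\to 0$), together with the limit formula.

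For that tail estimate your plan is vaguer and harder than necessary. You do not need a comparison principle on level shells, nor any doubling bound like $|\chi(-k)|\le 2|\chi(-k/2)|$, which fails for a general $\chi\in\mathcal W$ anyway. The key observation you are missing is that, because $u\in\mathcal E(X,\omega)$, one has $\int_X\omega_u^n=\mathrm{Vol}(X)=\int_X\omega_{u_k}^n$, while locality of the Monge--Amp\`ere operator gives $\mathbbm 1_{\{u>-k\}}\,\omega_{u_k}^n=\mathbbm 1_{\{u>-k\}}\,\omega_u^n$; subtracting yields $\nu_k(X)=\omega_u^n(\{u\le -k\})$. Since $|\chi(u)|\ge|\chi(-k)|$ on $\{u\le -k\}$,
\[
|B_k|=|\chi(-k)|\,\omega_u^n(\{u\le -k\})\le\int_{\{u\le -k\}}|\chi(u)|\,\omega_u^n,
\]
which is the tail of the finite integral $|E_\chi(u)|$ and hence tends to $0$. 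This single estimate gives both the remaining implication and the limit identity $E_\chi(u)=\lim_k E_\chi(u_k)$ at once.
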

\noindent The two special classes of weights that we will be most interested are:
\begin{flalign*}
\mathcal W^-&=\big\{\chi \in \mathcal W \big| \chi \textup{ is convex and }\chi(t)=t, \ t \geq 0 \big\},\\
\mathcal W^+_M&=\big\{\chi \in \mathcal W \big| \chi \textup{ is concave and }|t\chi'(t)| \leq M|\chi(t)|, \ t \leq 0\big\},\end{flalign*}
where $M \geq 1$. The interest in the convex weights $\mathcal W^-$ comes from the following fact \cite[Proposition 2.2]{gz}:
\begin{equation}\label{E_union}
\mathcal E(X,\o)=\Big\{ v \in \mathcal E_\chi(X,\o)\Big|\chi \in \mathcal W^-\Big\}.
\end{equation}
Of special importance are the weights $\chi^p(t)=-(-t)^p, t \leq 0, p >0$ and the associated classes $\mathcal E^p(X,\o)$. Observe that $\chi^p \in \mathcal W^-$ for $p \leq 1$ and $\chi^p \in \mathcal W^+_p$ for $p \geq 1$. The case $p=1$ class interpolates between convex and concave energy classes as it is most apparent that
$$\mathcal E_\nu(X,\o) \subset \mathcal E^1(X,\o) \subset \mathcal E_\chi(X,\o),$$
for any $\nu \in \mathcal W^+_M$ and $\chi \in \mathcal W^-$.
The following result is sometimes called the ``fundamental estimate":

\begin{proposition} \textup{\cite[Lemma 2.3, Lemma 3.5]{gz}}\label{Energy_est} Suppose $\chi\in \mathcal W^- \cup \mathcal W^+_M, \ M \geq1$. If $u,v \in \mathcal E_\chi(X,\o)$ with $u\leq v \leq 0$ then $$E_\chi(u) \leq  C E_\chi(v),$$ where $C>0$ depends only on $M$ and the dimension of $X$.
\end{proposition}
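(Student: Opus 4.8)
The plan is to reduce the estimate to bounded potentials and then run a finite induction over the degree of the mixed Monge--Amp\`ere forms, closing the inequality by means of the structural condition defining each weight class. For the reduction, set $u_k=\max(u,-k)$ and $v_k=\max(v,-k)$: these lie in $\mathcal H_0$ and satisfy $u_k\le v_k\le 0$, and since $u,v\in\mathcal E_\chi(X,\o)$, Proposition \ref{cutoff_aprox} gives $E_\chi(u)=\lim_k E_\chi(u_k)$ and $E_\chi(v)=\lim_k E_\chi(v_k)$. Hence it is enough to prove $E_\chi(u_k)\le C\,E_\chi(v_k)$ with $C$ independent of $k$; that is, we may assume $u,v$ bounded.

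For bounded $u\le v\le 0$, introduce for $0\le j\le n$ the mixed energies $I_j:=\int_X(-\chi(u))\,(\o+i\partial\bar\partial u)^{j}\wedge(\o+i\partial\bar\partial v)^{n-j}$, and put $L:=\int_X(-\chi(v))\,(\o+i\partial\bar\partial v)^{n}$; these are finite and nonnegative (the integrands are bounded), with $I_n=-E_\chi(u)$, $L=-E_\chi(v)$, and $I_0\ge L$ because $-\chi(u)\ge-\chi(v)\ge 0$. The heart of the argument is a comparison of consecutive $I_j$'s: writing $\o+i\partial\bar\partial u=(\o+i\partial\bar\partial v)+i\partial\bar\partial(u-v)$, expanding one factor $(\o+i\partial\bar\partial u)$ inside $I_{j+1}$, and integrating the $i\partial\bar\partial(u-v)$ term by parts against $-\chi(u)$ (no boundary terms, $X$ being compact), one expresses $I_{j+1}-I_j$ as a sum of terms of the form $\int_X(v-u)\chi'(u)(\cdots)$ --- one of which carries an extra factor $\o$ because $i\partial\bar\partial u=(\o+i\partial\bar\partial u)-\o$ --- plus a term $\int_X(v-u)\chi''(u)\,i\partial u\wedge\bar\partial u\wedge(\cdots)$. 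Here $v-u\ge 0$, $\chi'(u)\ge 0$ and all the forms are sign-definite, and the $\chi''$-term has a fixed sign dictated by convexity (for $\mathcal W^-$) or concavity (for $\mathcal W^+_M$). One then dominates each correction by a controlled multiple of the $I_j$'s: for $\chi\in\mathcal W^+_M$ this is exactly where the homogeneity bound $|t\chi'(t)|\le M|\chi(t)|$ enters, trading $\chi'(u)$ for $(-\chi(u))$ up to the factor $M$; for $\chi\in\mathcal W^-$ one uses instead $0\le\chi'\le 1$, $\chi''\ge 0$ and the normalization $\chi(t)=t$ on $\RR_{\ge0}$, which keeps all corrections of favourable sign and size. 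Iterating the comparison relates $I_0$ to $I_n$, and after absorbing the lower-order contributions it yields $I_n\ge c\,I_0\ge c\,L$ for some $c>0$ depending only on $M$ and the dimension of $X$; this is precisely $E_\chi(u)\le c\,E_\chi(v)$. As this is the content of \cite[Lemma 2.3, Lemma 3.5]{gz}, one may also simply cite it.

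The step I expect to be the main obstacle is the bookkeeping of the correction terms --- in particular the extra $\o$-factors produced by integration by parts on a compact manifold and the $\chi''$-terms carrying the curvature of the weight. These have no counterpart in the local Dirichlet situation and must all be dominated, uniformly in $k$, by bounded multiples of the mixed energies. The mechanism differs genuinely between the two families (absorbing a derivative of $\chi$ into $\chi$ via the homogeneity condition for $\mathcal W^+_M$, versus exploiting convexity and the linear normalization for $\mathcal W^-$), which is exactly why the statement appears as two separate lemmas in \cite{gz}, and why a self-contained argument has to handle both cases in turn.
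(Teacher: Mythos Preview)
The paper does not prove this proposition at all: it is stated purely as a citation of \cite[Lemma 2.3, Lemma 3.5]{gz}, with no argument given. So there is no ``paper's own proof'' to compare your attempt against; what you have written is a sketch of the argument from the cited reference, and you correctly note at the end that one may simply cite it.

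Your outline is the right one --- reduce to bounded potentials via Proposition~\ref{cutoff_aprox}, introduce the mixed energies $I_j$, and compare $I_j$ with $I_{j+1}$ using integration by parts and the defining property of the weight class --- and the final inequality $I_n\ge cI_0\ge cL$ is the correct direction (equivalently $I_0\le C I_n$, which is how it is usually written in \cite{gz}). One point where your sketch is too loose to stand on its own: writing out a $\chi''(u)\,i\partial u\wedge\bar\partial u$ term presumes $u$ is smooth and $\chi\in C^2$, which is not the case after the cutoff reduction. In \cite{gz} this is handled either by a further smooth approximation or, more elegantly, by observing that for $\chi\in\mathcal W^-$ the function $\chi\circ u$ is itself $\o$-psh (since $0\le\chi'\le1$ and $\chi''\ge0$), so the integration by parts can be done at the level of currents without ever differentiating $\chi$ twice. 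For $\chi\in\mathcal W^+_M$ the mechanism is different and uses the growth condition $|t\chi'(t)|\le M|\chi(t)|$ in the way you indicate. If you want a self-contained argument rather than a citation, those are the details that need to be supplied; otherwise, citing \cite{gz} as the paper does is entirely adequate here.
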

If $\chi\in \mathcal W^- \cup \mathcal W^+_M, \ M \geq1$ then the $\chi$-energy has a very useful semi-continuity property:
\begin{proposition}\textup{\cite[Proposition 5.6]{gz}} \label{E_semicont} Suppose $\chi\in \mathcal W^- \cup \mathcal W^+_M, \ M \geq1$ and $\{u_j\} _{j \in \Bbb N} \subset \mathcal H_0$ is a sequence decreasing to $u \in \textup{PSH}(X,\o)$. If
$\inf_j E_\chi(u_j) > -\infty$ then $u \in \mathcal E_\chi(X,\o)$ and
$$E_\chi(u) \geq \liminf_{j\to \infty}E_\chi(u_j).$$
\end{proposition}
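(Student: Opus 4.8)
The plan is to prove the energy inequality first and read off $u\in\mathcal E_\chi(X,\o)$ from it; after subtracting a constant I may assume $u_j\le 0$ (hence $u\le 0$), and I set $-A:=\inf_j E_\chi(u_j)>-\infty$. The inputs I intend to use are: Bedford--Taylor's weak continuity of $(\o+i\partial\bar\partial\,\cdot\,)^n$ along \emph{decreasing} sequences of \emph{bounded} $\o$-plurisubharmonic functions; the monotonicity in $h$ of the truncated measures $\mathbbm{1}_{\{v>-h\}}(\o+i\partial\bar\partial v_h)^n$ recorded in the introduction; Proposition \ref{cutoff_aprox} (used in the form that $k\mapsto E_\chi(\max(v,-k))$ decreases to $E_\chi(v)$, which also follows directly from the plurifine locality of the Monge--Amp\`ere operator); and the upper-semicontinuous form of the portmanteau theorem, namely $\limsup_j\int f\,d\mu_j\le\int f\,d\mu$ whenever $\mu_j\to\mu$ weakly with $\mu_j(X)\to\mu(X)$ and $f$ is bounded and upper semicontinuous.

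First I would fix a truncation level $k$ and establish $\liminf_j E_\chi(u_j)\le E_\chi(\max(u,-k))$. Since $\max(u_j,-k)\downarrow\max(u,-k)$ with all functions bounded in $[-k,0]$, Bedford--Taylor gives $(\o+i\partial\bar\partial\max(u_j,-k))^n\to(\o+i\partial\bar\partial\max(u,-k))^n$ weakly, all of total mass $\textup{Vol}(X)$. Because the integrands $\chi\circ\max(u_j,-k)$ are bounded, upper semicontinuous and decreasing in $j$, I would fix $j_0$, bound $E_\chi(\max(u_j,-k))\le\int\chi(\max(u_{j_0},-k))\,(\o+i\partial\bar\partial\max(u_j,-k))^n$ for $j\ge j_0$, apply the portmanteau estimate to the fixed majorant $\chi\circ\max(u_{j_0},-k)$, and then let $j_0\to\infty$ (dominated convergence against the fixed limit measure); this yields $\limsup_j E_\chi(\max(u_j,-k))\le E_\chi(\max(u,-k))$. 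Combined with $E_\chi(\max(u_j,-k))\ge E_\chi(u_j)$ from Proposition \ref{cutoff_aprox}, this gives the desired inequality, and in particular $E_\chi(\max(u,-k))\ge -A>-\infty$ for \emph{every} $k$.

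Next I would deduce $u\in\mathcal E(X,\o)$ by a Chebyshev-type estimate applied to $v:=\max(u,-h)\le 0$ for large $h$: splitting $E_\chi(v)=\int_X\chi(v)\,(\o+i\partial\bar\partial v)^n$ at the level $-h$ and using $\chi\le 0$ on $(-\infty,0]$ together with $\chi(v)\le\chi(-h)<0$ on $\{v\le -h\}=\{u\le -h\}$, the bound $E_\chi(v)\ge -A$ forces $(\o+i\partial\bar\partial\max(u,-h))^n(\{u\le -h\})\le A/|\chi(-h)|\to 0$ as $h\to\infty$. Since $(\o+i\partial\bar\partial\max(u,-h))^n$ has total mass $\textup{Vol}(X)$, this says $\int_X\mathbbm{1}_{\{u>-h\}}(\o+i\partial\bar\partial\max(u,-h))^n\ge\textup{Vol}(X)-A/|\chi(-h)|$; the left-hand side increases in $h$ and tends to $\int_X(\o+i\partial\bar\partial u)^n$, so $\int_X(\o+i\partial\bar\partial u)^n=\textup{Vol}(X)$, i.e.\ $u\in\mathcal E(X,\o)$. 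Now Proposition \ref{cutoff_aprox} applies to $u$, and together with $\inf_k E_\chi(\max(u,-k))\ge -A>-\infty$ it gives $E_\chi(u)>-\infty$ (hence $u\in\mathcal E_\chi(X,\o)$) and $E_\chi(u)=\inf_k E_\chi(\max(u,-k))$; comparing with the first step, $\liminf_j E_\chi(u_j)\le\inf_k E_\chi(\max(u,-k))=E_\chi(u)$, which is the claim.

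I expect the main obstacle to be the passage from a fixed truncation level to the limit, that is, preventing Monge--Amp\`ere mass from escaping to the sublevel sets $\{u\le -h\}$ as $h\to\infty$: the weak limits in the first step have full mass for each \emph{fixed} $k$, but nothing a priori controls the joint limit, and it is exactly the uniform bound $\inf_j E_\chi(u_j)>-\infty$, channelled through the Chebyshev estimate, that closes this gap. A minor additional nuisance is that the integrands $\chi\circ\max(u_j,-k)$ are merely upper semicontinuous and vary with $j$, which forces the use of the continuous majorants underlying the portmanteau theorem and the careful ordering of limits ($j\to\infty$ before $k\to\infty$).
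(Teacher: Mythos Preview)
The paper does not give its own proof of this proposition; it is quoted from \cite[Proposition 5.6]{gz} as a preliminary result, so there is no ``paper's proof'' to compare against. Your argument is correct and is essentially the standard one from \cite{gz}: freeze a truncation level $k$, use Bedford--Taylor weak continuity on the bounded decreasing sequence $\max(u_j,-k)\searrow\max(u,-k)$ together with the upper-semicontinuous portmanteau bound to get $\limsup_j E_\chi(\max(u_j,-k))\le E_\chi(\max(u,-k))$, combine with the monotonicity $E_\chi(\max(u_j,-k))\ge E_\chi(u_j)$ to obtain a uniform lower bound on $E_\chi(\max(u,-k))$, extract $u\in\mathcal E(X,\o)$ via the Chebyshev estimate, and finish with Proposition~\ref{cutoff_aprox}.

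One small remark: the inequality $E_\chi(\max(v,-k))\ge E_\chi(v)$ for bounded $v\le 0$ is not literally contained in the statement of Proposition~\ref{cutoff_aprox} as recorded here (which only asserts $E_\chi(u)=\lim_k E_\chi(u_k)$ under the finiteness hypothesis), so your parenthetical justification via plurifine locality is the right thing to cite. Concretely, on $\{v>-k\}$ the integrands and Monge--Amp\`ere measures of $v$ and $\max(v,-k)$ agree, while on $\{v\le -k\}$ one has $\chi(v)\le\chi(-k)$ and, since both measures have total mass $\textup{Vol}(X)$ and agree on $\{v>-k\}$, also $(\o+i\partial\bar\partial v)^n(\{v\le -k\})=(\o+i\partial\bar\partial\max(v,-k))^n(\{v\le -k\})$; this yields the inequality with constant $1$, which is what you need (the fundamental estimate of Proposition~\ref{Energy_est} would only give it with a dimensional constant).
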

Using the canonical cutoffs, the last two results imply  the very important ``monotonicity property":
\begin{corollary}\label{Emonoton} Suppose $\chi\in \mathcal W^- \cup \mathcal W^+_M, \ M \geq1$. If $u \leq v$ and $u \in \mathcal E_\chi(X,\o)$ then $v \in \mathcal E_\chi(X,\o)$.
\end{corollary}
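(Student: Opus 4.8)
The plan is to combine the three preliminary facts about the $\chi$-energy — the cutoff approximation (Proposition \ref{cutoff_aprox}), the fundamental estimate (Proposition \ref{Energy_est}) and the semicontinuity property (Proposition \ref{E_semicont}) — applied to the canonical cutoffs of $u$ and $v$.

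First I would reduce to the normalized situation $u \leq v \leq 0$. Since $v \in \textup{PSH}(X,\o)$ is bounded above, replacing $u,v$ by $u-c,v-c$ with $c=\sup_X v$ puts us in this case; a routine estimate shows that $\mathcal E_\chi(X,\o)$ is invariant under translation by a constant (for $\chi \in \mathcal W^-$ one uses that $\chi$ is convex with $\chi'\le 1$ on $(-\infty,0]$, which gives $\chi(t-c)\ge \chi(t)-c$ there; for $\chi \in \mathcal W^+_M$ one integrates the inequality $|t\chi'(t)|\le M|\chi(t)|$ to obtain a bound of the form $\chi(t-c)\ge 2^M\chi(t)-B$ on $(-\infty,0]$). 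Hence this reduction is harmless and may be undone at the end.

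Assume then $u\leq v\leq 0$ with $u\in\mathcal E_\chi(X,\o)$, and set $u_k=\max(u,-k)$, $v_k=\max(v,-k)$. These lie in $\mathcal H_0$, decrease to $u$ and $v$ respectively, and satisfy $u_k\leq v_k\leq 0$; in particular $u_k,v_k\in\mathcal H_0\subset\mathcal E_\chi(X,\o)$. Since $u\in\mathcal E_\chi(X,\o)$, Proposition \ref{cutoff_aprox} gives $\inf_k E_\chi(u_k)>-\infty$; write $A:=-\inf_k E_\chi(u_k)<\infty$. Applying the fundamental estimate (Proposition \ref{Energy_est}) to $u_k\leq v_k\leq 0$ yields $E_\chi(u_k)\leq C\,E_\chi(v_k)$ with $C>0$ depending only on $M$ and $\dim X$; both sides are nonpositive, so dividing by $C$ bounds $E_\chi(v_k)$ \emph{below}: $E_\chi(v_k)\geq \tfrac1C E_\chi(u_k)\geq -A/C$. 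Thus $\inf_k E_\chi(v_k)>-\infty$, and since $\{v_k\}\subset\mathcal H_0$ decreases to $v\in\textup{PSH}(X,\o)$, Proposition \ref{E_semicont} gives $v\in\mathcal E_\chi(X,\o)$, as desired.

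The argument is essentially bookkeeping once the three preliminary propositions are available; the only points that require a little care are the sign tracking in the fundamental estimate (both $E_\chi(u_k)$ and $E_\chi(v_k)$ are $\leq 0$, so $E_\chi(u_k)\le C E_\chi(v_k)$ really does control $E_\chi(v_k)$ from below) and the remark that the normalization $u\le v\le 0$ can be imposed without affecting membership in $\mathcal E_\chi(X,\o)$.
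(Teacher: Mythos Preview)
Your argument is correct and is precisely the one the paper has in mind: the text introduces this corollary by saying ``Using the canonical cutoffs, the last two results imply the very important `monotonicity property','' and you have spelled out exactly that deduction from Propositions \ref{cutoff_aprox}, \ref{Energy_est} and \ref{E_semicont}. The only embellishment is your explicit justification of the normalization $u\le v\le 0$, which the paper leaves implicit.
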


The usual continuity property of the Monge-Amp\`ere operator from Bedford-Taylor theory is also preserved in this more general setting:
\begin{proposition}\textup{\cite[Theorem 2.17]{begz} \label{MA_cont}} Suppose  $\{ v_k\}_{k \in \Bbb N} \subset \mathcal E(X,\o)$ decreases (increases  a.e.) to $v \in \mathcal E(X,\o)$. Then $(\o + i\partial\bar\partial v_k)^n \to(\o + i\partial\bar\partial v)^n$ weakly.
\end{proposition}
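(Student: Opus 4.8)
The plan is to reduce everything to bounded truncations plus Bedford--Taylor theory, isolating a single nontrivial ingredient: a \emph{uniform} bound on the Monge--Amp\`ere mass that escapes to the polar set $\{v=-\infty\}$ along the sequence. Since $\o$-psh functions on compact $X$ are bounded above, after subtracting a constant (harmless for Monge--Amp\`ere measures) I may assume $v_k\leq 0$, hence $v\leq 0$. For $h>0$ I set $v_k^{(h)}=\max(v_k,-h)$ and $v^{(h)}=\max(v,-h)$; these are bounded, so $(\o+i\partial\bar\partial v_k^{(h)})^n$ has total mass $\textup{Vol}(X)$, and I split it into its parts on $\{v_k>-h\}$ and on $\{v_k\leq -h\}$. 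By (\ref{non-plurip}) the first part increases in $h$ to the non-pluripolar measure $(\o+i\partial\bar\partial v_k)^n$, which again has total mass $\textup{Vol}(X)$ because $v_k\in\mathcal E(X,\o)$. Hence the ``tail mass'' $\varepsilon_k(h):=\int_{\{v_k\leq -h\}}(\o+i\partial\bar\partial v_k^{(h)})^n$ satisfies $\varepsilon_k(h)\downarrow 0$ as $h\to\infty$ for each fixed $k$.

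The decisive step --- and the one I expect to be the main obstacle --- is to upgrade this to $\sup_k\varepsilon_k(h)\to 0$ as $h\to\infty$. For that I would exploit the monotonicity of the sequence: in the decreasing case $v\leq v_k$, so $\{v_k\leq -h\}\subseteq\{v\leq -h\}$, while in the increasing case $v_1\leq v_k$, so $\{v_k\leq -h\}\subseteq\{v_1\leq -h\}$. Thus the tail sets sink into a \emph{fixed} set that is negligible because the dominating function $w$ (equal to $v$, resp.\ $v_1$) lies in $\mathcal E(X,\o)$. What is still needed is to control the Monge--Amp\`ere mass of the \emph{varying} truncation $v_k^{(h)}$ on that shrinking set, uniformly in $k$; this is exactly where the comparison-principle (``fundamental''/doubling) estimates of Guedj--Zeriahi enter --- comparing $v_k^{(h)}$ with the truncation of $w$ at level $\sim h/2$ one expects $\varepsilon_k(h)\leq C\int_{\{w\leq -h/2\}}(\o+i\partial\bar\partial w^{(h/2)})^n$, whose right-hand side vanishes as $h\to\infty$ independently of $k$. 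This uniform tail estimate is the substance of \cite[Theorem 2.17]{begz}, and I would either reproduce that argument or simply cite it.

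Granting the uniform tail bound, the conclusion follows by a soft approximation argument. Fixing a continuous $\phi$ with $0\leq\phi\leq 1$ and $\delta>0$, I choose $h$ with $\sup_k\varepsilon_k(h)<\delta$ and $\int_{\{v\leq -h\}}(\o+i\partial\bar\partial v^{(h)})^n<\delta$; then for every $k$ the integrals of $\phi$ against $(\o+i\partial\bar\partial v_k)^n$ and $(\o+i\partial\bar\partial v_k^{(h)})^n$ differ by at most $2\varepsilon_k(h)<2\delta$, and similarly for $v$. For this fixed $h$, the $v_k^{(h)}$ converge to $v^{(h)}$ monotonically inside a uniformly bounded family of $\o$-psh functions, so Bedford--Taylor continuity gives $\int_X\phi\,(\o+i\partial\bar\partial v_k^{(h)})^n\to\int_X\phi\,(\o+i\partial\bar\partial v^{(h)})^n$. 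Chaining these estimates and letting $k\to\infty$, then $\delta\to 0$, yields the claimed weak convergence. I note only that the increasing case additionally uses the Bedford--Taylor statement for increasing sequences (with the usual almost-everywhere/upper-semicontinuous regularization), which is standard.
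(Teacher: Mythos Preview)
The paper does not give its own proof of this proposition --- it is stated in Section 2.3 as a preliminary result quoted directly from \cite[Theorem 2.17]{begz} and used as a black box. Your sketch is a correct outline of that argument: truncate at level $-h$, isolate the tail mass $\varepsilon_k(h)$, show it vanishes uniformly in $k$ by comparing with a fixed function $w\in\mathcal E(X,\o)$ that bounds the whole sequence from below, and finish with Bedford--Taylor continuity for bounded monotone families. The only remark worth making is that in \cite{begz} the uniform tail bound is obtained by first choosing a common weight $\chi\in\mathcal W^-$ via \eqref{E_union} (so that the fundamental estimate gives $\sup_k|E_\chi(v_k)|<\infty$, whence $\varepsilon_k(h)\leq \sup_k|E_\chi(v_k)|/|\chi(-h)|$), rather than the direct ``doubling'' comparison you gesture at; both routes work, and indeed the author adapts exactly this weight technique later in the proof of Proposition~\ref{Mdist_est_gen}.
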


Lastly, let us mention the uniqueness theorem of S. Dinew. We will use this result multiple times.
\begin{theorem} \textup{\cite{di}\label{dinew}} Suppose that $u,v \in \mathcal E(X,\o)$ satisfy $(\o + i\partial\bar\partial u)^n=(\o + i\partial\bar\partial v)^n.$ Then $u -v$ is constant.
\end{theorem}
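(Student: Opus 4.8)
The plan is to reduce the statement to the case of bounded potentials, where it is classical, and then pass to $\mathcal{E}(X,\o)$ through the canonical cutoffs. First I would reduce to the case $u\le v$. Replacing $v$ by $\max(u,v)$ and using locality of the Monge--Amp\`ere operator in the pluri-fine topology,
$$(\o + i\partial\bar\partial\max(u,v))^n \ge \mathbbm{1}_{\{u\ge v\}}(\o+i\partial\bar\partial u)^n + \mathbbm{1}_{\{u<v\}}(\o+i\partial\bar\partial v)^n = \mu,$$
where $\mu:=(\o+i\partial\bar\partial u)^n=(\o+i\partial\bar\partial v)^n$; since $\max(u,v)\in\mathcal{E}(X,\o)$ by Corollary \ref{Emonoton} and all three potentials carry total mass $\textup{Vol}(X)$, this forces $(\o+i\partial\bar\partial\max(u,v))^n=\mu$. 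So it is enough to prove: if $u\le v$, both in $\mathcal{E}(X,\o)$, and $\o_u^n=\o_v^n$, then $u-v$ is constant -- the general case then follows by applying this to the pair $(u,\max(u,v))$ and distinguishing whether the resulting constant is negative or zero. Normalising $u\le v\le 0$, by \eqref{E_union} we may fix a convex weight $\chi\in\mathcal{W}^-$ with $u\in\mathcal{E}_\chi(X,\o)$, and then $v\in\mathcal{E}_\chi(X,\o)$ as well by the monotonicity of Corollary \ref{Emonoton}; these energies will supply the bounds needed below.

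Next I would treat the bounded case $u,v\in\mathcal{H}_0$. With $w:=u-v$ and $\o_v^n-\o_u^n = -\,i\partial\bar\partial w\wedge\sum_{j=0}^{n-1}\o_u^j\wedge\o_v^{n-1-j}$, integration by parts (valid for bounded potentials by Bedford-Taylor theory) gives
$$0 = \int_X w\,(\o_v^n-\o_u^n) = \int_X i\partial w\wedge\bar\partial w\wedge\sum_{j=0}^{n-1}\o_u^j\wedge\o_v^{n-1-j},$$
and since each summand is the integral of a nonnegative measure, each one vanishes; in particular $\int_X i\partial w\wedge\bar\partial w\wedge\o_u^{n-1}=0$. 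A B\l ocki-type bootstrap -- expanding $\o_u^{n-1}$ in powers of $i\partial\bar\partial u$, integrating by parts term by term, and absorbing the cross terms by Cauchy--Schwarz -- then yields $\int_X i\partial w\wedge\bar\partial w\wedge\o^{n-1}=0$, whence $w$ is constant. (In the smooth category this is Calabi's theorem; the bounded case is due to B\l ocki.)

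Finally, for $u,v\in\mathcal{E}(X,\o)$ I would run the same scheme on the cutoffs $u_k:=\max(u,-k)$ and $v_k:=\max(v,-k)$, which belong to $\mathcal{H}_0$. They no longer solve a common equation, so the identity above picks up an error term $\int_X(u_k-v_k)(\o_{v_k}^n-\o_{u_k}^n)$, and the crux is that this error tends to $0$. By pluri-fine locality, $\o_{u_k}^n$ agrees with $\o_u^n$ on $\{u>-k\}$ and similarly for $v$, so (using $u\le v$, hence $\{v\le-k\}\subset\{u\le-k\}$) the error is supported on $\{u\le-k\}$ and is controlled, via the comparison principle and the monotonicity of the cutoff measures, by quantities of the form $\int_{\{u\le-k\}}|u_k|\,\o_{u_k}^n$; these are bounded in terms of $E_\chi(u)$ and $E_\chi(v)$ by Propositions \ref{Energy_est}, \ref{E_semicont} and Proposition \ref{cutoff_aprox}, and vanish as $k\to\infty$. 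Passing to the limit with the help of the weak continuity of Monge--Amp\`ere measures along monotone sequences (Proposition \ref{MA_cont}) and the lower semicontinuity of the Dirichlet energy $w\mapsto\int_X i\partial w\wedge\bar\partial w\wedge\o^{n-1}$ then gives $\int_X i\partial(u-v)\wedge\bar\partial(u-v)\wedge\o^{n-1}=0$, so $u-v$ is constant. I expect this last step to be the main obstacle: integration by parts and the interchange of limits are delicate in $\mathcal{E}(X,\o)$, since $u-v$ may be unbounded and the forms $\o_{u_k}^{n-1}$ can concentrate on pluripolar sets, so one must show rigorously -- and with bounds uniform in $k$ -- that the error coming from $\o_{u_k}^n\neq\o_{v_k}^n$ genuinely disappears in the limit. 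This is the technical heart of Dinew's argument and the reason the theorem remained open after the bounded case was understood.
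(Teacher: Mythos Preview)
This theorem is not proved in the paper. It is stated in the Preliminaries (Section~2.3) as a result quoted from the literature, with the citation \cite{di}, and is then used as a black box in the proofs of Theorem~\ref{Echar}, the lemma establishing non-degeneracy of $\tilde d$, and Lemma~\ref{mononton_seq}. There is thus no argument in the paper to compare your proposal against.

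As for the proposal itself: the reduction to the ordered case via $\max(u,v)$ is correct, and your treatment of the bounded case is the classical Calabi--B\l ocki integration-by-parts scheme, which is fine. In the passage to $\mathcal E(X,\o)$ you correctly isolate the real obstacle, namely that the cutoffs $u_k,v_k$ no longer solve the same equation and one must show the resulting error terms vanish in the limit while keeping enough control to run the Dirichlet-energy bootstrap. Two cautions are in order. First, your endgame relies on the quantity $\int_X i\partial(u-v)\wedge\bar\partial(u-v)\wedge\o^{n-1}$ and its lower semicontinuity, but for general $u,v\in\mathcal E(X,\o)$ there is no a~priori reason $u-v$ lies in $W^{1,2}$; you would need to extract this from the approximation itself, not assume it. Second, Dinew's actual proof in \cite{di} does not proceed along these lines: it goes through an inequality for mixed Monge--Amp\`ere measures, showing that $\o_u^n=\o_v^n=\mu$ forces $\o_u^j\wedge\o_v^{n-j}\ge\mu$ (hence $=\mu$ by mass considerations), and then concludes via the domination principle in $\mathcal E(X,\o)$. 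That route sidesteps the integration-by-parts difficulties you flag; your approach, if it can be made rigorous, would give an alternative proof, but the gap you identify is exactly where the work lies.
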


\section{The Operator $(u,v) \to P(u,v)$ on $\mathcal E(X,\o)$}

In this section we examine properties of the envelopes $P({u_0},{u_1})$ when ${u_0},{u_1}$ are from one the finite energy classes $\mathcal E_\chi(X,\o)$. Before we start dealing with the general case, let us establish some preliminary results generalizing  formulas at the end of Section 2.2. By Proposition \ref{MA_form}, if ${u_0},{u_1} \in \mathcal H$ then $(\o+i\partial\bar\partial P({u_0},{u_1})^n$ only charges the coincidence set $\{ P({u_0},{u_1})=\min ( {u_0},{u_1})\}$. It turns out that this statement extends to the case ${u_0},{u_1} \in \mathcal H_0 = \textup{PSH}(X,\o) \cap L^\infty(X)$:

\begin{lemma} \label{meas_conc} For ${u_0},{u_1} \in \mathcal H_0$ we have
$$(\o+i\partial\bar\partial P({u_0},{u_1}))^n(\{ P({u_0},{u_1})<\min ( {u_0},{u_1})\})=0.$$
\end{lemma}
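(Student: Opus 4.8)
\textbf{The plan} is to reduce to the case already handled by Proposition \ref{MA_form} via approximation and then let the approximation degenerate. Pick $u_0^k,u_1^k\in\mathcal H$ with $u_0^k\downarrow u_0$ and $u_1^k\downarrow u_1$ (possible by \cite{bk}), and set $P_k:=P(u_0^k,u_1^k)$, $P:=P(u_0,u_1)$, $f_k:=\min(u_0^k,u_1^k)$, $f:=\min(u_0,u_1)$. By Theorem \ref{DR_reg} we have $P_k\in\mathcal H_\Delta\subset C(X)$, so $f_k-P_k$ is continuous, and Proposition \ref{MA_form}, which applies on $\mathcal H_\Delta$, tells us that $(\o+i\partial\bar\partial P_k)^n$ is supported on $\{P_k=f_k\}$; equivalently, it charges none of the \emph{open} set $\{f_k-P_k>0\}$. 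Standard envelope monotonicity gives $P_k\downarrow P$, hence also $f_k\downarrow f$.

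\textbf{The main step} is to deduce $(\o+i\partial\bar\partial P)^n(\{P<f\})=0$ from the corresponding statements for the $P_k$. The tools at hand are: $(\o+i\partial\bar\partial P_k)^n\to(\o+i\partial\bar\partial P)^n$ weakly (the bounded case of Proposition \ref{MA_cont}); $P_k\to P$ and $f_k\to f$ in capacity, since decreasing sequences of uniformly bounded $\o$-psh functions — and minima of such — converge in capacity; the classical uniform estimate bounding the measures $(\o+i\partial\bar\partial P_k)^n$ and $(\o+i\partial\bar\partial P)^n$ by capacity, the potentials having uniformly bounded oscillation; and the quasi-continuity of $f-P$. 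Concretely: given $\eps,\delta>0$, convergence in capacity furnishes for all large $k$ a Borel set $E_k$ with $\text{Cap}(E_k)<\delta$ off which $|P_k-P|\le\eps/4$ and $|f_k-f|\le\eps/4$, whence the set $\{f-P>\eps\}\setminus E_k\subseteq\{f_k-P_k>\eps/2\}$ is open and $(\o+i\partial\bar\partial P_k)^n$-null; replacing $\{f-P>\eps\}$ by a nearby open set via quasi-continuity and using the uniform capacity domination together with the weak convergence of the $(\o+i\partial\bar\partial P_k)^n$, one obtains $(\o+i\partial\bar\partial P)^n(\{f-P>\eps\})\le C\delta$ with $C$ independent of $\eps,\delta$. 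Letting $\delta\to0$ and then $\eps\to0$ finishes the argument.

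\textbf{Main obstacle.} Everything hinges on this last passage to the limit. Because $\min(u_0,u_1)$ is merely upper semicontinuous, $\{P<\min(u_0,u_1)\}$ need not be open — it is only \emph{finely} open, $\min(u_0,u_1)-P$ being a difference of finely continuous functions — so the usual local balayage argument is unavailable; moreover the clean partition of Proposition \ref{MA_form} already breaks down on $\mathcal H_{0,1}$, so there is genuinely no shortcut around the approximation. The real work is controlling the portions of the contact sets $\{P_k=f_k\}$ that fail to converge to $\{P=f\}$, and this is precisely what the uniform capacity domination together with quasi-continuity accomplish. (Alternatively one may argue directly that $P$ is maximal, in the fine-potential-theoretic sense, on the finely open set $\{P<\min(u_0,u_1)\}$ and invoke the concentration results of \cite{bt}; the route above stays closer to the machinery of Section 2.)
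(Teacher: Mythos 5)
Your argument is correct, and its first half coincides with the paper's: approximate by decreasing sequences in $\mathcal H$, invoke Theorem \ref{DR_reg} and Proposition \ref{MA_form} for the approximants, and note $P(u_0^k,u_1^k)\searrow P(u_0,u_1)$. Where you diverge is the passage to the limit. The paper converts the statement into the single integral identity $\int_X(\min(u_0^j,u_1^j)-P(u_0^j,u_1^j))(\o+i\partial\bar\partial P(u_0^j,u_1^j))^n=0$ and passes it to the limit in one stroke: writing $\min(u_0^j,u_1^j)=u_0^j+u_1^j-\max(u_0^j,u_1^j)$ expresses the (non-psh) integrand as a difference of uniformly bounded decreasing $\o$-psh sequences, so the Bedford--Taylor monotone convergence theorem for expressions of the form $v_j(\o+i\partial\bar\partial\phi_j)^n$ applies directly, and nonnegativity of the limit integrand finishes the proof. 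You instead work at the level of sets and measures, combining weak convergence of the Monge--Amp\`ere measures with convergence in capacity of $P_k$ and $\min(u_0^k,u_1^k)$, quasi-continuity of $\min(u_0,u_1)-P(u_0,u_1)$, and the uniform domination $(\o+i\partial\bar\partial P_k)^n(E)\leq C\,\textup{Cap}(E)$ coming from the uniform bound on the potentials; all of these ingredients are standard for uniformly bounded $\o$-psh functions, and the $\eps$--$\delta$ bookkeeping you sketch (small-capacity exceptional sets $E_k$ and an open set supplied by quasi-continuity, then $\delta\to0$, $\eps\to0$) does close. So both routes are sound: the paper's trick of absorbing $\min$ into psh building blocks buys a two-line limit argument, while your capacity argument is longer but more hands-on and does not rely on the mixed monotone convergence theorem, only on quasi-continuity and the definition of capacity. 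Your closing remark is also apt: since $\min(u_0,u_1)$ is merely upper semicontinuous, the set $\{P<\min(u_0,u_1)\}$ is only plurifinely open, so neither approach can dispense with the approximation by the regular case treated in Proposition \ref{MA_form}.
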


\begin{proof}By \cite{bk} there exist decreasing sequences $u_1^j,u_0^j \in \mathcal H$ such that $u_0^j \searrow {u_0}$ and $u_1^j \searrow u_1$. By Proposition \ref{MA_form} this means that
$$\int_X (\min( u_0^j,u_1^j)-P(u_0^j,u_1^j))(\o +i\partial\bar\partial P(u_0^j,u_1^j))^n=0, \ j \in \Bbb N.$$
Since $\min( u_0^j,u_1^j)= u_0^j + u_1^j - \max(u_0^j,u_1^j )$, and $\{u_0^j\}_{j\in\Bbb N},\{u_1^j\}_{j\in\Bbb N},\{\max (u_0^j,u_1^j )\}_{j\in\Bbb N}$ form  decreasing sequences of uniformly bounded $\o-$plurisubharmonic functions, by Bedford-Taylor theory, we can take the limit in the above identity to obtain:
$$\int_X (\min( {u_0},{u_1})-P({u_0},{u_1}))(\o +i\partial\bar\partial P({u_0},{u_1}))^n=0.$$
From this the statement of the lemma follows.
\end{proof}
With the next result, we take another step in generalizing Proposition \ref{MA_form}:
\begin{lemma} For ${u_0},{u_1} \in \mathcal H_0$ we have
$$(\o+i\partial\bar\partial P({u_0},{u_1}))^n\leq \mathbbm{1}_{\{ {u_0} \leq {u_1} \}}(\o+i\partial\bar\partial {u_0})^n + \mathbbm{1}_{\{ {u_1} \leq {u_0} \}}(\o+i\partial\bar\partial {u_1})^n.$$
\end{lemma}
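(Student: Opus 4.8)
The plan is to reduce the inequality on $P(u_0,u_1)$ to the identity already available on the smooth approximants via Proposition~\ref{MA_form}, and then pass to the limit using Bedford--Taylor continuity together with Lemma~\ref{meas_conc}. As in the proof of Lemma~\ref{meas_conc}, pick decreasing sequences $u_0^j, u_1^j \in \mathcal H$ with $u_0^j \searrow u_0$ and $u_1^j \searrow u_1$ (possible by \cite{bk}). By Proposition~\ref{MA_form},
\begin{equation*}
(\o + i\partial\bar\partial P(u_0^j,u_1^j))^n = \mathbbm{1}_{\Lambda_{u_0^j}}(\o + i\partial\bar\partial u_0^j)^n + \mathbbm{1}_{\Lambda_{u_1^j}\setminus\Lambda_{u_0^j}}(\o + i\partial\bar\partial u_1^j)^n \leq \mathbbm{1}_{\{u_0^j \leq u_1^j\}}(\o + i\partial\bar\partial u_0^j)^n + \mathbbm{1}_{\{u_1^j \leq u_0^j\}}(\o + i\partial\bar\partial u_1^j)^n,
\end{equation*}
since $\Lambda_{u_0^j} = \{P(u_0^j,u_1^j) = u_0^j\} \subset \{u_0^j \leq u_1^j\}$ and likewise $\Lambda_{u_1^j} \subset \{u_1^j \leq u_0^j\}$. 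So the desired inequality holds at each finite level $j$; the only issue is the limit.

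The left-hand side is easy: $P(u_0^j,u_1^j) \searrow P(u_0,u_1)$ (this is immediate from the definition of the envelope as a supremum over subfunctions, applied at the level of $\min(u_0^j,u_1^j) \searrow \min(u_0,u_1)$), all terms are uniformly bounded, so by Bedford--Taylor continuity $(\o + i\partial\bar\partial P(u_0^j,u_1^j))^n \to (\o + i\partial\bar\partial P(u_0,u_1))^n$ weakly. The right-hand side is the delicate part: the indicator functions $\mathbbm{1}_{\{u_0^j \leq u_1^j\}}$ need not converge pointwise to $\mathbbm{1}_{\{u_0 \leq u_1\}}$, because the sub-level structure can jump in the limit. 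The standard remedy is to work with strict inequalities and a small slack: for fixed $\varepsilon > 0$ one has $\{u_0^j \leq u_1^j\} \subset \{u_0 \leq u_1 + \varepsilon\}$ eventually where the right side is considered; more carefully, fix any $\delta>0$, note $\mathbbm{1}_{\{u_0^j \le u_1^j\}} \le \mathbbm{1}_{\{u_0 \le u_1 + \delta\}}$ fails in general too. I expect the clean route is: test the level-$j$ inequality against a \emph{continuous} nonnegative cutoff $0 \le \phi \le 1$ supported in the \emph{open} set $\{u_0 < u_1\}$ (resp. $\{u_1 < u_0\}$), plus a symmetric piece, use that on the support of such $\phi$ one has $u_0^j < u_1^j$ for $j$ large (by Dini-type monotone convergence of the continuous... — actually $u_0^j, u_1^j$ are only u.s.c. limits, so one uses Hartogs-type lemma / the fact that $u_0^j \to u_0$ and $u_1^j \to u_1$ decreasingly forces $u_0^j < u_1^j$ eventually on any compact subset of the open set $\{u_0 < u_1\}$ modulo a small exceptional set), and deduce $\int \phi\, (\o + i\partial\bar\partial P(u_0,u_1))^n \le \int \phi\,(\o + i\partial\bar\partial u_0)^n$. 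Exhausting $\{u_0 < u_1\}$ by such $\phi$, adding the symmetric estimate on $\{u_1 < u_0\}$, and finally using Lemma~\ref{meas_conc} to show $(\o + i\partial\bar\partial P(u_0,u_1))^n$ puts no mass on $\{P(u_0,u_1) < \min(u_0,u_1)\} \supset \{u_0 = u_1\}^c \cap (\text{the remaining set})$ — more precisely, on the set $\{u_0 = u_1\}$ one splits it between the two terms — yields the claim.

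The main obstacle is exactly this limiting step: passing from $\mathbbm{1}_{\{u_0^j \le u_1^j\}}$ to $\mathbbm{1}_{\{u_0 \le u_1\}}$ against weakly convergent Monge--Amp\`ere measures. The boundary set $\{u_0 = u_1\}$ is where the two candidate bounds overlap and where indicator convergence genuinely fails; I expect the paper handles it either (a) by the continuous-cutoff argument on the open sets $\{u_0 < u_1\}$ and $\{u_1 < u_0\}$ sketched above, together with Lemma~\ref{meas_conc} to control the complement, or (b) by the standard trick of replacing $u_1$ by $u_1 + \tau$ for $\tau$ outside a countable exceptional set (so that $\{u_0 = u_1 + \tau\}$ carries no mass, cf. Remark~\ref{MA_form_remark}), proving the perturbed inequality cleanly, and then letting $\tau \to 0$ using Bedford--Taylor continuity in $\tau$ and the fact that the bound $\mathbbm{1}_{\{u_0 \le u_1+\tau\}}(\o+i\partial\bar\partial u_0)^n + \mathbbm{1}_{\{u_1 + \tau\le u_0\}}(\o+i\partial\bar\partial u_1)^n$ is monotone enough in $\tau$ to pass to the limit. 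Route (b) is the more robust and is what I would write.
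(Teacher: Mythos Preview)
Your overall strategy---approximate by smooth potentials, use Proposition~\ref{MA_form} at each level, and pass to the limit separately on the strict-inequality sets and on the equality set---is exactly the paper's route (your option (a)). But there is a real gap in your sketch: the set $\{u_0 < u_1\}$ is \emph{not} open in the usual topology when $u_0,u_1\in\mathcal H_0$ are merely bounded $\o$-psh (both are only upper semicontinuous, so $u_0-u_1$ is neither u.s.c.\ nor l.s.c.), and so your continuous-cutoff argument does not go through as written. The paper handles this in two stages. First, it proves the restricted estimate $\mathbbm{1}_{\{u_0<u_1\}}(\o+i\partial\bar\partial P(u_0,u_1))^n\le \mathbbm{1}_{\{u_0<u_1\}}(\o+i\partial\bar\partial u_0)^n$ assuming $u_1$ is \emph{smooth}: then $\{u_0<u_1\}$ \emph{is} open (as the increasing union of the open sets $\{u_0^j<u_1\}$), and your cutoff argument works. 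Second, for general $u_1\in\mathcal H_0$, it uses that $\{u_0<u_1\}\subset\{u_0<u_1^j\}$ to get the bound for each $P(u_0,u_1^j)$, and then passes to the limit by observing that $\{u_0<u_1\}$ is \emph{plurifine} open and invoking a lower-semicontinuity lemma for Monge--Amp\`ere measures restricted to plurifine open sets (the paper cites \cite[Lemma~3.5]{rwn1}). This plurifine step is the missing idea in your sketch.

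Two smaller points. On the equality set $\{u_0=u_1\}$, Lemma~\ref{meas_conc} does not help---the coincidence set $\{P(u_0,u_1)=\min(u_0,u_1)\}$ can contain $\{u_0=u_1\}$. The paper instead first records the coarse bound $(\o+i\partial\bar\partial P(u_0,u_1))^n\le(\o+i\partial\bar\partial u_0)^n+(\o+i\partial\bar\partial u_1)^n$ (trivially obtained by taking weak limits in Proposition~\ref{MA_form}) and uses it on $\{u_0=u_1\}$, where both indicators on the right-hand side of the target inequality equal~$1$. As for your route (b): Remark~\ref{MA_form_remark} is stated for $\mathcal H_\Delta$, not $\mathcal H_0$, and in any case the perturbation $u_1\mapsto u_1+\tau$ does not bypass the fundamental problem of passing indicator functions through weak limits; you would still need the plurifine argument or something equivalent.
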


\begin{proof} Let $u_0^j,u_1^j$ be strictly decreasing approximating sequences, as in the proof of the previous result. After taking the weak limit of measures, from Proposition \ref{MA_form} it follows that
\begin{equation}\label{weak_meas_est}
(\o+i\partial\bar\partial P({u_0},{u_1}))^n \leq (\o+i\partial\bar\partial {u_0})^n + (\o+i\partial\bar\partial {u_1})^n
\end{equation}
We next prove that
\begin{equation}\label{strong_meas_est}
\mathbbm{1}_{\{ {u_0} < {u_1} \}}(\o+i\partial\bar\partial P({u_0},{u_1}))^n \leq \mathbbm{1}_{\{ {u_0} < {u_1} \}}(\o+i\partial\bar\partial {u_0})^n
\end{equation}
First, we prove this estimate for smooth ${u_1}$. We can additionally assume without loss of generality that $\{ u^0_j = u_1\}$ has Lebesgue measure zero $j \in \Bbb N$. In this case the set $\{ {u_0} < {u_1} \}$ is open, more precisely, it is the union of the increasing open sets $\{ u_0^j < {u_1} \}$. Let $\phi \in C^\infty(X)$ with $\textup{supp }\phi \subset \{ {u_0} < {u_1}\}$ and $\phi \geq0$. Clearly, there exists $j_0$ such that $\textup{supp }\phi \subset \{ u_0^j < {u_1}\}$ for all $j \geq j_0$. Using Proposition \ref{MA_form} this implies that

$$\int_{\{{u_0}<{u_1}\}} \phi ((\o + i\partial\bar\partial u_0^j)^n - (\o + i\partial\bar\partial P(u_0^j,{u_1}))^n)\geq0, \ j \geq j_0.$$
Letting $j \to \infty$ we arrive at
$$\int_{\{{u_0}<{u_1}\}} \phi ((\o + i\partial\bar\partial {u_0}) - (\o + i\partial\bar\partial P({u_0},{u_1}))^n)\geq0, \ j \geq j_0.$$
This implies (\ref{strong_meas_est}) in the case ${u_1}$ is smooth. Now we treat the general case. Observe that the set $\{ {u_0} < {u_1}\}$ is contained in the intersection of the decreasing open sets $\{{u_0} < u_1^j\}, \ j \in \Bbb N$. In particular, by what we just proved,
$$\mathbbm{1}_{\{ {u_0} < {u_1} \}}(\o+i\partial\bar\partial P({u_0},u_1^j))^n \leq \mathbbm{1}_{\{ {u_0} < {u_1} \}}(\o+i\partial\bar\partial {u_0})^n, \ j \in \Bbb N.$$
Since $P({u_0},u_1^j)$ decreases to $P({u_0},{u_1})$ and $\{ {u_0} < {u_1} \}$ is plurifine open, we can use \cite[Lemma 3.5]{rwn1} to conclude that
$$\mathbbm{1}_{\{ {u_0} < {u_1} \}}(\o+i\partial\bar\partial P({u_0},{u_1}))^n \leq \liminf_{j \to \infty}\mathbbm{1}_{\{ {u_0} < {u_1} \}}(\o+i\partial\bar\partial P({u_0},u_1^j))^n.$$
Putting the last two estimates together we obtain (\ref{strong_meas_est}). By (\ref{weak_meas_est}) and (\ref{strong_meas_est}) it results that:
\begin{flalign*}
(\o+i\partial\bar\partial P({u_0},{u_1}))^n&= (\mathbbm{1}_{\{ {u_0} < {u_1} \}} + \mathbbm{1}_{\{ {u_1} < {u_0} \}} + \mathbbm{1}_{\{ {u_0} = {u_1} \}})(\o+i\partial\bar\partial P({u_0},{u_1}))^n \\
&\leq\mathbbm{1}_{\{ {u_0} < {u_1} \}}(\o+i\partial\bar\partial {u_0})^n + \mathbbm{1}_{\{ {u_1} < {u_0} \}}(\o+i\partial\bar\partial {u_1})^n + \\
 &\ \ \ +\mathbbm{1}_{\{ {u_0} = {u_1} \}}((\o+i\partial\bar\partial {u_0})^n + (\o+i\partial\bar\partial {u_1})^n).
\end{flalign*}
After rearranging terms we obtain the desired estimate.
\end{proof}

From the last two results we obtain an estimate that can be seen as a generalization of the partition formula in Proposition \ref{MA_form}:
\begin{proposition}\label{MA_form_prop}For ${u_0},{u_1} \in \mathcal H_0$ we have
\begin{equation}\label{MA_form_generalized}
(\o+i\partial\bar\partial P({u_0},{u_1}))^n\leq \mathbbm{1}_{\{ P({u_0},{u_1}) = {u_0} \}}(\o+i\partial\bar\partial {u_0})^n + \mathbbm{1}_{\{ P({u_0},{u_1}) = {u_1} \}}(\o+i\partial\bar\partial {u_1})^n
\end{equation}
\end{proposition}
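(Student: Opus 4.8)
The plan is to deduce Proposition~\ref{MA_form_prop} directly from the previous two lemmas, combining them on the coincidence set. Write $P = P(u_0,u_1)$ for short. By Lemma~\ref{meas_conc} the measure $(\o+i\partial\bar\partial P)^n$ puts no mass on $\{P < \min(u_0,u_1)\}$, so it is concentrated on the coincidence set $\{P = \min(u_0,u_1)\} = \{P=u_0\}\cup\{P=u_1\}$. Hence
\begin{equation}\label{pf_step1}
(\o+i\partial\bar\partial P)^n = \mathbbm{1}_{\{P=u_0\}\cup\{P=u_1\}}(\o+i\partial\bar\partial P)^n.
\end{equation}
So it suffices to control the mass of $(\o+i\partial\bar\partial P)^n$ separately on $\{P=u_0\}$ and on $\{P=u_1\}\setminus\{P=u_0\}$, and to compare it with the Monge--Amp\`ere masses of $u_0$ and $u_1$ there.

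The key input is the second lemma, which gives
$$(\o+i\partial\bar\partial P)^n\le \mathbbm{1}_{\{u_0\le u_1\}}(\o+i\partial\bar\partial u_0)^n + \mathbbm{1}_{\{u_1\le u_0\}}(\o+i\partial\bar\partial u_1)^n.$$
The point to exploit is that on the set $\{P=u_0\}$ one has $u_0 = P \le u_1$, so $\{P=u_0\}\subset\{u_0\le u_1\}$; likewise $\{P=u_1\}\subset\{u_1\le u_0\}$. Therefore, restricting the above inequality to $\{P=u_0\}$ kills the second term (since $\{P=u_0\}\cap\{u_1<u_0\}=\emptyset$, and on $\{P=u_0\}\cap\{u_0=u_1\}$ we still have the $u_0$ term available), giving
\begin{equation}\label{pf_step2}
\mathbbm{1}_{\{P=u_0\}}(\o+i\partial\bar\partial P)^n \le \mathbbm{1}_{\{P=u_0\}}(\o+i\partial\bar\partial u_0)^n.
\end{equation}
Symmetrically, restricting to $\{P=u_1\}\setminus\{P=u_0\}$ — where $u_1=P\le u_0$ and we are off the set where $u_0=P$, so in particular we may drop to the $u_1$ term — yields
\begin{equation}\label{pf_step3}
\mathbbm{1}_{\{P=u_1\}\setminus\{P=u_0\}}(\o+i\partial\bar\partial P)^n \le \mathbbm{1}_{\{P=u_1\}\setminus\{P=u_0\}}(\o+i\partial\bar\partial u_1)^n \le \mathbbm{1}_{\{P=u_1\}}(\o+i\partial\bar\partial u_1)^n.
\end{equation}
Adding \eqref{pf_step2} and \eqref{pf_step3} and using the decomposition $\{P=u_0\}\cup\{P=u_1\} = \{P=u_0\}\sqcup\big(\{P=u_1\}\setminus\{P=u_0\}\big)$ together with \eqref{pf_step1} gives exactly \eqref{MA_form_generalized}.

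The one delicate point — and the main thing to get right — is the bookkeeping on the overlap $\{u_0=u_1\}$, where both terms on the right-hand side of the second lemma are present and the indicator sets $\{u_0\le u_1\}$, $\{u_1\le u_0\}$ are not disjoint. The clean way around this is to first observe that $\{P=u_0\}$ and $\{P=u_1\}\setminus\{P=u_0\}$ are disjoint Borel sets whose union carries all the mass of $(\o+i\partial\bar\partial P)^n$ by \eqref{pf_step1}, and then to bound the restriction to each of these two sets using only the \emph{one} relevant term from the second lemma — discarding the other term, which only makes the inequality weaker and hence still valid. No measure-zero hypothesis on $\{u_0=u_1\}$ is needed, precisely because we allow the non-sharp direction; this is exactly why the statement is an inequality rather than the equality of Proposition~\ref{MA_form}. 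I expect no serious obstacle beyond being careful that the passage from \eqref{pf_step2}--\eqref{pf_step3} to \eqref{MA_form_generalized} does not double-count the overlap, which the disjoint decomposition handles automatically.
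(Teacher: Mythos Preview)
Your approach is exactly the paper's: combine Lemma~\ref{meas_conc} with the preceding lemma. But your step \eqref{pf_step2} is not justified as written, and your closing paragraph gets the direction of an inequality backwards. On $\{P=u_0\}\cap\{u_0=u_1\}$, restricting the second lemma gives
\[
(\o+i\partial\bar\partial P)^n \le (\o+i\partial\bar\partial u_0)^n + (\o+i\partial\bar\partial u_1)^n,
\]
and ``discarding'' the nonnegative term $(\o+i\partial\bar\partial u_1)^n$ from the right-hand side does \emph{not} make the inequality weaker --- it makes it stronger, and it need not hold. So \eqref{pf_step2} does not follow from the second lemma alone on this overlap.

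The fix is immediate and requires no new ideas: rather than aiming for the too-strong \eqref{pf_step2}, simply multiply the inequality from the second lemma by $\mathbbm{1}_{\{P=\min(u_0,u_1)\}}$ and observe the set inclusions
\[
\{P=\min(u_0,u_1)\}\cap\{u_0\le u_1\}\subset\{P=u_0\}, \qquad \{P=\min(u_0,u_1)\}\cap\{u_1\le u_0\}\subset\{P=u_1\}.
\]
These give $\mathbbm{1}_{\{P=\min\}}\mathbbm{1}_{\{u_0\le u_1\}}\le \mathbbm{1}_{\{P=u_0\}}$ and $\mathbbm{1}_{\{P=\min\}}\mathbbm{1}_{\{u_1\le u_0\}}\le \mathbbm{1}_{\{P=u_1\}}$; combined with Lemma~\ref{meas_conc} this yields \eqref{MA_form_generalized} directly. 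Equivalently, decompose the coincidence set into the three disjoint pieces $\{P=u_0,\,u_0<u_1\}$, $\{P=u_1,\,u_1<u_0\}$, $\{P=u_0=u_1\}$; on the first two only one indicator from the second lemma survives, and on the third both terms survive on both sides, so nothing needs to be discarded.
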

We now move on to proving energy estimates for envelopes of the type $P({u_0},{u_1})$. As above, we deal with the bounded case first:

\begin{lemma} \label{env_exist} Suppose $\chi \in \mathcal W$, ${u_0},{u_1} \in \mathcal H_0$ and $N \in \Bbb R$ are such that $\chi \circ {u_0}, \chi \circ {u_1} \leq N$. Then
\begin{equation}\label{e_est}
E_\chi(P({u_0},{u_1})) \geq E_\chi({u_0}) + E_\chi({u_1})-N\textup{Vol}(X).
\end{equation}
\end{lemma}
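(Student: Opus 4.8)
The plan is to use the partition formula from Proposition \ref{MA_form_prop} together with the integration-by-parts comparison inequalities for the $\chi$-energy. First I would write $v = P(u_0,u_1)$, and note that by definition $v \leq u_0$ and $v \leq u_1$, so on the coincidence sets $\Lambda_{u_0} = \{v = u_0\}$ and $\Lambda_{u_1} = \{v = u_1\}$ we have $\chi(v) = \chi(u_0)$ and $\chi(v) = \chi(u_1)$ respectively (using that $\chi$ is continuous and increasing, and that $v$ agrees with $u_0$, resp. $u_1$, there). Since $\chi$ is increasing and $v \leq u_0$ everywhere, $\chi(v) \leq \chi(u_0) \leq N$ everywhere, and likewise $\chi(v) \leq N$; this is the elementary bound that will produce the $-N\,\textup{Vol}(X)$ term.

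Next I would expand $E_\chi(v) = \int_X \chi(v)\,(\o + i\partial\bar\partial v)^n$ and apply Proposition \ref{MA_form_prop}, which gives
\begin{equation*}
(\o + i\partial\bar\partial v)^n \leq \mathbbm{1}_{\Lambda_{u_0}}(\o + i\partial\bar\partial u_0)^n + \mathbbm{1}_{\Lambda_{u_1}}(\o + i\partial\bar\partial u_1)^n.
\end{equation*}
The subtlety is the direction of the inequality: $\chi(v)$ need not be nonnegative, so one cannot simply integrate the pointwise measure bound against $\chi(v)$. To handle this I would split $\chi(v) = (\chi(v) - N) + N$, where $\chi(v) - N \leq 0$ everywhere. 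Integrating the \emph{nonpositive} function $\chi(v) - N$ against the measure inequality reverses the sign correctly:
\begin{equation*}
\int_X (\chi(v) - N)(\o + i\partial\bar\partial v)^n \geq \int_{\Lambda_{u_0}}(\chi(v)-N)(\o + i\partial\bar\partial u_0)^n + \int_{\Lambda_{u_1} \setminus \Lambda_{u_0}}(\chi(v)-N)(\o + i\partial\bar\partial u_1)^n.
\end{equation*}
On $\Lambda_{u_0}$ replace $\chi(v)$ by $\chi(u_0)$, and on $\Lambda_{u_1}\setminus\Lambda_{u_0}$ replace $\chi(v)$ by $\chi(u_1)$. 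Then I would enlarge the domains of integration back to all of $X$: since $\chi(u_0) - N \leq 0$, extending $\int_{\Lambda_{u_0}}(\chi(u_0)-N)(\o+i\partial\bar\partial u_0)^n$ to $\int_X$ only decreases it, and similarly for the $u_1$ term (extending from $\Lambda_{u_1}\setminus\Lambda_{u_0}$ to $X$). This yields
\begin{equation*}
\int_X(\chi(v)-N)(\o+i\partial\bar\partial v)^n \geq \int_X(\chi(u_0)-N)(\o+i\partial\bar\partial u_0)^n + \int_X(\chi(u_1)-N)(\o+i\partial\bar\partial u_1)^n.
\end{equation*}

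Finally, since $u_0, u_1 \in \mathcal H_0$ are bounded, all three Monge--Amp\`ere measures have total mass exactly $\textup{Vol}(X)$, so the $N$ terms contribute $-N\,\textup{Vol}(X)$ on the left and $-2N\,\textup{Vol}(X)$ on the right; rearranging gives precisely $E_\chi(v) \geq E_\chi(u_0) + E_\chi(u_1) - N\,\textup{Vol}(X)$, which is \eqref{e_est}. The main obstacle I anticipate is purely bookkeeping around signs — making sure the measure inequality of Proposition \ref{MA_form_prop} is only ever integrated against nonpositive integrands, and correctly tracking the overlap $\Lambda_{u_0}\cap\Lambda_{u_1}$ (harmless since one may shrink $\Lambda_{u_1}$ to $\Lambda_{u_1}\setminus\Lambda_{u_0}$ and still extend back to $X$ because the integrand is nonpositive). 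No regularity issues arise here because everything is bounded and Bedford--Taylor theory applies directly.
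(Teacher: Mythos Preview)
Your proof is correct and essentially identical to the paper's: both subtract $N$ to make the integrand $\chi(P(u_0,u_1))-N$ nonpositive, apply the measure inequality of Proposition \ref{MA_form_prop}, replace $\chi(v)$ by $\chi(u_j)$ on the coincidence sets, and then extend the domains of integration to all of $X$ using nonpositivity. The only cosmetic difference is that the paper integrates over $\Lambda_{u_1}$ rather than $\Lambda_{u_1}\setminus\Lambda_{u_0}$ in the intermediate step; your handling of the overlap is harmless for exactly the reason you give.
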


\begin{proof} Since $P({u_0},{u_1}) \leq \min( {u_0},{u_1})$, it follows that $\chi \circ P({u_0},{u_1}) \leq N$.
\begin{flalign}E_\chi&(P({u_0},{u_1}))-N\textup{Vol}(X) = \int_X (\chi (P({u_0},{u_1}))-N)(\o + i\partial\bar\partial P({u_0},{u_1}))^n \nonumber \\
&\geq \int_{\{P({u_0},{u_1}) = {u_0}\}}(\chi({u_0})-N)(\o + i\partial\bar\partial {u_0})^n + \int_{\{P({u_0},{u_1}) = {u_1}\}}(\chi({u_1})-N)(\o + i\partial\bar\partial {u_1})^n \nonumber\\
&\geq \int_X(\chi({u_0})-N)(\o + i\partial\bar\partial {u_0})^n + \int_X(\chi({u_1})-N)(\o + i\partial\bar\partial {u_1})^n \nonumber \\
& = E_\chi({u_0}) + E_\chi({u_1}) - 2N\textup{Vol}(X).\nonumber
\end{flalign}
where in the second line we have used (\ref{MA_form_generalized}).
\end{proof}

\begin{corollary} \label{ecor}For ${u_0},{u_1} \in \mathcal E(X,\o)$, we have $P({u_0},{u_1})\in \mathcal E(X,\o)$. More precisely, if ${u_0},{u_1} \in \mathcal E_\chi(X,\o)$ for some $\chi \in \mathcal W^-$, then $P({u_0},{u_1}) \in \mathcal E_\chi(X,\o)$.
\end{corollary}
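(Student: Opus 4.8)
The plan is to reduce the unbounded case to the bounded estimate of Lemma \ref{env_exist} by approximating with canonical cutoffs and invoking the semicontinuity and monotonicity machinery of Section 2.3. First I would handle the second, more precise statement, since the first follows from it via \eqref{E_union}. So fix $\chi \in \mathcal W^-$ and ${u_0},{u_1} \in \mathcal E_\chi(X,\o)$, and set ${u_0^k} = \max({u_0},-k)$, ${u_1^k} = \max({u_1},-k) \in \mathcal H_0$. Since $\chi(0)=0$ and $\chi$ is increasing, we may take $N = 0$ in Lemma \ref{env_exist}, which then yields
\begin{equation*}
E_\chi(P({u_0^k},{u_1^k})) \geq E_\chi({u_0^k}) + E_\chi({u_1^k})
\end{equation*}
for every $k$. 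By Proposition \ref{cutoff_aprox}, $E_\chi({u_0^k}) \to E_\chi({u_0}) > -\infty$ and $E_\chi({u_1^k}) \to E_\chi({u_1}) > -\infty$ (these are in fact the relevant infima), so the right-hand side is bounded below uniformly in $k$; hence $\inf_k E_\chi(P({u_0^k},{u_1^k})) > -\infty$.

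Next I would observe that $P({u_0^k},{u_1^k})$ is a decreasing sequence in $\mathcal H_0$: indeed ${u_0^k} \searrow {u_0}$ and ${u_1^k}\searrow {u_1}$ pointwise, so $\min({u_0^k},{u_1^k})\searrow \min({u_0},{u_1})$, and the envelope operator $P$ is monotone in its argument, giving $P({u_0^k},{u_1^k}) \searrow$ some limit $\psi \in \text{PSH}(X,\o)$ with $\psi \geq P({u_0},{u_1})$. Conversely $P({u_0^k},{u_1^k}) \geq P({u_0},{u_1})$ for each $k$ while $P({u_0^k},{u_1^k}) \leq \min({u_0^k},{u_1^k})$, and letting $k \to \infty$ shows $\psi \leq \min({u_0},{u_1})$, so by the defining maximality of $P({u_0},{u_1})$ we get $\psi = P({u_0},{u_1})$. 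Thus $P({u_0^k},{u_1^k})$ is a sequence in $\mathcal H_0$ decreasing to $P({u_0},{u_1})$ with $\inf_k E_\chi(P({u_0^k},{u_1^k})) > -\infty$. Since $\chi \in \mathcal W^- \subset \mathcal W^- \cup \mathcal W^+_M$, Proposition \ref{E_semicont} applies directly and gives $P({u_0},{u_1}) \in \mathcal E_\chi(X,\o)$, which is exactly the second assertion. The first assertion follows: if ${u_0},{u_1} \in \mathcal E(X,\o)$ then by \eqref{E_union} there are $\chi_0,\chi_1 \in \mathcal W^-$ with ${u_0} \in \mathcal E_{\chi_0}$, ${u_1}\in \mathcal E_{\chi_1}$; replacing both by $\chi = \min(\chi_0,\chi_1) \in \mathcal W^-$ (note $\mathcal E_\chi$ shrinks as $\chi$ grows near $-\infty$, so ${u_0},{u_1}\in\mathcal E_\chi$) puts us in the situation just handled, whence $P({u_0},{u_1}) \in \mathcal E_\chi(X,\o) \subset \mathcal E(X,\o)$.

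The only genuinely delicate point is the identification $\lim_k P({u_0^k},{u_1^k}) = P({u_0},{u_1})$ and the verification that the sequence lies in $\mathcal H_0$; both are elementary once one uses the sandwich $P({u_0},{u_1}) \leq P({u_0^k},{u_1^k}) \leq \min({u_0^k},{u_1^k})$ together with monotonicity of $P$, but one should be slightly careful that the $u_i^k$ are genuinely in $\mathcal H_0 = \text{PSH}(X,\o) \cap L^\infty(X)$ (they are, being maxima of a fixed element of $\text{PSH}(X,\o)$ with a constant). Everything else is an application of the cited propositions from Section 2.3, so I do not anticipate any real obstacle.
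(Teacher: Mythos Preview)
Your approach is essentially the same as the paper's: apply Lemma \ref{env_exist} to canonical cutoffs, use Proposition \ref{cutoff_aprox} to bound the right-hand side, verify $P(u_0^k,u_1^k)\searrow P(u_0,u_1)$, and invoke Proposition \ref{E_semicont}. Two slips need correcting, however.

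First, taking $N=0$ is only legitimate once you normalize $u_0,u_1\le 0$; as written, $u_0^k=\max(u_0,-k)$ can be positive where $u_0>0$, so $\chi\circ u_0^k\le 0$ fails. The paper simply takes $N=\max(\sup_X\chi\circ u_0,\sup_X\chi\circ u_1)$, which works without normalization.

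Second, in the reduction from $\mathcal E(X,\omega)$ to a common weight you chose $\chi=\min(\chi_0,\chi_1)$, but this goes the wrong way: $\min(\chi_0,\chi_1)\le\chi_0$ means $E_{\min(\chi_0,\chi_1)}(u_0)\le E_{\chi_0}(u_0)$, which gives no lower bound, so $u_0\in\mathcal E_{\chi_0}$ does \emph{not} imply $u_0\in\mathcal E_{\min(\chi_0,\chi_1)}$. Moreover the minimum of two convex functions need not be convex, so $\min(\chi_0,\chi_1)$ need not lie in $\mathcal W^-$. The correct choice, as in the paper, is $\chi=\max(\chi_0,\chi_1)$: the maximum of convex increasing functions with $\chi_i(0)=0$, $\chi_i(t)=t$ for $t\ge 0$, $\chi_i(-\infty)=-\infty$ again lies in $\mathcal W^-$, and $\chi\ge\chi_i$ gives $E_\chi(u_i)\ge E_{\chi_i}(u_i)>-\infty$, hence $u_0,u_1\in\mathcal E_\chi(X,\omega)$. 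With these two fixes your argument is complete and matches the paper's.
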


\begin{proof} Suppose ${u_0},{u_1} \in \mathcal E_\chi(X,\o)$ for some $\chi \in \mathcal W^-$. Let $N = \max(\sup_X\chi\circ{u_0},\sup_X\chi\circ{u_0}).$ If we apply the previous lemma to the canonical cutoffs $u_0^k$,$u_1^k$ we obtain:
$$E_\chi(P(u_0^k,u_1^k))\geq E_\chi(u_0^k) + E_\chi(u_1^k) - N\textup{Vol}(X).$$
Since ${u_0},{u_1} \in \mathcal E_\chi(X,\o)$ it follows that
\begin{flalign*}
\liminf_{k \to \infty}E_\chi(P(u_0^k,u_1^k))\geq& \lim_{k\to\infty}E_\chi(u_0^k) + \lim_{k\to\infty}E_\chi(u_1^k)-N\textup{Vol}(X)\\
=&E_\chi({u_0})+E_\chi({u_1}) -N\textup{Vol}(X),
\end{flalign*}
where we have used Proposition \ref{cutoff_aprox}. Since $\chi \in \mathcal W^-$ and $\inf_k E_\chi(P(u_0^k,u_1^k))$ is bounded below, using Proposition \ref{E_semicont} it follows that $P({u_0},{u_1}) \in \mathcal E_\chi(X,\o)$.

If we start with ${u_0},{u_1} \in \mathcal E(X,\o)$, then by (\ref{E_union}) it follows that there exists $\chi_1,\chi_2 \in \mathcal W^-$ such that ${u_0} \in \mathcal E_{\chi_1}(X,\o)$ and ${u_1} \in \mathcal E_{\chi_2}(X,\o)$. Clearly $\chi = \max ( \chi_1,\chi_2) \in \mathcal W^-$ and ${u_0},{u_1} \in \mathcal E_\chi(X,\o)$. By our prior argument it follows that $P({u_0},{u_1}) \in \mathcal E_\chi(X,\o) \subset \mathcal E(X,\o),$  finishing the proof.
\end{proof}

Finally, we generalize the estimate (\ref{e_est}) for any $\chi \in \mathcal W$ and ${u_0},{u_1} \in \mathcal E_\chi(X,\o)$, with this proving Theorem \ref{E_min}:
\begin{theorem} \label{envexist} If $\chi \in \mathcal W$ and ${u_0},{u_1} \in \mathcal E_\chi(X,\o)$, then $P({u_0},{u_1}) \in \mathcal E_\chi(X,\o)$. More precisely, if $N \in \Bbb R$ is such that $\chi \circ {u_0}, \chi \circ {u_1} \leq N$, then we have:
\begin{equation}\label{en_est}
E_\chi(P({u_0},{u_1})) \geq E_\chi({u_0}) + E_\chi({u_1})-N\textup{Vol}(X).
\end{equation}
Furthermore, if $\chi \in \mathcal W^- \cup \mathcal W^+_M, M \geq 1$, then $\mathcal E_\chi(X,\o)$ is convex.
\end{theorem}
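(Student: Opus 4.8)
The plan is to bootstrap from the bounded case (Lemma \ref{env_exist}) to the general case by the same cutoff-and-semicontinuity argument already used in Corollary \ref{ecor}, but now being careful that $\chi$ is an arbitrary weight, not just a convex one in $\mathcal W^-$. First I would fix ${u_0},{u_1}\in\mathcal E_\chi(X,\o)$ and $N\in\Bbb R$ with $\chi\circ{u_0},\chi\circ{u_1}\le N$ (such $N$ exists since $\chi$ is increasing and ${u_0},{u_1}$ are bounded above on $X$, $X$ compact). Let $u_0^k=\max({u_0},-k)$, $u_1^k=\max({u_1},-k)$ be the canonical cutoffs; these lie in $\mathcal H_0$, decrease to ${u_0},{u_1}$, and still satisfy $\chi\circ u_0^k,\chi\circ u_1^k\le N$. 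Applying Lemma \ref{env_exist} to each pair gives $E_\chi(P(u_0^k,u_1^k))\ge E_\chi(u_0^k)+E_\chi(u_1^k)-N\textup{Vol}(X)$. Since $P(u_0^k,u_1^k)$ decreases to $P({u_0},{u_1})$ (as $\min(u_0^k,u_1^k)\searrow\min({u_0},{u_1})$ and $P$ is monotone and continuous under decreasing limits of its bounded argument), and since by Proposition \ref{cutoff_aprox} $E_\chi(u_i^k)\to E_\chi(u_i)$, the right-hand side stays bounded below.

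The main obstacle is that Proposition \ref{E_semicont} — the tool used in Corollary \ref{ecor} to pass from $\inf_k E_\chi(P(u_0^k,u_1^k))>-\infty$ to $P({u_0},{u_1})\in\mathcal E_\chi(X,\o)$ together with the liminf inequality — is only stated for $\chi\in\mathcal W^-\cup\mathcal W^+_M$, not for general weights. To handle an arbitrary $\chi\in\mathcal W$ I would instead argue directly using the monotonicity of the cutoff measures and Fatou. Concretely: write $w=P({u_0},{u_1})$, $w^k=\max(w,-k)$, and note $w\le\min({u_0},{u_1})$ so $w\in\mathcal E(X,\o)$ already follows from $\mathcal E(X,\o)\supset\{v:v\le u\text{ for some }u\in\mathcal E(X,\o)\}$ — but actually $\mathcal E(X,\o)$ is \emph{not} downward closed, so this is wrong; the cleaner route is to observe $P(u_0^k,u_1^k)\ge w\ge P(u_0^k,u_1^k)-C_k$ is false too. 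Rather, the honest path: $P(u_0^k,u_1^k)\searrow w$, each $P(u_0^k,u_1^k)\in\mathcal H_0$, and we have a uniform lower bound $m:=\inf_k E_\chi(P(u_0^k,u_1^k))>-\infty$. Now use that for bounded $v$, $E_\chi(v)=\int_X\chi(v)(\o+i\ddbar v)^n$, and that for the decreasing sequence $v^k:=P(u_0^k,u_1^k)$ one has $(\o+i\ddbar v^k)^n\to(\o+i\ddbar w)^n$ weakly on the plurifine-open set $\{w>-\ell\}$ by \cite[Lemma 3.5]{rwn1} (applied as in the second lemma of this section), while $\mathbbm 1_{\{w>-\ell\}}\chi(v^k)\to\mathbbm 1_{\{w>-\ell\}}\chi(w_\ell)$; a standard semicontinuity/Fatou argument on $\{w>-\ell\}$ then yields $\int_{\{w>-\ell\}}\chi(w_\ell)(\o+i\ddbar w_\ell)^n\ge m$ for every $\ell$, hence $\inf_\ell E_\chi(w_\ell)\ge m>-\infty$, so $w\in\mathcal E_\chi(X,\o)$ by Proposition \ref{cutoff_aprox}, and then letting $\ell\to\infty$ and $k\to\infty$ in the bounded estimate produces \eqref{en_est}. (Alternatively, and more simply, one can reduce to the $\mathcal W^-$ case: since ${u_0},{u_1}\in\mathcal E_\chi\subset\mathcal E(X,\o)$, Corollary \ref{ecor} already gives $P({u_0},{u_1})\in\mathcal E(X,\o)$, and then the cutoff estimate plus Proposition \ref{cutoff_aprox} applied to $w$, $u_0^k$, $u_1^k$ directly forces $\inf_k E_\chi(w^k)>-\infty$ once one checks $E_\chi(w^k)\ge E_\chi(P(u_0^k,u_1^k))$; this last inequality holds because $w^k\ge P(u_0^k,u_1^k)$ and... — no, $E_\chi$ is not monotone in that direction either.) The safest writeup is the plurifine-localization argument above.

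For the convexity statement, assume $\chi\in\mathcal W^-\cup\mathcal W^+_M$ with $M\ge1$, take ${u_0},{u_1}\in\mathcal E_\chi(X,\o)$ and $\lambda\in(0,1)$, and set $v=\lambda{u_0}+(1-\lambda){u_1}\in\text{PSH}(X,\o)$. I would show $v\in\mathcal E_\chi(X,\o)$ by comparing $v$ with $P({u_0},{u_1})$: since $P({u_0},{u_1})\le\min({u_0},{u_1})\le v$, and $P({u_0},{u_1})\in\mathcal E_\chi(X,\o)$ by the first part, Corollary \ref{Emonoton} (the monotonicity property, valid precisely for $\chi\in\mathcal W^-\cup\mathcal W^+_M$) gives $v\in\mathcal E_\chi(X,\o)$ immediately. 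This is the cleanest argument and explains why the convexity clause carries exactly the hypothesis $\chi\in\mathcal W^-\cup\mathcal W^+_M$: that is the range where Corollary \ref{Emonoton} applies.

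I expect the technical heart of the proof to be the passage to the limit for a \emph{general} weight $\chi$ — i.e. establishing $P({u_0},{u_1})\in\mathcal E_\chi(X,\o)$ without the convexity/concavity hypotheses that power Proposition \ref{E_semicont} — which is why the plurifine-open localization via \cite[Lemma 3.5]{rwn1} together with the monotone convergence of the truncated Monge--Amp\`ere measures is the indispensable ingredient; everything else (the bounded estimate, the cutoff approximation, and the convexity corollary) is then routine assembly.
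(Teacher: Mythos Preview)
Your overall architecture matches the paper's: reduce to the bounded case via canonical cutoffs and Lemma \ref{env_exist}, use Proposition \ref{cutoff_aprox} to control the right-hand side, pass to the limit, and then derive convexity from $P(u_0,u_1)\le \min(u_0,u_1)\le t u_0+(1-t)u_1$ together with Corollary \ref{Emonoton}. The convexity paragraph is exactly the paper's argument.

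Where you diverge is the limit passage, and there you make life harder than necessary. You correctly note in your ``alternative'' that Corollary \ref{ecor} already gives $w:=P(u_0,u_1)\in\mathcal E(X,\o)$, but then abandon this route because you try to compare $E_\chi(w^k)$ with $E_\chi(P(u_0^k,u_1^k))$ via a monotonicity of $E_\chi$ that indeed fails. The paper exploits $w\in\mathcal E(X,\o)$ differently: by Proposition \ref{MA_cont} it gives weak convergence $(\o+i\ddbar P(u_0^k,u_1^k))^n\to(\o+i\ddbar w)^n$ of the \emph{global} Monge--Amp\`ere measures. Then for fixed $l$ and any continuous $v\ge\chi\circ P(u_0^l,u_1^l)$ one has
\[
\int_X v\,(\o+i\ddbar w)^n=\lim_k\int_X v\,(\o+i\ddbar P(u_0^k,u_1^k))^n\ge\liminf_k E_\chi(P(u_0^k,u_1^k)),
\]
and taking the infimum over such $v$ (upper semicontinuity of $\chi\circ P(u_0^l,u_1^l)$) followed by $l\to\infty$ (monotone convergence, since $\chi\circ P(u_0^l,u_1^l)\searrow\chi\circ w$) yields $E_\chi(w)\ge\liminf_k E_\chi(P(u_0^k,u_1^k))$. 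Combined with the cutoff inequality this gives \eqref{en_est} directly, with no plurifine localization.

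Your plurifine route, as written, has two soft spots. First, bounding $\int_{\{w>-\ell\}}\chi(w_\ell)(\o+i\ddbar w_\ell)^n$ from below by $m$ does not bound $E_\chi(w_\ell)$ from below: the missing piece $\chi(-\ell)\int_{\{w\le-\ell\}}(\o+i\ddbar w_\ell)^n$ is nonpositive for large $\ell$, so the inequality goes the wrong way. Second, invoking Proposition \ref{cutoff_aprox} presupposes $w\in\mathcal E(X,\o)$, which you have not established inside that branch (you need Corollary \ref{ecor} anyway). Both issues are repairable, but the paper's test-function argument sidesteps them entirely and is the cleaner path.
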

\begin{proof}
As both $E_\chi({u_0})$ and $E_\chi({u_1})$ are finite, we can use Lemma \ref{env_exist} and Proposition \ref{cutoff_aprox} to conclude:
\begin{equation} \label{firsteq}
\liminf_{k\to \infty} E_\chi(P(u_0^k,u_1^k))\geq E_\chi({u_0})+E_\chi({u_1})-N\textup{Vol}(X),
\end{equation}
where $u_0^k$, $u_1^k$ are the canonical cutoffs.

Fix $l \in \Bbb N$ and $v \in C(X)$ satisfying $v \geq \chi\circ P(u_0^l,u_1^l)$. As the sequence $P(u_0^k,u_1^k)$ decreases to  $P({u_0},{u_1})$ and $P({u_0},{u_1}) \in \mathcal E(X,\o)$ by the previous result, we have $(\o + i\partial\bar\partial P(u_0^k,u_1^k))^n \to(\o + i\partial\bar\partial P({u_0},{u_1}))^n$ weakly. It follows that
$$ \int_X v(\o + i\partial\bar\partial P({u_0},{u_1}))^n = \lim_{k \to \infty} \int_X v(\o + i\partial\bar\partial P({u_0^k},{u_1^k}))^n \geq \liminf_{k\to \infty} E_\chi(P(u_0^k,u_1^k)).$$
As $\chi \circ P(u_0^l,u_1^l)$ is upper semi-continuous this implies:
$$ \int_X \chi(P(u_0^l,u_1^l))(\o + i\partial\bar\partial P({u_0},{u_1}))^n\geq \liminf_{k\to \infty} E_\chi(P(u_0^k,u_1^k)).$$
Letting $l \to + \infty$, by the monotone convergence theorem one obtains
\begin{equation} \label{secondeq}E_\chi(P(u_0,u_1))\geq \liminf_{k\to \infty} E_\chi(P(u_0^k,u_1^k)).
\end{equation}
Putting (\ref{firsteq}) and (\ref{secondeq}) together, (\ref{en_est}) follows.
Finally, we have the estimate: $$P({u_0},{u_1}) \leq \min({u_0},{u_1}) \leq t {u_0} + (1-t){u_1}, \ t \in [0,1].$$
We just proved that $P({u_0},{u_1}) \in \mathcal E_\chi(X,\o)$. If $\chi \in \mathcal W^- \cup \mathcal W^+_M, M \geq 1$ then Corollary \ref{Emonoton} implies that $t {u_0} + (1-t){u_1} \in \mathcal E_\chi(X,\o), \ t \in [0,1]$.
\end{proof}

We conclude from the above proof that in fact $\mathcal E_\chi(X,\o)$ is convex for all weights $\chi$ that have the "monotonicity" property described in Corollary \ref{Emonoton}: if $u \in \mathcal E_\chi(X,\o), \ v \in \text{PSH}(X,\o)$ and $u \leq v$ then $v \in \mathcal E_\chi(X,\o)$.

Finally, we notice that for arbitrary ${u_0},{u_1} \in \text{PSH}(X,\o)$,  it may easily happen that $P({u_0},{u_1})\equiv-\infty$. If $X$ is $\Bbb CP^1$ with the Fubini-Study metric $\o_{FS}$ and ${u_0},{u_1}$ are $\o_{FS}-$Green functions with poles at different points, then clearly $P({u_0},{u_1})\equiv-\infty$.

\section{The Operator $u \to P_{[{u}]}(v)$ on $\mathcal E(X,\o)$}

In this short section we will prove Theorem \ref{E_char1}. Before we do this we need some preliminary results.

\begin{lemma} \label{weak_est}Suppose $\chi \in \mathcal W$ and ${u_0},{u_1} \in \mathcal E_\chi(X,\o)$ with ${u_0},{u_1} < 0$. For any $c >0$ and $\phi \in C^\infty(X)$ with $\phi \geq0$  we have
$$\int_X \phi (\o + i\partial\bar\partial P({u_0},{u_1} + c))^n \leq \int_X \phi (\o + i\partial\bar\partial {u_0})^n + \frac{E_\chi({u_1})\sup_X \phi}{\chi(-c)}.$$
\end{lemma}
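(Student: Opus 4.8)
The plan is to compare $P(u_0, u_1+c)$ against $u_0$ on the region where they do \emph{not} coincide, and to control the $\o$-Monge--Amp\`ere mass there using the $\chi$-energy of $u_1$. First I would reduce to the bounded case by working with the canonical cutoffs $u_0^k, u_1^k \in \mathcal H_0$ decreasing to $u_0, u_1$. By Proposition \ref{MA_form_prop}, for bounded potentials we have the one-sided partition estimate
\[
(\o + i\partial\bar\partial P(u_0^k,u_1^k+c))^n \leq \mathbbm{1}_{\{P(u_0^k,u_1^k+c)=u_0^k\}}(\o+i\partial\bar\partial u_0^k)^n + \mathbbm{1}_{\{P(u_0^k,u_1^k+c)=u_1^k+c\}}(\o+i\partial\bar\partial u_1^k)^n.
\]
Testing against $\phi \geq 0$, the first term is $\leq \int_X \phi\,(\o+i\partial\bar\partial u_0^k)^n$, so the whole problem is to bound
\[
\int_{\{P(u_0^k,u_1^k+c)=u_1^k+c\}}\phi\,(\o+i\partial\bar\partial u_1^k)^n.
\]

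The key observation is that on the set $\Lambda_k := \{P(u_0^k, u_1^k+c) = u_1^k + c\}$ one has $u_1^k + c \leq u_0^k \leq 0$, hence $u_1^k \leq -c$, i.e. $\Lambda_k \subset \{u_1^k \leq -c\}$. Since $\chi$ is increasing and negative on negatives, $\chi(u_1^k) \leq \chi(-c) < 0$ there, so $1 \leq \chi(u_1^k)/\chi(-c)$ on $\Lambda_k$. Therefore
\[
\int_{\Lambda_k}\phi\,(\o+i\partial\bar\partial u_1^k)^n \leq \frac{\sup_X \phi}{\chi(-c)}\int_{\Lambda_k}\chi(u_1^k)(\o+i\partial\bar\partial u_1^k)^n,
\]
and because $\chi(u_1^k) \leq 0$ everywhere (as $u_1 < 0$) and $\chi(-c)<0$, enlarging the domain of integration from $\Lambda_k$ to all of $X$ only increases the right-hand side (we are adding nonpositive quantities and dividing by a negative number), giving the bound $\frac{E_\chi(u_1^k)\sup_X\phi}{\chi(-c)}$. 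Combining, $\int_X \phi\,(\o+i\partial\bar\partial P(u_0^k,u_1^k+c))^n \leq \int_X\phi\,(\o+i\partial\bar\partial u_0^k)^n + \frac{E_\chi(u_1^k)\sup_X\phi}{\chi(-c)}$.

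Finally I would pass to the limit $k \to \infty$. Here I need: $u_0, u_1 \in \mathcal E(X,\o)$ (true, since $\mathcal E_\chi \subset \mathcal E$) and $P(u_0,u_1+c) \in \mathcal E(X,\o)$ by Corollary \ref{ecor}; the sequences $u_0^k$, $u_1^k$, and $P(u_0^k,u_1^k+c)$ decrease respectively to $u_0$, $u_1$, and $P(u_0,u_1+c)$, so Proposition \ref{MA_cont} gives weak convergence of the three Monge--Amp\`ere measures. Weak convergence against the continuous test function $\phi$ handles the first term and the left-hand side directly; for the energy term I use $E_\chi(u_1^k) \to E_\chi(u_1)$ from Proposition \ref{cutoff_aprox} (note $E_\chi(u_1^k)$ is negative and $\chi(-c)$ negative, so the fraction is $\geq 0$ and the direction of the inequality is preserved in the limit). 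This yields the claimed estimate. The main obstacle is the bookkeeping around signs and the one-sided nature of Proposition \ref{MA_form_prop}: one must be careful that the coincidence set $\{P = u_1+c\}$ genuinely lies in $\{u_1 \leq -c\}$ and that enlarging integration domains moves the inequality the right way; beyond that the argument is a routine cutoff-and-limit passage.
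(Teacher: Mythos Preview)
Your proof is correct and follows essentially the same route as the paper: approximate by canonical cutoffs, apply the one-sided partition estimate of Proposition \ref{MA_form_prop}, observe the inclusion $\{P(u_0^k,u_1^k+c)=u_1^k+c\}\subset\{u_1^k\leq -c\}$, use the $\chi$ trick to bound the mass there by $E_\chi(u_1^k)/\chi(-c)$, and pass to the limit via Proposition \ref{cutoff_aprox} and weak convergence of the Monge--Amp\`ere measures. The only cosmetic difference is that the paper invokes Theorem \ref{envexist} (giving $P(u_0,u_1+c)\in\mathcal E_\chi$) where you cite Corollary \ref{ecor} (giving membership in $\mathcal E$), but either suffices for the limit passage.
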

\begin{proof} Suppose $u_0^j,u_1^j \in \mathcal H_0$ are the canonical cutoffs decreasing to $u_0,u_1$. It follows that $P(u_0^j,u_1^j + c)$ decreases to $P(u_0,u_1 + c)$. Theorem \ref{envexist} implies that $P(u_0,u_1 + c)\in\mathcal E_\chi(X,\o)$  hence we have
$$\int_X \phi (\o + i\partial\bar\partial P({u_0},{u_1} + c))^n = \lim _{j \to \infty}\int_X \phi (\o + i\partial\bar\partial P(u_0^j,u_1^j + c))^n.$$
Using formula (\ref{MA_form_generalized}) we can write:
\begin{flalign*}
\int_X \phi (\o + &i\partial\bar\partial P(u_0^j,u_1^j + c))^n\leq\\
&\leq \int_{\{P(u_0^j,u_1^j + c) = u_0^j\}}\phi(\o + i\partial\bar\partial u_0^j)^n + \int_{\{P(u_0^j,u_1^j + c) = u_1^j + c\}}\phi(\o + i\partial\bar\partial u_1^j)^n\\
&\leq \int_X \phi(\o + i\partial\bar\partial u_0^j)^n + \int_{\{u_1^j < -c\}}\phi(\o + i\partial\bar\partial u_1^j)^n\\
&\leq \int_X \phi(\o + i\partial\bar\partial u_0^j)^n +\frac{\sup_X \phi}{\chi(-c)} \int_{\{u_1^j < -c\}}\chi(u_1^j)(\o + i\partial\bar\partial u_1^j)^n\\
&\leq \int_X \phi(\o + i\partial\bar\partial u_0^j)^n +\frac{\sup_X \phi}{\chi(-c)} \int_X\chi(u_1^j)(\o + i\partial\bar\partial u_1^j)^n\\
&= \int_X \phi(\o + i\partial\bar\partial u_0^j)^n +\frac{\sup_X \phi}{\chi(-c)} E_\chi(u_1^j).
\end{flalign*}
Using Proposition \ref{cutoff_aprox}, after taking the limit $j \to \infty$ in the above estimate we obtain the statement of the lemma.
\end{proof}

\begin{lemma} \label{dif_lemma} Suppose we have $u,v \in \textup{PSH}(X,\o)$ satisfying $P_{[u]}(v) \not\equiv -\infty$. Then $\inf_{\{ P_{[u]}(v) > -\infty\}}(v-P_{[u]}(v))=0.$
\end{lemma}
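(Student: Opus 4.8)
The plan is to establish the inequality ``$\ge 0$'' directly and the reverse ``$\le 0$'' by contradiction, exploiting a self-improvement property of the envelopes defining $P_{[u]}(v)$. Throughout I would set $\psi_c := P(u+c,v) = P(\min(u+c,v))$ (an increasing family in $c$, since $\min(u+c,v)$ increases and $P(\cdot)$ is monotone) and $\phi := P_{[u]}(v) = \textup{usc}(\psi_\infty)$ with $\psi_\infty := \lim_{c\to\infty}\psi_c$.

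First I would record the trivial half. Since $\psi_c \le \min(u+c,v)\le v$ for every $c$, we get $\psi_\infty \le v$, and as $v$ is upper semicontinuous also $\phi = \textup{usc}(\psi_\infty)\le v$; hence $v-\phi\ge 0$ on $\{\phi>-\infty\}$, so the infimum is $\ge 0$. It remains to rule out strict positivity. I would also note that $\phi\not\equiv-\infty$ forces $v\not\equiv-\infty$ (otherwise $\psi_c\equiv-\infty$ for all $c$) and forces $\psi_{c_0}\not\equiv-\infty$ for some $c_0$, so that $\psi_c\in\textup{PSH}(X,\o)$ is finite almost everywhere whenever $c\ge c_0$.

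For the reverse inequality, suppose toward a contradiction that $v-\phi\ge\eps$ on $\{\phi>-\infty\}$ for some $\eps>0$. Since $\{\phi=-\infty\}$ is Lebesgue-null and both $\phi+\eps$ and $v$ are $\o$-plurisubharmonic, this gives $\phi+\eps\le v$ everywhere. The crux is then the claim that $\psi_c+\eps\le\psi_{c+\eps}$ for every $c$: indeed $\psi_c+\eps$ is $\o$-plurisubharmonic and satisfies both $\psi_c+\eps\le(u+c)+\eps = u+(c+\eps)$ and $\psi_c+\eps\le\psi_\infty+\eps\le\phi+\eps\le v$, hence $\psi_c+\eps\le\min(u+(c+\eps),v)$, and the maximality defining $\psi_{c+\eps}=P(\min(u+(c+\eps),v))$ yields the claim. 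Iterating, $\psi_{c+k\eps}\ge\psi_c+k\eps$ for all $k\in\Bbb N$. Fixing $c\ge c_0$ and a point $x$ with $\psi_c(x)>-\infty$ (almost every $x$), we obtain $\phi(x)\ge\psi_\infty(x)\ge\psi_{c+k\eps}(x)\ge\psi_c(x)+k\eps\to+\infty$, so $\phi=+\infty$ almost everywhere, contradicting $\phi\le v$ with $v$ finite almost everywhere. Thus no such $\eps$ exists and the infimum equals $0$.

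The single non-routine ingredient is the self-improvement step $\psi_c+\eps\le\psi_{c+\eps}$; everything else is bookkeeping with upper envelopes together with the standard fact that an almost-everywhere inequality between $\o$-plurisubharmonic functions holds pointwise. I expect the write-up to be very short.
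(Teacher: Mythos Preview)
Your argument is correct and is essentially the same as the paper's: both hinge on the self-improvement inequality $\psi_c+\eps\le\psi_{c+\eps}$ (the paper writes this as $P(v,u+k)\le P(v,u+k+\eps)-\eps$ via the identity $P(v-\eps,u+k)=P(v,u+k+\eps)-\eps$). The only cosmetic difference is the endgame: you iterate to force $\phi=+\infty$ almost everywhere, while the paper passes to the limit directly to obtain $P_{[u]}(v)\le P_{[u]}(v)-\eps$; these are equivalent contradictions.
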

\begin{proof} Suppose $\inf_{\{ P_{[u]}(v) > -\infty\}}(v-P_{[u]}(v))>0$. Then there exists $\varepsilon >0$ such that $P_{[u]}(v) \leq v -\varepsilon$. This implies that $P(v,u+k) \leq v - \varepsilon, \ k \in \Bbb N$. From this it results that  $P(v,u+k)=P(P(v,u+k),u+k) \leq P(v-\varepsilon,u+k)= P(v,u+k+\varepsilon) - \varepsilon, \ k \in \Bbb N$. Letting $k \to \infty$, we obtain $P_{[u]}(v)\leq P_{[u]}(v)-\varepsilon$, a contradiction.
\end{proof}

Now we turn to the proof of Theorem \ref{E_char1}:

\begin{theorem} \label{Echar} Suppose ${u_0} \in \mathcal E(X,\o)$ and ${u_1} \in \textup{PSH}(X,\o)$. Then ${u_1} \in \mathcal E(X, \o)$ if and only if
$$P_{[{u_1}]}({u_0})={u_0}.$$
\end{theorem}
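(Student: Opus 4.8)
The plan is to establish the two implications separately, reducing the unbounded case $u_0\in\mathcal E(X,\o)$ to the already-known continuous case via an approximation argument combined with the envelope estimates proved in Section 3.

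For the ``if'' direction, suppose $P_{[u_1]}(u_0)=u_0$; I want to conclude $u_1\in\mathcal E(X,\o)$. The idea is to test membership in $\mathcal E$ using the total mass of the non-pluripolar Monge--Amp\`ere measure, i.e. to show $\int_X(\o+i\partial\bar\partial u_1)^n=\textup{Vol}(X)$ by comparing with the envelopes $P(u_0,u_1+c)$. Since $P_{[u_1]}(u_0)=\textup{usc}(\lim_{c\to\infty}P(u_0,u_1+c))=u_0$, the functions $P(u_0,u_1+c)$ increase a.e. to $u_0$ as $c\to\infty$. Using the partition estimate (\ref{MA_form_generalized}) (or rather its unbounded analogue obtained by the same limiting argument as in Lemma \ref{env_exist}), one has $(\o+i\partial\bar\partial P(u_0,u_1+c))^n \le \mathbbm 1_{\{P(u_0,u_1+c)=u_0\}}(\o+i\partial\bar\partial u_0)^n + \mathbbm 1_{\{P(u_0,u_1+c)=u_1+c\}}(\o+i\partial\bar\partial u_1)^n$. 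Now on the region where $u_1$ is ``very negative'' (below $-c$ roughly), the envelope must coincide with $u_1+c$, so this region captures mass of $(\o+i\partial\bar\partial u_1)^n$; meanwhile $u_0\in\mathcal E(X,\o)$ controls the total mass on the rest. Quantitatively, because $P(u_0,u_1+c)\to u_0$ a.e. and $u_0\in\mathcal E(X,\o)$, Proposition \ref{MA_cont} (applied to the increasing sequence) gives $(\o+i\partial\bar\partial P(u_0,u_1+c))^n \to (\o+i\partial\bar\partial u_0)^n$ weakly, so the total mass of $P(u_0,u_1+c)$ converges to $\textup{Vol}(X)$. Combining with the partition estimate, the mass of $(\o+i\partial\bar\partial u_1)^n$ escaping to $\{u_1<-c'\}$-type sets must be asymptotically negligible, and one extracts $\int_X(\o+i\partial\bar\partial u_1)^n\ge\textup{Vol}(X)$, hence equality, so $u_1\in\mathcal E(X,\o)$. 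Here Lemma \ref{dif_lemma} is what guarantees $P(u_0,u_1+c)$ genuinely catches up to $u_0$ on a large set and is not uniformly strictly below it.

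For the ``only if'' direction, suppose $u_1\in\mathcal E(X,\o)$; I must show $P_{[u_1]}(u_0)=u_0$. First reduce to the case $u_0,u_1<0$ by subtracting a constant (envelopes and the singularity-type equivalence are unaffected by constants, and $P_{[u_1]}(u_0)=u_0$ is a constant-invariant statement). Now I would apply Lemma \ref{weak_est}: for each $c>0$ and each $\phi\in C^\infty(X)$, $\phi\ge0$,
\begin{equation*}
\int_X \phi\,(\o+i\partial\bar\partial P(u_0,u_1+c))^n \le \int_X\phi\,(\o+i\partial\bar\partial u_0)^n + \frac{E_\chi(u_1)\sup_X\phi}{\chi(-c)},
\end{equation*}
where $\chi\in\mathcal W^-$ is chosen with $u_1\in\mathcal E_\chi(X,\o)$ (possible by (\ref{E_union})). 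As $c\to\infty$, $\chi(-c)\to-\infty$ while $E_\chi(u_1)$ is a fixed finite negative number, so the error term vanishes; letting $w:=P_{[u_1]}(u_0)=\textup{usc}(\lim_{c\to\infty}P(u_0,u_1+c))$ and using weak convergence of the Monge--Amp\`ere measures along the increasing sequence (Proposition \ref{MA_cont}, after checking $w\in\mathcal E(X,\o)$ — which follows from $w\ge P(u_0,u_1+1)\in\mathcal E(X,\o)$ by Corollary \ref{ecor} and Corollary \ref{Emonoton}), one gets $(\o+i\partial\bar\partial w)^n\le(\o+i\partial\bar\partial u_0)^n$. But $w\le u_0$, and integrating, $\int_X(\o+i\partial\bar\partial w)^n \le \int_X(\o+i\partial\bar\partial u_0)^n=\textup{Vol}(X)$ since $u_0\in\mathcal E(X,\o)$; on the other hand $w\in\mathcal E(X,\o)$ forces $\int_X(\o+i\partial\bar\partial w)^n=\textup{Vol}(X)$, so in fact $(\o+i\partial\bar\partial w)^n=(\o+i\partial\bar\partial u_0)^n$ as measures. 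By Dinew's uniqueness theorem (Theorem \ref{dinew}), $w-u_0$ is constant; combined with $w\le u_0$ and Lemma \ref{dif_lemma} (which gives $\inf(u_0-w)=0$ on the relevant set), the constant is $0$, i.e. $P_{[u_1]}(u_0)=u_0$.

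The main obstacle I anticipate is the careful handling of the unbounded Monge--Amp\`ere measures and their weak continuity along the \emph{increasing} sequences $P(u_0,u_1+c)\nearrow w$: I must verify at each stage that the relevant potentials lie in $\mathcal E(X,\o)$ (so that Proposition \ref{MA_cont} applies) and that the partition-type inequality (\ref{MA_form_generalized}), proved only for bounded potentials, survives passing to the limit — this is handled by first applying it to the canonical cutoffs $u_0^j,u_1^j\in\mathcal H_0$ and then using plurifine-locality of Monge--Amp\`ere measures (as in the proof of Theorem \ref{envexist}) together with Lemma \ref{weak_est}, but the bookkeeping of which sets carry which mass is delicate. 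A secondary subtlety is ensuring that ``$P_{[u_1]}(u_0)=u_0$'' — an equality of the \emph{usc regularization} — is correctly matched with the a.e.\ statements coming from Dinew uniqueness; Lemma \ref{dif_lemma} is precisely the tool that bridges this gap.
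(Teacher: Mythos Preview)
Your ``only if'' direction (assuming $u_1\in\mathcal E(X,\o)$ and deducing $P_{[u_1]}(u_0)=u_0$) is correct and follows the paper's proof essentially verbatim: Lemma \ref{weak_est} gives the measure inequality in the limit $c\to\infty$, equality of total masses forces equality of measures, then Dinew's uniqueness together with Lemma \ref{dif_lemma} pins down the constant as zero. One small correction: Lemma \ref{weak_est} requires \emph{both} $u_0,u_1\in\mathcal E_\chi(X,\o)$, so you should choose $\chi\in\mathcal W^-$ with $u_0,u_1\in\mathcal E_\chi(X,\o)$ (possible as in the proof of Corollary \ref{ecor}), not just $u_1$.

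Your ``if'' direction, however, has a genuine gap. You invoke Proposition \ref{MA_cont} for the increasing sequence $P(u_0,u_1+c)\nearrow u_0$, but that proposition requires the sequence to lie in $\mathcal E(X,\o)$. The only available tool for this is Corollary \ref{ecor}, which needs $u_1\in\mathcal E(X,\o)$ --- precisely what you are trying to prove. So the appeal to weak convergence of the Monge--Amp\`ere measures is circular. Even setting that aside, the partition inequality \eqref{MA_form_generalized} is an \emph{upper} bound on $(\o+i\partial\bar\partial P(u_0,u_1+c))^n$; integrating it cannot produce a \emph{lower} bound on $\int_X(\o+i\partial\bar\partial u_1)^n$. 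Concretely, integrating yields only
\[
\int_X(\o+i\partial\bar\partial P(u_0,u_1+c))^n \le \int_{\{P=u_0\}}(\o+i\partial\bar\partial u_0)^n + \int_{\{P=u_1+c\}}(\o+i\partial\bar\partial u_1)^n,
\]
and since the second set shrinks into $\{u_1=-\infty\}$ (which the non-pluripolar measure does not charge), in the limit you learn something about the $(\o+i\partial\bar\partial u_0)^n$-mass of the coincidence set, not about the total mass of $(\o+i\partial\bar\partial u_1)^n$.

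The paper's proof of the ``if'' direction takes an entirely different route: it invokes the weak geodesic ray machinery from \cite{d}. One picks $\tilde u_0\in\mathcal H$ with $\tilde u_0\ge u_0,u_1$, observes that $P_{[u_1]}(\tilde u_0)\ge P_{[u_1]}(u_0)=u_0\in\mathcal E(X,\o)$ so $P_{[u_1]}(\tilde u_0)\in\mathcal E(X,\o)$ by monotonicity, and then uses the characterization \cite[Theorem 2(iii)]{d} of $\mathcal E(X,\o)$ via constancy of the ray $t\mapsto v(\tilde u_0,u_1)_t$, together with the identity $v(\tilde u_0,u_1)=v(\tilde u_0,P_{[u_1]}(\tilde u_0))$. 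In effect the argument reduces to the already-known continuous case (the quoted Theorem from \cite{d}), but the reduction goes through the ray construction rather than through a direct mass estimate.
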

\begin{proof}We can assume that $u_0,u_1 < 0$. Suppose ${u_1} \in \mathcal E(X,\o)$. As explained in the proof of Corollary \ref{ecor}, one can find $\chi \in \mathcal W^-$ such that ${u_0},{u_1} \in \mathcal E_\chi(X,\o)$. By Theorem \ref{envexist} we have $P({u_0},{u_1} +k) \in \mathcal E_\chi(X,\o), \ k \in \Bbb N$. By the continuity of the Monge-Amp\`ere operator under increasing limits taken within the class $\mathcal E(X,\o)$, we have $(\o + i\partial \bar \partial P({u_0},{u_1} + k))^n \to (\o + i\partial \bar \partial P_{[{u_1}]}({u_0}))^n$ weakly as $k \to \infty$. Using this, as $\chi(-\infty)=-\infty$, from Lemma \ref{weak_est} it follows that
$$(\o + i\partial \bar \partial P_{[{u_1}]}({u_0}))^n\leq(\o + i\partial \bar \partial {u_0})^n.$$
Since both ${u_0}$ and $P_{[{u_1}]}({u_0})$ are in $\mathcal E_\chi(X,\o)$, both of the measures above integrate to $\text{Vol}(X)$ over $X$. Hence, we actually have equality in the above estimate. Now S. Dinew's uniqueness result (Theorem \ref{dinew}) and Lemma \ref{dif_lemma} yields that $P_{[{u_1}]}({u_0})={u_0}$.

For the other direction we use the results of \cite{d}. To be more precise, let $\tilde u_0 \in \mathcal H$ such that $\tilde u_0 \geq {u_0}$ and $\tilde u_0 \geq {u_1}$. We will argue that the geodesic ray $t \to v(\tilde u_0,{u_1})_t$ constructed by the method of \cite{d} is constant equal to $\tilde u_0$, thus implying that ${u_1} \in \mathcal E(X,\o)$ \cite[Theorem 2(iii)]{d}.

Let us recall that $(0,\infty) \ni t \to v(\tilde u_0,{u_1})_t \in
\mathcal H_0$ is a decreasing weak geodesic ray that is constructed as the increasing limit of the weak geodesic segments joining $\tilde u_0$ with $\max({u_1},\tilde u_0-k)$ parameterized by the segment $(0,k)$ \cite[Section 4]{d}. We also know that $v_\infty := \lim_{t \to \infty}v(\tilde u_0,{u_1})_t  \geq {u_1}$.

From \cite[Proposition 5.1]{d} it follows that $P_{[v_\infty]}(\tilde u_0) = v_\infty$, hence
$${u_1}\leq P_{[{u_1}]}(\tilde u_0)\leq P_{[v_\infty]}(\tilde u_0) = v_\infty.$$
If follows from the  method of constructing the ray $t \to v(\tilde u_0,{u_1})_t$ that for any $\alpha \in \text{PSH}(X,\o)$ with ${u_1} \leq \alpha \leq v_\infty$ we have $v(\tilde u_0,{u_1}) = v(\tilde u_0,\alpha)$. Using this, by the last estimate we have
$$v(\tilde u_0,P_{[{u_1}]}(\tilde u_0))_t=v(\tilde u_0,{u_1})_t,$$
for any $t \in (0,\infty)$. Since $P_{[{u_1}]}({u_0}) \leq P_{[{u_1}]}(\tilde u_0)$, we have  $P_{[{u_1}]}(\tilde u_0)\in \mathcal E(X,\o)$, hence by \cite[Theorem 2(iii)]{d} it follows that $t \to v(\tilde u_0,P_{[{u_1}]}(\tilde u_0))_t$ is constant equal to $\tilde u_0$, implying that $ t \to v(\tilde u_0,{u_1})_t$ is constant as well. As mentioned in the beginning, applying \cite[Theorem 2(iii)]{d} again, we obtain that ${u_1} \in \mathcal E(X,\o)$.
\end{proof}

\section{Weak Geodesic Segments in $\text{PSH}(X,\o)$}

Recall that given $u_0,u_1 \in  \text{PSH}(X,\o)$ and decreasing approximating sequences $u^k_0, u^k_1 \in \mathcal H$, we define the "candidate" weak geodesic $(0,1) \ni t \to u_t \in \text{PSH}(X,\o)$ joining $u_0$ and $u_1$ by the formula:
\begin{equation}\label{udef_new}
u_t = \lim_{k \to + \infty}u^k_t, \ t \in (0,1),
\end{equation}
where $(0,1)\ni t \to u^k_t \in \mathcal H_\Delta$ are the weak geodesics joining $u^k_0, u^k_1$. We observe that this definition is independent of the choice of approximating sequences, as $u(s,x)=u_{ \textup{Re }s}(x) \in PSH(S \times X, \tilde \o)$ is the upper envelope of the family $\mathcal S$:
\begin{equation}\label{udef1_new}
u= \sup_{v \in \mathcal S}v,
\end{equation}
where $\mathcal S$ is the following set of subgeodesics:
$$ \mathcal S = \{ (0,1) \ni t \to v_t \in \text{PSH}(X,\o) \textup{ is a subgeodesic with }\lim_{t \to 0,1}v_t \leq u_{0,1} \}.$$

By (\ref{udef1}) and (\ref{udef1_new}) it is clear that when $u_0,u_1 \in \mathcal H_0$ the weak geodesic defined in Section 2.2 and (\ref{udef_new}) are the same. Before we prove Theorem \ref{geod_constr} we make and elementary observation about convex functions that will turn out to be quite useful.

\begin{lemma} \label{easy_lemma}Given a bounded convex function $f:(0,1) \to \Bbb R$ we have
$$\lim_{t \to 0}f(t) = \lim_{\tau \to - \infty }\inf_{t \in (0,1)}(f(t)-\tau t).$$
\end{lemma}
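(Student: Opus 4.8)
The plan is to identify both sides of the claimed identity with the quantity $f(0^{+}):=\lim_{t\to 0^{+}}f(t)$. First I would record the elementary facts about bounded convex $f$: being convex on the open interval $(0,1)$, $f$ is continuous there with finite one-sided derivatives at every interior point, and its (right) derivative $f'$ is nondecreasing; being bounded, the eventually sign-constant $f'$ forces $f$ to be monotone on a right-neighbourhood of $0$, so $f(0^{+})$ exists and is a finite real number (this is the left-hand side). Setting $g(\tau):=\inf_{t\in(0,1)}(f(t)-\tau t)$, note that for $t>0$ the quantity $f(t)-\tau t$ is nonincreasing in $\tau$, so $g$ is nonincreasing and $\lim_{\tau\to-\infty}g(\tau)$ exists in $(-\infty,+\infty]$; it then suffices to prove two one-sided estimates.

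For $\limsup_{\tau\to-\infty}g(\tau)\le f(0^{+})$ I would simply test the infimum at a cleverly chosen point. Given $\varepsilon>0$, pick $\delta\in(0,1)$ with $|f(t)-f(0^{+})|<\varepsilon$ for $t\in(0,\delta)$; for $\tau<0$ with $|\tau|>\varepsilon/\delta$ put $t_\tau:=-\varepsilon/\tau\in(0,\delta)$. Then $g(\tau)\le f(t_\tau)-\tau t_\tau=f(t_\tau)+\varepsilon\le f(0^{+})+2\varepsilon$, and letting $\varepsilon\to 0$ gives the claim.

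For $\liminf_{\tau\to-\infty}g(\tau)\ge f(0^{+})$ I would use a supporting line of $f$. Fix $t_0\in(0,1)$ and let $p:=f'(t_0)$; convexity gives $f(t)\ge f(t_0)+p(t-t_0)$ for all $t\in(0,1)$, hence $f(t)-\tau t\ge (f(t_0)-pt_0)+(p-\tau)t$. As soon as $\tau\le p$ the coefficient $p-\tau$ is $\ge 0$, so $f(t)-\tau t\ge f(t_0)-pt_0$ for every $t\in(0,1)$, i.e. $g(\tau)\ge f(t_0)-t_0f'(t_0)$; consequently $\liminf_{\tau\to-\infty}g(\tau)\ge f(t_0)-t_0f'(t_0)$ for every fixed $t_0$. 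Letting $t_0\to 0^{+}$, the term $f(t_0)$ tends to $f(0^{+})$, and the chord-slope bounds $\frac{2\bigl(f(t_0)-f(t_0/2)\bigr)}{t_0}\le f'(t_0)\le \frac{f(1/2)-f(t_0)}{1/2-t_0}$, whose two sides both tend to $0$ since $f(t_0),f(t_0/2)\to f(0^{+})$, force $t_0f'(t_0)\to 0$; thus $\liminf_{\tau\to-\infty}g(\tau)\ge f(0^{+})$, and combined with the previous estimate the lemma follows.

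The step I expect to be the main obstacle is exactly the vanishing $t_0f'(t_0)\to 0$ as $t_0\to 0^{+}$: this would fail for an unbounded convex function (for instance one with $f'(t)\sim -1/t$ near $0$), so it is precisely here that the boundedness hypothesis is essential, and it is dispatched by the two elementary chord-slope inequalities above together with continuity of $f$ up to the endpoint.
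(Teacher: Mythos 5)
Your argument is correct, and in the only nontrivial direction it takes a genuinely different route from the paper's. Writing $g(\tau)=\inf_{t\in(0,1)}(f(t)-\tau t)$, the inequality $\lim_{\tau\to-\infty}g(\tau)\le\lim_{t\to0}f(t)$ is immediate in both treatments (for each fixed $\tau$ one has $g(\tau)\le f(t)-\tau t$ for all $t$, and letting $t\to0$ already gives $g(\tau)\le f(0^{+})$, so your moving test point $t_\tau=-\varepsilon/\tau$ is more than is needed). For the reverse inequality the paper argues through the minimizer: it splits into the case where $f'$ is bounded below on $(0,1/2]$ --- then for $\tau$ negative enough $t\mapsto f(t)-\tau t$ is increasing, so the infimum equals $\lim_{t\to0}f(t)$ --- and the case where $f'$ is unbounded below, where for $\tau<\min\{f'(1/2),0\}$ the infimum is attained at some $t_\tau\in(0,1/2)$ with $t_\tau\to0$, whence $g(\tau)\ge f(t_\tau)-\tau t_\tau\ge f(t_\tau)\to f(0^{+})$. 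You instead avoid any case distinction by a supporting-line (Legendre-type) bound, $g(\tau)\ge f(t_0)-t_0f'(t_0)$ for every $\tau\le f'(t_0)$, and reduce the lemma to the auxiliary fact that $t_0f'(t_0)\to0$ as $t_0\to0^{+}$ for a bounded convex $f$, proved by sandwiching with chord slopes; this isolates where boundedness enters, whereas the paper uses boundedness only through the finiteness of $f(0^{+})$. One phrasing slip worth correcting: the two sides of your displayed chord-slope inequality need not themselves tend to $0$ (for $f(t)=-\sqrt t$ the lower one blows up like $-1/\sqrt{t_0}$); what tends to $0$ are the two bounds after multiplying through by $t_0$, namely $2\bigl(f(t_0)-f(t_0/2)\bigr)$ and $t_0\bigl(f(1/2)-f(t_0)\bigr)/(1/2-t_0)$, and that is exactly the sandwich your conclusion $t_0f'(t_0)\to0$ requires, so the argument itself is sound.
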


\begin{proof} The estimate $\lim_{t \to 0}f(t) \geq \lim_{\tau \to - \infty }\inf_{t \in (0,1)}(f(t)-\tau t)$ is clear. Now we deal with the reverse estimate. If $f'$ is bounded below on $(0,1/2]$ then we are done, since for negative enough $\tau$ the map $t \to f(t) -\tau t$ is increasing. If $f'$ is unbounded on $(0,1/2]$, then for any $\tau < \min\{f'(1/2),0\}$ there exists $t_\tau \in (0,1/2)$ such that $\inf_{t \in (0,1)}(f(t)-\tau t)=f(t_\tau) -\tau t_\tau \geq f(t_\tau).$ Clearly $t_\tau \to 0$ as $\tau \to -\infty$, hence $ \lim_{\tau \to - \infty }\inf_{t \in (0,1)}(f(t)-\tau t) \geq \lim_{t \to 0}f(t).$
\end{proof}

\begin{theorem} \label{boundary_limit}Suppose $u_0,u_1 \in \textup{PSH}(X,\o)$, $u_0,u_1 \not\equiv -\infty$. Then for the curve $t \to u_t$ defined in (\ref{udef_new}) we have:
\begin{enumerate}
\item[(i)] $u \not\equiv -\infty$ if and only if $P(u_0,u_1) \not\equiv -\infty.$
\item[(ii)] $\lim_{t \to 0} u_t = u_0$ in capacity if and only if $P_{[u_1]}(u_0)= u_0$.
\item[(iii)] $\lim_{t \to 1} u_t = u_1$ in capacity if and only if $P_{[u_0]}(u_1) = u_1$.
\end{enumerate}
\end{theorem}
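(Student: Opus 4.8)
The plan is to exploit the Legendre transform identity \eqref{legenv} — which holds for bounded data and passes to the limit under the decreasing approximation \eqref{udef_new} — to translate statements about the boundary behavior of $t\to u_t$ into statements about the envelopes $P(u_0,u_1)$ and $P_{[u_1]}(u_0)$. Concretely, for the approximants $u^k_0,u^k_1\in\mathcal H$ we have $(u^k)^*_\tau=\inf_{t\in(0,1)}(u^k_t-\tau t)=P(u^k_0,u^k_1-\tau)$, and letting $k\to\infty$ gives
\begin{equation}\label{pp1}
u^*_\tau:=\inf_{t\in(0,1)}(u_t-\tau t)=P(u_0,u_1-\tau),\qquad \tau\in\Bbb R,
\end{equation}
as a decreasing limit. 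For (i): taking $\tau=0$ in \eqref{pp1} shows $u^*_0=P(u_0,u_1)$, and since the map $t\to u_t(x)$ is convex (being a decreasing limit of convex functions), $u\equiv-\infty$ iff $u^*_0\equiv-\infty$, i.e. iff $P(u_0,u_1)\equiv-\infty$. This is the short part.

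For (ii) I would apply Lemma \ref{easy_lemma} pointwise in $x$ to the bounded convex function $t\to u^k_t(x)$ (bounded because the $u^k_i$ are smooth), obtaining $\lim_{t\to0}u^k_t=\lim_{\tau\to-\infty}(u^k)^*_\tau=\lim_{\tau\to-\infty}P(u^k_0,u^k_1-\tau)$. The monotone limits in $k$ and the envelope manipulations should commute to yield, at the level of $u$,
\begin{equation}\label{pp2}
\lim_{t\to0}u_t=\textup{usc}\Big(\lim_{\tau\to-\infty}u^*_\tau\Big)=\textup{usc}\Big(\lim_{\tau\to-\infty}P(u_0,u_1-\tau)\Big)=\textup{usc}\Big(\lim_{c\to\infty}P(u_1+c,u_0)\Big)=P_{[u_1]}(u_0),
\end{equation}
where the equality with \eqref{envdef} is just relabeling $c=-\tau$. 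Thus the pointwise decreasing limit $\lim_{t\to0}u_t$ always equals $P_{[u_1]}(u_0)$ (up to usc regularization), so if the convergence $u_t\to u_0$ holds in capacity then in particular $P_{[u_1]}(u_0)=u_0$; conversely, if $P_{[u_1]}(u_0)=u_0$ one must upgrade the pointwise/monotone convergence to convergence in capacity. The reverse direction would follow from: a decreasing sequence of $\o$-psh functions whose limit lies in $\mathcal E(X,\o)$ converges in capacity (this uses the relation between capacity and Monge--Amp\`ere mass of sublevel sets, cf. \cite{gz2}), together with the observation that once $P_{[u_1]}(u_0)=u_0$ we have $u_0\in\mathcal E(X,\o)$ automatically or we are in a situation where the relevant monotone convergence is controlled; in the unbounded case one first truncates $u_0$ at level $-h$, handles $\max(u_t,-h)$, and sends $h\to\infty$. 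Part (iii) is identical after swapping the roles of the endpoints via the symmetry $t\to1-t$.

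The main obstacle I expect is the upgrade from monotone (hence $L^1$) convergence to convergence in capacity in the "if" directions of (ii) and (iii), especially without the a priori knowledge that $u_0$ or $u_t$ lies in a finite energy class. The clean monotone-limit identities \eqref{pp1}--\eqref{pp2} are essentially formal once one checks that decreasing limits commute with $P(\cdot,\cdot)$ and with $\inf_t$ (the former because $\textup{usc}$ of a decreasing limit of candidates is still a candidate, the latter by a diagonal/monotone argument), but capacity convergence is genuinely stronger than $L^1$, so one needs a quantitative input: a Hartogs-type lemma in capacity for decreasing sequences, or the Guedj--Zeriahi estimate bounding $\textup{Cap}(\{u_t<u_0-\varepsilon\})$ by the non-pluripolar mass that escapes to $\{u_0=-\infty\}$, which vanishes precisely when the relevant limit recovers $u_0$. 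Assembling this carefully — and making sure the truncation argument at level $-h$ interacts correctly with the envelope $P_{[u_1]}(\cdot)$ and with taking $t\to0$ — is where the real work lies; everything else reduces to the Legendre transform dictionary and Lemma \ref{easy_lemma}.
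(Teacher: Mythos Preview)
Your handling of part (i) and of the ``only if'' direction of (ii) via the Legendre transform identity \eqref{pp1} and Lemma \ref{easy_lemma} is essentially the paper's argument, and your observation \eqref{pp2} that $\lim_{t\to 0}u_t$ always agrees (a.e.) with $P_{[u_1]}(u_0)$ is a clean way to package it.

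The genuine gap is in the ``if'' direction of (ii). Two of your proposed ingredients fail. First, $t\to u_t$ is \emph{not} decreasing as $t\to 0$: convexity in $t$ gives no monotonicity, so you cannot invoke any ``decreasing sequences converge in capacity'' principle. Second, you have no finite-energy hypothesis on $u_0$ (the theorem is stated for arbitrary $u_0,u_1\in\textup{PSH}(X,\o)$), so the $\mathcal E(X,\o)$-based argument and the truncation-at-level-$h$ idea do not get off the ground; the condition $P_{[u_1]}(u_0)=u_0$ does not by itself force $u_0\in\mathcal E(X,\o)$.

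What the paper actually does here is a two-sided sandwich that bypasses both issues. The lower bound comes from the envelope description \eqref{udef1_new}: for every $c$, the curve $t\to P(u_0,u_1+c)-ct$ is a subgeodesic with admissible boundary data, hence $P(u_0,u_1+c)-ct\leq u_t$. The upper bound $u_t-u_0\leq t(u_1-u_0)$ is just convexity in $t$. Together these give
\[
\{|u_t-u_0|>\varepsilon\}\subset\{|P(u_0,u_1+c)-u_0|>\varepsilon-ct\}\cup\{|u_1-u_0|>\varepsilon/t\},
\]
and one estimates the capacity of each piece directly: the second goes to zero as $t\to 0$ since $\textup{Cap}\{u_i<-R\}\to 0$, and the first goes to zero as $c\to\infty$ because $P(u_0,u_1+c)\nearrow u_0$ off a pluripolar set (this is exactly the hypothesis $P_{[u_1]}(u_0)=u_0$), and increasing sequences of quasi-psh functions converge in capacity. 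This sandwich is the missing idea; once you have it, no energy-class or monotonicity-in-$t$ assumption is needed.
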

\begin{proof} We can suppose throughout the proof that $u_0,u_1 \leq 0$. By approximating with a decreasing sequence of bounded weak geodesics, it is easily seen that formula (\ref{legenv}) also holds for our possibly unbounded weak geodesic $t \to u_t$, that is
\begin{equation}\label{geod_env_new}
P(u_0,u_1-\tau) = \inf_{t \in (0,1)} (u_t - \tau t),
\end{equation}
for all $\tau \in \Bbb R$. By convexity in the $t$-variable, $u \not\equiv-\infty$ if and only if $\inf_{t \in (0,1)} u_t \not\equiv -\infty$, which in turn is equivalent to $P(u_0,u_1) \not\equiv -\infty$. This proves (i).

Now we turn to the proof of (ii). We assume first that $P_{[u_1]}(u_0)=u_0$. Notice that, by formula (\ref{udef1_new}), for any $c \in \Bbb R$ we have
$$P(u_0,u_1 + c) - ct \leq u_t, \ t \in (0,1).$$
By convexity in the $t$ variable we can further write:
$$P(u_0,u_1 + c)-u_0 - ct \leq u_t - u_0 \leq t(u_1 - u_0), \ t \in (0,1).$$
This implies that:
$$\{ |u_t - u_0| > \varepsilon\} \subset \{ |P(u_0,u_1 +c)-u_0| > \varepsilon - ct\} \cup \{ |u_1 - u_0| > \varepsilon/t\},$$
for any $t \in (0,1)$. Since the capacity is subadditive we can write:
\begin{flalign*}
\lim_{ t \to 0} \text{Cap}&\{ |u_t - u_0| > \varepsilon\}\leq\\
&\leq \limsup_{ t \to 0} \text{Cap}\{ |P(u_0,u_1 +c)-u_0| > \varepsilon - ct\} + \limsup_{ t \to 0}\text{Cap}\{ |u_1 - u_0| > \varepsilon/t\}\\
&\leq \text{Cap}\{ |P(u_0,u_1 +c)-u_0| > \varepsilon/2\} + \limsup_{t \to 0 }\text{Cap}\{ |u_1 - u_0| > \varepsilon/t\}.
\end{flalign*}
The last limit is zero as we have:
\begin{flalign*}
\limsup_{t \to 0 }\text{Cap}\{ |u_1 - u_0| > \varepsilon/t\}&\leq \limsup_{t \to 0 }\text{Cap}\{ |u_1| + |u_0| > \varepsilon/t\}\\
&\leq\lim_{t \to 0 }(\text{Cap}\{ u_0 < -\varepsilon/2t\} + \text{Cap}\{ u_1 < -\varepsilon/2t\})=0.
\end{flalign*}
Summing up we have
$$\lim_{ t \to 0} \text{Cap}\{ |u_t - u_0| > \varepsilon\}\leq \text{Cap}\{ |P(u_0,u_1 +c)-u_0| > \varepsilon/2\}, \ c \in \Bbb R.$$
Our assumption implies that $P(u_0,u_1 + c)$ increases to $u_0$ outside a set of zero capacity zero as $c \to + \infty$ (negligible sets are pluripolar). It is well known (using quasi-continuity for instance) that this implies
$$\lim_{c \to + \infty}\text{Cap}\{ |P(u_0,u_1 +c)-u_0| > \varepsilon/2\}=0,$$
proving that $\lim_{t \to 0}u_t=u_0$ in capacity.

Now we prove the other direction. As noted earlier, $\lim_{t \to 0}u_t=u_0$ in capacity implies convergence in $L^1(X)$. Using convexity of $u$ in the $t$ variable again , we obtain that $u_t(x) \to u_0(x)$ for any $x \in X$ outside a set $E$ of Lebesgue measure $0$. Lemma \ref{easy_lemma} now implies that:
$$u_0(x)= \lim_{\tau \to - \infty }\inf_{t \in (0,1)}(u_t(x)-\tau t), \ x \in X \setminus E.$$
Additionally, formula (\ref{geod_env_new}) coupled with the fact that negligible sets are pluripolar tells us that for any $x \in X$ outside a set $C$ of capacity zero, we have
$$\lim_{\tau \to - \infty }\inf_{t \in (0,1)}(u_t(x)-\tau t)= \lim_{\tau \to - \infty }P(u_0, u_1 - \tau)(x) =P_{[u_1]}(u_0)(x).$$

Putting the last two formulas together we obtain that $u_0(x)= P_{[u_1]}(u_0)(x)$ for any $x \in X$ outside $E \cup C$. As $\textup{Cap}(C)=0$, it follows that $C$ has Lebesgue measure zero, implying that  $E \cup C$ has Lebesgue measure zero as well. From this  it results that $u_0 = P_{[u_1]}(u_0)$ globally, finishing the proof of (ii). The proof of part (iii) is carried out the same way.
\end{proof}

The next corollary can be extracted from the proof of the previous theorem.

\begin{corollary} Suppose that $u_0,u_1 \in \text{PSH}(X,\o)$ and for the curve $t \to u_t$ defined in (\ref{udef_new}) we have $u \neq -\infty$. The following are equivalent:
\begin{enumerate}
\item[(i)] $P_{[u_1]}(u_0)= u_0$.
\item[(ii)] $\lim_{t \to 0} u_t = u_0$ in capacity.
\item[(iii)] $\lim_{t \to 0} u_t = u_0$ in $L^1(X)$.
\item[(iv)] $\lim_{t \to 0} u_t(x) = u_0(x)$ for any $x \in X$ outside a set of Lebesgue measure zero.
\end{enumerate}
The analogous statement for limits at $t=1$ is also true.
\end{corollary}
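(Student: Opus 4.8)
The plan is to prove the four conditions equivalent by running the cycle $(i) \Rightarrow (ii) \Rightarrow (iii) \Rightarrow (iv) \Rightarrow (i)$, each arrow being either a general fact or a fragment already contained in the proof of Theorem \ref{boundary_limit}(ii). Normalize $u_0,u_1 \le 0$. Two structural observations underlie everything. First, since the complexification $u$ is $\tilde\omega$-plurisubharmonic and invariant in the imaginary direction, $t \mapsto u_t(x)$ is convex on $(0,1)$ for each fixed $x$; moreover, writing $u$ as the decreasing limit of bounded weak geodesics $u^k$ and using convexity with boundary data $u_0^k,u_1^k$, one gets $u_t(x) \le (1-t)u_0(x) + t u_1(x)$ and $u_t(x) \ge P(u_0,u_1)(x)$ for all $x$. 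Second, because $u \not\equiv -\infty$ we have $P(u_0,u_1) \not\equiv -\infty$ by Theorem \ref{boundary_limit}(i), and every exceptional set met below is pluripolar, hence of zero Lebesgue measure; thus for $x$ outside a fixed Lebesgue-null set, $t \mapsto u_t(x)$ is a \emph{bounded} convex function.

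The implication $(i) \Rightarrow (ii)$ is precisely the first half of the proof of Theorem \ref{boundary_limit}(ii): from $P_{[u_1]}(u_0)=u_0$ one bounds $\mathrm{Cap}\{|u_t-u_0|>\varepsilon\}$ above using $P(u_0,u_1+c)-ct \le u_t \le (1-t)u_0 + t u_1$, subadditivity of capacity, and the fact that $P(u_0,u_1+c) \nearrow u_0$ off a pluripolar set. The implication $(ii) \Rightarrow (iii)$ is the general fact, recalled in the introduction, that convergence in capacity implies $L^1(X)$ convergence. For $(iii) \Rightarrow (iv)$: by the structural observations, $\ell(x) := \lim_{t\to 0} u_t(x)$ exists in $[-\infty, u_0(x)]$ for every $x$; choosing a subsequence $t_j \to 0$ along which $L^1$ convergence gives a.e.\ convergence to $u_0$ forces $\ell = u_0$ outside a Lebesgue-null set $E$, which is (iv).

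Finally, $(iv) \Rightarrow (i)$ is the second half of the proof of Theorem \ref{boundary_limit}(ii): Lemma \ref{easy_lemma} applied to the bounded convex functions $t \mapsto u_t(x)$ yields $u_0(x) = \lim_{\tau\to-\infty}\inf_{t\in(0,1)}(u_t(x)-\tau t)$ for $x \notin E$; formula \eqref{geod_env_new} identifies the inner infimum with $P(u_0,u_1-\tau)(x)$, and letting $\tau \to -\infty$ recovers $P_{[u_1]}(u_0)(x)$, all valid off an additional pluripolar (hence Lebesgue-null) set. Thus $u_0$ agrees a.e.\ with the $\omega$-plurisubharmonic function $P_{[u_1]}(u_0)$, whence they coincide everywhere. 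I do not anticipate a genuine obstacle: the only point needing care is the bookkeeping of the various exceptional sets so that the convexity-in-$t$, $L^1$, and envelope arguments can be performed simultaneously off a single Lebesgue-null set, and checking that $t\mapsto u_t(x)$ is bounded below for a.e.\ $x$ so that Lemma \ref{easy_lemma} applies — both follow from $u\not\equiv -\infty$. The statement at $t=1$ follows verbatim after interchanging $u_0$ and $u_1$ and reversing the time parameter.
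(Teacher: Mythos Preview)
Your proposal is correct and follows essentially the same route as the paper: the cycle $(i)\Rightarrow(ii)\Rightarrow(iii)\Rightarrow(iv)\Rightarrow(i)$, with the first and last implications lifted from the two halves of the proof of Theorem~\ref{boundary_limit}(ii), $(ii)\Rightarrow(iii)$ the general fact that capacity convergence implies $L^1$ convergence, and $(iii)\Rightarrow(iv)$ via convexity in $t$. Your added care about boundedness of $t\mapsto u_t(x)$ (so that Lemma~\ref{easy_lemma} applies) and the bookkeeping of null sets is a welcome elaboration but not a departure from the paper's argument.
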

\begin{proof} By the previous theorem (i) and (ii) are equivalent. Clearly, (ii) implies (iii), which in turn implies (iv), since $u$ is convex in the $t-$variable.
Finally, the direction (iv)$\to$(i) follows from the last part of the proof of the previous theorem.
\end{proof}

We say that $u_0,u_1 \in \text{PSH}(X,\o)$,  $u_0,u_1 \not\equiv -\infty$ can be connected with a weak geodesic if for the curve $t \to u_t$ defined in (\ref{udef_new}) we have $u \not\equiv -\infty$ and $\lim_{t \to 0,1} u_t = u_{0,1}$ in capacity.
By putting together Theorem \ref{envexist}, Theorem \ref{Echar} and Theorem \ref{boundary_limit} we obtain that for certain weights $\chi$, the  elements of $\mathcal E_\chi(X,\o)$ can always be connected with a weak geodesic segment passing through $\mathcal E_\chi(X,\o)$.

\begin{corollary}\label{energy_geod1}Suppose $\chi \in \mathcal W^- \cup\mathcal W^+_M, M\geq1$ and $u_0,u_1 \in \mathcal E_\chi(X,\o)$. Then for the curve $t \to u_t$ defined in (\ref{udef_new}) we have:
\begin{enumerate}
\item[(i)] $u_t \in \mathcal E_\chi(X,\o)$ for all $t \in (0,1).$ More precisely, if $N \in \Bbb R$ satisfies $\chi \circ u_0,\chi \circ u_1 \leq N$ then:
\begin{equation}\label{geod_energy}
    E_\chi(u_t) \geq C(E_\chi(u_0) + E_\chi(u_1) - N Vol(X)),
\end{equation}
where $C$  depends only on $M$ and $\dim X$.
\item[(ii)]$\lim_{t \to t_0} u_t = u_{t_0}$ in capacity for any $t_0 \in [0,1]$.
\end{enumerate}
\end{corollary}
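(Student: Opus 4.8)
We obtain both statements from the machinery already in place, namely Theorem~\ref{envexist}, the fundamental estimate (Proposition~\ref{Energy_est}), the characterization of $\mathcal E(X,\o)$ in Theorem~\ref{Echar}, and the boundary dichotomy of Theorem~\ref{boundary_limit}. The first step is to record, for the curve $t\to u_t$ of \eqref{udef_new}, the sandwich
$$P(u_0,u_1)\le u_t\le (1-t)u_0+t u_1,\qquad t\in(0,1).$$
The right-hand inequality is convexity of $u$ in $t$, inherited from the bounded approximants; the left-hand one holds because the subgeodesic $s\mapsto P(u_0,u_1)$, constant in $s$, belongs to the family $\mathcal S$ of \eqref{udef1_new}. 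In particular $u_t\not\equiv-\infty$ for $t\in(0,1)$, since $P(u_0,u_1)\in\mathcal E_\chi(X,\o)$ by Theorem~\ref{envexist}.

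For (i): as $u_0,u_1\in\mathcal E_\chi(X,\o)$, Theorem~\ref{envexist} gives $P(u_0,u_1)\in\mathcal E_\chi(X,\o)$, and then the left inequality of the sandwich together with the monotonicity property (Corollary~\ref{Emonoton}) forces $u_t\in\mathcal E_\chi(X,\o)$ for every $t\in(0,1)$. For the quantitative bound \eqref{geod_energy} we may assume $u_0,u_1\le 0$ (the general case follows by subtracting a constant, using the standard comparison inequalities for weights in $\mathcal W^-\cup\mathcal W^+_M$, which allow adding constants at the cost of the constant $C$). Then $P(u_0,u_1)\le u_t\le 0$ with both functions in $\mathcal E_\chi(X,\o)$, so Proposition~\ref{Energy_est} yields $E_\chi(P(u_0,u_1))\le C_0\,E_\chi(u_t)$ with $C_0=C_0(M,\dim X)$; combining this with the lower bound \eqref{en_est} for $E_\chi(P(u_0,u_1))$ from Theorem~\ref{envexist} gives \eqref{geod_energy} with $C=1/C_0$.

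For (ii): when $t_0\in\{0,1\}$ this is immediate, since $u_0,u_1\in\mathcal E_\chi(X,\o)\subset\mathcal E(X,\o)$, so Theorem~\ref{Echar} gives $P_{[u_1]}(u_0)=u_0$ and $P_{[u_0]}(u_1)=u_1$, whence Theorem~\ref{boundary_limit}(ii)--(iii) provides $\lim_{t\to 0}u_t=u_0$ and $\lim_{t\to 1}u_t=u_1$ in capacity. For $t_0\in(0,1)$ we use that the restriction of $t\to u_t$ to any subinterval $[a,b]\subset(0,1)$, after affine reparametrization to $[0,1]$, is precisely the weak geodesic joining $u_a$ and $u_b$ in the sense of \eqref{udef_new}; this reduces to the analogous statement for bounded weak geodesics (uniqueness for the Dirichlet problem \eqref{bvp_Bern}) by passing to the decreasing limit in the approximants. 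By part~(i), $u_a,u_b\in\mathcal E(X,\o)$ whenever $a,b\in(0,1)$, so Theorem~\ref{Echar} again yields $P_{[u_b]}(u_a)=u_a$ and $P_{[u_a]}(u_b)=u_b$. Applying Theorem~\ref{boundary_limit}(ii) to the geodesic on $[t_0,t_2]$ with $t_0<t_2<1$ gives $\lim_{t\to t_0^+}u_t=u_{t_0}$ in capacity, and applying Theorem~\ref{boundary_limit}(iii) to the geodesic on $[t_1,t_0]$ with $0<t_1<t_0$ gives $\lim_{t\to t_0^-}u_t=u_{t_0}$ in capacity; combining the two one-sided limits finishes (ii).

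The energy bookkeeping in (i) and the sandwich inequality are routine. The point worth isolating is that, once part~(i) places every interior value $u_t$ into $\mathcal E(X,\o)$, the hypothesis $P_{[\cdot]}(\cdot)=\cdot$ of Theorem~\ref{boundary_limit} is satisfied automatically via Theorem~\ref{Echar}, so convergence in capacity at each $t_0$ is forced. The only genuinely technical step is checking that restrictions of the possibly unbounded weak geodesic to subintervals are again weak geodesics, which, as indicated, follows from the bounded case by monotone approximation.
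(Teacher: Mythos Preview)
Your proof is correct and follows essentially the same route as the paper: the lower bound $u_t\ge P(u_0,u_1)$ combined with Theorem~\ref{envexist}, Proposition~\ref{Energy_est}, and Corollary~\ref{Emonoton} for part~(i), and Theorem~\ref{Echar} feeding into Theorem~\ref{boundary_limit} for part~(ii). The only cosmetic difference is that for interior $t_0$ the paper restricts to $[0,t_0]$ (keeping the original endpoint $u_0$) whereas you restrict to $[t_1,t_0]$ and $[t_0,t_2]$ with both endpoints interior; either choice works once part~(i) places all interior $u_t$ in $\mathcal E(X,\o)$.
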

\begin{proof} Formula (\ref{geod_env_new}) implies that $u_t \geq P(u_0,u_1)$ for any $t_0 \in [0,1]$. Part (i) follows now from Theorem \ref{envexist}, Proposition \ref{Energy_est} and Proposition \ref{Emonoton}.
When $t_0 =0$ or $t_0 =1$, part (ii) is a consequence of Theorem \ref{Echar} and Theorem \ref{boundary_limit}(ii)(iii).

Let $t_0 \in (0,1)$. By part (i) we have $u_{t_0} \in \mathcal E_\chi(X,\o)$. Suppose $(0,1) \ni t \to v_t \in \mathcal E_\chi(X,\o)$ is the weak geodesic segment joining $u_0$ and $u_{t_0}$ defined by (\ref{udef_new}). By (\ref{udef1_new}) it follows that $v_t = u_{t_0 t}, \ t \in (0,1)$. Using what we just proved, we obtain that $\lim_{t \nearrow t_0} u_t =\lim_{t \nearrow 1} v_t = u_{t_0}$ in capacity. One deals with the right limit similarly to conclude that $\lim_{t \to t_0} u_t = u_{t_0}$ in capacity.
\end{proof}
We remark that in concluding $u_t \in E_\chi(X,\o)$ for all $t \in (0,1)$ we only used the fact that $\chi$ has the ``monotonicity" property described in Proposition \ref{Emonoton}. Another application of Theorem \ref{boundary_limit} and Theorem \ref{Echar} yields the last result of this section.
\begin{corollary} \label{geod_join}Suppose $u_0 \in \mathcal E(X,\o)$ and $u_1 \in \textup{PSH}(X,\o)$. Then $u_0$ can be connected to $u_1$ with a weak geodesic if and only if $u_1 \in \mathcal E(X,\o)$.
\end{corollary}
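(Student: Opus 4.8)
The plan is to combine the two characterizations already in hand: Theorem \ref{Echar}, which says that for $u_0 \in \mathcal E(X,\o)$ and $u_1 \in \textup{PSH}(X,\o)$ one has $u_1 \in \mathcal E(X,\o)$ if and only if $P_{[u_1]}(u_0)=u_0$; and Theorem \ref{boundary_limit}, whose parts (i), (ii), (iii) describe precisely when the curve $t\to u_t$ of \eqref{udef_new} is nontrivial and attains its endpoints in capacity. By definition, $u_0$ can be connected to $u_1$ with a weak geodesic means exactly that $u\not\equiv-\infty$ and $\lim_{t\to 0,1}u_t=u_{0,1}$ in capacity.

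First I would prove the easy direction. Suppose $u_1\in\mathcal E(X,\o)$. Since $u_0\in\mathcal E(X,\o)$ as well, by \eqref{E_union} there exist $\chi_1,\chi_2\in\mathcal W^-$ with $u_0\in\mathcal E_{\chi_1}(X,\o)$ and $u_1\in\mathcal E_{\chi_2}(X,\o)$; taking $\chi=\max(\chi_1,\chi_2)\in\mathcal W^-$, both $u_0,u_1\in\mathcal E_\chi(X,\o)$. Now Corollary \ref{energy_geod1} (with this $\chi\in\mathcal W^-$) applies directly and gives both that $u_t\in\mathcal E_\chi(X,\o)\not\equiv-\infty$ for $t\in(0,1)$, and that $\lim_{t\to t_0}u_t=u_{t_0}$ in capacity for every $t_0\in[0,1]$, in particular at $t_0=0,1$. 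Hence $u_0$ and $u_1$ are connected by a weak geodesic. (Alternatively, one can avoid Corollary \ref{energy_geod1} entirely: Theorem \ref{Echar} applied with the roles of $u_0$ and $u_1$ gives $P_{[u_1]}(u_0)=u_0$ and $P_{[u_0]}(u_1)=u_1$, and then Theorem \ref{boundary_limit}(i)--(iii) supplies $u\not\equiv-\infty$ — because $P(u_0,u_1)=P_{[u_1]}(u_0)\wedge\cdots\not\equiv-\infty$ once $P_{[u_1]}(u_0)=u_0$ — and the two capacity limits.)

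For the converse, suppose $u_0$ and $u_1$ can be connected with a weak geodesic. Then in particular $\lim_{t\to 1}u_t=u_1$ in capacity, so by Theorem \ref{boundary_limit}(iii) we get $P_{[u_0]}(u_1)=u_1$. Since $u_0\in\mathcal E(X,\o)$ and $u_1\in\textup{PSH}(X,\o)$, Theorem \ref{Echar} (with $u_0$ playing the role of ``$u_0$'' and $u_1$ the role of ``$u_1$'', noting the statement is $u_1\in\mathcal E(X,\o)\iff P_{[u_1]}(u_0)=u_0$ — so here I instead apply it in the symmetric form $P_{[u_0]}(u_1)=u_1 \iff u_0\in\mathcal E(X,\o)$ under the hypothesis $u_1\in\mathcal E(X,\o)$, which is circular, so the correct application is: Theorem \ref{Echar} with the first slot $u_1$ and the second slot $u_0$ after first ensuring $u_1\in\mathcal E$) — the clean route is: we know $u_0\in\mathcal E(X,\o)$, and $P_{[u_0]}(u_1)=u_1$; Theorem \ref{Echar}, read with ``$u_0$'' $:=u_0\in\mathcal E$ and ``$u_1$'' $:=u_1$, states $u_1\in\mathcal E(X,\o)\iff P_{[u_1]}(u_0)=u_0$, which is not what we have. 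So the converse needs Theorem \ref{Echar} read the other way, and the genuinely useful input is that $\lim_{t\to 0}u_t=u_0$ in capacity gives, by Theorem \ref{boundary_limit}(ii), $P_{[u_1]}(u_0)=u_0$; then Theorem \ref{Echar} directly yields $u_1\in\mathcal E(X,\o)$. The hard part, and the point requiring care, is exactly this bookkeeping of which endpoint condition feeds which envelope identity: the convergence $u_t\to u_0$ controls $P_{[u_1]}(u_0)$, and it is \emph{that} envelope that Theorem \ref{Echar} relates to membership of $u_1$ in $\mathcal E(X,\o)$. Once this is lined up correctly the proof is immediate; no new estimates are needed.

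\begin{proof}
Suppose first that $u_0,u_1 \in \mathcal E(X,\o)$. By \eqref{E_union} there are $\chi_1,\chi_2 \in \mathcal W^-$ with $u_0 \in \mathcal E_{\chi_1}(X,\o)$, $u_1 \in \mathcal E_{\chi_2}(X,\o)$; set $\chi = \max(\chi_1,\chi_2) \in \mathcal W^-$, so that $u_0,u_1 \in \mathcal E_\chi(X,\o)$. By Corollary \ref{energy_geod1}, the curve $t\to u_t$ of \eqref{udef_new} satisfies $u_t \in \mathcal E_\chi(X,\o) \not\equiv -\infty$ for $t \in (0,1)$, and $\lim_{t\to 0,1}u_t = u_{0,1}$ in capacity. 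Hence $u_0$ can be connected to $u_1$ with a weak geodesic.

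Conversely, suppose $u_0$ can be connected to $u_1$ with a weak geodesic. Then $u\not\equiv-\infty$ and $\lim_{t\to 0}u_t = u_0$ in capacity. By Theorem \ref{boundary_limit}(ii) this gives $P_{[u_1]}(u_0) = u_0$. Since $u_0 \in \mathcal E(X,\o)$ and $u_1 \in \textup{PSH}(X,\o)$, Theorem \ref{Echar} now yields $u_1 \in \mathcal E(X,\o)$.
\end{proof}
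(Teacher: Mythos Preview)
Your formal proof is correct and follows essentially the same route the paper indicates: the paper simply says the corollary is ``another application of Theorem \ref{boundary_limit} and Theorem \ref{Echar},'' and that is exactly what you do (in the forward direction you invoke Corollary \ref{energy_geod1}, which itself is just those two theorems packaged together). The exploratory paragraph preceding your \texttt{proof} environment is rather muddled, with a circular attempt and then a self-correction, but the clean proof you give at the end has the bookkeeping right: convergence at $t=0$ gives $P_{[u_1]}(u_0)=u_0$ via Theorem \ref{boundary_limit}(ii), and then Theorem \ref{Echar} (with $u_0\in\mathcal E(X,\o)$ as hypothesis) yields $u_1\in\mathcal E(X,\o)$.
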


\section{Extending the Mabuchi Metric to $\mathcal E^2(X,\o)$}

Given $u_0,u_1 \in \mathcal E^2(X,\o)$ and decreasing approximating sequences $u^k_0, u^k_1 \in \mathcal H$, we define the distance $\tilde d(u_0,u_1)$ by the formula:
\begin{equation}\label{dist_geod_new1}
\tilde d(u_0,u_1)=\lim_{k\to \infty}d(u^k_0,u^k_1),
\end{equation}
The main result of this section is the following:
\begin{theorem} \label{e2space}$(\mathcal E^2(X,\o), \tilde d)$ is a non-positively curved geodesic metric space extending $(\mathcal H,d)$, with geodesic segments joining $u_0,u_1 \in \mathcal E^2(X,\o)$ given by (\ref{udef_new}).
\end{theorem}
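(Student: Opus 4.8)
The whole statement will follow once the identity (\ref{distgeod}) is extended to $\mathcal E^2(X,\o)$. Fix $u_0,u_1\in\mathcal E^2(X,\o)$ and decreasing approximants $u_0^k,u_1^k\in\mathcal H$, and let $u_t^k\in\mathcal H_\Delta$ be the weak geodesics joining $u_0^k,u_1^k$. By the comparison principle $u_t^k$ decreases to the curve $t\to u_t$ of (\ref{udef_new}), and Corollary \ref{energy_geod1}(i) (applicable since $\chi^2\in\mathcal W^+_2$) gives $u_t\in\mathcal E^2(X,\o)$ for $t\in(0,1)$. The heart of the proof is to show that for any fixed $t_0\in(0,1)$
\[
\lim_{k\to\infty}d(u_0^k,u_1^k)^2=\int_X\dot u_{t_0}^2\,(\o+i\partial\bar\partial u_{t_0})^n .
\]
Since the right-hand side depends only on the limiting weak geodesic, which by the envelope description (\ref{udef1_new}) is independent of the approximants, this at once gives that $\tilde d(u_0,u_1)$ is well defined and finite, that $\tilde d$ extends $d$ (choose $u_j^k\equiv u_j$ when $u_j\in\mathcal H$), and the formula $\tilde d(u_0,u_1)^2=\int_X\dot u_{t_0}^2(\o+i\partial\bar\partial u_{t_0})^n$.

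\textbf{The one-sided estimate, boundedness, and the limit.} The basic ingredient is the comparison: if $\psi\le\varphi$ in $\mathcal H$ then
\[
d(\varphi,\psi)^2\le\int_X(\varphi-\psi)^2\,(\o+i\partial\bar\partial\psi)^n .
\]
This holds because the weak geodesic $t\to w_t$ joining $\varphi,\psi$ satisfies $\psi=P(\varphi,\psi)\le w_t\le(1-t)\varphi+t\psi\le\varphi$ (using $P(\psi)=\psi$, (\ref{geod_env_new}) at $\tau=0$, and convexity in $t$), hence $\psi-\varphi\le\dot w_1\le0$, i.e.\ $|\dot w_1|\le\varphi-\psi$, so the estimate follows from (\ref{distgeod}) evaluated at the endpoint $t_0=1$. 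Comparing each $u_j^k$ to a constant potential lying above it and invoking the fundamental estimates of Section 3 (Proposition \ref{Energy_est}) one bounds $d(u_0^k,u_1^k)$, and hence $\int_X(\dot u_{t_0}^k)^2(\o+i\partial\bar\partial u_{t_0}^k)^n$, uniformly in $k$ in terms of the $\chi^2$-energies of $u_0,u_1$. To pass to the limit I combine: $(\o+i\partial\bar\partial u_{t_0}^k)^n\to(\o+i\partial\bar\partial u_{t_0})^n$ weakly (Proposition \ref{MA_cont}, a decreasing sequence in $\mathcal E(X,\o)$); the convergence $\dot u_{t_0}^k\to\dot u_{t_0}$ at Lebesgue-a.e.\ point of $X$ for a.e.\ $t_0$ (convexity in $t$ of $u_t^k$ and of $u_t$); and the uniform $L^2$-bound on $\dot u_{t_0}^k$ against $(\o+i\partial\bar\partial u_{t_0}^k)^n$, which after truncating $\dot u_{t_0}^k$ supplies the uniform integrability needed to move the weak limit through the integral.

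\textbf{Cauchy property and the metric axioms.} To know the limit above exists for every choice of approximants, $\{d(u_0^k,u_1^k)\}_k$ must be Cauchy; by $|d(u_0^k,u_1^k)-d(u_0^m,u_1^m)|\le d(u_0^k,u_0^m)+d(u_1^k,u_1^m)$ this reduces, for $k\le m$, to the one-sided bound $d(u_j^k,u_j^m)^2\le\int_X(u_j^k-u_j^m)^2(\o+i\partial\bar\partial u_j^m)^n$, and one shows this tends to $0$ uniformly using the energy estimates of Section 3 (Theorem \ref{envexist}, Proposition \ref{Energy_est}) together with $u_j\in\mathcal E^2(X,\o)$. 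Once $\tilde d$ is well defined, symmetry, positivity and the triangle inequality descend from $d$ by passing to the limit; for non-degeneracy, $\tilde d(u_0,u_1)=0$ forces $\dot u_{t_0}\equiv0$ with respect to $(\o+i\partial\bar\partial u_{t_0})^n$ for every $t_0$, and since (by the distance formula and Cauchy--Schwarz) $\tilde d$ dominates a multiple of the point-separating $L^1$-type quantity $\int_X|u_0-u_1|\,((\o+i\partial\bar\partial u_0)^n+(\o+i\partial\bar\partial u_1)^n)$, this yields $u_0=u_1$.

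\textbf{Geodesics, non-positive curvature, and the main difficulty.} For $0\le t_1<t_2\le1$, Corollary \ref{energy_geod1}(i) gives $u_{t_1},u_{t_2}\in\mathcal E^2(X,\o)$, and --- exactly as in the proof of Corollary \ref{energy_geod1}(ii) --- the envelope formula (\ref{udef1_new}) identifies the weak geodesic joining $u_{t_1}$ and $u_{t_2}$ with $s\to u_{t_1+s(t_2-t_1)}$; plugging this into the distance formula of the first step yields $\tilde d(u_{t_1},u_{t_2})^2=(t_2-t_1)^2\,\tilde d(u_0,u_1)^2$, which is (\ref{geod_def}). For non-positive curvature, $(\mathcal H,d)$ is itself non-positively curved (Calabi--Chen), so for $p^k\in\mathcal H$ and $t\in[0,1]$ the weak geodesic $u_t^k$ satisfies $d(p^k,u_t^k)^2\le t\,d(p^k,u_1^k)^2+(1-t)d(p^k,u_0^k)^2-t(1-t)d(u_0^k,u_1^k)^2$; approximating $p\in\mathcal E^2(X,\o)$ by $p^k\searrow p$ in $\mathcal H$, letting $k\to\infty$ and using that $\tilde d$ is a metric (hence continuous) together with $u_t^k\searrow u_t$ gives (\ref{nonpositive}) with $q=u_0$, $r=u_1$, $s=u_t$ and $\lambda=t=\tilde d(q,s)/\tilde d(q,r)$. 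The expected main obstacle is the limit passage in the first step: the uniform Laplacian bound (\ref{epsest}) that Chen used is unavailable once $u_0,u_1$ are unbounded, so the convergence $\int_X(\dot u_{t_0}^k)^2(\o+i\partial\bar\partial u_{t_0}^k)^n\to\int_X\dot u_{t_0}^2(\o+i\partial\bar\partial u_{t_0})^n$ has to be extracted from weak convergence of the Monge--Amp\`ere measures, convexity in $t$, and the energy-driven uniform integrability; correspondingly, the uniform decay of $\int_X(u_j^k-u_j^m)^2(\o+i\partial\bar\partial u_j^m)^n$ in the Cauchy step is precisely where the specific class $\mathcal E^2$, rather than a general $\mathcal E_\chi$, is needed.
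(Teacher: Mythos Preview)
Your strategy rests on the identity
\[
\tilde d(u_0,u_1)^2=\lim_{k\to\infty}\int_X(\dot u_{t_0}^k)^2(\o+i\partial\bar\partial u_{t_0}^k)^n=\int_X\dot u_{t_0}^2(\o+i\partial\bar\partial u_{t_0})^n,\qquad t_0\in(0,1),
\]
and this limit passage is the genuine gap. Weak convergence of Monge--Amp\`ere measures together with Lebesgue-a.e.\ convergence of the integrands does \emph{not} give convergence of the integrals, even after truncation: the measures $(\o+i\partial\bar\partial u_{t_0}^k)^n$ may be singular with respect to Lebesgue measure and may concentrate precisely where the pointwise convergence of $\dot u_{t_0}^k$ fails. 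The paper in fact opens Section~7 with an explicit counterexample ($\dim X=1$, $u_0=\max(g_x,0)$, $u_1=0$) showing that the tangent-norm formula already fails at $t_0=0$ for $u_0,u_1\in\mathcal H_{0,1}$; the same example shows that your convergence cannot hold at $t_0=0$, so whatever argument you give must genuinely use the interior condition $t_0\in(0,1)$ --- and nothing in ``weak convergence $+$ a.e.\ convergence $+$ uniform $L^2$-bound'' does. The paper only establishes the formula (Theorem \ref{distgeod_general}) under the extra hypothesis $u_0,u_1\in\mathcal H_\Delta$, and gets there by a completely different route (the layer-cake identities of Lemma \ref{sublevel_lemma} and Remark \ref{MA_form_remark}).

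A second gap is your non-degeneracy argument: the claimed domination of $\int_X|u_0-u_1|(\o_{u_0}^n+\o_{u_1}^n)$ by $\tilde d(u_0,u_1)$ is the content of the sequel \cite{d3} and is not available here; and even granting $\dot u_{t_0}=0$ $(\o+i\partial\bar\partial u_{t_0})^n$-a.e.\ for all $t_0$, deducing $u_0=u_1$ is not immediate, since these measures can be singular. The paper's proof of non-degeneracy is quite different and uses the envelope machinery in an essential way: from $d(u_0^k,u_1^k)\to0$ one extracts via Lemma \ref{sublevel_lemma} and Proposition \ref{MA_form} that $(\o+i\partial\bar\partial P(u_0,u_1-\varepsilon))^n\le(\o+i\partial\bar\partial u_1)^n$, hence equality of total mass forces equality of measures, and Dinew's uniqueness (Theorem \ref{dinew}) together with Lemma \ref{dif_lemma2} gives $P(u_0,u_1-\varepsilon)=u_1-\varepsilon$, i.e.\ $u_1-\varepsilon\le u_0$.

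For comparison, the paper \emph{never} attempts the tangent-norm formula in $\mathcal E^2(X,\o)$ to prove Theorem \ref{e2space}. Well-definedness of $\tilde d$ comes from the Cauchy estimate alone (your own paragraph on this is correct and matches Lemma \ref{IntDistEst}); the geodesic property is obtained by fixing one endpoint in $\mathcal H$ and varying only the other (Lemma \ref{geod_tangent_limit}), so that the reference measure stays fixed and dominated convergence applies --- this sidesteps exactly the simultaneous variation of measure and integrand that sinks your argument; and non-positive curvature is, as you say, inherited from Calabi--Chen by approximation, once one has Proposition \ref{Mdist_est_gen} to pass to the limit.
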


Proving completeness of $(\mathcal E^2(X,\o), \tilde d)$ is left to a later section (Theorem \ref{E2complete}). The proof of the above theorem will be split into a sequence of lemmas. Our first result is a well known estimate for the Mabuchi metric that will be used a lot:

\begin{lemma}[\cite{c}] \label{Mdist_est}Suppose $u,v \in \mathcal H$ with $u \leq v$. Then we have:
$$\int_X(v-u)^2(\o +i\partial\bar\partial v)^n \leq d(u,v)^2 \leq \int_X(v-u)^2(\o +i\partial\bar\partial u)^n$$
\end{lemma}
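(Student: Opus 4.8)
The plan is to prove the two inequalities by different means: the right-hand bound by testing $d(u,v)$ against an explicit competitor curve, and the left-hand bound by feeding the weak geodesic joining $u,v$ into formula \eqref{distgeod}. Throughout write $\phi = v-u \geq 0$ and $\o_w = \o + i\del\dbar w$ for $w \in \text{PSH}(X,\o)$.

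For the upper bound I would test $d(u,v)$ with the affine path $\alpha_t = u + t\phi$, $t\in[0,1]$, which lies in $\calH$ because $\o_{\alpha_t} = (1-t)\o_u + t\o_v$ is a convex combination of K\"ahler forms, and which has $\dot\alpha_t = \phi$. Expanding $\o_{\alpha_t}^n$ by the binomial theorem, the estimate $l(\alpha)^2 \leq \int_X \phi^2\o_u^n$ follows at once from the claim that $j\mapsto \int_X \phi^2\,\o_u^{\,j}\w\o_v^{\,n-j}$ is nondecreasing. Since $\o_v - \o_u = i\del\dbar\phi$, the difference of two consecutive terms equals $-\int_X \phi^2\,i\del\dbar\phi\w\o_u^{\,j}\w\o_v^{\,n-1-j}$; integrating by parts (legitimate as $u,v$ are smooth), and using $i\del\dbar(\phi^2) = 2\,i\del\phi\w\dbar\phi + 2\phi\,i\del\dbar\phi$, this becomes $2\int_X \phi\,i\del\phi\w\dbar\phi\w\o_u^{\,j}\w\o_v^{\,n-1-j} \geq 0$, because $\phi\geq0$ and $i\del\phi\w\dbar\phi\w\o_u^{\,j}\w\o_v^{\,n-1-j}$ is a positive measure. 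Hence $d(u,v) \leq l(\alpha) \leq \bigl(\int_X (v-u)^2\o_u^n\bigr)^{1/2}$.

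For the lower bound, let $[0,1]\ni t\to w_t$ be the weak geodesic joining $u,v$. I would first record that $u \leq w_t \leq v$: the left inequality because the constant curve $t\mapsto u$ belongs to the defining family $\mathcal S$ of \eqref{udef1_new} (its endpoint data $u\leq u$ and $u\leq v$ only use $u\leq v$), and the right inequality because $w$ is convex in $t$ with $w_0=u$, $w_1=v$, so $w_t \leq (1-t)u + tv \leq v$. By \eqref{distgeod}, $d(u,v)^2 = \int_X \dot w_{t_0}^2\,\o_{w_{t_0}}^n$ for every $t_0\in(0,1)$; convexity in $t$ gives $\dot w_{t_0} \geq (w_{t_0}-u)/t_0 \geq 0$, whence $d(u,v)^2 \geq t_0^{-2}\int_X (w_{t_0}-u)^2\,\o_{w_{t_0}}^n$. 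Letting $t_0\to1$: by the Lipschitz-in-$t$ bound \eqref{dot_u_est}, $w_{t_0}\to v$ uniformly, and since the $w_{t_0}$ are bounded with uniformly bounded Laplacian, $\o_{w_{t_0}}^n\to\o_v^n$ weakly; the right-hand side then converges to $\int_X (v-u)^2\o_v^n$, which is the claim.

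I expect the only delicate point to be the passage $t_0\to1$ in the lower bound: one must make sure that \eqref{distgeod} is legitimately invoked (it is derived in Section~2.1 for endpoints in $\calH$ with no use of the present lemma, so there is no circularity) and that the Monge-Amp\`ere measures $\o_{w_{t_0}}^n$ converge weakly, for which the uniform bound on $\|\Delta w_{t_0}\|_{L^\infty}$ together with the uniform convergence $w_{t_0}\to v$ suffices by Bedford-Taylor theory. In the upper bound the only thing to watch is the sign in the integration by parts and the positivity of the measure $i\del\phi\w\dbar\phi\w\o_u^{\,j}\w\o_v^{\,n-1-j}$; otherwise this is a routine verification of a classical estimate of Chen.
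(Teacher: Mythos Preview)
Your proof is correct. For the lower bound you follow essentially the paper's route, but with an unnecessary limiting step: formula \eqref{distgeod} is derived in Section~2.1 for all $t_0\in[0,1]$, including the endpoints, so the paper simply evaluates at $t_0=1$ and uses the convexity sandwich $\dot w_1\geq v-u\geq 0$ directly, with no passage to the limit required.

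For the upper bound your approach is genuinely different. The paper does \emph{not} use a competitor curve; instead it again invokes \eqref{distgeod}, this time at $t_0=0$, and reads off $0\leq\dot w_0\leq v-u$ from convexity together with $u\leq w_t$, giving $d(u,v)^2=\int_X\dot w_0^2\,\o_u^n\leq\int_X(v-u)^2\o_u^n$. So both inequalities in the paper flow from the single observation $0\leq\dot w_0\leq v-u\leq\dot w_1$, which makes the argument short and symmetric. Your affine-path computation is more elementary in that it bypasses $\varepsilon$-geodesics and \eqref{distgeod} entirely, using only the infimum-over-curves definition of $d$; the price is the integration-by-parts monotonicity computation, which the paper avoids.
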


\begin{proof} Suppose $(0,1) \ni t \to w_t \in \mathcal H_\Delta$ is the weak geodesic segment joining $u$ and $v$. By (\ref{distgeod}) we have
$$d(u,v)=\sqrt{\int_X \dot w^2_0 (\o +i\partial\bar\partial u)^n}=\sqrt{\int_X \dot w^2_1 (\o +i\partial\bar\partial v)^n}.$$ Since $u \leq v$, we have that $u \leq w_t$, as follows from (\ref{udef1}). Since $(t,x) \to w_t(x)$ is convex in the $t-$variable, it results that $0 \leq \dot w_0 \leq v-u \leq \dot w_1$ and the lemma follows.
\end{proof}

\begin{lemma} Suppose $u \in \mathcal E^2(X,\o)$ and $\{ u_k\}_{k \in \Bbb N} \subset \mathcal H$ is a sequence decreasing to $u$. Then $d(u_l,u_k) \to 0$ as $l,k \to \infty$.\label{IntDistEst}
\end{lemma}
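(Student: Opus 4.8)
The goal is to show that for $u \in \mathcal E^2(X,\o)$ and a decreasing sequence $\mathcal H \ni u_k \searrow u$, the sequence is Cauchy for $d$. The key tool is Lemma \ref{Mdist_est}: since $u_l \leq u_k$ for $l \geq k$, we have
$$d(u_l,u_k)^2 \leq \int_X (u_k - u_l)^2 (\o + i\partial\bar\partial u_l)^n.$$
So it suffices to show the right-hand side tends to $0$ as $l,k \to \infty$. I would first reduce to the case $u \leq u_k \leq 0$ by adding a constant. Then $0 \leq u_k - u_l \leq u_k - u \leq -u = (-u)$, so it is enough to control $\int_X (u_k - u)^2 (\o+i\partial\bar\partial u_l)^n$, and in fact $\int_{\{u_k - u > \eps\}}$-type tail estimates plus the bounded part. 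Let me think of the cleaner route: write $(u_k - u_l)^2 \leq (u_k - u)(u_l - u) \cdot$... no — better, just bound $(u_k - u_l)^2 \le (u_k - u)^2$ pointwise (since $0 \le u_k - u_l \le u_k - u$), reducing everything to showing
$$\lim_{l \to \infty}\ \sup_{k \geq l}\ \int_X (u_k - u)^2 (\o + i\partial\bar\partial u_l)^n = 0.$$
Wait — that's not literally $(u_k-u_l)^2 \le (u_k-u)^2$ for all relevant pairs; it holds when $l \ge k$, and by symmetry of $d$ we may always assume $l \ge k$, so this is fine.

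\medskip

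The heart of the matter is thus an energy/tail estimate. I would split the integral over $\{u \leq -h\}$ and $\{u > -h\}$ for a large parameter $h$. On $\{u > -h\}$ one has $0 \leq u_k - u \leq u_k + h$, and since $u_k \searrow u$, Dini-type/monotone convergence against the locally bounded measures $(\o+i\partial\bar\partial u_l)^n$ (all with total mass $\le \mathrm{Vol}(X)$) handles this region — here one should fix $l$ large first, note $u_k - u \to 0$ on $\{u > -h\}$ and use that the mass is uniformly bounded; the subtle point is the interplay of the two indices, which is why one organizes it as: first choose $h$, then handle the $\{u > -h\}$ piece. On $\{u \leq -h\}$ one uses Cauchy–Schwarz or a direct comparison: $(u_k - u)^2 \mathbbm{1}_{\{u \le -h\}} \le (u_k - u)^2 \mathbbm{1}_{\{u_k \le -h/2\}}$ for $k$ large (or just $\le (-u)^2 \mathbbm 1_{\{u \le -h\}}$ after noting $u_k \le 0$), and then one wants a bound of the form $\int_{\{u \le -h\}} (-u)^2 (\o+i\partial\bar\partial u_l)^n \le C \cdot (\text{something} \to 0 \text{ as } h \to \infty)$ \emph{uniformly in} $l$. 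This uniform bound is exactly where membership in $\mathcal E^2 = \mathcal E_{\chi^2}$ enters: by Proposition \ref{Energy_est} (the fundamental estimate) and monotonicity, the energies $E_{\chi^2}(u_l)$ — or rather $E_{\chi^2}$ of the canonical cutoffs comparing $u_l$ with $u$ — stay bounded, which gives a uniform-in-$l$ tail bound $\int_{\{u_l \le -h\}} (-u_l)^2 (\o+i\partial\bar\partial u_l)^n \le \delta(h)$ with $\delta(h) \to 0$.

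\medskip

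So the concrete steps, in order: (1) normalize $u \le u_k \le 0$ and reduce via symmetry and Lemma \ref{Mdist_est} to estimating $\int_X (u_k - u)^2 (\o+i\partial\bar\partial u_l)^n$ with $l \ge k$; (2) prove a uniform tail estimate $\sup_l \int_{\{u_l \le -h\}} (u_l)^2 (\o + i\partial\bar\partial u_l)^n \le \delta(h) \to 0$, using that $u \in \mathcal E^2$, the fundamental estimate (Proposition \ref{Energy_est}), the monotonicity corollary, and Proposition \ref{cutoff_aprox} applied to the canonical cutoffs — the point being that $u_l \ge u$ with $u \in \mathcal E^2_{}$ forces uniform control; (3) on the complementary region $\{u_l > -h\}$, dominate $(u_k - u)^2$ by a fixed bounded function and use Bedford–Taylor convergence / boundedness of total Monge–Ampère mass to push $k \to \infty$ then $l \to \infty$; (4) combine, letting $h \to \infty$ last.

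\medskip

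The main obstacle I anticipate is step (2): getting the tail estimate \emph{uniform in the index $l$} of the approximating sequence, rather than for a single fixed potential. The naive bound $\int_{\{u_l \le -h\}}(-u_l)^2(\o+i\partial\bar\partial u_l)^n \le -\tfrac1h E_{\chi^2}(u_l)$-style arguments need $E_{\chi^2}(u_l)$ bounded, and while $u_l \searrow u \in \mathcal E^2$ and Proposition \ref{E_semicont}/\ref{Energy_est} do yield $\sup_l E_{\chi^2}(u_l) < \infty$ (indeed the energies of decreasing approximants of a finite-energy potential are uniformly bounded — this is essentially Proposition \ref{cutoff_aprox} combined with the fundamental estimate comparing $u_l$ to the canonical cutoffs of $u$), one must be careful that the relevant quantity $\int_{\{u_l \le -h\}}(-u_l)^2(\o_{u_l})^n$ is genuinely controlled by $E_{\chi^2}(u_l)$ and that the bound degrades to $0$ as $h\to\infty$ uniformly; a clean way is to write it as $\int \mathbbm 1_{\{u_l \le -h\}}(-u_l)^2(\o_{u_l})^n$ and compare with $E_{\chi^2}$ of $\max(u_l,-h)$ versus $E_{\chi^2}(u_l)$, both controlled via the cutoff-approximation and fundamental estimates. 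Everything else is a routine splitting argument.
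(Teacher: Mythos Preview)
Your first step --- reducing via Lemma \ref{Mdist_est} to the estimate $d(u_l,u_k)^2 \le \int_X (u_k-u_l)^2 (\o+i\partial\bar\partial u_l)^n$ (with $u_l$ the smaller potential) --- is exactly what the paper does. But from here the two arguments diverge, and your detour creates real difficulties.

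The paper does \emph{not} pass to the pointwise bound $(u_k-u_l)^2\le(u_k-u)^2$. Instead it observes that both $u_l-u_k$ and $u-u_k$ are $(\o+i\partial\bar\partial u_k)$-psh and $\le 0$ (in your indexing, with $u_k$ the larger potential), and applies Proposition \ref{Energy_est} \emph{in the shifted class} $\mathcal E^2(X,\o+i\partial\bar\partial u_k)$. Since the $\chi^2$-energy of $v-u_k$ in this shifted class is exactly $-\int_X (v-u_k)^2(\o+i\partial\bar\partial v)^n$, one line gives
\[
\int_X (u_k-u_l)^2(\o+i\partial\bar\partial u_l)^n \;\le\; C\int_X (u-u_k)^2(\o+i\partial\bar\partial u)^n,
\]
and the right side tends to zero by dominated convergence against the fixed measure $(\o+i\partial\bar\partial u)^n$. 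No splitting, no tail estimate, no juggling of indices.

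Your route, by contrast, lands on $\int_X(u_k-u)^2(\o+i\partial\bar\partial u_l)^n$, a genuinely mixed quantity: the integrand depends on $k$, the measure on $l$. Your step (3) proposes to ``push $k\to\infty$ then $l\to\infty$'' on the region $\{u_l>-h\}$, but the constraint $l\ge k$ forbids decoupling the limits this way --- once $k\to\infty$, so must $l$, and the measures keep moving. Making this rigorous requires a mixed energy inequality of the form $\int_X |\chi(\phi)|\,\o_\psi^n \le C\int_X|\chi(\phi)|\,\o_\phi^n$ for $\phi\le\psi\le 0$, which is indeed available in \cite{gz} but is not the version stated in Proposition \ref{Energy_est}; and even granting it, the uniform-in-$l$ tail control you identify as the ``main obstacle'' is still work. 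The paper's trick of shifting the base form sidesteps all of this: the point is that $\int(u_k-u_l)^2\o_{u_l}^n$ already \emph{is} an energy in the shifted class, so Proposition \ref{Energy_est} applies directly without any mixed estimate.
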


\begin{proof}
We can suppose that $l \leq k$. Then $u_k\leq u_l$, hence by the previous lemma we have:
$$d(u_l,u_k)^2 \leq \int_X(u_k-u_l)^2(\o +i\partial\bar\partial u_k)^n.$$
We clearly have $u - u_l, u_k - u_l \in \mathcal E^2(X,\o +i\partial\bar\partial u_l)$ and $u - u_l\leq u_k - u_l\leq0$. Hence, applying Proposition \ref{Energy_est} for the class $\mathcal E^2(X,\o + i\partial\bar\partial u_l)$ we obtain that
\begin{equation}\label{estimate}
d(u_l,u_k)^2\leq C\int_X(u-u_l)^2(\o +i\partial\bar\partial u)^n.
\end{equation}
As $u_l$ decreases to $u \in \mathcal E^2(X,\o)$, it follows from the dominated convergence theorem that $d(u_l,u_k) \to 0$ as $l,k \to \infty$.
\end{proof}

\begin{lemma} Given $u_0,u_1 \in \mathcal E^2(X,\o)$, the limit in (\ref{dist_geod_new1}) is finite and independent of the approximating sequences $u^k_0, u^k_1 \in \mathcal H$. Additionally, if $u_0,u_1 \in \mathcal H$, then $\tilde d(u_0,u_1)$ is equal to the Mabuchi distance $d(u_0,u_1)$.
\end{lemma}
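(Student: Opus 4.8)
The plan is to establish three things in sequence: (a) that the limit in \eqref{dist_geod_new1} exists; (b) that it does not depend on the chosen decreasing approximating sequences in $\mathcal H$; and (c) that it reduces to $d(u_0,u_1)$ when $u_0,u_1 \in \mathcal H$. The main tool throughout is the comparison estimate of Lemma \ref{Mdist_est} together with the Cauchy-type estimate of Lemma \ref{IntDistEst}, which tells us that a decreasing sequence in $\mathcal H$ converging to an element of $\mathcal E^2(X,\o)$ is $d$-Cauchy.

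First I would show the limit exists. Let $u_0^k, u_1^k \in \mathcal H$ decrease to $u_0, u_1$. For $l \le k$ we have $u_0^k \le u_0^l$ and $u_1^k \le u_1^l$, so by the triangle inequality $|d(u_0^k,u_1^k) - d(u_0^l,u_1^l)| \le d(u_0^k,u_0^l) + d(u_1^k,u_1^l)$. By Lemma \ref{IntDistEst} the right-hand side tends to $0$ as $l,k \to \infty$, so $\{d(u_0^k,u_1^k)\}_k$ is a Cauchy sequence of real numbers, hence convergent to a finite limit. This also handles finiteness.

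Next, independence of the approximating sequences. Suppose $\{v_0^k\}, \{v_1^k\} \subset \mathcal H$ are two other decreasing sequences with the same limits $u_0, u_1$. I would form the interlaced sequences $w_i^k = \min(u_i^k, v_i^k)$, or alternatively compare directly: by a standard lemma on decreasing approximations (the same tool from \cite{bk} used to build the sequences), for each fixed $k$ one can find $m(k)$ large enough that $u_i^{m(k)} \le v_i^k$, and symmetrically. Then $|d(u_0^{m(k)},u_1^{m(k)}) - d(v_0^k,v_1^k)| \le d(u_0^{m(k)},v_0^k) + d(u_1^{m(k)},v_1^k)$, and each term on the right is controlled, using Lemma \ref{Mdist_est} and Proposition \ref{Energy_est} exactly as in the proof of Lemma \ref{IntDistEst}, by a quantity of the form $C\int_X (u_i - v_i^k)^2 (\o + i\partial\bar\partial u_i)^n$ (after passing through the class $\mathcal E^2(X, \o_{v_i^k})$), which tends to $0$ by dominated convergence since $v_i^k \searrow u_i \in \mathcal E^2(X,\o)$. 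Hence both limits coincide. The one point requiring care is that the interlacing step genuinely works, i.e. that for each $k$ one can dominate some tail of one sequence by $v_i^k$; this follows because $v_i^k$ is continuous (smooth) and $\ge u_i$, while $u_i^{m} \searrow u_i$, so $\{u_i^{m} < v_i^k\}$ is an increasing open cover of $X$, which is compact — so $u_i^{m} \le v_i^k$ for $m$ large. This compactness/Dini argument is the step I expect to be the main (if modest) obstacle, since one must be slightly careful about the non-strict inequality on the coincidence set, but it is routine.

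Finally, if $u_0, u_1 \in \mathcal H$ themselves, one may simply take the constant approximating sequences $u_i^k \equiv u_i$, and then \eqref{dist_geod_new1} gives $\tilde d(u_0,u_1) = \lim_k d(u_0,u_1) = d(u_0,u_1)$; by the already-proved independence of the approximating sequences, the value $\tilde d(u_0,u_1)$ obtained from any other decreasing sequence in $\mathcal H$ agrees with this, so $\tilde d$ restricted to $\mathcal H$ coincides with $d$. This completes the proof.
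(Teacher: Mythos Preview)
Your proposal is correct and follows essentially the same route as the paper: the triangle inequality together with Lemma~\ref{IntDistEst} gives existence of the limit, a Dini/compactness argument is used to dominate one approximating sequence by the other so that estimate~\eqref{estimate} applies and yields independence, and the constant sequence handles the case $u_0,u_1\in\mathcal H$. The only cosmetic difference is that the paper disposes of the ``non-strict inequality on the coincidence set'' by first perturbing both sequences by small constants so they are \emph{strictly} decreasing, and then applies Dini's lemma to $\max\{u_0^{k+1},v_0^j\}\searrow u_0^{k+1}$; this is precisely the ``routine'' fix you allude to.
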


\begin{proof} By the triangle inequality and Lemma \ref{IntDistEst} we can write:
$$|d(u^l_0,u^l_1)-d(u^k_0,u^k_1)| \leq d(u^l_0,u^k_0) +d(u^l_1,u^k_1) \to 0, \ l,k \to \infty, $$
proving that $d(u^k_0,u^k_1)$ is indeed convergent.

Now we prove that the limit in \eqref{dist_geod_new1} is independent of the choice of approximating sequences. Let $v^l_0, v^l_1 \in \mathcal H$ be another approximating sequence. By adding small constants if necessary, we can arrange that all the sequences $u^l_0, u^l_1$ respectively $v^l_0, v^l_1$ are strictly decreasing to $u_0,u_1$.

Fixing $k$ for the moment, the sequence $\{\max\{ u^{k+1}_0,v^j_0\}\}_{j \in \Bbb N}$ decreases pointwise to $u^{k+1}_0$. By Dini's lemma the convergence is uniform, hence there exists $j_k\in \Bbb N$ such that for any $j \geq j_k$ we have $v^j_0 < u^k_0$. By repeating the same argument we can also assume that $v^j_1 < u^k_1$ for any $j \geq j_k$. By the triangle inequality again
$$|d(u^j_0,u^j_1)-d(v^k_0,v^k_0)| \leq d(u^j_0,v^k_0) +d(u^j_1,v^k_1), \ j \geq j_k. $$
From (\ref{estimate}) it follows that for $k$ big enough the quantities $d(u^j_0,v^k_0)$, $d(u^j_1,v^k_1), \ j \geq j_k$ are arbitrarily small, hence $\tilde d(u_0,u_1)$ is independent of the choice of approximating sequences. We observe that this automatically implies that $\tilde d$ restricted to $\mathcal H$ is the Mabuchi metric. The triangle inequality for $\tilde d$ also follows.
\end{proof}

To conclude that $\tilde d$ is a metric on $\mathcal E^2(X,\o)$ all we need is that $\tilde d(u_0,u_1)=0$ implies $u_0 = u_1$. Before we prove this we make two elementary observations.

\begin{lemma}\label{sublevel_lemma}Suppose $u_0,u_1 \in \mathcal H_0$. Let $(0,1) \ni t \to u_t \in \mathcal H_0$ be the bounded weak geodesic joining $u_0$ and $u_1$. Then for any $\tau \in \Bbb R$ we have
$$\{ \dot u_0 \geq \tau \} = \{ P(u_0,u_1 - \tau)=u_0\}.$$
\end{lemma}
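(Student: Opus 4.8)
The plan is to use the Legendre transform identity \eqref{legenv}, namely $u^*_\tau = \inf_{t\in(0,1)}(u_t - \tau t) = P(u_0,u_1-\tau)$, and read off the coincidence set $\{P(u_0,u_1-\tau)=u_0\}$ in terms of $\dot u_0$. First I would note that since $t\mapsto u_t(x)$ is convex and bounded on $(0,1)$ (Berndtsson's Lipschitz bound \eqref{dot_u_est}), it extends continuously to $[0,1]$ with $u_t(x)\to u_0(x)$ as $t\to 0$, and the one-sided derivative $\dot u_0(x) := \lim_{t\to 0^+}\frac{u_t(x)-u_0(x)}{t}$ exists (possibly $-\infty$ a priori, but finite by \eqref{dot_u_est}) and equals $\inf_{t\in(0,1)}\frac{u_t(x)-u_0(x)}{t}$. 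The key elementary fact about a convex function $f:[0,1]\to\mathbb R$ with $f(0)$ finite is the equivalence
\[
\dot f(0) \geq \tau \quad\Longleftrightarrow\quad \inf_{t\in(0,1)}\big(f(t)-\tau t\big) = f(0),
\]
which is exactly the $\tau$-shifted version of Lemma \ref{easy_lemma} (Lemma \ref{easy_lemma} gives $\lim_{t\to 0}f(t) = \lim_{\tau\to-\infty}\inf_t(f(t)-\tau t)$; here I want the statement at a fixed $\tau$, but the proof is the same: $f(t)-\tau t \geq f(0)$ for all $t$ iff the right derivative of $f(t)-\tau t$ at $0$ is $\geq 0$ iff $\dot f(0)\geq\tau$).

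Applying this pointwise with $f(t) = u_t(x)$, I get for each fixed $\tau$ and each $x\in X$:
\[
\dot u_0(x) \geq \tau \quad\Longleftrightarrow\quad \inf_{t\in(0,1)}\big(u_t(x)-\tau t\big) = u_0(x).
\]
By \eqref{legenv} the left-hand side of the second condition is $P(u_0,u_1-\tau)(x)$, and since $P(u_0,u_1-\tau)\leq \min(u_0,u_1-\tau)\leq u_0$ always holds, "$P(u_0,u_1-\tau)(x)=u_0(x)$" is equivalent to "$x\in\{P(u_0,u_1-\tau)=u_0\}$". This gives $\{\dot u_0\geq\tau\} = \{P(u_0,u_1-\tau)=u_0\}$, as desired. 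One small point to check is that the pointwise identity $\inf_t(u_t-\tau t) = P(u_0,u_1-\tau)$ holds as functions, not just a.e.: here $u_0,u_1$ are bounded so $u_t$ is genuinely bounded and continuous in $t$ for each $x$ (Berndtsson's estimate), and $P(u_0,u_1-\tau)$ is already the genuine envelope (not just its usc regularization, since $u_0$ is bounded, hence continuous arguments are available through approximation); so the identity \eqref{legenv} holds everywhere, not just a.e.

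The main obstacle is making the convexity/derivative argument clean at the endpoint $t=0$: one must verify that $\dot u_0$ is well-defined as the decreasing limit of difference quotients and that the endpoint value $\lim_{t\to 0}u_t = u_0$ holds pointwise (which follows from the boundary condition in \eqref{bvp_Bern} together with convexity forcing continuity up to the boundary). Once that is in place, everything reduces to the one-variable convexity lemma applied slice by slice, and there is no pluripotential-theoretic subtlety because both functions are bounded. I would present the one-variable equivalence as a short remark paralleling Lemma \ref{easy_lemma}, then do the slicing in two lines.
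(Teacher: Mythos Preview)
Your proposal is correct and follows essentially the same route as the paper's own proof: invoke the Legendre transform identity \eqref{legenv} to rewrite $P(u_0,u_1-\tau)(x)=u_0(x)$ as $\inf_{t}(u_t(x)-\tau t)=u_0(x)$, then use convexity of $t\mapsto u_t(x)$ to see that this infimum equals $u_0(x)$ precisely when $\dot u_0(x)\ge\tau$. The paper's version is terser (it omits the endpoint regularity discussion and the explicit one-variable lemma), but the argument is identical.
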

\begin{proof} By (\ref{legenv}) we have $\inf_{t \in [0,1]}(u_t - \tau)= P(u_0,u_1 - \tau).$ Given $x \in X$, it follows that $P(u_0,u_1 - \tau)(x)=u_0(x)$ if and only if $\inf_{t \in [0,1]}(u_t(x) - \tau)=u_0(x)$. The convexity in the $t$-variable implies that this last identity is equivalent to $\dot u_0(x) \geq \tau$.
\end{proof}
The next result will serve a purpose similar to Lemma \ref{dif_lemma}.

\begin{lemma} \label{dif_lemma2}Suppose $u, v \in \textup{PSH}(X,\o)$ are from the same singularity class. Then $P(u,v) \not\equiv -\infty$, moreover either $P(u,v)=u$ or $\inf_{\{ u > -\infty\}} (v - P(u,v)) =0$.
\end{lemma}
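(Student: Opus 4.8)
The plan is to settle the first assertion immediately and then reduce the dichotomy to an elementary one‑variable convexity fact. For the first part: since $u$ and $v$ share a singularity class there is $c\in(0,\infty)$ with $v-c\le u$, and trivially $v-c\le v$, so $v-c\le\min(u,v)$; as $v-c\in\textup{PSH}(X,\o)$ and $v\not\equiv-\infty$ this already gives $P(u,v)\ge v-c\not\equiv-\infty$. For the dichotomy I would argue by contradiction: set $w:=P(u,v)$, assume $w\ne u$, and suppose $a:=\inf_{\{u>-\infty\}}(v-w)>0$. First I would promote the inequality $v-w\ge a$ from $\{u>-\infty\}$ to all of $X$: on the complement $\{u=-\infty\}=\{v=-\infty\}$ one has $w=-\infty=v-a$, so $w\le v-a$ everywhere. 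Hence $w\le\min(u,v-a)$, so $w\le P(u,v-a)$; the reverse bound $P(u,v-a)=P(\min(u,v-a))\le P(\min(u,v))=w$ is automatic, so $w=P(u,v-a)$, and by monotonicity of $\tau\mapsto P(u,v-\tau)$ this forces $P(u,v-\tau)=w$ for all $\tau\in[0,a]$. Moreover, since $w\ne u$ we cannot have $v\ge u$ everywhere, so $m:=\inf_X(v-u)<0$, while $P(u,v-\tau)=P(u)=u$ for every $\tau\le m$ because then $\min(u,v-\tau)=u$.

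The heart of the argument — and the step I expect to be the main obstacle — is a pointwise concavity fact: for each fixed $x\in X$ the function $h_x(\tau):=P(u,v-\tau)(x)$ is concave in $\tau$. This I would verify by the standard averaging trick: for $\tau=\tfrac12(\tau_1+\tau_2)$ the function $\tfrac12\big(P(u,v-\tau_1)+P(u,v-\tau_2)\big)$ is $\o$‑plurisubharmonic and lies below $\min(u,v-\tau)$, hence below $P(u,v-\tau)$, which is midpoint concavity; together with monotonicity this gives genuine concavity (on all of $\RR$, with no exceptional set). Now fix $x$ with $u(x),v(x)>-\infty$ — this excludes only the pluripolar, hence Lebesgue‑null, set $\{u=-\infty\}$ — and suppose $w(x)<u(x)$. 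Then $h_x$ is a finite concave function equal to the constant $u(x)$ on $(-\infty,m]$ and to the constant $w(x)<u(x)$ on $[0,a]$, with $m<0<a$. Its right derivative vanishes on $(-\infty,m)$ and on $(0,a)$, and being non‑increasing it must vanish on all of $(-\infty,a)$; so $h_x$ is constant there, forcing $u(x)=h_x(m)=h_x(0)=w(x)$, a contradiction. Hence $w(x)=u(x)$ for a.e.\ $x$, and since two $\o$‑plurisubharmonic functions agreeing almost everywhere coincide, $w=u$, contradicting $w\ne u$. Therefore $a=0$, which is exactly the claimed alternative.

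Two points to watch. The first is making sure the concavity of $\tau\mapsto P(u,v-\tau)$ is genuine and not merely concavity off a null set — the averaging argument above is robust on this point — and that the identity $P(u,v-\tau)=u$ for $\tau\ll0$ is literally true (it is, once $v-\tau\ge u$ on all of $X$). The second is the bookkeeping of $\{u=-\infty\}$, which is harmless since it is Lebesgue‑null and both $w$ and $u$ are $\o$‑plurisubharmonic. If one prefers, the same convexity argument can be run along the weak geodesic $t\mapsto\phi_t$ joining $u$ and $v$ — which does connect them, since equal singularity types give $P_{[u]}(v)=v$ and $P_{[v]}(u)=u$ — via the Legendre identity $P(u,v-\tau)=\inf_{t\in(0,1)}(\phi_t-\tau t)$ of \eqref{geod_env_new}; but that route demands extra care with the boundary limits and with the finiteness of $\phi_t$, so I would favor the envelope‑only argument above.
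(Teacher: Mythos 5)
Your proof is correct, but it takes a genuinely different and noticeably heavier route than the paper's. The paper argues directly: if $u\le v$ then $P(u,v)=u$; otherwise it sets $l_0=\sup\{l\in\RR \,|\, u+l\le v\}$, notes that the common singularity type forces $-\infty<l_0<0$, so that $u+l_0\le\min(u,v)$ and hence $u+l_0\le P(u,v)\le v$, and then maximality of $l_0$ gives $\inf_{\{u>-\infty\}}(v-u-l_0)=0$, so the sandwich immediately yields $\inf_{\{u>-\infty\}}(v-P(u,v))=0$ --- no contradiction argument, no measure-theoretic input. You instead argue by contradiction through the one-parameter family $\tau\mapsto P(u,v-\tau)$: flatness equal to $u$ for $\tau\le m<0$, flatness equal to $P(u,v)$ on $[0,a]$ if the infimum were $a>0$, pointwise midpoint concavity in $\tau$ via averaging of candidates, and monotonicity of the right derivative forcing $P(u,v)=u$ a.e.\ and hence everywhere. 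All of these steps check out (the extension of $v-P(u,v)\ge a$ across the common polar set, $m>-\infty$ from the singularity hypothesis, concavity from midpoint concavity plus monotonicity, and the a.e.\ identity principle for $\omega$-psh functions), though you should state explicitly that $h_x$ is finite on $(-\infty,a]$ --- immediate from monotonicity together with $P(u,v)\ge u-c$ --- and that $m$ is best taken as an infimum over $\{u>-\infty\}$, where $v-u$ is well defined. What your route buys is a reusable fact, the pointwise concavity of the rooftop family $\tau\mapsto P(u,v-\tau)$, which is the envelope-side shadow of the Kiselman/Legendre structure behind \eqref{legenv}; what it costs is length and machinery for a statement the paper settles in four lines with the optimal shift $l_0$. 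Your treatment of the nondegeneracy claim ($P(u,v)\ge v-c\not\equiv-\infty$) coincides with the paper's.
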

\begin{proof} As $u, v \in \text{PSH}(X,\o)$ are from the same singularity class it follows that for $c$ big enough $P(u,v) \geq u-c$. If $u \leq v$ then clearly $P(u,v)=u$.

Let us suppose that $u \not\leq v$ and define $l_0 = \sup \{l \in \Bbb R | u + l \leq v\}.$ From our assumptions it follows that $-\infty < l_0 < 0$. We have $u + l_0 \leq u$ and $u+l_0 \leq v$, hence $u+l_0 \leq P(u,v)$. By the definition of $l_0$ we also have $\inf_{\{ u > -\infty\}} (v - u - l_0) =0$. Since $u+l_0 \leq P(u,v)\leq v$, it follows that $\inf_{\{ u > -\infty\}} (v - P(u,v)) =0$.
\end{proof}

\begin{lemma} For $u_0,u_1 \in \mathcal E^2(X,\o)$ if $d(u_0,u_1)=0$ then $u_0=u_1$.
\end{lemma}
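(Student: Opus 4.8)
The plan is to convert the metric hypothesis into an identity of Monge--Amp\`ere measures and then apply Dinew's uniqueness theorem. Concretely, I aim to show that $\tilde d(u_0,u_1)=0$ forces $(\o+i\partial\bar\partial P(u_0,u_1))^n=(\o+i\partial\bar\partial u_0)^n=(\o+i\partial\bar\partial u_1)^n$; by Theorem \ref{dinew} this yields $u_0-u_1\equiv c$ for a constant $c$, and since the linear path $t\mapsto u_1^k+tc$ is a smooth geodesic one computes $\tilde d(u_0,u_1)=|c|\sqrt{\textup{Vol}(X)}$, whence $c=0$ and $u_0=u_1$.

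To produce the measure identity, fix $\tau<0$, choose decreasing approximants $u_0^k,u_1^k\in\mathcal H$ of $u_0,u_1$, and let $t\mapsto u^k_t$ be the weak geodesic joining them. Since $d(u_0^k,u_1^k)^2=\int_X(\dot u^k_0)^2(\o+i\partial\bar\partial u_0^k)^n\to 0$ by \eqref{distgeod}, Cauchy--Schwarz gives $\int_X|\dot u^k_0|(\o+i\partial\bar\partial u_0^k)^n\to 0$, and Chebyshev then gives $(\o+i\partial\bar\partial u_0^k)^n(\{\dot u^k_0\le\tau\})\to 0$ (this is where $\tau<0$ enters). By the sublevel-set identity of Lemma \ref{sublevel_lemma}, $\{\dot u^k_0\ge\tau\}=\{P(u_0^k,u_1^k-\tau)=u_0^k\}=:\Lambda^k$, so $(\o+i\partial\bar\partial u_0^k)^n(\Lambda^k)\to\textup{Vol}(X)$. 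The partition formula of Proposition \ref{MA_form} gives $(\o+i\partial\bar\partial P(u_0^k,u_1^k-\tau))^n\ge\mathbbm{1}_{\Lambda^k}(\o+i\partial\bar\partial u_0^k)^n$, and letting $k\to\infty$ --- using that $P(u_0^k,u_1^k-\tau)\searrow P(u_0,u_1-\tau)$ and $u_0^k\searrow u_0$, that $u_0,\,u_1-\tau,\,P(u_0,u_1-\tau)$ all lie in $\mathcal E(X,\o)$ (Corollary \ref{ecor}) so the Monge--Amp\`ere measures converge weakly (Proposition \ref{MA_cont}), and that $\mathbbm{1}_{X\setminus\Lambda^k}(\o+i\partial\bar\partial u_0^k)^n$ has mass tending to $0$ --- one obtains $(\o+i\partial\bar\partial P(u_0,u_1-\tau))^n\ge(\o+i\partial\bar\partial u_0)^n$. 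Both sides have total mass $\textup{Vol}(X)$, hence are equal, and Theorem \ref{dinew} forces $P(u_0,u_1-\tau)=u_0+c(\tau)$ for some constant $c(\tau)\le 0$.

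Now comes the key point. Since $P(u_0,u_1-\tau)\le u_1-\tau$ by definition of the envelope, the last identity gives $u_0+c(\tau)\le u_1-\tau$, i.e. $\sup_X(u_0-u_1)\le -\tau-c(\tau)<\infty$; running the same argument with $u_0$ and $u_1$ interchanged (via \eqref{distgeod} at $t_0=1$ and Lemma \ref{sublevel_lemma} for the reversed geodesic) gives $\sup_X(u_1-u_0)<\infty$. Hence $u_0$ and $u_1$ have the same singularity type; by Lemma \ref{dif_lemma2} and Corollary \ref{ecor}, $P:=P(u_0,u_1)\in\mathcal E(X,\o)$, and $P(u_0,u_1-\tau)\searrow P$ as $\tau\nearrow 0$ within $\mathcal E(X,\o)$. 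By Proposition \ref{MA_cont} together with the previous step, $(\o+i\partial\bar\partial P)^n=(\o+i\partial\bar\partial u_0)^n$, and symmetrically $(\o+i\partial\bar\partial P)^n=(\o+i\partial\bar\partial u_1)^n$; this is the measure identity sought, and the proof concludes as in the first paragraph.

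The main obstacle is precisely the transition carried out in the second paragraph: turning $\tilde d(u_0,u_1)=0$ into the pluripotential statement $(\o+i\partial\bar\partial P(u_0,u_1-\tau))^n=(\o+i\partial\bar\partial u_0)^n$. This forces one to combine Chen's formula \eqref{distgeod}, the sublevel-set description of Lemma \ref{sublevel_lemma}, and the partition formula of Proposition \ref{MA_form}, and it is essential that the approximants $u_0^k,u_1^k$ genuinely lie in $\mathcal H$ so that all three results apply; the passage to the possibly unbounded limit then rests on weak continuity of the Monge--Amp\`ere operator along decreasing sequences in $\mathcal E(X,\o)$.
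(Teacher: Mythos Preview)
Your argument is correct and follows the same overall strategy as the paper: Chen's length formula \eqref{distgeod} combined with a Chebyshev estimate on the level sets of $\dot u^k_0$, the identification of those level sets with coincidence sets via Lemma~\ref{sublevel_lemma}, the partition formula of Proposition~\ref{MA_form}, weak continuity of the Monge--Amp\`ere operator along the decreasing approximants, and finally Dinew's uniqueness (Theorem~\ref{dinew}).

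The only real differences are in sign convention and in the endgame. The paper takes a \emph{positive} parameter $\varepsilon>0$, so that the superlevel set $\{\dot u^k_0\ge\varepsilon\}$ carries vanishing $(\o+i\partial\bar\partial u_0^k)^n$--mass; via Proposition~\ref{MA_form} this yields $(\o+i\partial\bar\partial P(u_0,u_1-\varepsilon))^n\le(\o+i\partial\bar\partial u_1)^n$, hence equality, hence $P(u_0,u_1-\varepsilon)=u_1-\varepsilon-b_\varepsilon$. You take $\tau<0$ and obtain the dual conclusion $P(u_0,u_1-\tau)=u_0+c(\tau)$. For the finish, the paper invokes Lemma~\ref{dif_lemma2} directly to force $b_\varepsilon=0$, giving $u_1-\varepsilon\le u_0$ (and symmetrically $u_0-\varepsilon\le u_1$), so $u_0=u_1$ upon letting $\varepsilon\to0$ --- no second application of Dinew and no limiting in the parameter is needed. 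Your route instead lets $\tau\nearrow0$, uses Proposition~\ref{MA_cont} once more, and then applies Dinew a second time to conclude $u_0-u_1$ is constant; this is slightly longer but equally valid. (Incidentally, since you already have $P(u_0,u_1-\tau)=u_0+c(\tau)$, the decreasing limit as $\tau\nearrow0$ gives $P(u_0,u_1)=u_0+c(0)$ \emph{pointwise}, so the measure identity $(\o+i\partial\bar\partial P(u_0,u_1))^n=(\o+i\partial\bar\partial u_0)^n$ is in fact immediate without invoking Proposition~\ref{MA_cont}.)
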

\begin{proof} Suppose $u^k_0,u^k_1 \in \mathcal H$ are strictly decreasing approximating sequences of $u_0,u_1$. We also fix $\varepsilon >0$ . Let $(0,1) \ni t \to u^k_t \in \mathcal H_\Delta$ be the weak geodesic segment joining $u^k_0,u^k_1$. Starting from (\ref{distgeod}), we have the following sequence of estimates:
\begin{flalign*}
d(u^k_0,u^k_1)^2 &= \int_X  \dot {u^k_0}^2 (\o + i\partial\bar\partial u^k_0)^n \geq \varepsilon^2 \int_{\{\dot u^k_0 \geq \varepsilon\}}  (\o + i\partial\bar\partial u^k_0)^n\\
&=\varepsilon^2 \int_{\{ P(u^k_0,u^k_1 - \varepsilon)=u^k_0\}}  (\o + i\partial\bar\partial u^k_0)^n,\\
\end{flalign*}
where in the last line we have used Lemma \ref{sublevel_lemma}. As $d(u^k_0,u^k_1) \to 0$ it follows that
\begin{equation}\label{meas_est}\int_{\{ P(u^k_0,u^k_1 - \varepsilon)=u^k_0\}}  (\o + i\partial\bar\partial u^k_0)^n \to 0
\end{equation}
as $k \to \infty$. Suppose $\phi \in C^\infty(X)$ with $\phi \geq 0$. By Proposition \ref{MA_form} we have:
\begin{flalign*}
\int_X \phi &(\o + i\partial\bar\partial P(u^k_0,u^k_1-\varepsilon))^n \leq \\
&\int_{\{P(u^k_0,u^k_1-\varepsilon) = u^k_0\}}\phi(\o + i\partial\bar\partial u^k_0)^n + \int_{\{P(u^k_0,u^k_1-\varepsilon) = u^k_1-\varepsilon\}}\phi(\o + i\partial\bar\partial u^k_1)^n,
\end{flalign*}
$k \in \Bbb N.$  Using (\ref{meas_est}) it results that
$$\lim_{k \to\infty}\int_X \phi (\o + i\partial\bar\partial P(u^k_0,u^k_1-\varepsilon))^n \leq \lim_{k \to \infty}\int_X \phi (\o + i\partial\bar\partial u^k_1)^n.$$
As $P(u_0,u_1-\varepsilon) \in \mathcal E^2(X,\o)$ and $P(u^k_0,u^k_1-\varepsilon) \searrow P(u_0,u_1-\varepsilon)$, we can conclude:
$$(\o + i\partial\bar\partial P(u_0,u_1-\varepsilon))^n \leq (\o + i\partial\bar\partial u_1)^n.$$
As in the proof of Theorem \ref{Echar}, we have equality in this last estimate, since both measures integrate to $\text{Vol}(X)$. Now S. Dinew's uniqueness theorem implies that
\begin{equation}\label{firstid}
P(u_0,u_1-\varepsilon) = u_1-\varepsilon - b_\varepsilon
\end{equation}
for some $b_\varepsilon \geq 0$. We will conclude soon that in fact $b_\varepsilon=0$, but for the moment let us observe that we can similarly deduce that
\begin{equation}\label{secondid}
P(u_0-\varepsilon,u_1) = u_0-\varepsilon - c_\varepsilon
\end{equation}
for some $c_\varepsilon \geq 0$. It follows from (\ref{firstid}) and (\ref{secondid}) that $u_0$ and $u_1$ have the same singularity type. Lemma \ref{dif_lemma2} applied to (\ref{firstid}) implies now that either $u_0 = P(u_0,u_1 - \varepsilon)=u_1-\varepsilon - b_\varepsilon$ or $\inf_{X \setminus \{ u_0 = -\infty\}} (u_1 - \varepsilon - P(u_0,u_1 - \varepsilon))=0$. The first case implies that $d(u_0,u_1)=\varepsilon + b_\varepsilon >0$, contradicting our assumption. Hence we have $\inf_{X \setminus \{ u_0 = -\infty\}} (u_1 - \varepsilon - P(u_0,u_1 - \varepsilon))=0$ and this implies that $b_\varepsilon=0$. We can similarly conclude that $c_\varepsilon=0$.

Summing up, we have proved that $P(u_0,u_1-\varepsilon) = u_1-\varepsilon$ and $ P(u_0-\varepsilon,u_1) = u_0-\varepsilon$. This implies that $u_1 \geq u_0-\varepsilon$ and $u_0 \geq u_1-\varepsilon$. As $\varepsilon >0$ can be chosen arbitrarily small, we conclude that $u_0=u_1$.\end{proof}

By Corollary \ref{energy_geod1} it follows that given $u_0,u_1\in \mathcal E^2(X,\o)$, for the weak geodesic segment $t \to u_t$ connecting  $u_0,u_1$ we have $u_t \in\mathcal E^2(X,\o), \ t \in (0,1)$. Next we show that this weak geodesic is an actual geodesic segment in $(\mathcal E^2(X,\o),\tilde d)$ in the sense of (\ref{geod_def}). Before we do this we need a technical lemma:

\begin{lemma} \label{geod_tangent_limit} Suppose $v_0,v_1 \in \mathcal H_0$ and $\{v^j_1 \}_{j \in \Bbb N}\subset \mathcal H_0$ is sequence decreasing to $v_1$. By $(0,1) \ni t \to v_t,v_t^j \in \mathcal H_0$ we denote the bounded weak geodesic segments connecting $v_0,v_1$ and $v_0, v^j_1$ respectively. As we have convexity in the $t-$variable, we can define $\dot v_0 = \lim_{t \to 0}(v_t - v_0)/t$ and $\dot v^j_0 = \lim_{t \to 0}(v^j_t - v_0)/t$. The following holds:
$$\lim_{j \to \infty}\int_X \dot {v^j_0}^2 (\o + i\partial\bar\partial v_0)^n = \int_X \dot {v_0}^2 (\o + i\partial\bar\partial v_0)^n.$$
\end{lemma}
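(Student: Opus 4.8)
The plan is to reduce the integral identity to pointwise convergence of the derivatives $\dot v^j_0$ together with a uniform $L^\infty$ bound, and then conclude by the dominated convergence theorem applied against the finite measure $(\o+i\partial\bar\partial v_0)^n$.

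First I would record two preliminary facts. Since $v^j_1\searrow v_1$, the families of competing subgeodesics in the envelope construction \eqref{udef1} are nested, so the bounded weak geodesics satisfy $v^j_t\searrow v_t$ for every $t\in(0,1)$: the decreasing limit of the $v^j$ is $\tilde\o$-plurisubharmonic, bounded, and has boundary data $\le v_0$ as $t\to0$ and $\le v_1$ as $t\to1$ (since it lies below every $v^j$ and $v^j_1\searrow v_1$), hence it belongs to the subgeodesic family for $(v_0,v_1)$ and therefore equals $v$. By convexity in $t$ we have $\dot v^j_0=\inf_{t\in(0,1)}(v^j_t-v_0)/t$ and $\dot v_0=\inf_{t\in(0,1)}(v_t-v_0)/t$, so from $v_t\le v^{j+1}_t\le v^j_t$ we get the pointwise monotonicity $\dot v_0\le\dot v^{j+1}_0\le\dot v^j_0$. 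Moreover Berndtsson's Lipschitz estimate \eqref{dot_u_est} gives $\|\dot v^j_0\|_{L^\infty(X)}\le\|v_0-v^j_1\|_{L^\infty(X)}\le\|v_0-v_1\|_{L^\infty(X)}+\|v_0-v^1_1\|_{L^\infty(X)}=:C$, a bound independent of $j$ (using $v_1\le v^j_1\le v^1_1$).

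The main point is to prove $\dot v^j_0(x)\to\dot v_0(x)$ for every $x\in X$, and here I would lean on Lemma \ref{sublevel_lemma}, which gives $\{\dot v^j_0\ge\tau\}=\{P(v_0,v^j_1-\tau)=v_0\}$ and $\{\dot v_0\ge\tau\}=\{P(v_0,v_1-\tau)=v_0\}$. Since $v^j_1-\tau\searrow v_1-\tau$, the bounded envelopes $P(v_0,v^j_1-\tau)$ decrease pointwise to $P(v_0,v_1-\tau)$ (the limit is $\o$-plurisubharmonic, lies above $P(v_0,v_1-\tau)$ by monotonicity of $P$ and below $\min(v_0,v_1-\tau)$). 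Combining this with the squeeze $P(v_0,v_1-\tau)\le P(v_0,v^j_1-\tau)\le v_0$, one sees that $P(v_0,v^j_1-\tau)(x)=v_0(x)$ holds for all $j$ if and only if $P(v_0,v_1-\tau)(x)=v_0(x)$; equivalently, $\dot v^j_0(x)\ge\tau$ for all $j$ iff $\dot v_0(x)\ge\tau$. Since $\tau\in\Bbb R$ is arbitrary and $\dot v^j_0(x)$ decreases to some $\ell\ge\dot v_0(x)$, letting $\tau\nearrow\ell$ forces $\dot v_0(x)\ge\ell$, hence $\ell=\dot v_0(x)$.

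Finally, since $(\dot v^j_0)^2\le C^2$ and $(\o+i\partial\bar\partial v_0)^n$ is a finite Borel measure, the dominated convergence theorem yields $\int_X\dot{v^j_0}^2(\o+i\partial\bar\partial v_0)^n\to\int_X\dot{v_0}^2(\o+i\partial\bar\partial v_0)^n$, which is the assertion. The only genuinely delicate step is the one I expect to be the main obstacle: identifying the decreasing limit of the envelopes $P(v_0,v^j_1-\tau)$ with $P(v_0,v_1-\tau)$ and running the threshold bookkeeping that converts this into pointwise convergence of the $\dot v^j_0$. Everything else is the monotonicity built into the envelope/geodesic construction together with Berndtsson's a priori $L^\infty$ bound.
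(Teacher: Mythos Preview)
Your proof is correct and follows the same overall line as the paper: uniform $L^\infty$ bounds from Berndtsson's estimate, monotonicity $v^j\searrow v$ from the comparison/envelope principle, pointwise convergence $\dot v^j_0\searrow\dot v_0$, and then dominated convergence. The only difference is that your argument for pointwise convergence routes through Lemma~\ref{sublevel_lemma} and the envelopes $P(v_0,v^j_1-\tau)$, whereas the paper (and your own setup) already gives this directly: from $\dot v^j_0=\inf_{t\in(0,1)}(v^j_t-v_0)/t$ and $v^j_t\searrow v_t$ one has, for each fixed $t_0$, $\lim_j\dot v^j_0\le\lim_j(v^j_{t_0}-v_0)/t_0=(v_{t_0}-v_0)/t_0$, and taking the infimum over $t_0$ yields $\lim_j\dot v^j_0\le\dot v_0$, which together with the obvious inequality $\dot v^j_0\ge\dot v_0$ finishes the job without invoking the sublevel-set description.
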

\begin{proof} By (\ref{dot_u_est}) there exists $C >0$ such that $\|\dot v_0\|_{L^\infty(X)}, \|\dot v^j_0\|_{L^\infty(X)} \leq C$. We also have $v \leq v^j, \ j \in \Bbb N$ by the comparison principle. As we have convexity in the $t-$variable and all our weak geodesics share the same starting point, it also follows that $\dot v^j_0 \searrow \dot v_0$ pointwise. The conclusion of the lemma follows now from Lebesgue's dominated convergence theorem.
\end{proof}

\begin{lemma} Suppose $u_0,u_1 \in \mathcal E^2(X,\o)$ and $(0,1)\ni t \to u_t \in \mathcal E^2(X,\o)$ is the weak geodesic segment connecting $u_0,u_1$ defined in (\ref{udef_new}). Then $t\ \to u_t$ is a geodesic segment in $(\mathcal E^2(X,\o),\tilde d)$ in the sense of (\ref{geod_def}).
\end{lemma}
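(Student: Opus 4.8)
The plan is to establish $\tilde d(u_{t_1},u_{t_2})=|t_1-t_2|\,\tilde d(u_0,u_1)$ first for $u_0,u_1\in\mathcal H$, and then transport it to $\mathcal E^2(X,\o)$ by decreasing approximation. For the case $u_0,u_1\in\mathcal H$ I would work with Chen's $\eps$-geodesics $v^\eps$ (genuine smooth curves in $\mathcal H$) joining $u_0,u_1$, rather than with the weak geodesic directly, so that the length functional \eqref{curve_length_def1} is literally available. The energy $E^\eps(t)=\int_X\dot{v^\eps_t}^2(\o+i\partial\bar\partial v^\eps_t)^n$ satisfies $|\dot E^\eps(t)|\le 2\eps C$ on $[0,1]$ by the computation preceding \eqref{distgeod}; hence the affine reparametrization of $v^\eps|_{[t_1,t_2]}$ is a smooth curve in $\mathcal H$ from $v^\eps_{t_1}$ to $v^\eps_{t_2}$ of length $\int_{t_1}^{t_2}\sqrt{E^\eps(s)}\,ds\le |t_2-t_1|\sqrt{E^\eps(0)+2\eps C}$, so $d(v^\eps_{t_1},v^\eps_{t_2})\le|t_2-t_1|\sqrt{E^\eps(0)+2\eps C}$. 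Applying the same length bound on $[0,t_1]$ and on $[t_2,1]$ together with the triangle inequality and $d(v^\eps_0,v^\eps_1)=d(u_0,u_1)$ produces the matching lower bound $d(v^\eps_{t_1},v^\eps_{t_2})\ge d(u_0,u_1)-\big(1-|t_2-t_1|\big)\sqrt{E^\eps(0)+2\eps C}$. Letting $\eps\to 0$ and using $E^\eps(0)\to d(u_0,u_1)^2$ (from \eqref{distgeod}) gives $\lim_{\eps\to0}d(v^\eps_{t_1},v^\eps_{t_2})=|t_2-t_1|\,d(u_0,u_1)$.

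Since $\eps$-geodesics increase to the weak geodesic joining $u_0,u_1$, one has $v^\eps_{t_i}\nearrow u_{t_i}\in\mathcal H_\Delta$, and \eqref{epsest} bounds $(\o+i\partial\bar\partial v^\eps_{t_i})^n$ by a fixed multiple of $\o^n$; so by Lemma \ref{Mdist_est} (extended, by approximation from $\mathcal H$, to the case of one endpoint in $\mathcal H_\Delta$) and dominated convergence, $\tilde d(v^\eps_{t_i},u_{t_i})\to 0$, whence $\tilde d(u_{t_1},u_{t_2})=|t_2-t_1|\,d(u_0,u_1)=|t_2-t_1|\,\tilde d(u_0,u_1)$. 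For arbitrary $u_0,u_1\in\mathcal E^2(X,\o)$ I would then take decreasing $u^k_0,u^k_1\in\mathcal H$ with $u^k_i\searrow u_i$; by \eqref{udef_new} the corresponding weak geodesics $u^k_t$ (which have endpoints in $\mathcal H$) decrease to $u_t$, and $u_t\in\mathcal E^2(X,\o)$ by Corollary \ref{energy_geod1}(i). The $\mathcal H$-case gives $\tilde d(u^k_{t_1},u^k_{t_2})=|t_2-t_1|\,d(u^k_0,u^k_1)$; as $k\to\infty$ the right side converges to $|t_2-t_1|\,\tilde d(u_0,u_1)$ by \eqref{dist_geod_new1}, so it only remains to show $\tilde d(u^k_{t_i},u_{t_i})\to 0$.

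This continuity of $\tilde d$ along decreasing sequences leaving $\mathcal H$ is the step I expect to be the real work, and it is where Lemma \ref{geod_tangent_limit} is needed. One first promotes Lemmas \ref{Mdist_est} and \ref{IntDistEst} to this setting: approximating an $\mathcal H_\Delta$ potential $b$ and a limit $u_{t_i}$ from above by elements of $\mathcal H$ and using Dini/Bedford--Taylor convergence yields $\tilde d(a,b)^2\le\int_X(b-a)^2(\o+i\partial\bar\partial a)^n$ whenever $a\le b$ with $a\in\mathcal E^2(X,\o)$ and $b\in\mathcal H_\Delta$; then, exactly as in the proof of Lemma \ref{IntDistEst}, the fundamental estimate (Proposition \ref{Energy_est}) applied on $(X,\o+i\partial\bar\partial u^l_{t_i})$ to $u_{t_i}-u^l_{t_i}\le u^k_{t_i}-u^l_{t_i}\le 0$, combined with $\int_X u_{t_i}^2(\o+i\partial\bar\partial u_{t_i})^n<\infty$ (which is exactly membership in $\mathcal E^2$), gives $\tilde d(u^k_{t_i},u_{t_i})^2\le C\int_X(u_{t_i}-u^k_{t_i})^2(\o+i\partial\bar\partial u_{t_i})^n\to 0$. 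Lemma \ref{geod_tangent_limit} is precisely what makes the promotion of Lemma \ref{Mdist_est} past $\mathcal H$ go through, since it controls $\int_X\dot v_0^2(\o+i\partial\bar\partial v_0)^n$ under a decreasing perturbation of the far endpoint. Once this continuity is in hand the geodesic identity holds for all $t_1,t_2\in(0,1)$, and \eqref{geod_def} then follows on $[0,1]$ after extending $\alpha(t)=u_t$ by $\alpha(0)=u_0$, $\alpha(1)=u_1$; everything outside the continuity step is the triangle inequality and dominated convergence.
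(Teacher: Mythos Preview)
Your argument is correct but takes a genuinely different route from the paper, and your attribution of the key step to Lemma~\ref{geod_tangent_limit} is misplaced.

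The paper does \emph{not} separate out the case $u_0,u_1\in\mathcal H$. Instead it fixes $l\in[0,1]$ and proves $\tilde d(u_0,u_l)=l\,\tilde d(u_0,u_1)$ directly from the definition of $\tilde d$: given approximants $u^k_0,u^k_1\in\mathcal H$, the weak geodesic $t\mapsto u^k_t$ produces a midpoint $u^k_l\in\mathcal H_\Delta$, not in $\mathcal H$, so the definition of $\tilde d$ does not apply to the pair $(u^k_0,u^k_l)$. Lemma~\ref{geod_tangent_limit} is invoked at exactly this point: it lets one replace $u^k_l$ by some $v^k_l\in\mathcal H$ with $v^k_l\ge u^k_l$, $v^k_l\searrow u_l$, and $d(u^k_0,v^k_l)$ close to $l\,d(u^k_0,u^k_1)$ (since the geodesic from $u^k_0$ to $u^k_l$ is $t\mapsto u^k_{lt}$). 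The two-parameter identity is then obtained by reparametrization.

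Your route instead handles $u_0,u_1\in\mathcal H$ via the $\varepsilon$-geodesic length estimate, and then passes from $v^\varepsilon_{t_i}$ to $u_{t_i}$, and later from $u^k_{t_i}$ to $u_{t_i}$, by establishing $\tilde d$-continuity along monotone sequences. The tool you actually use for this is the estimate $\tilde d(a,b)^2\le C\int_X(b-a)^2(\omega+i\partial\bar\partial a)^n$ for $a\le b$ in $\mathcal E^2$, whose proof needs only Lemma~\ref{Mdist_est}, approximation from $\mathcal H$, and Proposition~\ref{Energy_est} --- \emph{not} Lemma~\ref{geod_tangent_limit}. In the paper this estimate and its consequence (monotone $\tilde d$-continuity, Proposition~\ref{Mdist_est_gen}) are proved \emph{after} the present lemma; your argument effectively anticipates them. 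So your proof is valid, but the sentence crediting Lemma~\ref{geod_tangent_limit} with ``making the promotion of Lemma~\ref{Mdist_est} past $\mathcal H$ go through'' should be dropped: that lemma controls $\int\dot v_0^2\,\omega_{v_0}^n$ under perturbation of the far endpoint, which is the paper's device for staying inside $\mathcal H$, and plays no role in your approximation-plus-fundamental-estimate argument.
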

\begin{proof} We have $u_t \in \mathcal E^2(X,\o), \ t \in [0,1],$ as follows from Corollarly \ref{energy_geod1}. First we prove that
\begin{flalign}\label{geod_half}
 \tilde d(u_0,u_l)=l\tilde d(u_0,u_1),
\end{flalign}
for $l \in [0,1]$. Suppose $u^k_0,u^k_1 \in \mathcal H$ are strictly decreasing approximating sequences of $u_0,u_1$ and let $(0,1)\ni t \to u^k_t \in \mathcal H_\Delta$ be the decreasing sequence of weak geodesics connecting $u_0^k,u_1^k$. By definition, we have that $$\tilde d(u_0,u_1)=\lim_{k\to \infty}d(u^k_0,u^k_1)=\lim_{k\to \infty}\sqrt{\int_X \dot {u^k_0}^2 (\o + i\partial\bar\partial u^k_0)^n},$$
where we have used (\ref{distgeod}).

The geodesics segments $(0,1)\ni t \to u^k_t \in \mathcal H_\Delta$ are decreasing pointwise to $(0,1)\ni t \to u_t\in \mathcal E^2(X,\o)$. In particular, this implies that $u^k_l \searrow u_l$.
We want to find a decreasing sequence $v^k_l \in \mathcal H$ such that $u^k_l \leq v^k_l$, $v^k_l \searrow u_l$ and
\begin{equation}\label{approx_lim}
ld(u_0^k,u^k_1) - d(u^k_0, v^k_l)=l\sqrt{\int_X \dot {u^k_0}^2 (\o + i\partial\bar\partial u^k_0)^n} - \sqrt{\int_X \dot {v^k_0}^2 (\o + i\partial\bar\partial u^k_0)^n} \to 0,
\end{equation}
as $k \to \infty$, where $(0,1)\ni t \to v^k\in \mathcal H_\Delta$ is the weak geodesic segment connecting $u^k_0$ and $v^k_l$. By the definition of $\tilde d$, letting $k \to \infty$ in (\ref{approx_lim}) would give us (\ref{geod_half}).

Finding such sequence $v_l^k$ is always possible by an application of the previous lemma to $v_0 := u^k_0$ and $v_1 := u^k_l$, as we observe that the weak geodesic segment connecting $u^k_0$ and $u^k_l$ is exactly $t \to u_{lt}^k$.

Finally, we prove that for $t_1,t_2 \in [0,1], \ t_1 \leq t_2$ we have
\begin{equation}\label{geod_def1}
\tilde d(u_{t_1},u_{t_2})=(t_2 -t_1)\tilde d(u_0,u_1).
\end{equation}
Let $h_0 = u_{t_2}$ and $h_1 = u_0$. If $(0,1) \ni t \to h_t \in \mathcal E^2(X,\o)$ is the weak geodesic connecting $h_0,h_1$ as defined in (\ref{udef_new}), using (\ref{udef1_new}) one can easily see that $h_t = u_{t_2(1-t)}$. Applying (\ref{geod_half}) to $t \to h_t$ and $l = 1 - t_1/t_2$ we obtain
$$(1 - t_1/t_2)\tilde d(u_{t_2},u_0)=\tilde d(u_{t_2},u_{t_1}).$$ Now applying (\ref{geod_half}) for $t \to u_t$ and $l = t_2$ we have
$$\tilde d(u_0, u_{t_2})=t_2 \tilde d(u_0,u_1).$$ Putting these last two formulas together we obtain (\ref{geod_def1}), finishing the proof.
\end{proof}
The following one sided generalization of Lemma \ref{Mdist_est} will be very useful:
\begin{lemma} Suppose $u,v \in \mathcal E^2(X,\o)$ satisfying $u \leq v$. Then:
$$\tilde d(u,v)^2 \leq C \int_X(v-u)^2(\o +i\partial\bar\partial u)^n,$$
where $C>0$ only depends on $\dim X$.
\end{lemma}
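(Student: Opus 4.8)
The plan is to reduce to the approximating definition of $\tilde d$ and then run, in a one‑sided way, the mechanism already used in the proof of Lemma \ref{IntDistEst}: bound $d(u_k,v_k)$ from above by an integral against $(\omega+i\partial\bar\partial u_k)^n$ via Lemma \ref{Mdist_est}, and then transfer this bound to an integral against $(\omega+i\partial\bar\partial u)^n$ using the fundamental estimate (Proposition \ref{Energy_est}) over the Kähler reference form $\omega+i\partial\bar\partial v_k$.

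First I would normalize: replacing $(u,v)$ by $(u-\sup_X v,\;v-\sup_X v)$ changes neither side of the asserted inequality (the Mabuchi geometry and the Monge--Amp\`ere measure $(\omega+i\partial\bar\partial u)^n$ are invariant under adding a constant), so we may assume $u\le v\le 0$. Then I would produce decreasing sequences $u_k,v_k\in\mathcal H$ with $u_k\searrow u$, $v_k\searrow v$ and, crucially, $u_k\le v_k$ for all $k$. This is done exactly as in the proof that $\tilde d$ is well defined: take $v_k\searrow v$ strictly decreasing in $\mathcal H$ and $w_j\searrow u$ in $\mathcal H$ (both possible by \cite{bk}); since $u\le v\le v_{k+1}$, the functions $\max(v_{k+1},w_j)$ decrease pointwise, as $j\to\infty$, to the \emph{continuous} function $\max(v_{k+1},u)=v_{k+1}$, so Dini's lemma gives uniform convergence, and hence (using $\min_X(v_k-v_{k+1})>0$) an index $j_k$ with $w_{j_k}<v_k$ on all of $X$; choosing $j_k$ strictly increasing and setting $u_k:=w_{j_k}$ works.

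With this in hand, Lemma \ref{Mdist_est} applies (as $u_k\le v_k$) and gives $d(u_k,v_k)^2\le\int_X(v_k-u_k)^2(\omega+i\partial\bar\partial u_k)^n$. Next I would regard $u_k-v_k$ and $u-v_k$ as $\omega_{v_k}$‑plurisubharmonic functions, where $\omega_{v_k}:=\omega+i\partial\bar\partial v_k$ is Kähler; they satisfy $u-v_k\le u_k-v_k\le 0$, both lie in $\mathcal E^2(X,\omega_{v_k})$ (for $u-v_k$ this is the invariance of the finite energy class under the change of reference, $v_k$ being bounded), and their Monge--Amp\`ere measures relative to $\omega_{v_k}$ equal $(\omega+i\partial\bar\partial u_k)^n$ and $(\omega+i\partial\bar\partial u)^n$ respectively. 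Since $\chi^2\in\mathcal W^+_2$, Proposition \ref{Energy_est} applied over the reference $\omega_{v_k}$ produces a constant $C>0$ depending only on $\dim X$ with $\int_X(u_k-v_k)^2(\omega+i\partial\bar\partial u_k)^n\le C\int_X(u-v_k)^2(\omega+i\partial\bar\partial u)^n$, whence
$$d(u_k,v_k)^2\le C\int_X(v_k-u)^2(\omega+i\partial\bar\partial u)^n.$$
Finally I let $k\to\infty$: the left side tends to $\tilde d(u,v)^2$ by the definition of $\tilde d$, while on the right $(v_k-u)^2$ decreases pointwise to $(v-u)^2$ and is dominated by $(v_1-u)^2\le 2v_1^2+2u^2$, which is integrable against $(\omega+i\partial\bar\partial u)^n$ because $u\in\mathcal E^2(X,\omega)$ and $v_1$ is bounded; dominated convergence then yields the claim.

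The two genuinely delicate points are the places where boundedness of $v_k$ enters: the Dini‑type construction of a coupled pair $u_k\le v_k$ (routine once one notices that $\max(v_{k+1},w_j)$ has the continuous limit $v_{k+1}$ precisely because $u\le v_{k+1}$), and the identification of $u-v_k$ as an element of $\mathcal E^2(X,\omega_{v_k})$ carrying Monge--Amp\`ere measure $(\omega+i\partial\bar\partial u)^n$; the latter is the standard invariance of non‑pluripolar Monge--Amp\`ere operators and of finite energy classes under adding a bounded $\omega$‑plurisubharmonic function to the reference. Everything else is a direct application of Lemma \ref{Mdist_est}, Proposition \ref{Energy_est}, and the already‑established well‑definedness of $\tilde d$.
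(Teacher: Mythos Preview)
Your proof is correct and follows essentially the same route as the paper's: approximate by $u_k\le v_k$ in $\mathcal H$, apply Lemma \ref{Mdist_est}, shift the reference to $\omega_{v_k}$ and invoke Proposition \ref{Energy_est} with $u-v_k\le u_k-v_k\le 0$, then pass to the limit via dominated convergence. You supply more detail than the paper (the normalization, the explicit Dini-type construction of the coupled pair $u_k\le v_k$, and the justification for dominated convergence), but the skeleton is identical.
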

\begin{proof}Let $u_k,v_k \in \mathcal H$ be sequences decreasing to $u,v$, with the additional property $u_k \leq v_k, \ k \geq1$. By Lemma \ref{Mdist_est} we have:
$$d(u_k,v_k)^2 \leq \int_X(u_k-v_k)^2(\o +i\partial\bar\partial u_k)^n.$$
We clearly have $u-v_k, u_k -v_k \in \mathcal E^2(X,\o +i\partial\bar\partial v_k)$ and $u-v_k\leq u_k -v_k \leq 0$. Applying Proposition \ref{Energy_est} to $\mathcal E^2(X,\o +i\partial\bar\partial v_k)$ we can conclude:
$$d(u_k,v_k)^2 \leq C \int_X(u-v_k)^2(\o +i\partial\bar\partial u)^n.$$
Letting $k \to \infty$, by the dominated convergence theorem we arrive at the desired estimate.
\end{proof}

We prove now that monotone sequences in $\mathcal E^2(X,\o)$ converge with respect to the Mabuchi metric:

\begin{proposition} \label{Mdist_est_gen}
If $\{w_k\}_{k \in \Bbb N} \subset \mathcal E^2(X,\o)$ decreases (increases a.e.) to $w \in \mathcal E^2(X,\o)$ then $\tilde d(w_k,w)\to 0$.
\end{proposition}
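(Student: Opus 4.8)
The plan is to treat the two monotone regimes separately, after normalizing: the Mabuchi metric on $\mathcal H$ is invariant under adding a common constant to its two arguments, hence so is $\tilde d$ by \eqref{dist_geod_new1}, and every $\o$-plurisubharmonic function is bounded above, so we may subtract a constant and assume $w_k\le 0$ and $w\le 0$ throughout.

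\emph{Decreasing case.} Here $w\le w_k\le w_1\le 0$, so the one-sided distance estimate proved just above (the generalization of Lemma \ref{Mdist_est}) gives
$$\tilde d(w_k,w)^2\ \le\ C\int_X (w_k-w)^2\,(\o+i\partial\bar\partial w)^n .$$
Since $0\le w_k-w\le -w$ pointwise we have $(w_k-w)^2\le w^2$, and $\int_X w^2(\o+i\partial\bar\partial w)^n=-E_{\chi^2}(w)<\infty$ because $w\in\mathcal E^2(X,\o)$ (Proposition \ref{cutoff_aprox}). As $w_k-w\searrow 0$ pointwise, dominated convergence yields $\int_X (w_k-w)^2(\o+i\partial\bar\partial w)^n\to 0$, hence $\tilde d(w_k,w)\to 0$.

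\emph{Increasing case.} Now $w_1\le w_k\le w\le 0$ — the bound $w_k\le w$ holds since $w$ is the upper semicontinuous regularization of $\sup_j w_j$. The same one-sided estimate gives
$$\tilde d(w_k,w)^2\ \le\ C\int_X (w-w_k)^2\,(\o+i\partial\bar\partial w_k)^n ,$$
but now the Monge--Amp\`ere measure on the right depends on $k$, so a plain domination argument fails. I would recast this integral as a weighted energy over the (possibly singular) reference $\o_w:=\o+i\partial\bar\partial w$, which represents the Kähler class $[\o]$ and carries full mass, so the finite-energy formalism of Section~2.3 applies to it (cf. \cite{begz}). Set $\psi_k:=w_k-w$; then $\o_w+i\partial\bar\partial\psi_k=\o+i\partial\bar\partial w_k\ge 0$, so $\psi_k\in\textup{PSH}(X,\o_w)$, $\psi_1\le\psi_k\le 0$, and $\psi_k\nearrow 0$. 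Its $\chi^2$-energy over $\o_w$ equals $-\int_X(w-w_1)^2(\o+i\partial\bar\partial w_1)^n$, which by $0\le w-w_1\le -w_1$ is $\ge -\int_X w_1^2(\o+i\partial\bar\partial w_1)^n>-\infty$, so $\psi_1$ lies in the finite-energy class of $\o_w$; by monotonicity (Corollary \ref{Emonoton}) so does every $\psi_k$, and
$$\int_X (w-w_k)^2(\o+i\partial\bar\partial w_k)^n\ =\ -E_{\chi^2}(\psi_k)$$
with the energy taken relative to $\o_w$. Thus the increasing case reduces to showing $E_{\chi^2}(\psi_k)\to 0=E_{\chi^2}(0)$ as $\psi_k\nearrow 0$ inside the finite-energy class of $\o_w$.

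\emph{The main obstacle.} The remaining point — continuity of the weighted Monge--Amp\`ere energy along an increasing sequence inside a finite energy class — is the one genuinely delicate step, and I expect it to be where the real work is. It is a known property of the Guedj--Zeriahi energy, but it can also be extracted from Section~2.3 by truncation: the bounded functions $\max(\psi_k,-m)=\max(w_k,w-m)-w$ increase to $0$, and $E_{\chi^2}(\max(\psi_k,-m))\to 0$ follows from the continuity of the Monge--Amp\`ere operator under the increasing limit $\max(w_k,w-m)\nearrow w$ (Proposition \ref{MA_cont}) together with boundedness of the integrand; one then sends $m\to\infty$, controlling the tail $\int_{\{\psi_k<-m\}}\psi_k^2\,(\o+i\partial\bar\partial w_k)^n$ uniformly in $k$ through the fundamental estimate (Proposition \ref{Energy_est}) applied in the energy class of $\o_w$, exactly as in Proposition \ref{cutoff_aprox}. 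Everything else is dominated convergence and the one-sided distance estimate already in hand.
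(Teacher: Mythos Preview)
Your decreasing case is correct and identical to the paper's. For the increasing case your skeleton is also the paper's: reduce via the one-sided estimate to $\int_X(w-w_k)^2(\o+i\partial\bar\partial w_k)^n\to 0$, truncate, handle the bounded situation, and control the tails uniformly. The passage to the non-smooth reference $\o_w$ is a cosmetic repackaging (and forces you to invoke the \cite{begz} finite-energy formalism relative to a singular form, whereas the paper stays with $\o$ throughout and truncates $w_k$ directly at level $-L$).

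However, at the two genuinely delicate points your justification is too thin. First, for the truncated (``bounded'') step you write that $E_{\chi^2}(\max(\psi_k,-m))\to 0$ follows from weak convergence of the Monge--Amp\`ere measures (Proposition~\ref{MA_cont}) ``together with boundedness of the integrand''. Weak convergence of measures against a merely bounded, non-continuous integrand does not give convergence of the integrals; the paper invokes quasi-continuity of plurisubharmonic functions here, and you need the same (or an equivalent device). Second, for the uniform tail control you appeal to ``the fundamental estimate (Proposition~\ref{Energy_est}) \ldots\ exactly as in Proposition~\ref{cutoff_aprox}''. Proposition~\ref{cutoff_aprox} concerns a \emph{single} potential and gives no uniformity in $k$, and Proposition~\ref{Energy_est} alone only yields a uniform bound on the full energies $|E_{\chi^2}(\psi_k)|$, not smallness of $\int_{\{\psi_k<-m\}}\psi_k^2(\o+i\partial\bar\partial w_k)^n$ as $m\to\infty$ uniformly in $k$. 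The paper's way out is to choose a weight $\chi\in\mathcal W^+_M$ with $-t^2/\chi(t)\to 0$ and $w_1\in\mathcal E_\chi(X,\o)$ (hence all $w_k$ and $w$ by monotonicity); then
\[
\int_{\{w_k\le -L\}} w_k^2\,(\o+i\partial\bar\partial w_k)^n\ \le\ \frac{L^2}{|\chi(-L)|}\,|E_\chi(w_k)|\ \le\ \frac{CL^2}{|\chi(-L)|}\ \longrightarrow\ 0,
\]
uniformly in $k$, where the middle inequality \emph{is} the fundamental estimate. You need this upgrade to a faster-growing weight; without it the tail step does not close.
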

\begin{proof} When $w_k$ is decreasing to $w$ the result follows from the estimate we just proved and the dominated convergence theorem. Suppose $w_k$ increases a.e. to $w$. Again, using the previous Lemma we want to prove that
\begin{equation}\label{intest}
\int_X (w -w_k)^2 (\o +i\partial \bar\partial w_k)^n \to 0.
\end{equation}
The rest of the argument is adapted from \cite[Theorem 2.17]{begz}. We can additionally suppose that all the functions are negative. Let $w^L_k = \max\{w_k,-L\}, L \geq 0$ with $w^L$ defined similarly. If the $w_k$ are uniformly bounded then (\ref{intest}) can be seen using the quasicontinuity property of pluri-subharmonic functions.  Hence we are done if we can prove that the quantity
$$\Big|\int_X (w - w_k)^2 (\o +i\partial \bar \partial w_k)^n-\int_X (w^L - w_k^L)^2 (\o +i\partial \bar \partial w^L_k)^n\Big| $$
tends to zero uniformly as $L \to \infty$. Before we get into the estimates we observe that there exists $\chi \in \mathcal W^+_M$ for some $M > 1$ such that $-t^2/\chi(t)$ decreases to $0$ as $t \to -\infty$ and $w_k,w \in \mathcal E_\chi(X,\o), \ k \in \Bbb N$. We can write:
\begin{flalign*}
\Big|\int_X (w - w_k)^2 &(\o +i\partial \bar \partial w_k)^n-\int_X (w^L - w_k^L)^2 (\o +i\partial \bar \partial w^L_k)^n\Big| \leq \\
&\leq\Big|\int_{\{w_k \leq -L\}} (w - w_k)^2 (\o +i\partial \bar \partial w_k)^n-\int_{\{w_k \leq -L\}} (w^L - w_k^L)^2 (\o +i\partial \bar \partial w^L_k)^n\Big| \\
&\leq\Big(\int_{\{w_k \leq -L\}} w_k^2 (\o +i\partial \bar \partial w_k)^n+\int_{\{w_k \leq -L\}} {w_k^L}^2 (\o +i\partial \bar \partial w^L_k)^n\Big)\\
&\leq\frac{L^2}{\chi(L)}\Big(\int_{\{w_k \leq -L\}} \chi(w_k) (\o +i\partial \bar \partial w_k)^n+\int_{\{w_k \leq -L\}} \chi(w_k^L) (\o +i\partial \bar \partial w^L_k)^n\Big)\\
&\leq\frac{C L^2}{|\chi(L)|}|E_\chi(w_k)| \leq \frac{C L^2}{\chi(L)},
\end{flalign*}
where in the last line we have used Proposition \ref{Energy_est}.
\end{proof}

\begin{lemma} $(\mathcal E^2(X,\o), \tilde d)$ is non-positively curved in the sense of Alexandrov. In particular, goedesic segments joining different points of $\mathcal E^2(X,\o)$ are unique.
\end{lemma}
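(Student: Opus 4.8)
The plan is to deduce the Alexandrov comparison inequality (\ref{nonpositive}) for $(\mathcal E^2(X,\o),\tilde d)$ from the corresponding inequality for $(\mathcal H,d)$, due to Calabi and Chen \cite{cc}, by a monotone approximation argument. Fix distinct $u_0,u_1\in\mathcal E^2(X,\o)$. By the lemmas already established, the weak geodesic $\gamma:[0,1]\ni t\to u_t\in\mathcal E^2(X,\o)$ of (\ref{udef_new}) is a $\tilde d$-geodesic joining $u_0,u_1$, parametrized proportionally to arc length; hence it suffices to verify (\ref{nonpositive}) with $q=u_0$, $r=u_1$, $s=u_\lambda$ for $\lambda\in(0,1)$ (the cases $\lambda=0,1$ being trivial), and $p\in\mathcal E^2(X,\o)$ arbitrary.

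First I would choose, using \cite{bk}, decreasing sequences $u_0^k\searrow u_0$, $u_1^k\searrow u_1$, $p^k\searrow p$ with $u_0^k,u_1^k,p^k\in\mathcal H$, and let $(0,1)\ni t\to u_t^k\in\mathcal H_\Delta$ be Chen's weak geodesic joining $u_0^k,u_1^k$. By the comparison principle $u_t^k\searrow u_t$ for each $t$ (this is precisely how $u_t$ is defined), and the distance formula (\ref{distgeod}) shows that $t\to u_t^k$ has constant speed, so $u_\lambda^k$ is exactly the point at parameter $\lambda$ of this geodesic. Applying the non-positive curvature inequality for the Mabuchi metric to $u_0^k,u_1^k,u_\lambda^k,p^k$ (in the form valid for weak geodesics joining smooth potentials, with the intermediate point in $\mathcal H_\Delta\subset\mathcal E^2(X,\o)$ and $p^k$ an arbitrary smooth potential; if only the midpoint version is available it upgrades to general $\lambda$ by the standard argument), and recalling that $\tilde d=d$ on $\mathcal H$, gives
\begin{equation*}
\tilde d(p^k,u_\lambda^k)^2\le \lambda\,\tilde d(p^k,u_1^k)^2+(1-\lambda)\,\tilde d(p^k,u_0^k)^2-\lambda(1-\lambda)\,\tilde d(u_0^k,u_1^k)^2.
\end{equation*}

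Then I would let $k\to\infty$. Since $u_0^k\searrow u_0$, $u_1^k\searrow u_1$, $p^k\searrow p$, and $u_\lambda^k\searrow u_\lambda$ are decreasing sequences in $\mathcal E^2(X,\o)$ with limits again in $\mathcal E^2(X,\o)$ (the last by Corollary \ref{energy_geod1}), Proposition \ref{Mdist_est_gen} yields $\tilde d(u_0^k,u_0),\tilde d(u_1^k,u_1),\tilde d(p^k,p),\tilde d(u_\lambda^k,u_\lambda)\to 0$; combined with the triangle inequality, every term of the displayed inequality converges to the corresponding term for $u_0,u_1,u_\lambda,p$. This establishes (\ref{nonpositive}) for $\gamma$, $s=u_\lambda$, $p$, so $(\mathcal E^2(X,\o),\tilde d)$ is non-positively curved; uniqueness of geodesic segments between distinct points is then the standard consequence (\cite{bh}).

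The limiting step is clean once Proposition \ref{Mdist_est_gen} is in hand, so the main obstacle is ensuring that the Calabi–Chen comparison inequality may be invoked in exactly the form needed: along the \emph{weak} geodesic joining two smooth potentials (genuine smooth geodesics need not exist), with an arbitrary smooth fourth point, and with the intermediate point only known to lie in $\mathcal H_\Delta$. If the literature supplies the inequality only along the smooth $\varepsilon$-geodesics $u_t^{k,\varepsilon}$, one would first record it there and then pass $\varepsilon\to 0$, using $u_t^{k,\varepsilon}\nearrow u_t^k$ together with the convergences $l(u^{k,\varepsilon})\to d(u_0^k,u_1^k)$ and $\tilde d(\cdot,u_\lambda^{k,\varepsilon})\to\tilde d(\cdot,u_\lambda^k)$ recalled in Section 2.1, before running the argument above.
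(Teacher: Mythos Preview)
Your proposal is correct and follows essentially the same approach as the paper: establish the comparison inequality first for smooth endpoints using Calabi--Chen \cite{cc}, then pass to general $\mathcal E^2(X,\o)$ by decreasing approximation and Proposition \ref{Mdist_est_gen}. The paper carries out precisely the $\varepsilon$-geodesic workaround you anticipate in your last paragraph, using two parameters $\varepsilon,\varepsilon'$ (one for the geodesics from $p$, one for the geodesic from $q$ to $r$) and the energy functional $L(\alpha)=\int_0^1\int_X\dot\alpha_t^2(\o+i\partial\bar\partial\alpha_t)^n$, letting first $\varepsilon\to0$ then $\varepsilon'\to0$.
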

\begin{proof} Suppose $p,q,r \in \mathcal E^2(X,\o)$ and $(0,1) \ni t \to u^{qr}_t \in \mathcal E^2(X,\o)$ is the weak geodesic segment connecting $q,r$. Suppose $s \in \mathcal \{{u^{qr}}\}$ with $\lambda \tilde d(q,r)= \tilde d(q,s)$, i.e. $s = u^{qr}_\lambda$. We need to prove the estimate
\begin{equation}\label{nonpositive1}
\tilde d(p,s)^2 \leq \lambda \tilde d(p,r)^2 +(1-\lambda) \tilde d(p,q)^2 - \lambda(1-\lambda)\tilde d(q,r)^2.
\end{equation}
First, suppose $p,q,r \in \mathcal H$. The proof of the above inequality in this case  is done in \cite{cc}, so we refer to this work and only skim over the argument.

We fix $\varepsilon,\varepsilon' >0$. With $[0,1] \ni t \to u^{\varepsilon,pq}_t,u^{\varepsilon,pr}_t,u^{\varepsilon',qr}_t \in \mathcal H$ we denote the smooth $\varepsilon-$geodesics ($\varepsilon'-$geodesics) joining $(p,q),(p,r)$ and $(q,r)$ respectively (see Section 2.1). Given a smooth curve $[0,1]\ni t \to \alpha_t \in \mathcal H$, we denote by $L(\alpha)$ its energy:
$$L(\alpha)=\int_0^1\int_X \dot \alpha_t^2(\o+i\partial\bar\partial \alpha_t)^n.$$
Let $[0,1]\ni t\to u^{\varepsilon,\lambda}_t \in \mathcal H$ be the $\varepsilon-$geodesic joining $p$ and $u^{\varepsilon',qr}_\lambda$. It is proved in  \cite[Section 2.4]{cc} that
$$L(u^{\varepsilon,\lambda})\leq (1-\lambda)L(u^{\varepsilon,pq}) + \lambda L(u^{\varepsilon,pr}) -\lambda(1-\lambda) L(u^{\varepsilon',qr}) - (\varepsilon+\varepsilon') C$$
where $C >0$ is independent of $\varepsilon,\varepsilon'$. Using the same ideas that lead to $(\ref{distgeod})$, after letting first $\varepsilon \to 0$ then $\varepsilon' \to 0$ in the above estimate we arrive at
$$\tilde d(p,s)^2=\tilde d(p,u_\lambda)^2 \leq (1-\lambda)\tilde d(p,q)^2 + \lambda\tilde d(p,r)^2 -\lambda(1-\lambda)\tilde d(q,r)^2.$$
In general, if $p,q,r \in \mathcal E^2(X,\o)$, we can use approximation: suppose $p_k,q_k,r_k \in \mathcal H$ are sequences decreasing to $p,q,r$. From what we proved it follows that
$$\tilde d(p^k,u^{q^kr^k}_\lambda)^2 \leq (1-\lambda)\tilde d(p^k,q^k)^2 + \lambda\tilde d(p^k,r^k)^2 -\lambda(1-\lambda)\tilde d(q^k,r^k)^2,$$
where $(0,1)\ni t \to u^{q^kr^k}_t \in \mathcal H_\Delta$ is the weak geodesic connecting $q^k,r^k$. From the comparison principle it follows that $\{u^{q^kr^k}\}_k$ is decreasing to $u^{qr}$, in particular $u^{q^kr^k}_\lambda \searrow u^{qr}_\lambda=s$. Using Proposition \ref{Mdist_est_gen} we can take the limit in this last estimate to obtain (\ref{nonpositive1}).
\end{proof}
As a corollary to Theorem \ref{e2space}, using the estimates of Berndtsson (\ref{dot_u_est}), Blocki \cite[Theorem 1]{bl2}, He \cite[Theorem 1.1]{h} and Theorem \ref{geod_constr}, we obtain that $(\mathcal E^2(X,\o), \tilde d)$ has many special dense totally geodesic subspaces:
\begin{corollary}\label{supspaces} $\mathcal H$ is dense in $\mathcal E^2(X,\o)$ and the following sets are totally geodesic dense subspaces of $(\mathcal E^2(X,\o), \tilde d)$:
\begin{itemize}
\item[(i)] $\mathcal H_{\Delta}= \{ u \in \textup{PSH}(X,\o)| \ \Delta u \in L^\infty(X)\}$;
\item[(ii)] $\mathcal H_{0,1}= \textup{PSH}(X,\o) \cap \textup{Lip}(X)$;
\item[(iii)] $\mathcal H_0= \textup{PSH}(X,\o) \cap L^\infty(X)$;
\item[(iv)] $\mathcal E_\chi(X,\o)$, if $ \chi \in \mathcal W^+_M, M \geq 1$ satisfies $\limsup_{t \to -\infty} \chi(t)/t^2 \leq -1$.
\end{itemize}
\end{corollary}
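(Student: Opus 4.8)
The plan is to reduce the density statements to the density of $\mathcal H$, and the total geodesicity statements to the regularity theory of weak geodesics recalled in Section 2 together with the a priori estimates of B\l ocki and He.

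\emph{Density.} Given $u\in\mathcal E^2(X,\o)$, choose by \cite{bk} a sequence $u_k\in\mathcal H$ with $u_k\searrow u$; Proposition \ref{Mdist_est_gen} then gives $\tilde d(u_k,u)\to0$, so $\mathcal H$ is $\tilde d$-dense in $\mathcal E^2(X,\o)$. Each of the four subspaces in the statement contains $\mathcal H$, so it suffices to check that each is contained in $\mathcal E^2(X,\o)$. For the chain $\mathcal H\subset\mathcal H_\Delta\subset\mathcal H_{0,1}\subset\mathcal H_0\subset\mathcal E^2(X,\o)$: if $\Delta u\in L^\infty(X)$ then, since $\o+i\partial\bar\partial u\ge0$ also bounds $\Delta u$ from below, $u\in W^{2,p}(X)$ for every $p<\infty$, hence $u\in C^{1,\alpha}(X)\subset\mathrm{Lip}(X)$; a Lipschitz function on the compact manifold $X$ is bounded; and a bounded $\o$-plurisubharmonic $v$ lies in $\mathcal E(X,\o)$ and satisfies $E_{\chi^2}(v)\ge-\|v\|_{L^\infty(X)}^2\,\mathrm{Vol}(X)>-\infty$, so $v\in\mathcal E^2(X,\o)$. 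For the last subspace, $\mathcal H\subset\mathcal E_\chi(X,\o)$ since bounded functions have finite $\chi$-energy, and the hypothesis $\limsup_{t\to-\infty}\chi(t)/t^2\le-1$ forces $\chi(t)\le-\tfrac12 t^2$ for $t$ sufficiently negative, which yields $E_{\chi^2}(v)\ge2E_\chi(v)-C$ with $C$ depending only on $\chi$ and $\mathrm{Vol}(X)$; hence $\mathcal E_\chi(X,\o)\subset\mathcal E^2(X,\o)$.

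\emph{Total geodesicity.} Fix $u_0,u_1$ in one of the four subspaces. By Theorem \ref{e2space} the (unique, by non-positive curvature) geodesic of $(\mathcal E^2(X,\o),\tilde d)$ joining them is the curve $t\mapsto u_t$ of \eqref{udef_new}, which by \eqref{udef1_new} is the upper envelope of the subgeodesics bounded by $u_{0,1}$ at the endpoints; in particular it agrees with Berndtsson's solution of \eqref{bvp_Bern} when the endpoints are bounded (and with Chen's weak geodesic when $u_0,u_1\in\mathcal H$), and $\lim_{t\to0,1}u_t=u_{0,1}$ in capacity by Theorem \ref{geod_constr} (for bounded endpoints, which share the same singularity type) or Corollary \ref{energy_geod1}(ii) (for $\mathcal E_\chi(X,\o)$-endpoints). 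Since $u_0,u_1$ lie in the subspace by hypothesis, it remains to verify that $u_t$ lies in it for $t\in(0,1)$. For $\mathcal H_0$ this is immediate, as Berndtsson's solution of \eqref{bvp_Bern} is in $L^\infty(S\times X)$. For $\mathcal E_\chi(X,\o)$ with $\chi\in\mathcal W^+_M$ it is Corollary \ref{energy_geod1}(i). For $\mathcal H_{0,1}$, the estimate \eqref{dot_u_est} bounds $\partial_t u$ while He \cite[Theorem 1.1]{h} bounds $\nabla_x u_t$ uniformly in $t$ in terms of $\|\nabla u_0\|_{L^\infty(X)}$ and $\|\nabla u_1\|_{L^\infty(X)}$, so $u_t\in\mathrm{Lip}(X)$. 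For $\mathcal H_\Delta$, B\l ocki \cite[Theorem 1]{bl2} bounds $\Delta_{(t,x)}u$ on $S\times X$ in terms of $\|\Delta u_0\|_{L^\infty(X)}$, $\|\Delta u_1\|_{L^\infty(X)}$ and $\|u_0-u_1\|_{L^\infty(X)}$, so $\Delta u_t\in L^\infty(X)$.

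\emph{The main obstacle.} The cases $\mathcal H_0$ and $\mathcal E_\chi(X,\o)$ are immediate from results already proved; the delicate point is $\mathcal H_{0,1}$ and $\mathcal H_\Delta$. There the curve $t\mapsto u_t$ of \eqref{udef_new} is, a priori, only a decreasing limit of Chen $\varepsilon$-geodesics between smooth approximations of $u_0,u_1$, so one must use the cited estimates of He and B\l ocki precisely in the form in which the gradient, respectively the Laplacian, of the geodesic is controlled by the corresponding $L^\infty$ norms of the endpoints alone (together with $\|u_0-u_1\|_{L^\infty(X)}$), uniformly along the approximating sequence. Granting this uniformity, the bound on $u_t$ for fixed $t\in(0,1)$ is inherited by the decreasing limit via lower semicontinuity of the relevant seminorm, which finishes the proof.
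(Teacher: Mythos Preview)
Your proof is correct and follows essentially the same route as the paper: density via decreasing approximation and Proposition~\ref{Mdist_est_gen}, and total geodesicity by invoking the appropriate regularity estimate for weak geodesics in each case. One minor correction: you have swapped the attributions---B\l ocki's paper \cite{bl2} provides the \emph{gradient} estimate needed for $\mathcal H_{0,1}$, while the Laplacian bound for $\mathcal H_\Delta$ is due to Berman--Demailly \cite[Corollary 4.7]{bd} (see also He \cite[Theorem 1.1]{h}).
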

\begin{proof} Suppose $\mathcal S$ is one of the above subspaces. From the above quoted results it follows that given $u_0,u_1 \in \mathcal S$, for the weak geodesic segment $t \to u_t$ joining $u_0,u_1$ we have $u_t \in \mathcal S, \ t \in (0,1)$.

As $\mathcal H\subset \mathcal S$, we just need to argue that $\mathcal H \subset \mathcal E^2(X,\o)$ is dense. As elements of $\mathcal E^2(X,\o)$ can be approximated by decreasing sequences in $\mathcal H$, an application of Proposition \ref{Mdist_est_gen} finishes the proof.
\end{proof}

\section{The Length of Geodesic Segments}

In light of results obtained in the previous section, one would like to generalize formula (\ref{distgeod}) for arbitrary $u_0,u_1 \in \mathcal E^2(X,\o)$. However this fails even for $u_0,u_1 \in \mathcal H_{0,1}$, as the next (familiar) example shows. Suppose $\dim X =1$ and $g_x \in \text{PSH}(X,\o)$ is the $\o-$Green function with pole at some $x \in X$. As in Section 2.2, let $u_0 = \max(g_x,0)$ and $u_1=0$. We denote by $(0,1) \ni t \to u_t \in \mathcal H_0$ the geodesic connecting $u_0$ and $u_1$. Clearly $\tilde d (u_0,u_1) >0$. By properties of Green functions, $(\o + i\partial \bar \partial u_0)$ only charges the set $\{g_x \leq 0\}$. However $\dot u_0\big|_{\{g_x \leq 0\}}\equiv0$, because $0\leq u_t, \ t \in [0,1]$ and $t \to u_t$ is decreasing. It results that
$$\int_X {\dot u^2_0}(\o + i\partial \bar \partial u_0)=0,$$
contradicting (\ref{distgeod}).
This example also suggests that one can not endow $\mathcal E^2(X,\o)$ with a Riemannian structure that would induce the metric space $(\mathcal E^2(X,\o),\tilde d)$ unlike $(\mathcal H, d)$.

The main purpose of this section is to show that (\ref{distgeod}) nevertheless  holds for $u_0,u_1 \in \mathcal H_\Delta$. Recall that by \cite[Corollary 4.7]{bd} (see also \cite[Theorem 1.1]{h}) we have $u_t \in \mathcal H_\Delta,\ t \in (0,1)$,  where $t \to u_t$ is the geodesic joining $u_0,u_1$. We start with a lemma:

\begin{lemma} Suppose $u_0,u_1 \in \mathcal H_\Delta$ and $(0,1) \ni t \to u_t \in \mathcal H_\Delta$ is the geodesic connecting them. Then the following holds:
\begin{equation}\label{lengthequal}
\int_X {\dot u^2_0}(\o + i\partial \bar \partial u_0)^n=\int_X {\dot u^2_1}(\o + i\partial \bar \partial u_1)^n.
\end{equation}
\end{lemma}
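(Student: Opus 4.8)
The plan is to deduce \eqref{lengthequal} from the corresponding statement for smooth endpoints, which is already contained in \eqref{distgeod}: indeed, for $u_0,u_1\in\mathcal H$, applying \eqref{distgeod} with $t_0=0$ and with $t_0=1$ gives $\int_X\dot u_0^2(\o+i\partial\bar\partial u_0)^n=d(u_0,u_1)^2=\int_X\dot u_1^2(\o+i\partial\bar\partial u_1)^n$. So the only issue is to pass from $\mathcal H$ to $\mathcal H_\Delta$ by approximation. First I would choose decreasing sequences $u^k_0,u^k_1\in\mathcal H$ with $u^k_0\searrow u_0$, $u^k_1\searrow u_1$; such sequences exist by \cite{bk}, and since $\Delta u_0,\Delta u_1\in L^\infty(X)$ one can moreover arrange $\sup_k\big(\|\Delta u^k_0\|_{L^\infty(X)}+\|\Delta u^k_1\|_{L^\infty(X)}\big)<\infty$. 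Let $(0,1)\ni t\to u^k_t\in\mathcal H_\Delta$ be the weak geodesic joining $u^k_0$ and $u^k_1$; by the comparison principle $u^k\searrow u$ pointwise, and by the a priori estimates underlying \cite[Corollary 4.7]{bd} (see also \cite[Theorem 1.1]{h}) the bound on $\|\Delta u^k\|_{L^\infty([0,1]\times X)}$ is uniform in $k$. Hence $\{u^k\}$ is precompact in $C^{1,\alpha}([0,1]\times X)$ for every $\alpha\in(0,1)$, and since its only possible limit is $u$ we obtain $u^k\to u$ in $C^1([0,1]\times X)$. In particular $\dot u^k_0\to\dot u_0$ and $\dot u^k_1\to\dot u_1$ uniformly on $X$, while $(\o+i\partial\bar\partial u^k_0)^n\to(\o+i\partial\bar\partial u_0)^n$ and $(\o+i\partial\bar\partial u^k_1)^n\to(\o+i\partial\bar\partial u_1)^n$ weakly.

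Now for each $k$ the endpoints are smooth, so \eqref{distgeod} yields
\[
\int_X(\dot u^k_0)^2(\o+i\partial\bar\partial u^k_0)^n=d(u^k_0,u^k_1)^2=\int_X(\dot u^k_1)^2(\o+i\partial\bar\partial u^k_1)^n .
\]
Letting $k\to\infty$ and using that $(\dot u^k_0)^2\to\dot u_0^2$ and $(\dot u^k_1)^2\to\dot u_1^2$ uniformly, while the Monge--Amp\`ere measures converge weakly and all have total mass $\textup{Vol}(X)$, the left- and right-hand sides converge respectively to $\int_X\dot u_0^2(\o+i\partial\bar\partial u_0)^n$ and $\int_X\dot u_1^2(\o+i\partial\bar\partial u_1)^n$, which is \eqref{lengthequal}.

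The crux of the argument, and the step I expect to be the main obstacle, is the uniformity in $k$ of the regularity of the approximating weak geodesics $u^k$ up to the endpoints $t=0,1$ (together with the uniform Laplacian bounds for the approximating endpoints themselves): it is precisely this uniformity that legitimizes interchanging the limit $k\to\infty$ with the limit $\varepsilon\to 0$ used to derive \eqref{distgeod}, and it rests on the a priori estimates of \cite{bd}. As an alternative one could argue directly from the $C^{1,\bar1}$ regularity of $u$ supplied by \cite{bd}: the function $E(t)=\int_X\dot u_t^2(\o+i\partial\bar\partial u_t)^n$ is continuous on $[0,1]$, and since the geodesic equation gives $\ddot u_t(\o+i\partial\bar\partial u_t)^n=n\,i\partial\dot u_t\wedge\bar\partial\dot u_t\wedge(\o+i\partial\bar\partial u_t)^{n-1}$ a.e., differentiating $E$ under the integral sign and integrating by parts formally yields $E'\equiv 0$ on $(0,1)$, hence $E$ is constant; the difficulty there is justifying the differentiation and the integration by parts with only bounded (rather than continuous) second derivatives, which is why the approximation argument is the safer route.
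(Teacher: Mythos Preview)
Your route is genuinely different from the paper's, and it hinges on an approximation step you assert but do not justify: that $u_0,u_1\in\mathcal H_\Delta$ admit decreasing approximations $u_0^k,u_1^k\in\mathcal H$ with \emph{uniformly bounded Laplacians}. The reference \cite{bk} supplies only decreasing smooth $\o$-psh approximants with no second-order control, and neither \cite{bd} nor the other sources you invoke contain such a statement. Without it, the chain ``uniform Laplacian bounds on the endpoints $\Rightarrow$ uniform Laplacian bounds on the approximating geodesics $u^k$ $\Rightarrow$ $C^{1,\alpha}$ compactness on $[0,1]\times X$ $\Rightarrow$ $\dot u_0^k\to\dot u_0$ uniformly'' breaks at the very first link. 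You correctly flag this as the main obstacle but do not resolve it; closing the gap would require a separate regularization argument (controlling the Laplacian under, say, Demailly's exponential-map averaging and then restoring strict positivity), and it is not clear this can be done while keeping the sequence monotone. Your alternative---differentiating $E(t)$ directly---runs into exactly the integration-by-parts issues you mention, so it does not rescue the argument.

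The paper avoids approximation entirely and argues directly from the envelope machinery built in Sections~2--3. It splits \eqref{lengthequal} into positive and negative halves,
\[
\int_{\{\dot u_0>0\}}\dot u_0^2\,(\o+i\partial\bar\partial u_0)^n=\int_{\{\dot u_1>0\}}\dot u_1^2\,(\o+i\partial\bar\partial u_1)^n
\]
and its counterpart on $\{\dot u_{0,1}<0\}$ (obtained by reversing the geodesic). For the displayed identity it writes both sides via the layer-cake formula, uses the level-set description $\{\dot u_0\ge\tau\}=\{P(u_0,u_1-\tau)=u_0\}$ of Lemma~\ref{sublevel_lemma}, and then applies the partition formula of Remark~\ref{MA_form_remark} for $(\o+i\partial\bar\partial P(u_0,u_1-\tau))^n$ to transfer mass from $(\o+i\partial\bar\partial u_0)^n$ to $(\o+i\partial\bar\partial u_1)^n$ slice by slice in $\tau$. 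This works precisely because Proposition~\ref{MA_form} and Remark~\ref{MA_form_remark} are available for $\mathcal H_\Delta$ data---the regularity threshold singled out by the counterexample opening Section~7. The full distance formula (Theorem~\ref{distgeod_general}) is then proved on top of this lemma via the one-sided approximation of Lemma~\ref{geod_tangent_limit}, a structure designed exactly to sidestep the two-sided uniform-in-$k$ regularity your argument would require.
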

\begin{proof} To obtain (\ref{lengthequal}) we prove the following two formulas:
\begin{equation}\label{lengthequal1}\int_{\{\dot u_0 >0\}} {\dot u^2_0}(\o + i\partial \bar \partial u_0)^n=\int_{\{\dot u_1 > 0\}} {\dot u^2_1}(\o + i\partial \bar \partial u_1)^n,
\end{equation}
\begin{equation}\label{lengthequal2} \int_{\{\dot u_0 <0\}} {\dot u^2_0}(\o + i\partial \bar \partial u_0)^n=\int_{\{\dot u_1 <0\}} {\dot u^2_1}(\o + i\partial \bar \partial u_1)^n.
\end{equation}
Using Remark \ref{MA_form_remark} and Lemma \ref{sublevel_lemma} multiple times we can write:
\begin{flalign*} \int_{\{\dot u_0 > 0\}} \dot u_0^2 (\o + i\partial \bar \partial u_0)^n&= 2\int_0^{\infty} \tau(\o + i\partial \bar \partial u_0)^n(\{{\dot u_0 \geq \tau}\})d\tau\\
&= 2\int_0^{\infty} \tau(\o + i\partial\bar\partial u_0)^n(\{ P(u_0,u_1 - \tau)=u_0\})d\tau\\
&= 2\int_0^{\infty} \tau(\textup{Vol}(X)-(\o + i\partial\bar\partial u_1)^n(\{ P(u_0,u_1 - \tau)=u_1-\tau\}))d\tau\\
&= 2\int_0^{\infty} \tau(\o + i\partial\bar\partial u_1)^n(\{ P(u_0,u_1 - \tau)<u_1-\tau\})d\tau\\
&= 2\int_0^{\infty} \tau(\o + i\partial\bar\partial u_1)^n(\{ P(u_0+\tau,u_1)<u_1\})d\tau\\
&= 2\int_0^{\infty} \tau(\o + i\partial\bar\partial u_1)^n(\{ \dot u_1 > \tau\})d\tau\\
&=\int_{\{\dot u_1 > 0\}} {\dot u^2_1}(\o + i\partial \bar \partial u_1)^n,
\end{flalign*}
where in the second we have used Lemma \ref{sublevel_lemma}, in the third line we have used Remark \ref{MA_form_remark} and in the sixth line we have used Lemma \ref{sublevel_lemma} again. Formula (\ref{lengthequal2}) follows if we apply (\ref{lengthequal1}) to the ``reversed" geodesic $t \to v_t = u_{1-t}$.
\end{proof}

\begin{theorem} \label{distgeod_general} Suppose $u_0,u_1 \in \mathcal H_\Delta$ and $(0,1) \ni t \to u_t \in \mathcal H_\Delta$ is the geodesic connecting them. Then we have:
\begin{equation}\label{distgeod_formula}
\tilde d(u_0,u_1)^2 = \int_X {\dot u^2_t}(\o + i\partial \bar \partial u_t)^n, \  t \in [0,1].
\end{equation}
\end{theorem}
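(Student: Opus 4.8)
The plan is to reduce the whole statement to the single endpoint identity
\begin{equation}
\tilde d(u_0,u_1)^2=\int_X \dot u_0^2\,(\o+i\partial\bar\partial u_0)^n,\qquad u_0,u_1\in\mathcal H_\Delta,\tag{$\star$}
\end{equation}
and then recover the general $t$ by rescaling the geodesic. Granting $(\star)$, its $t=1$ analogue follows from $(\star)$ together with the preceding lemma, which equates the two endpoint energies via (\ref{lengthequal}). Now fix $t\in(0,1)$: by the envelope characterization (\ref{udef1_new}) the reparametrized curve $s\mapsto u_{ts}$ is exactly the geodesic joining $u_0$ and $u_t$, it lies in $\mathcal H_\Delta$ by \cite[Corollary 4.7]{bd}, and its velocity at the endpoint $u_t$ equals $t\,\dot u_t$. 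Applying the $t=1$ form of $(\star)$ to this geodesic gives $\tilde d(u_0,u_t)^2=t^2\int_X\dot u_t^2\,(\o+i\partial\bar\partial u_t)^n$, while $\tilde d(u_0,u_t)=t\,\tilde d(u_0,u_1)$ because $t\mapsto u_t$ is a geodesic in the sense of (\ref{geod_def}). Comparing the two yields (\ref{distgeod_formula}) for every $t\in(0,1]$, and the case $t=0$ is $(\star)$ itself.

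For $(\star)$ I would argue in two stages. \emph{Stage 1 (smooth base point).} Suppose $u_0\in\mathcal H$ and $u_1\in\mathcal H_0$. Choose $u_1^j\in\mathcal H$ with $u_1^j\searrow u_1$ and let $s\mapsto v^j_s\in\mathcal H_\Delta$ be the weak geodesics joining $u_0$ and $u_1^j$. By (\ref{distgeod}) we have $d(u_0,u_1^j)^2=\int_X(\dot v^j_0)^2\,(\o+i\partial\bar\partial u_0)^n$; since $(u_0,u_1^j)$ is an admissible approximating sequence for $(u_0,u_1)$, the left-hand side tends to $\tilde d(u_0,u_1)^2$. On the other hand Lemma \ref{geod_tangent_limit}, applied with the fixed base point $u_0$, gives $\int_X(\dot v^j_0)^2\,(\o+i\partial\bar\partial u_0)^n\to\int_X\dot u_0^2\,(\o+i\partial\bar\partial u_0)^n$, where $\dot u_0$ is the initial velocity of the geodesic joining $u_0$ and $u_1$. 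Hence $(\star)$ holds whenever the base point is smooth.

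\emph{Stage 2 ($\mathcal H_\Delta$ base point).} Now take $u_0,u_1\in\mathcal H_\Delta$ and $u_0^k\in\mathcal H$ with $u_0^k\searrow u_0$, and let $w^k$ be the geodesic joining $u_0^k$ and $u_1$, which lies in $\mathcal H_\Delta$ by \cite[Corollary 4.7]{bd}. Stage 1 gives $\tilde d(u_0^k,u_1)^2=\int_X(\dot w^k_0)^2\,(\o+i\partial\bar\partial u_0^k)^n$, and the decisive step is to apply the preceding lemma to $w^k$ in order to transfer this integral onto the \emph{fixed} measure $(\o+i\partial\bar\partial u_1)^n$, obtaining $\tilde d(u_0^k,u_1)^2=\int_X(\dot w^k_1)^2\,(\o+i\partial\bar\partial u_1)^n$. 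By the comparison principle $w^k$ decreases to the geodesic $w$ joining $u_0$ and $u_1$; since all the $w^k$ end at $u_1$ and $s\mapsto w^k_s$ is convex, $\dot w^k_1=\sup_{s\in(0,1)}(u_1-w^k_s)/(1-s)$ increases pointwise to $\dot w_1=\dot u_1$, and $|\dot w^k_1|\le\|u_0^k-u_1\|_{L^\infty(X)}$ is bounded uniformly in $k$ by (\ref{dot_u_est}). Dominated convergence then gives $\int_X(\dot w^k_1)^2\,(\o+i\partial\bar\partial u_1)^n\to\int_X\dot u_1^2\,(\o+i\partial\bar\partial u_1)^n$, while $\tilde d(u_0^k,u_1)\to\tilde d(u_0,u_1)$ by Proposition \ref{Mdist_est_gen}. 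Thus $\tilde d(u_0,u_1)^2=\int_X\dot u_1^2\,(\o+i\partial\bar\partial u_1)^n$, and a final use of the preceding lemma rewrites this as $\int_X\dot u_0^2\,(\o+i\partial\bar\partial u_0)^n$, proving $(\star)$.

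The main obstacle is precisely the passage in Stage 2: approximating the base point $u_0$ by smooth potentials forces the Monge--Amp\`ere measures $(\o+i\partial\bar\partial u_0^k)^n$ to vary with $k$, and the velocities $\dot w^k_0$ need not converge monotonically, so $\lim_k\int_X(\dot w^k_0)^2\,(\o+i\partial\bar\partial u_0^k)^n$ is not directly accessible. The device that resolves this is to push the integral onto the common endpoint measure $(\o+i\partial\bar\partial u_1)^n$ via the endpoint-symmetry identity (\ref{lengthequal}), against which the relevant velocities do converge monotonically and are uniformly bounded. This is also why $u_0,u_1\in\mathcal H_\Delta$ is the correct hypothesis: for merely Lipschitz potentials (\ref{lengthequal}) fails, as the Green-function example preceding the theorem shows.
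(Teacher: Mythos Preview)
Your proof is correct and follows essentially the same route as the paper: approximate one endpoint, use Lemma \ref{geod_tangent_limit} to pass the limit in the tangent integral at the fixed base point, then invoke the endpoint-symmetry identity (\ref{lengthequal}) to transfer the integral onto the \emph{other} endpoint's measure before approximating that one as well; the intermediate $t$ is recovered from the sub-arc/reparametrization argument. The only cosmetic differences are that in Stage 2 you re-derive the monotone convergence of $\dot w^k_1$ by hand rather than citing Lemma \ref{geod_tangent_limit} a second time (as the paper does), and your treatment of intermediate $t$ via $\tilde d(u_0,u_t)=t\,\tilde d(u_0,u_1)$ is spelled out more explicitly than the paper's one-line ``a subarc of a geodesic is again a geodesic.''
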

\begin{proof} As usual, let $u^k_0,u^k_1 \in \mathcal H$ be a sequence of potentials decreasing to $u_0,u_1$. Let $(0,1) \ni t \to u^{kl}_t \in \mathcal H_\Delta$ be the geodesic joining $u_0^k,u^l_1$. By (\ref{distgeod}) we have
$$\tilde d(u_0^k,u^l_1)^2 = \int_X {\dot {u^{kl}_0}^2}(\o + i\partial \bar \partial u_0^k)^n.$$
If we let $l \to \infty$, by Lemma \ref{geod_tangent_limit} and Proposition \ref{Mdist_est_gen} we obtain that that
$$\tilde d(u_0^k,u_1)^2 = \int_X {\dot {u^k_0}^2}(\o + i\partial \bar \partial u_0^k)^n,$$
where $(0,1) \ni t \to u^k_t \in \mathcal H_\Delta$ is the geodesic connecting $u^k_0$ with $u_1$. Using the previous lemma we can write:
$$\tilde d(u_0^k,u_1)^2 = \int_X {\dot {u^k_1}^2}(\o + i\partial \bar \partial u_1)^n.$$
Letting $k \to \infty$, another application of Lemma \ref{geod_tangent_limit} yields (\ref{distgeod_formula}) for $t=1$. The case $t=0$ follows by symmetry, and for $0 < t < 1$ the result follows because a subarc of a geodesic is again a geodesic.
\end{proof}

\section{Metric Properties of the Operator $(u,v) \to P(u,v)$}

In this short section we explore the geometry of the operator $(u,v) \to P(u,v)$ restricted to the space $\mathcal E^2(X,\o)$. First we observe that the triplet $(u,v,P(u,v))$ always forms a right triangle. This will help in proving that $P(\cdot,\cdot)$ contracts distances with respect to $\tilde d $ in both components.

\begin{proposition}[Pythagorean formula] \label{pythagorean} Given $u_0,u_1 \in \mathcal E^2(X,\o)$, we have $P(u_0,u_1)\in \mathcal E^2(X,\o)$ and
$$\tilde d(u_0,u_1)^2 = \tilde d(u_0,P(u_0,u_1))^2 + \tilde d(P(u_0,u_1),u_1)^2.$$
\end{proposition}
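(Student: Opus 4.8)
The plan is to reduce to the regular case $u_0,u_1\in\mathcal H_\Delta$ and there identify the geodesic from $u_0$ to $P(u_0,u_1)$ explicitly, so that its length can be read off from the formulas of Section 7. That $P(u_0,u_1)\in\mathcal E^2(X,\o)$ is immediate from Theorem \ref{envexist} (take $\chi=\chi^2\in\mathcal W^+_2$). For the distance identity, first I would reduce to $u_0,u_1\in\mathcal H_\Delta$: choose $\mathcal H\ni u_0^k\searrow u_0$ and $\mathcal H\ni u_1^k\searrow u_1$; then $P(u_0^k,u_1^k)\in\mathcal H_\Delta$ by Theorem \ref{DR_reg}, and since $\min(u_0^k,u_1^k)\searrow\min(u_0,u_1)$ a routine envelope argument gives $P(u_0^k,u_1^k)\searrow P(u_0,u_1)$. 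Granting the identity for each pair $u_0^k,u_1^k\in\mathcal H\subset\mathcal H_\Delta$, I would let $k\to\infty$: the left side tends to $\tilde d(u_0,u_1)$ by the definition (\ref{dist_geod_new1}), and each right-hand term converges using the triangle inequality and Proposition \ref{Mdist_est_gen}, e.g.
\[|\tilde d(u_0^k,P(u_0^k,u_1^k))-\tilde d(u_0,P(u_0,u_1))|\le\tilde d(u_0^k,u_0)+\tilde d(P(u_0^k,u_1^k),P(u_0,u_1))\to 0.\]
So it suffices to prove the identity for $u_0,u_1\in\mathcal H_\Delta$.

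Now let $u_0,u_1\in\mathcal H_\Delta$, put $P=P(u_0,u_1)\in\mathcal H_\Delta$, and let $(0,1)\ni t\to u_t\in\mathcal H_\Delta$ be the geodesic joining them (it stays in $\mathcal H_\Delta$ by \cite[Corollary 4.7]{bd}). The heart of the matter is the envelope identity
\[P(u_0,P+s)=P(u_0,u_1+s),\qquad s\ge 0.\]
Its proof is short: $P(u_0,u_1+s)-s$ is $\o$-plurisubharmonic and $\le\min(u_0-s,u_1)\le\min(u_0,u_1)$, hence $\le P$, so $P(u_0,u_1+s)\le\min(u_0,P+s)$ and therefore $P(u_0,u_1+s)\le P(u_0,P+s)$; conversely, using $P\le u_1$, we get $P(u_0,P+s)\le\min(u_0,P+s)\le\min(u_0,u_1+s)$, hence $P(u_0,P+s)\le P(u_0,u_1+s)$. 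Feeding this into Lemma \ref{sublevel_lemma}, applied both to the geodesic $t\to a_t$ joining $u_0$ to $P$ and to $u$, and using $P(u_0,P-\tau)=P-\tau$ for $\tau\ge 0$ (because $P-\tau\le u_0$), I obtain $\{\dot a_0\ge\tau\}=\{\dot u_0\ge\tau\}$ for $\tau\le 0$ and $\{\dot a_0\ge\tau\}=\emptyset$ for $\tau>0$; that is, $\dot a_0=\min(\dot u_0,0)$. Hence Theorem \ref{distgeod_general} at $t=0$ gives
\[\tilde d(u_0,P)^2=\int_X\dot a_0^2\,(\o+i\partial\bar\partial u_0)^n=\int_{\{\dot u_0<0\}}\dot u_0^2\,(\o+i\partial\bar\partial u_0)^n.\]
Running the same computation for the pair $(u_1,u_0)$ — whose geodesic is $t\mapsto u_{1-t}$, with derivative $-\dot u_1$ at the $u_1$-endpoint, and with $P(u_1,u_0)=P$ — yields
\[\tilde d(P,u_1)^2=\int_{\{\dot u_1>0\}}\dot u_1^2\,(\o+i\partial\bar\partial u_1)^n.\]

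Adding the last two displays and using (\ref{lengthequal1}) to replace the second integral by $\int_{\{\dot u_0>0\}}\dot u_0^2\,(\o+i\partial\bar\partial u_0)^n$, the sum becomes $\int_X\dot u_0^2\,(\o+i\partial\bar\partial u_0)^n$ (the set $\{\dot u_0=0\}$ contributing nothing), which equals $\tilde d(u_0,u_1)^2$ by Theorem \ref{distgeod_general}. This is the claimed Pythagorean identity. I expect the only genuine obstacle to be the envelope identity $P(u_0,P(u_0,u_1)+s)=P(u_0,u_1+s)$ and the resulting identification $\dot a_0=\min(\dot u_0,0)$ of the geodesic from $u_0$ to $P(u_0,u_1)$ via Lemma \ref{sublevel_lemma}; the rest is bookkeeping with the length formulas of Section 7 and the approximation argument.
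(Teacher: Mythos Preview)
Your argument is correct and follows the same overall strategy as the paper: reduce to the regular case by approximation (the paper reduces to $u_0,u_1\in\mathcal H$, you to $\mathcal H_\Delta$, which is equivalent), then split $\tilde d(u_0,u_1)^2=\int_X\dot u_0^2\,(\o+i\partial\bar\partial u_0)^n$ into its contributions over $\{\dot u_0<0\}$ and $\{\dot u_0>0\}$ and identify each piece with $\tilde d(u_0,P)^2$ and $\tilde d(P,u_1)^2$, the latter via (\ref{lengthequal1}). Both proofs rest on the same envelope identity: your $P(u_0,P(u_0,u_1)+s)=P(u_0,u_1+s)$ is exactly the paper's $P(P(u_0,u_1),u_1-\tau)=P(u_0,u_1-\tau)$ after swapping the roles of $u_0,u_1$.

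The one genuine difference is tactical. The paper computes $\tilde d(P(u_0,u_1),u_1)$ at the endpoint $P(u_0,u_1)$, which forces it to analyze the measure $(\o+i\partial\bar\partial P(u_0,u_1))^n$ and invoke the partition formula of Proposition \ref{MA_form} to pass back to $(\o+i\partial\bar\partial u_0)^n$. You instead compute $\tilde d(u_0,P(u_0,u_1))$ at the endpoint $u_0$, where the base measure is already $(\o+i\partial\bar\partial u_0)^n$: the envelope identity plus Lemma \ref{sublevel_lemma} give the clean pointwise identification $\dot a_0=\min(\dot u_0,0)$, and the integral follows immediately. This bypasses Proposition \ref{MA_form} entirely and is a pleasant simplification; the price is only that you must invoke (\ref{lengthequal1}) at the end (as does the paper, for the symmetric formula (\ref{u0dist})).
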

\begin{proof} By Proposition \ref{Mdist_est_gen}, it is enough to prove the above formula for $u_0,u_1 \in \mathcal H$. According to Theorem \ref{DR_reg} we have $P(u_0,u_1) \in \mathcal H_\Delta$. Suppose $(0,1) \ni t\to u_t \in \mathcal H_\Delta$ is the geodesic connecting $u_0,u_1$. By (\ref{distgeod}):
$$d(u_0,u_1)^2 = \int_X \dot u_0^2 (\o + i\partial \bar \partial u_0)^n.$$
To complete the argument we will prove the following:
\begin{equation}\label{u1dist}
d(u_1, P(u_0,u_1))^2 = \int_{\{\dot u_0 >0\}} \dot u_0^2 (\o + i\partial \bar \partial u_0)^n,
\end{equation}
\begin{equation}\label{u0dist}
d(u_0, P(u_0,u_1))^2 = \int_{\{\dot u_0 < 0\}} \dot u_0^2 (\o + i\partial \bar \partial u_0)^n.
\end{equation}
We prove now (\ref{u1dist}). Using Lemma \ref{sublevel_lemma} we can write:
\begin{flalign*} \int_{\{\dot u_0 > 0\}} \dot u_0^2 (\o + i\partial \bar \partial u_0)^n&= 2\int_0^{\infty} \tau(\o + i\partial \bar \partial u_0)^n(\{{\dot u_0 \geq \tau}\})d\tau\\
&= 2\int_0^{\infty} \tau(\o + i\partial\bar\partial u_0)^n(\{ P(u_0,u_1 - \tau)=u_0\})d\tau.
\end{flalign*}
Suppose $(0,1) \ni t\to \tilde u_t \in \mathcal H_\Delta$ is the weak geodesic connecting $P(u_0,u_1),u_1$. As $(t,x) \to \tilde u_t(x)$ is increasing in the $t-$variable, we have $\dot{\tilde {u}}_0 \geq 0$. By Theorem \ref{distgeod_general}, Lemma \ref{sublevel_lemma} and Proposition \ref{MA_form} we can write
\begin{flalign*} \tilde d (P(u_0,u_1),u_1)^2 &=\int_X \dot{\tilde {u}}_0^2 (\o + i\partial \bar \partial P(u_0,u_1))^n=\int_{\{\dot{\tilde {u}}_0 > 0\}} \dot{\tilde {u}}_0^2 (\o + i\partial \bar \partial P(u_0,u_1))^n\\
&= 2\int_0^{\infty} \tau(\o + i\partial\bar\partial P(u_0,u_1))^n(\{ \dot{\tilde {u}}_0 \geq \tau\})d\tau\\
&= 2\int_0^{\infty} \tau(\o + i\partial\bar\partial P(u_0,u_1))^n(\{ P(P(u_0,u_1),u_1 - \tau)=P(u_0,u_1)\})d\tau\\
&= 2\int_0^{\infty} \tau(\o + i\partial\bar\partial P(u_0,u_1))^n(\{ P(u_0,u_1-\tau)=P(u_0,u_1)\})d\tau\\
&= 2\int_0^{\infty} \tau(\o + i\partial\bar\partial u_0)^n(\{ P(u_0,u_1-\tau)=P(u_0,u_1)=u_0\})d\tau\\
&= 2\int_0^{\infty} \tau(\o + i\partial\bar\partial u_0)^n(\{ P(u_0,u_1-\tau)=u_0\})d\tau,
\end{flalign*}
where in the third line we have used Lemma \ref{sublevel_lemma}, in the fifth line we have used Proposition \ref{MA_form} and the fact that $\{ P(u_0,u_1)=u_1\}\cap\{ P(u_0,u_1 -\tau)=P(u_0,u_1)\}$ is empty for $\tau>0$.

From our calculations (\ref{u1dist}) follows. One can conclude (\ref{u0dist}) from (\ref{u1dist}) after reversing the roles of $u_0,u_1$ and then using (\ref{lengthequal1}).
\end{proof}
\begin{proposition} \label{contractivity} Given $u,v,w \in \mathcal E^2(X,\o)$ we have
$$\tilde d (P(u,v),P(u,w)) \leq \tilde d(v,w).$$
\end{proposition}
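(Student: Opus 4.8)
The plan is to prove the inequality in three moves: reduce to smooth potentials, reduce to the case where the two variable arguments are comparable, and then settle this comparable case by writing both distances as layer--cake integrals of Monge--Amp\`ere masses and comparing them through the partition formula. Throughout write $\o_\phi:=\o+i\partial\bar\partial\phi$. For the first reduction I would pick $u^k,v^k,w^k\in\mathcal H$ decreasing to $u,v,w$; then $P(u^k,v^k)\searrow P(u,v)$ and $P(u^k,w^k)\searrow P(u,w)$, the limits lying in $\mathcal E^2(X,\o)$ by Proposition~\ref{pythagorean}. Proposition~\ref{Mdist_est_gen} and the triangle inequality give $\tilde d(P(u^k,v^k),P(u^k,w^k))\to\tilde d(P(u,v),P(u,w))$ and $\tilde d(v^k,w^k)\to\tilde d(v,w)$, so it is enough to treat $u,v,w\in\mathcal H$, and then $P(u,v),P(u,w),P(v,w)\in\mathcal H_\Delta$ by Theorem~\ref{DR_reg}.

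Next I would show it suffices to prove the monotone inequality
$$\tilde d\big(P(u,q_0),P(u,q_1)\big)\le\tilde d(q_0,q_1)\qquad\text{whenever }q_0\le q_1,\ q_0,q_1\in\mathcal H_\Delta.$$
Granting this, one uses the elementary envelope identities $P\big(P(u,v),P(u,w)\big)=P(u,\min(v,w))=P\big(u,P(v,w)\big)$, applies the Pythagorean formula (Proposition~\ref{pythagorean}) to the pair $\big(P(u,v),P(u,w)\big)$, then the monotone inequality to the comparable pairs $\big(P(v,w),v\big)$ and $\big(P(v,w),w\big)$, and finally the Pythagorean formula to $(v,w)$, obtaining
\begin{align*}
\tilde d(P(u,v),P(u,w))^2&=\tilde d\big(P(u,v),P(u,P(v,w))\big)^2+\tilde d\big(P(u,P(v,w)),P(u,w)\big)^2\\
&\le\tilde d\big(v,P(v,w)\big)^2+\tilde d\big(P(v,w),w\big)^2=\tilde d(v,w)^2 .
\end{align*}

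For the monotone case I would set $a_i:=P(u,q_i)\in\mathcal H_\Delta$ (so $a_0\le a_1$) and let $t\mapsto a_t$, $t\mapsto(q_0)_t$ be the weak geodesics joining $a_0,a_1$ and $q_0,q_1$, both nondecreasing in $t$. By Theorem~\ref{distgeod_general} and the layer--cake identity from the proof of Proposition~\ref{pythagorean} (legitimate since $\dot a_0\ge0$),
$$\tilde d(a_0,a_1)^2=\int_X\dot a_0^2\,\o_{a_0}^n=2\int_0^\infty\tau\,\o_{a_0}^n\big(\{\dot a_0\ge\tau\}\big)\,d\tau,$$
and likewise $\tilde d(q_0,q_1)^2=2\int_0^\infty\tau\,\o_{q_0}^n\big(\{\dot{(q_0)}_0\ge\tau\}\big)\,d\tau$. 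By Lemma~\ref{sublevel_lemma}, $\{\dot a_0\ge\tau\}=\{P(a_0,a_1-\tau)=a_0\}$ and $\{\dot{(q_0)}_0\ge\tau\}=\{P(q_0,q_1-\tau)=q_0\}$, so the whole matter reduces to showing that for every $\tau>0$
$$\o_{a_0}^n\big(\{P(a_0,a_1-\tau)=a_0\}\big)\le\o_{q_0}^n\big(\{P(q_0,q_1-\tau)=q_0\}\big).$$

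This mass comparison is the step I expect to be the crux. Writing $\Lambda_u:=\{a_0=u\}$, $\Lambda_{q_0}:=\{a_0=q_0\}$, Proposition~\ref{MA_form} gives $\o_{a_0}^n=\mathbbm{1}_{\Lambda_u}\o_u^n+\mathbbm{1}_{\Lambda_{q_0}\setminus\Lambda_u}\o_{q_0}^n$, carried by $\Lambda_u\cup\Lambda_{q_0}$. Two pointwise observations then finish the argument: (i) for $\tau>0$ the set $\{P(a_0,a_1-\tau)=a_0\}$ misses $\Lambda_u$, because $P(a_0,a_1-\tau)(x)=a_0(x)$ forces $a_1(x)-a_0(x)\ge\tau>0$ whereas on $\Lambda_u$ one has $a_0=u\ge P(u,q_1)=a_1$; (ii) $\{P(a_0,a_1-\tau)=a_0\}\cap\{a_0=q_0\}\subseteq\{P(q_0,q_1-\tau)=q_0\}$, since $a_0\le q_0$ and $a_1\le q_1$ give $P(a_0,a_1-\tau)\le P(q_0,q_1-\tau)\le q_0$, so at such a point $q_0=a_0=P(a_0,a_1-\tau)\le P(q_0,q_1-\tau)\le q_0$. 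By (i) the set lies $\o_{a_0}^n$--a.e.\ in $\Lambda_{q_0}\setminus\Lambda_u$, where $\o_{a_0}^n=\o_{q_0}^n$, and by (ii) it lies in $\{P(q_0,q_1-\tau)=q_0\}$, which yields the inequality. The delicate point is precisely excising the $\Lambda_u$--part (where $\tau>0$ and $P(u,q_1)\le u$ enter); once this is done, the remaining work is bookkeeping with the Pythagorean formula and the monotone-convergence result Proposition~\ref{Mdist_est_gen}.
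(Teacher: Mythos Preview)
Your proof is correct and follows the same overall architecture as the paper: approximate by smooth potentials, reduce the general case to the comparable case via the Pythagorean formula and the identity $P\big(P(u,v),P(u,w)\big)=P\big(u,P(v,w)\big)$, and then handle the comparable case using the partition formula for $\o_{P(u,q_0)}^n$.

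The only difference is in how the comparable case is executed. The paper works directly with the tangent vectors: letting $\psi_t$ join $P(u,v),P(u,w)$ and $\phi_t$ join $v,w$, it observes from the maximum principle that $\psi_t\le u$ and $\psi_t\le\phi_t$, whence $\dot\psi_0\equiv 0$ on $\{P(u,v)=u\}$ and $0\le\dot\psi_0\le\dot\phi_0$ on $\{P(u,v)=v\}$; plugging into Theorem~\ref{distgeod_general} and Proposition~\ref{MA_form} gives the inequality in one line. Your route instead passes through the layer--cake representation and compares level-set masses, which is a distributional restatement of the same two pointwise observations (your (i) is the vanishing on $\Lambda_u$, your (ii) is the inclusion corresponding to $\dot\psi_0\le\dot\phi_0$). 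Both work; the paper's version is a bit shorter since it avoids the layer--cake machinery and Lemma~\ref{sublevel_lemma} altogether.
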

\begin{proof}
First we assume that $v \leq w$. As before, we can also assume $u,v,w \in \mathcal H$ and $P(u,v),P(u,w) \in \mathcal H_\Delta$. Let $(0,1) \ni t \to \phi_t,\psi_t \in \mathcal H_\Delta$ be geodesic segments, $\phi_t$ connecting $v,w$ and $\psi_t$ connecting $P(u,v),P(u,w)$. By Theorem \ref{distgeod_general} we have to argue that
\begin{equation}\label{dist_est}
\int_X \dot{\psi^2_0}(\o + i \partial\bar\partial P(u,v))^n\leq\int_X \dot{\phi^2_0}(\o + i \partial\bar\partial v)^n.
\end{equation}
Proposition \ref{MA_form} implies that
$$\int_X \dot{\psi^2_0}(\o + i \partial\bar\partial P(u,v))^n\leq \int_{\{P(u,v)=u\}} \dot{\psi^2_0}(\o + i \partial\bar\partial u)^n + \int_{\{P(u,v)=v\}} \dot{\psi^2_0}(\o + i \partial\bar\partial v)^n$$
We argue that the first term in this sum is zero. As $P(u,v) \leq P(u,w)$, it is clear that $t \to \psi_t$ is increasing in $t$. By the maximum principle, it is also clear that $\psi_t \leq u, \ t \in [0,1]$. Hence, if $x \in \{ P(u,v)=u\}$ then $\psi_t(x)=u(x), t \in [0,1]$, implying $\dot \psi_0 \big|_{\{ P(u,v)=u\}}\equiv 0$.

At the same time, using the maximum principle again, it follows that $\psi_t \leq \phi_t, \ t \in [0,1]$. This implies that $0 \leq \dot \psi_0 \big|_{\{ P(u,v)=v\}}\leq\dot \phi_0 \big|_{\{ P(u,v)=v\}},$ which in turn implies (\ref{dist_est}).

The general case follows now from an application of the Pythagorean formula (Proposition \ref{pythagorean}) and what we just proved:
\begin{flalign*}
\tilde d(P(u,v),P(u,w))^2&= \tilde d(P(u,v),P(u,v,w))^2 + d(P(u,w),P(u,v,w))^2\\
&= \tilde d(P(u,v),P(u,P(v,w)))^2 + d(P(u,w),P(u,P(v,w)))^2\\
&\leq \tilde d(v,P(v,w))^2 + d(w,P(v,w))^2\\
&= \tilde d(v,w)^2.
\end{flalign*}
\end{proof}

\section{Completeness of $(\mathcal E^2(X,\o), \tilde d)$}

We recall that the Aubin-Mabuchi energy is a functional $AM : \mathcal H_0 \to \Bbb R$ defined by the formula:
$$AM(v)=\frac{1}{n+1}\sum_{j=0}^n\int_{X} v\o^j\wedge (\o + i\partial\bar\partial v)^{n-j}.$$
As an easy computation shows, for $u,v \in \mathcal H_0$ we have
\begin{equation}\label{AM_diff}AM(u)-AM(v)=\frac{1}{n+1} \sum_{j=0}^n\int_{X} (u-v)(\o +i\partial\bar\partial u)^j\wedge (\o + i\partial\bar\partial v)^{n-j}.
\end{equation}
As the growth of $AM(\cdot)$ is the same as the growth of $E_{\chi^1}(\cdot)$ (\cite[Proposition 2.8]{begz}), one can extend $AM(\cdot)$ to $\mathcal E^1(X,\o)$. We observe now that the Aubin-Mabuchi energy is Lipschitz continuous with respect to the Mabuchi metric:

\begin{lemma} \label{AMcont} Given $u_0,u_1 \in \mathcal E^2(X,\o)$, we have $|AM(u_0) - AM(u_1)| \leq \sqrt{\textup{Vol}(X)}\tilde d(u_0,u_1)$.
\end{lemma}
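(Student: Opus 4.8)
The plan is to prove the estimate first for $u_0,u_1 \in \mathcal H$ and then extend it to $\mathcal E^2(X,\o)$ by a routine decreasing approximation. For the smooth case I would use Chen's $\varepsilon$-geodesics $[0,1] \ni t \to u^\varepsilon_t \in \mathcal H$ connecting $u_0$ and $u_1$ (Section 2.1). Differentiating under the integral sign in $(\ref{AM_diff})$ and integrating by parts gives the standard derivative formula for the Aubin--Mabuchi energy, $\frac{d}{dt}AM(u^\varepsilon_t) = \int_X \dot u^\varepsilon_t\,(\o + i\partial\bar\partial u^\varepsilon_t)^n$. Applying the Cauchy--Schwarz inequality and $\int_X (\o + i\partial\bar\partial u^\varepsilon_t)^n = \textup{Vol}(X)$ bounds this derivative in absolute value by $\sqrt{\textup{Vol}(X)}\,\sqrt{E^\varepsilon(t)}$, where $E^\varepsilon(t) = \int_X \dot{u^\varepsilon_t}^2(\o + i\partial\bar\partial u^\varepsilon_t)^n$ is the quantity from Section 2.1. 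Since $u^\varepsilon_0 = u_0$ and $u^\varepsilon_1 = u_1$, integrating in $t$ over $[0,1]$ yields $|AM(u_0) - AM(u_1)| \leq \sqrt{\textup{Vol}(X)}\,l(u^\varepsilon)$, and letting $\varepsilon \to 0$ and invoking $(\ref{dist_apr})$ produces the inequality with $d(u_0,u_1)$ on the right.

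To reach $\mathcal E^2(X,\o)$, I would take decreasing sequences $u^k_0, u^k_1 \in \mathcal H$ with $u^k_i \searrow u_i$, apply the smooth case to each pair, and pass to the limit. The right-hand side $\sqrt{\textup{Vol}(X)}\,d(u^k_0,u^k_1)$ converges to $\sqrt{\textup{Vol}(X)}\,\tilde d(u_0,u_1)$ by the definition $(\ref{dist_geod_new1})$ of $\tilde d$. For the left-hand side I would use that $AM$ extends to $\mathcal E^1(X,\o) \supset \mathcal E^2(X,\o)$ (as recalled just above the statement) and is continuous along decreasing sequences there: the mixed Monge--Amp\`ere measures $\o^j \wedge (\o + i\partial\bar\partial u^k_i)^{n-j}$ converge weakly to $\o^j \wedge (\o + i\partial\bar\partial u_i)^{n-j}$ by the multilinear version of Proposition \ref{MA_cont}, and pairing this with the monotone convergence $u^k_i \searrow u_i$ gives $AM(u^k_i) \to AM(u_i)$. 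Passing to the limit in the inequality for the approximants then gives the claim.

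The computations involved are all routine, so I do not expect a genuine obstacle; the one step I would flag as worth spelling out (or simply citing \cite{begz}) is the continuity of $AM$ under decreasing limits inside $\mathcal E^1(X,\o)$. As an alternative to the $\varepsilon$-geodesic step one could instead observe that $t \to AM(u_t)$ is affine along the weak geodesic joining $u_0,u_1 \in \mathcal H_\Delta$, so that $AM(u_1) - AM(u_0) = \int_X \dot u_t\,(\o + i\partial\bar\partial u_t)^n$ for every $t$, and combine Cauchy--Schwarz with Theorem \ref{distgeod_general}; one would still descend to $\mathcal E^2(X,\o)$ by the same approximation. I would present the $\varepsilon$-geodesic version since it relies only on material already developed in Section 2.1.
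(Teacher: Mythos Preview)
Your proposal is correct and follows essentially the same route as the paper: reduce to $u_0,u_1\in\mathcal H$ by density, differentiate $AM$ along a connecting path, apply Cauchy--Schwarz, and identify the result with $d(u_0,u_1)$. The only cosmetic difference is that the paper works directly with the weak geodesic $t\to u_t\in\mathcal H_\Delta$ and invokes \eqref{distgeod} (so the integrand already equals $d(u_0,u_1)$ for each $t$), whereas your primary version goes through the smooth $\varepsilon$-geodesics and \eqref{dist_apr}; your ``alternative'' is exactly the paper's argument.
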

\begin{proof} By density we can suppose that $u_0,u_1 \in \mathcal H.$ Let $(0,1) \ni t \to u_t \in \mathcal H_\Delta$ be the geodesic connecting $u_0,u_1$. By (\ref{distgeod}), (\ref{AM_diff}) and the Cauchy-Schwarz inequality we have:
\begin{flalign*}
AM(u_1) - AM(u_0)&= \int_0^1 \frac{d AM(u_t)}{dt}dt=\int_0^1 \int_X \dot {u_t}(\o + i\partial\bar\partial u_t)^n dt\\
&\leq \int_0^1 \sqrt{\textup{Vol}(X)}\sqrt{\int_X \dot {u_t^2}(\o + i\partial\bar\partial u_t)^n} dt=\sqrt{\textup{Vol}(X)}d(u_0,u_1).
\end{flalign*}
\end{proof}

We are ready to prove completeness of $(\mathcal E^2(X,\o),\tilde d)$. Roughly, the idea of the proof is to replace an arbitrary Cauchy sequence with an equivalent monotone Cauchy sequence which is much easier to deal with in light of Proposition \ref{Mdist_est_gen} and Lemma \ref{mononton_seq}.

\begin{theorem} \label{E2complete} $(\mathcal E^2(X,\o),\tilde d)$ is complete.
\end{theorem}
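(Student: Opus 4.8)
The plan is to take an arbitrary $\tilde d$-Cauchy sequence $\{u_k\}_{k\in\Bbb N}\subset\mathcal E^2(X,\o)$ and, after passing to a subsequence with $\tilde d(u_k,u_{k+1})<2^{-k}$, replace it by an \emph{equivalent monotone} sequence built out of envelopes, for which convergence is governed by Proposition \ref{Mdist_est_gen} and Lemma \ref{mononton_seq}. Concretely, for $k\le m$ I set $w_{k,m}=P(u_k,u_{k+1},\dots,u_m)$; iterating Theorem \ref{envexist} these all lie in $\mathcal E^2(X,\o)$, and $w_{k,m}=P(w_{k,m-1},u_m)$ while $w_{k,m-1}=P(w_{k,m-1},u_{m-1})$ because $w_{k,m-1}\le u_{m-1}$. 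Hence the contractivity estimate of Proposition \ref{contractivity}, applied in the second slot, gives
$$\tilde d(w_{k,m-1},w_{k,m})=\tilde d\big(P(w_{k,m-1},u_{m-1}),P(w_{k,m-1},u_m)\big)\le\tilde d(u_{m-1},u_m)<2^{-(m-1)}.$$
So for fixed $k$ the sequence $m\mapsto w_{k,m}$ decreases and is $\tilde d$-Cauchy; by Lemma \ref{mononton_seq} its limit $w_k:=\inf_m w_{k,m}$ lies in $\mathcal E^2(X,\o)$ and $\tilde d(w_{k,m},w_k)\to0$ as $m\to\infty$. Since $w_{k,k}=P(u_k)=u_k$, summing the displayed estimate yields $\tilde d(u_k,w_k)\le\sum_{j\ge k}2^{-j}=2^{1-k}$.

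Next I would verify that $\{w_k\}_k$ is increasing and $\tilde d$-Cauchy. Monotonicity of the envelope operator together with $\min(u_k,\dots,u_m)\le\min(u_{k+1},\dots,u_m)$ gives $w_{k,m}\le w_{k+1,m}$ for $m\ge k+1$; taking the infimum over $m$ (and using that $w_{k,m}$ decreases in $m$) shows $w_k\le w_{k+1}$. On the other hand the triangle inequality gives
$$\tilde d(w_k,w_{k+1})\le\tilde d(w_k,u_k)+\tilde d(u_k,u_{k+1})+\tilde d(u_{k+1},w_{k+1})\le 2^{1-k}+2^{-k}+2^{-k}\le 2^{2-k},$$
so $\sum_k\tilde d(w_k,w_{k+1})<\infty$ and $\{w_k\}$ is $\tilde d$-Cauchy. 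Being an increasing (a.e.), $\tilde d$-bounded sequence in $\mathcal E^2(X,\o)$, Lemma \ref{mononton_seq} applies once more: the a.e.-limit $w$ of $\{w_k\}$ lies in $\mathcal E^2(X,\o)$ and $\tilde d(w_k,w)\to0$.

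Finally, $\tilde d(u_k,w)\le\tilde d(u_k,w_k)+\tilde d(w_k,w)\le 2^{1-k}+\tilde d(w_k,w)\to0$, so the chosen subsequence converges to $w\in\mathcal E^2(X,\o)$; a Cauchy sequence with a convergent subsequence converges, hence the whole sequence $u_k\to w$, proving completeness. The one genuinely delicate point is the passage to the limit \emph{inside} $\mathcal E^2(X,\o)$ along monotone $\tilde d$-Cauchy sequences — that such limits do not leave the class and that $\tilde d$ is continuous along them. I expect this to be the heart of the matter; it is exactly what is isolated in Lemma \ref{mononton_seq} (membership in $\mathcal E^2$, via lower semicontinuity of the energy in the spirit of Proposition \ref{E_semicont}) and Proposition \ref{Mdist_est_gen} (the $\tilde d$-convergence, via the energy/distance comparison and dominated convergence). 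Everything else is the envelope bookkeeping sketched above, whose only subtlety is the identity $P(\min(b,a))=P(\min(P(b),a))$ used to rewrite $w_{k,m}$ as $P(w_{k,m-1},u_m)$.
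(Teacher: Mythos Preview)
Your argument follows the paper's strategy closely up to the point where the increasing sequence $\{w_k\}$ appears, but there is a genuine gap at the last step. Lemma \ref{mononton_seq} is stated and proved \emph{only for decreasing} $\tilde d$-bounded sequences; you invoke it for the increasing sequence $\{w_k\}$, and this is not a symmetric situation. For a decreasing sequence the danger is that the limit drops out of $\mathcal E^2$ (energy blows up), and that is what Lemma \ref{mononton_seq} controls. For an increasing sequence the danger is different and more basic: the pointwise limit could be $+\infty$ everywhere, so a priori there is no element of $\textup{PSH}(X,\o)$ to land on. Neither Lemma \ref{mononton_seq} nor Proposition \ref{Mdist_est_gen} addresses this --- the latter \emph{assumes} the limit is already in $\mathcal E^2(X,\o)$.

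The paper supplies exactly this missing piece. Using the Lipschitz continuity of the Aubin--Mabuchi energy along $\tilde d$ (Lemma \ref{AMcont}) together with \eqref{AM_diff}, one bounds $\int_X (w_k - w_1)(\o+i\partial\bar\partial w_1)^n$ uniformly in $k$; monotone convergence then forces $\tilde w:=\lim_k(w_k-w_1)$ to be finite on a set of full $(\o+i\partial\bar\partial w_1)^n$-measure. Since $w_1\in\mathcal E(X,\o)$, this set has positive capacity, and the sup of $w_k$ over a suitable subset is uniformly bounded; \cite[Corollary 4.3]{gz2} then gives $L^1$-relative compactness, so $w_k$ increases a.e.\ to some $w\in\textup{PSH}(X,\o)$. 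Now $w\ge w_1$ gives $w\in\mathcal E^2(X,\o)$ by Corollary \ref{Emonoton}, and only then does Proposition \ref{Mdist_est_gen} apply to conclude $\tilde d(w_k,w)\to 0$. Everything else in your outline --- the envelope bookkeeping, the contractivity estimate, the reduction to monotone sequences --- matches the paper's proof.
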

\begin{proof} In Corollary \ref{supspaces} we have seen that $\mathcal H$ is a dense subset of $\mathcal E^2(X,\o)$. Suppose $\{u_k\}_{k \in \Bbb N} \subset \mathcal H$ is a $\tilde d-$Cauchy sequence with respect to the Mabuchi metric. We will prove that there exists $v \in \mathcal E^2(X,\o)$ such that $\tilde d (v,u_k) \to 0.$ After passing to a subsequence we can assume that
$$\tilde d(u_l,u_{l+1}) \leq 1/2^l, \ l \in \Bbb N.$$
We introduce $v^k_l = P(u_k,u_{k+1},\ldots,u_{k+l}) \in \mathcal H_0, \ l,k \in \Bbb N$. We argue first that each decreasing sequence $\{ v^k_l\}_{l \in \Bbb N}$ is $\tilde d-$Cauchy. Given our assumptions, this fill follow if we show that $\tilde d(v^k_{l+1},v^k_l) \leq \tilde d (u_{l+1},u_{l}).$
We observe that $v^k_{l+1}=P(v^k_l,u_{k+ l+1})$ and $v^k_l=P(v^k_l,u_{k+l})$. Using this and Proposition \ref{contractivity} we can write:
$$ \tilde d(v^k_{l+1},v^k_l) = \tilde d(P(v^k_l,u_{k+l+1}),P(v^k_l,u_{k+l})) \leq \tilde d(u_{k+l+1}, u_{k+l})\leq \frac{1}{2^{k+l}}.$$

As we show below in Lemma \ref{mononton_seq}, it follows now that each sequence $\{ v^k_l\}_{l \in \Bbb N}$  is $\tilde d-$convergening to some $v^k \in \mathcal E^2(X,\o)$. Using the same trick as above, one can prove:
$$\tilde d(v^k,v^{k+1}) =\lim_{l \to \infty}\tilde d(v^k_{l+1},v^{k+1}_l)= \lim_{l \to \infty}\tilde d(P(u_k,v^{k+1}_{l}),P(u_{k+1},v^{k+1}_l))\leq \tilde d (u_k,u_{k+1}) \leq \frac{1}{2^k},$$
\begin{flalign*}
\tilde d(v^k,u_k) &=\lim_{l \to \infty}\tilde d(v^k_l,u_k)=\lim_{l \to \infty}\tilde d((P(u_k,v^{k+1}_{l-1}),P(u_k,u_k))\\
&\leq\lim_{l \to \infty}\tilde d(v^{k+1}_{l-1},u_k)=\lim_{l \to \infty}\tilde d(P(u_{k+1},v^{k+2}_{l-2}),u_k)\\
&\leq \lim_{l \to \infty}\tilde d(P(u_{k+1},v^{k+2}_{l-2}),u_{k+1}) + \tilde d(u_{k+1},u_k)\\
&\leq \lim_{l \to \infty} \sum_{j=k}^{l+k}\tilde d (u_j,u_{j+1}) \leq \frac{1}{2^{k-1}}.
\end{flalign*}Hence, $\{v^k\}_{k \in \Bbb N}$ is an increasing $\tilde d-$Cauchy sequence that is equivalent to $\{u_k\}_{k \in \Bbb N}$. We want to show that $\{v^k\}_{k \in \Bbb N}$ increases pointwise a.e. to some $v \in \mathcal E^2(X,\o)$.

Using Lemma \ref{AMcont} and (\ref{AM_diff}) we have:
$$0 \leq \frac{1}{n+1}\int_X (v^k - v^1)(\o + i\partial \bar\partial v^1 )^n \leq AM(v^k) - AM(v^1) \leq \tilde d(v^k,v^1).$$

Hence, the limit $\tilde v=\lim_{k \to \infty} v^k-v^1\geq 0$ is finite on a set of capacity non-zero. Indeed, by the monotone convergence theorem $(\o + i\partial \bar\partial v^1 )^n(\{ \tilde v = \infty\})=0$, implying $(\o + i\partial \bar\partial v^1 )^n(\{ \tilde v < \infty\})= \text{Vol}(X) >0$. As $v^1 \in \mathcal E^2(X,\o) \subset \mathcal E(X,\o)$, by \cite[Theorem A]{gz} it follows that $\text{Cap}(\{ \tilde v < \infty\}) >0$.

Let $A_l = \{ \tilde v < l, v^1 > -l\}, \ l >0$. As $\text{Cap}(\{  v^1 = -\infty\}) =0$ and $\{ \tilde v < \infty, v^1 > -\infty\} = \cup_{l >0} A_l$, we conclude that $\text{Cap}(A_{l_0}) >0$ for some $l_0 >0$ and
$$-l_0 < \sup_{A_{l_0}} v^k < l_0 + \sup_{A_{l_0}}v^1, k \in \Bbb N.$$
By \cite[Corollary 4.3]{gz2}, this implies that $\{v^k\}_{k \in \Bbb N}$ forms an $L^1-$relatively compact family in $\text{PSH}(X,\o)$. It follows that $v_k$ increases a.e. to some $ v \in \text{PSH}(X,\o)$, in particular $v \in \mathcal E^2(X,\o)$. An application of Proposition \ref{Mdist_est_gen} now yields $\tilde d(v^k,v) \to 0$, which in turn implies $\tilde d(u_k,v) \to 0$.
\end{proof}

As promised, we need to argue that decreasing Cauchy sequences in $(\mathcal E^2(X,\o),\tilde d)$ have their limit in $\mathcal E^2(X,\o)$. Before we do this, we state a lemma of independent interest:
\begin{lemma} Suppose $u \in \mathcal H$ and $u \leq 0$. Then $u/2 \in \mathcal H$ and
$$d(0,u/2) \leq  d(0,u)/2.$$
\end{lemma}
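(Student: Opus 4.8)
The plan is to reduce the estimate to the distance formula (\ref{distgeod}) evaluated at the endpoint $t_0=0$, and to compare the weak geodesic issuing from $0$ towards $u$ with the one issuing from $0$ towards $u/2$ by exhibiting the former, scaled by $1/2$, as an admissible competitor subgeodesic for the latter.

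First, $u/2\in\mathcal H$ is immediate: $u/2$ is smooth and $\omega+i\partial\bar\partial(u/2)=\tfrac12\omega+\tfrac12(\omega+i\partial\bar\partial u)>0$, being a sum of two strictly positive forms. For the distance bound I would let $(0,1)\ni t\mapsto u_t\in\mathcal H_\Delta$ be the weak geodesic joining $u_0=0$ and $u_1=u$, and $(0,1)\ni t\mapsto v_t\in\mathcal H_\Delta$ the weak geodesic joining $0$ and $u/2$; both extend continuously to $[0,1]\times X$, and by (\ref{distgeod}) applied at $t_0=0$, where $(\omega+i\partial\bar\partial\,0)^n=\omega^n$, one has $d(0,u)^2=\int_X\dot u_0^2\,\omega^n$ and $d(0,u/2)^2=\int_X\dot v_0^2\,\omega^n$.

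The key step is the comparison $\tfrac12 u_t\le v_t$ for all $t\in(0,1)$. Denoting the complexification still by $u$, the curve $t\mapsto\tfrac12 u_t$ has complexification satisfying $\tilde\omega+i\partial\bar\partial(\tfrac12 u)=\tfrac12\tilde\omega+\tfrac12(\tilde\omega+i\partial\bar\partial u)\ge 0$, since $\tilde\omega\ge 0$ and $u$ is $\tilde\omega$-plurisubharmonic; hence $t\mapsto\tfrac12 u_t$ is a subgeodesic, with boundary values $0$ and $u/2$, which are $\le$ those prescribed for $v$. Since $v$ is the upper envelope (\ref{udef1}) of all such subgeodesics, we obtain $\tfrac12 u_t\le v_t$. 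It is essential here that one uses the curved geodesic $u_t$ rather than the straight segment $t\mapsto\tfrac t2 u$, which is not $\tilde\omega$-plurisubharmonic on $S\times X$ (its complex Hessian has a non-vanishing mixed $\partial_s$-block over a zero diagonal entry); this is the one place where the geometry genuinely enters.

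To finish, observe that $u\le 0$ forces $u_t\le tu\le 0=u_0$ and $v_t\le t(u/2)\le 0=v_0$ by convexity in $t$, so the endpoint velocities $\dot u_0=\lim_{t\to 0^+}u_t/t$ and $\dot v_0=\lim_{t\to 0^+}v_t/t$ exist (monotone difference quotients, bounded via (\ref{dot_u_est})), agree with the derivatives in (\ref{distgeod}), and are both $\le 0$. Dividing $\tfrac12 u_t\le v_t$ by $t>0$ and letting $t\to 0^+$ gives $\tfrac12\dot u_0\le\dot v_0\le 0$, hence $4\dot v_0^2\le\dot u_0^2$ pointwise; integrating against $\omega^n$ and invoking (\ref{distgeod}) yields $4\,d(0,u/2)^2=4\int_X\dot v_0^2\,\omega^n\le\int_X\dot u_0^2\,\omega^n=d(0,u)^2$, i.e. $d(0,u/2)\le d(0,u)/2$. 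The main obstacle is making the comparison step watertight: checking that $\tfrac12 u$ is genuinely admissible in the envelope defining $v$ (the subgeodesic property together with the correct boundary inequalities) and that the endpoint derivatives appearing above are precisely those entering (\ref{distgeod}). Both are routine given Section 2, but the subgeodesic verification is the step most prone to error.
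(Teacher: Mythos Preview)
Your proof is correct and follows exactly the same approach as the paper's: exhibit $\tfrac12 u_t$ as an admissible subgeodesic for the envelope defining $v_t$, deduce $\tfrac12 u_t\le v_t$, and combine with the fact that both curves are decreasing in $t$ (since $u\le 0$) to compare the initial tangent vectors in (\ref{distgeod}). You supply more detail than the paper (the verification that $u/2\in\mathcal H$, the explicit check of $\tilde\omega$-plurisubharmonicity of $\tfrac12 u$, and the sign tracking for $\dot u_0,\dot v_0$), but the argument is the same.
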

\begin{proof} Suppose $(0,1) \ni t \to \phi_t,\psi_t \in \mathcal H_\Delta$ is the geodesic connecting $0,u$ and $0,u/2$ respectively. Then $t \to \phi_t/2$ is a subgeodesic connecting $0, u/2$.
By (\ref{distgeod}) we have $\tilde d(0,u/2)^2 = \int_X \dot \psi_0^2 \o^n$ and $\tilde d(0,u)^2 = \int_X \dot \phi_0^2 \o^n$. As $\phi_t/2 \leq \psi_t , \ t \in [0,1]$ and both curves are decreasing the desired estimate follows.
\end{proof}

\begin{lemma}\label{mononton_seq} Suppose $\{u_k\}_{k \in \Bbb N} \subset \mathcal E^2(X,\o)$ is a pointwise decreasing $\tilde d-$bounded sequence. Then $u = \lim_{k \to \infty} u_k \in \mathcal E^2(X,\o)$ and additionally $\tilde d(u,u_k) \to 0$.
\end{lemma}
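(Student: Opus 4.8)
\emph{Proof proposal.} Subtracting a constant (which affects neither $\tilde d$ nor membership in $\mathcal E^2(X,\o)$) we may assume $u_k\le 0$ for all $k$; put $u=\lim_k u_k$. The statement has two halves, $u\in\mathcal E^2(X,\o)$ and $\tilde d(u,u_k)\to 0$, and the second follows from the first together with Proposition \ref{Mdist_est_gen} (decreasing case). To obtain $u\in\mathcal E^2(X,\o)$ I would argue through the canonical cutoffs $w_k:=\max(u_k,-k)\in\mathcal H_0$, which decrease to $u$: by Proposition \ref{E_semicont} applied with the weight $\chi^2\in\mathcal W^+_2$ it suffices to show $\inf_k E_{\chi^2}(w_k)>-\infty$, and since $u_k\le w_k\le 0$ the fundamental estimate (Proposition \ref{Energy_est}) gives $E_{\chi^2}(w_k)\ge C^{-1}E_{\chi^2}(u_k)$, so the whole question reduces to the uniform bound $\sup_k\bigl|E_{\chi^2}(u_k)\bigr|=\sup_k\int_X u_k^2\,\o_{u_k}^n<\infty$.

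To produce that bound I would feed the $\tilde d$-boundedness into the comparison estimates of Section 6, much as in Lemma \ref{IntDistEst}. First, the lower inequality in Lemma \ref{Mdist_est} — extended from $\mathcal H$ to $\mathcal E^2$ by approximating from above by decreasing smooth potentials and using the weak continuity of Monge--Amp\`ere operators (Proposition \ref{MA_cont}) — gives, for $l\ge k$, $\int_X(u_k-u_l)^2\,\o_{u_k}^n\le\tilde d(u_k,u_l)^2\le M^2$, where $M:=\sup_{j,m}\tilde d(u_j,u_m)$; letting $l\to\infty$ and invoking the monotone convergence theorem yields $\int_X(u_k-u)^2\,\o_{u_k}^n\le M^2$ for every $k$. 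Along the way one also records that $u\not\equiv-\infty$ and in fact $u\in\mathcal E(X,\o)$: by \eqref{AM_diff} the sequence $AM(u_k)$ is non-increasing, and by Lemma \ref{AMcont} it is bounded below, hence convergent; expanding \eqref{AM_diff} with $u_k\le 0$ then forces $\int_X u\,\o^n>-\infty$ and $\inf_k E_{\chi^1}(u_k)>-\infty$, so $u\in\mathcal E_{\chi^1}(X,\o)\subset\mathcal E(X,\o)$ by Proposition \ref{E_semicont} (applied to the cutoffs $w_k$), and consequently $\o_{u_k}^n\to\o_u^n$ weakly. Finally one upgrades $\int_X(u_k-u)^2\,\o_{u_k}^n\le M^2$ to the required bound on $\int_X u_k^2\,\o_{u_k}^n$ by running the transfer-of-reference argument of Lemma \ref{IntDistEst} in the shifted classes $\mathcal E^2(X,\o_{u_l})$, now with $u$ playing the role of the fixed finite-energy potential there. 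With $u\in\mathcal E^2(X,\o)$ in hand, $\tilde d(u,u_k)\to 0$ is immediate from Proposition \ref{Mdist_est_gen}.

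I expect the main obstacle to be precisely this last conversion. The inequality $\int_X(u_k-u)^2\,\o_{u_k}^n\le M^2$ controls an integral of $(u_k-u)^2$, whereas $E_{\chi^2}(u_k)$ is an integral of $u_k^2$ against the \emph{same} measure, and the two integrands are a priori unrelated — the difference $u_k-u$ can be tiny on the bulk of $\o_{u_k}^n$ even where $u_k$ is very negative — so turning the distance bound into an energy bound genuinely requires the fundamental estimate to be applied in exactly the right order; morally this is the assertion that $\tilde d$-boundedness forces boundedness in $\chi^2$-energy. Everything else — the normalization, the Aubin--Mabuchi argument for $u\in\mathcal E(X,\o)$, the passage to the cutoffs, and the final appeal to Proposition \ref{Mdist_est_gen} — is routine given the results already established.
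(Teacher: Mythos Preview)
Your outline is correct up to the point you yourself flag, but the gap you identify is real and your suggested fix is circular. To ``run the transfer-of-reference argument of Lemma \ref{IntDistEst}'' with $u$ playing the role of the fixed finite-energy potential, you need $u-u_l\in\mathcal E^2(X,\o_{u_l})$, which is exactly the statement $u\in\mathcal E^2(X,\o)$ that you are trying to prove. Equivalently, from $\int_X(u_k-u)^2\,\o_{u_k}^n\le M^2$ you would need a uniform bound on $\int_X u^2\,\o_{u_k}^n$ to control $\int_X u_k^2\,\o_{u_k}^n$, and the only available tool for that (the fundamental estimate in $\mathcal E^2$) again presupposes $u\in\mathcal E^2$. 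Knowing merely $u\in\mathcal E^1$ does not give such a bound.

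The paper avoids this circularity by not attempting to bound $E_{\chi^2}(u_k)$ at all. Instead it proves the \emph{dual} statement: $\int_X v^2\,\o_u^n<\infty$ for every negative $v\in\mathcal E^2(X,\o)$, and then invokes \cite[Theorem C]{gz} together with Dinew's uniqueness (Theorem \ref{dinew}) to conclude $u\in\mathcal E^2$. The key device --- which has no analogue in your sketch --- is the pointwise inequality $0>v_l\ge 2\bigl(P(v_l,u_k)-u_k/2\bigr)$ combined with $\o_{u_k}^n\le 2^n\o_{u_k/2}^n$; this converts $\int_X v_l^2\,\o_{u_k}^n$ into the square of a Mabuchi distance via the \emph{lower} bound in Lemma \ref{Mdist_est}, namely $\tilde d\bigl(P(v_l,u_k),u_k/2\bigr)^2$. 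The Pythagorean formula (Proposition \ref{pythagorean}) then bounds $\tilde d(P(v_l,u_k),u_k)$ by $\tilde d(v_l,u_k)$, and the halving lemma $d(0,w/2)\le d(0,w)/2$ controls $\tilde d(u_k/2,0)$ in terms of $\tilde d(u_k,0)$. Everything on the right is $\tilde d$-bounded by hypothesis, so one may let $k\to\infty$ and then $l\to\infty$. Thus the Pythagorean formula and the halving lemma --- neither of which appears in your proposal --- are the ingredients that break the circularity.
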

\begin{proof} We can suppose that $u_k < 0$. First we assume that $\{u_k\}_{k \in \Bbb N} \subset \mathcal H$. By Lemma \ref{AMcont} it follows that $u = \lim_{k \to \infty} u_k \in \mathcal E^1(X,\o)$. Suppose $v \in \mathcal E^2(X,\o)$ and $v < 0$. We will prove that
\begin{equation} \label{E2est}
\int_X v^2 (\o + i\partial \bar \partial u)^n < + \infty.
\end{equation}
Suppose $v_l \searrow v$ is a decreasing sequence of negative smooth K\"ahler potentials. As $0 > v_l \geq 2(P(v_l,u_k) - u_k/2)$ and $(\o + \partial \bar \partial u_k)^n \leq 2^n (\o + \partial \bar \partial u_k/2)^n$ we can start writing:
\begin{flalign*}
\int_X v^2_l (\o + i\partial \bar \partial u_k)^n & \leq  2^{n+2}\int_X (P(v_l,u_k) - u_k/2)^2 (\o + i\partial \bar \partial u_k/2)^n\\
&\leq 2^{n+2}\tilde d(P(v_l,u_k),u_k/2)^2\\
&\leq 2^{n + 3}(\tilde d(P(v_l,u_k),u_k)^2 + \tilde d(u_k,u_k/2)^2)\\
&\leq 2^{n + 3}(\tilde d(v_l,u_k)^2 + \tilde d(u_k,u_k/2)^2)\\
&\leq 2^{n + 4}(\tilde d(v_l,0)^2 + 2\tilde d(u_k,0)^2 + \tilde d(u_k/2,0)^2),
\end{flalign*}
where in the second line we have used Lemma \ref{Mdist_est} and in the forth line we have used the Pythagorean formula (Proposition \ref{pythagorean}). As both $\{v_l\}_{l \in \Bbb N}$ and $\{u_k\}_{k \in \Bbb N}$ are $\tilde d-$bounded sequences, using the previous lemma we conclude that the right hand side above is bounded. Letting $k \to \infty$ and then $l \to \infty$ we obtain (\ref{E2est}). By \cite[Theorem C]{gz} and Theorem \ref{dinew} it follows that $u \in \mathcal E^2(X,\o)$. The last statement follows from Proposition \ref{Mdist_est_gen}.

The case when $\{u_k\}_{k \in \Bbb N} \subset \mathcal E^2(X,\o)$ can be reduced to the above situation using approximation via Proposition \ref{Mdist_est_gen}.
\end{proof}

\vspace{0.1 in}
\textsc{Department of Mathematics, Purdue University}\\
\emph{E-mail address: }\texttt{\textbf{tdarvas@math.purdue.edu}}

\begin{thebibliography}{4}
\bibitem[BT]{bt}E. Bedford, B. A. Taylor, A new capacity for plurisubharmonic functions, Acta Math. 149 (1982), no. 1-2, 1-40.
\bibitem[BD]{bd} R. Berman, J. P. Demailly, Regularity of plurisubharmonic upper envelopes in big cohomology classes. Perspectives in analysis, geometry and topology, Progr. Math. 296, Birkhäuser/Springer, New York, 2012, 39-66.
\bibitem[Br]{br} B. Berndtsson, A Brunn-Minkowski type inequality for Fano manifolds and the Bando-Mabuchi uniqueness theorem,	 arXiv:1103.0923.
\bibitem[Bl1]{bl}Z. B\l ocki, The complex Monge-Amp\`ere equation in K\"ahler geometry, CIME Summer School in Pluripotential Theory, Cetraro, July 2011, to appear in Lecture Notes in Mathematics, \url{http://gamma.im.uj.edu.pl/\~blocki/publ/ln/cetr.pdf}.
\bibitem[Bl2]{bli} Z. B\l ocki, On geodesics in the space of Kähler metrics, Proceedings of the "Conference in Geometry" dedicated to Shing-Tung Yau (Warsaw, April 2009), in "Advances in Geometric Analysis", ed. S. Janeczko, J. Li, D. Phong, Advanced Lectures in Mathematics 21, pp. 3--20, International Press, 2012.
\bibitem[Bl3]{bl2} Z. B\l ocki, A gradient estimate in the Calabi-Yau theorem, Math. Ann. 344 (2009), no. 2, 317-327.
\bibitem[Bl4]{bl3}  Z. B\l ocki, The complex Monge-Amp\`re operator in pluripotential theory, lecture notes, \url{http://gamma.im.uj.edu.pl/\~blocki/publ/ln/wykl.pdf}.
\bibitem[BK]{bk} Z. B\l ocki, S.Ko\l odziej, On regularization of plurisubharmonic functions on manifolds, Proceedings of the American Mathematical Society 135 (2007), 2089-2093
\bibitem[BEGZ]{begz} S. Boucksom, P. Eyssidieux, V. Guedj and A. Zeriahi, Monge-Amp\`ere equations in big cohomology classes, Acta Math. 205 (2010), 199-262.
\bibitem[BH]{bh} M. Bridson, A. Haefliger, Metric spaces of non-positive curvature. Grundlehren der Mathematischen Wissenschaften, 319. Springer-Verlag, Berlin, 1999.
\bibitem[Ca]{ca} E. Calabi, The variation of K\"ahler metrics, Bull. Amer. Math. Soc. 60 (1954), 167-168.
\bibitem[Cl]{cl} S. Calamai, The Calabi metric for the space of K\"ahler metrics. Math. Ann. 353 (2012), no. 2, 373-402.
\bibitem[CC]{cc} E. Calabi, X. X. Chen, The Space of Kähler Metrics II, J. Differential Geom. 61(2002), no. 2, 173-193.
\bibitem[C]{c}X. X. Chen, The space of K\"ahler metrics, J. Differential Geom. 56 (2000), no. 2, 189-234.
\bibitem[CT]{ct}X. X. Chen, G. Tian, Geometry of Kähler metrics and foliations by holomorphic discs, Publ. Math. Inst. Hautes Études Sci. No. 107 (2008), 1-107.
\bibitem[CR]{cr}B. Clarke, Y.A. Rubinstein, Ricci flow and the metric completion of the space of Kähler metrics, American Journal of Mathematics 135 (2013), no. 6, 1477-1505.
\bibitem[Da1]{d2} T. Darvas, Morse theory and geodesics in the space of Kähler metrics, arXiv:1207.4465v3.
\bibitem[Da2]{d} T. Darvas, Weak Geodesic Rays in the Space of Kähler Metrics and the Class $\mathcal E(X,\o)$, arXiv:1307.6822.
\bibitem[Da3]{d3} T. Darvas, The Mabuchi Geometry of Finite Energy Classes, arXiv:1409.2072.
\bibitem[DH]{dh} T. Darvas, W. He, Geodesic Rays and K\"ahler--Ricci Trajectories on Fano Manifolds, arXiv:1411.0774.
\bibitem[DL]{dl} T. Darvas, L. Lempert, Weak geodesics in the space of K\"ahler metrics, 	 Mathematical Research Letters, 19 (2012), no. 5.
\bibitem[DR]{dr} T. Darvas, Y. A. Rubinstein, Kiselman's principle, the Dirichlet problem for the Monge-Amp\`ere equation, and rooftop obstacle problems, arXiv:1405.6548.
\bibitem[Di]{di} S. Dinew, Uniqueness in $\mathcal E(X,\o)$. J. Funct. Anal. 256 (2009), no. 7, 2113-2122.
\bibitem[Do]{do} S. K. Donaldson - Symmetric spaces, K\"ahler geometry and Hamiltonian dynamics, Amer. Math. Soc. Transl. Ser. 2, vol. 196, Amer. Math. Soc., Providence RI, 1999, 13-33.
\bibitem[G]{g} V. Guedj, The Metric Completion of the Riemannian Space of K\"ahler Metrics, preprint.
\bibitem[GZ1]{gz} V. Guedj, A. Zeriahi, The weighted Monge-Amp\`ere energy of quasiplurisubharmonic functions, 	J. Funct. Anal. 250 (2007), no. 2, 442-482.
\bibitem[GZ2]{gz2} V. Guedj, A. Zeriahi, Intrinsic capacities on compact Kähler manifolds. Journal of Geom. An. 15 (2005), no. 4, 607-639.
\bibitem[GT]{gt}  D. Gilbarg, N.S. Trudinger, Elliptic partial differential equations of second order. Classics in Mathematics. Springer-Verlag, Berlin, 2001.
\bibitem[H]{h} W. He, On the space of K\"ahler potentials, to appear in CPAM, arXiv:1208.1021.
\bibitem[K]{k} S. Ko\l odziej, The Monge-Amp\`ere equation on compact K\"ahler manifolds, Indiana Univ. Math. J. 52 (2003), 667-686.
\bibitem[LV]{lv}L. Lempert, L. Vivas, Geodesics in the space of Kähler metrics. Duke Math. J. 162 (2013), no. 7, 1369-1381.
\bibitem[M]{m}T. Mabuchi, Some symplectic geometry on compact K\"ahler manifolds I, Osaka J. Math. 24, 1987, 227-252.
\bibitem[PS]{ps} D.H. Phong, J. Sturm, The Monge-Amp\`ere operator and geodesics in the space of K\"ahler potentials. Invent. Math. 166 (2006), no. 1, 125-149.
\bibitem[RWN]{rwn1}J. Ross, D. Witt Nystr\"om, Analytic test configurations and geodesic rays, arXiv:1101.1612.
\bibitem[RZ]{rz} Y. Rubinstein, S. Zelditch, The Cauchy problem for the homogeneous Monge--Amp\`ere equation, I. Toeplitz quantization, J. Differential Geom. 90 (2012), no. 2, 303--327.
\bibitem[Se]{s}S. Semmes, Complex Monge-Amp\`ere and symplectic manifolds, Amer. J. Math. 114 (1992), 495-550.
\bibitem[SZ]{sz} J. Song, S. Zelditch, Bergman metrics and geodesics in the space of Kähler metrics on toric varieties. Anal. PDE 3 (2010), no. 3, 295-358.
\bibitem[St1]{st1} J. Streets, Long time existence of Minimizing Movement solutions of Calabi flow, arXiv:1208.2718.
\bibitem[St2]{st2} J. Streets, The consistency and convergence of K-energy minimizing movements, arXiv:1301.3948.
\end{thebibliography}
\end{document}